\newenvironment{rcases}
  {\left.\begin{aligned}}
  {\end{aligned}\right\rbrace}
\newcommand\norm[1]{\left\lVert#1\right\rVert}
\newtheorem{theorem}{Theorem}
\newtheorem{definition}{Definition}
\newtheorem{lemma}{Lemma}
\newtheorem{corollary}{Corollary}
\newtheorem{remark}{Remark}
\newtheorem{proposition}{Proposition}
\newcommand{\bea}{\begin{eqnarray*}}
\newcommand{\eea}{\end{eqnarray*}}                                                                                                                                                                                               
\newcommand{\ben}{\begin{eqnarray}}
\newcommand{\een}{\end{eqnarray}}
\newcommand{\beq}{\begin{equation}}
\newcommand{\eeq}{\end{equation}}
\renewcommand{\hat}[1]{\widehat{#1}}
\begin{document}

\title{Long time behavior of 2d water waves with point vortices}
\email{qingtang@umich.edu}
\address{University of Michigan, Ann Arbor, 
Department of Mathematics
}

\author{Qingtang Su}

\begin{abstract}
In this paper, we study the motion of the two dimensional inviscid incompressible, infinite depth water waves with point vortices in the fluid. We show that Taylor sign condition $-\frac{\partial P}{\partial \boldmath{n}}\geq 0$ can fail if the point vortices are sufficient close to the free boundary, so the water waves could be subject to the Taylor instability. Assuming the Taylor sign condition, we prove that the water wave system is locally wellposed in Sobolev spaces. Moreover, we show that if the water waves is symmetric with a certain symmetric vortex pair traveling downward initially, then the free interface remains smooth for a long time, and for initial data of size $\epsilon\ll 1$, the lifespan is at least $O(\epsilon^{-2})$. 

\end{abstract}

\maketitle

\section{Introduction}
In this paper we investigate the two dimensional inviscid incompressible infinite depth water wave system with point vortices in the fluid. This system arises in the study of submerged bodies in a fluid (see for example \cite{chang2001vortex},\cite{dalrymple2006numerical} and the references therein). It's also believed to give some  insight into the problem of turbulence (\cite{marchioro2012mathematical}, chap 4, \S 4.6).  In idealized situation, such water waves are described by assuming the vorticity is a Dirac delta measure, i.e., the vorticity is given by $\omega(\cdot, t)=\sum_{j=1}^N \lambda_j \delta_{z_j(t)}(\cdot)$, where each $z_j\in \Omega(t)$, $\lambda_j\in\mathbb{R}$ represent the position and the strength of the $j$-th point vortex, respectively. It's well-known that each point vortex $z_j$ generates a velocity field $\frac{\lambda_j }{2\pi}\frac{i}{\overline{z-z_j(t)}}$ , which is purely rotational.  We assume $z_j(0)\neq z_k(0)$, for $j\neq k$. Assume that the fluid is incompressible and inviscid and neglect surface tension. Let $\Omega(t)$ be the fluid region, with free boundary $\Sigma(t)$, such that $\Sigma(t)$ separate the fluid region with density one from the air with density zero. Let $v$ be the fluid velocity, and $P$ be the fluid pressure. Normalize the gravity to be $(0,-1)$. Then the motion of the fluid is described by
\begin{equation}\label{vortex_model}
\begin{cases}
\begin{rcases}
&v_t+v\cdot \nabla v=-\nabla P-(0,1)\\
&div~v=0\\
&curl~v=\omega=\sum_{j=1}^N \lambda_j \delta_{z_j(t)}\\
\end{rcases}
\quad \Omega(t)\\
\frac{d}{dt}z_j(t)=(v-\frac{\lambda_j i}{2\pi}\frac{1}{\overline{z-z_j(t)}})\Big |_{z=z_j}\quad \\
z_j(t)\in \Omega(t),\quad j=1,...,N\\
P\Big|_{\Sigma(t)}\equiv 0\quad \quad \quad \quad \quad \quad \quad \quad \quad \quad \quad \quad \\
(1,v) \text{ is tangent to the free surface } (t, \Sigma(t)).
\end{cases}
\end{equation}
Here, $z=x+iy, z_j(t)=x_j(t)+iy_j(t)$. We've identified $\mathbb{R}^2$ with $\mathbb{C}$, so a point $(x,y)$ is identified as $x+iy$. Given $w\in \mathbb{C}$, $\bar{w}$ represents the complex conjugate of $w$.

Formally, this system is obtained by neglecting the self-interaction of the point vortices:  intuitively, if we pretend that the velocity field $v$ is well-defined at $z=z_j(t)$, then the motion of the fluid particle $z_j(t)$ is given by 
$$\frac{d}{dt}z_j(t)=v(z_j(t),t)=(v-\frac{\lambda_j i}{2\pi}\frac{1}{\overline{z-z_j(t)}})\Big |_{z=z_j}+\frac{\lambda_j i }{2\pi}\frac{1}{\overline{z-z_j(t)}})\Big |_{z=z_j}.$$
We assume that the only singularities of $v$ are at the point vortices, so  $v-\frac{\lambda_j i}{2\pi}\frac{1}{\overline{z-z_j(t)}}$ is smooth, while $\frac{\lambda_j i }{2\pi}\frac{1}{\overline{z-z_j(t)}}$ is not defined at $z=z_j$. However, since the velocity field $\frac{\lambda_j i }{2\pi}\frac{1}{\overline{z-z_j(t)}}\Big |_{z\neq z_j}$ is purely rotational around $z_j(t)$, it won't move the center $z_j(t)$ at all, which means
$$\frac{d}{dt}z_j(t)=(v-\frac{\lambda_j i}{2\pi}\frac{1}{\overline{z-z_j(t)}})\Big |_{z=z_j}.$$
For a rigorous justification of this derivation, see \cite{marchioro2012mathematical} (Theorem 4.1, 4.2, chapter 4) for the fixed boundary case. 

When $N=0$, i.e., without the presence of point vortices, the system (\ref{vortex_model}) has attracted a lot of attention from both mathematics and physics communities, and there is a enormous literature on this problem. Rigorous mathematical analysis for water waves with point vortices is still missing.

\subsection{Historical results}
\subsubsection{Numerical results}
There have been a lot of numerical study of the system (\ref{vortex_model}). See for example \cite{curtis2017vortex}, \cite{hill1975numerical}, \cite{fish1991vortex}, \cite{marcus1989interaction}, \cite{telste1989potential}, \cite{willmarth1989vortex} and the references therein for some numerical investigations.

\subsubsection{Rigorous mathematical analysis: Irrotational case}
For the rigorous mathematical analysis of (\ref{vortex_model}) in Sobolev spaces without the presence of point vortex, i.e., $N=0$,  system (\ref{vortex_model}) reduces to inviscid, incompressible, irrotational 2d water waves, i.e.,
\begin{equation}\label{system}
\begin{cases}
\begin{rcases}
&v_t+v\cdot \nabla v=-\nabla P-(0,1)\\
&div~v=0,~~curl~v=0\\
\end{rcases}
\quad \quad \quad \Omega(t)\\
P\equiv 0\quad \quad  \quad\quad \quad \quad\quad \quad \quad\quad \quad \quad\quad~~\quad\Sigma(t)\\
(1,v) \text{ is tangent to the free surface } (t, \Sigma(t)).
\end{cases}
\end{equation}
The water wave system (\ref{system}) has been intensively studied, for early works, see Newton \cite{newton}, Stokes\cite{Stokes}, Levi-Civita\cite{Levi-Civita}, and G.I. Taylor \cite{Taylor}. Nalimov \cite{Nalimov}, Yosihara\cite{Yosihara} and Craig \cite{Craig} proved local well-posedness for 2d water waves equation (\ref{system}) for small initial data. In S. Wu's breakthrough works \cite{Wu1997}\cite{Wu1999}, she proved that for $n\geq 2$ the important strong Taylor sign condition
\begin{equation}
-\frac{\partial P}{\partial \boldmath{n}}\Big|_{\Sigma(t)}\geq c_0>0
\end{equation}
always holds for the infinite depth water wave system (\ref{system}), as long as the interface is non-self-intersecting and smooth, and she proved that the initial value problem for (\ref{system}) is locally well-posed in $H^s(\mathbb{R}), s\geq 4$ without smallness assumption. Since then, a lot of interesting local well-posedness results were obtaind, see for example \cite{alazard2014cauchy}, \cite{ambrose2005zero}, \cite{christodoulou2000motion}, \cite{coutand2007well}, \cite{iguchi2001well}, \cite{lannes2005well}, \cite{lindblad2005well}, \cite{ogawa2002free}, \cite{shatah2006geometry}, \cite{zhang2008free}.
 Recently, almost global and global well-posedness for water waves (\ref{system}) under irrotational assumption have also been proved, see \cite{Wu2009}, \cite{Wu2011}, \cite{germain2012global}, \cite{Ionescu2015}, \cite{AlazardDelort},  and see also \cite{HunterTataruIfrim1} and \cite{HunterTataruIfrim2}.  More recently, there are strong interests in understanding the singularities of water waves, see for example  \cite{kinsey2018priori}, \cite{Wu2}, \cite{Wu3}, \cite{Wu4}. For the formation of splash singularities, see for example \cite{castro2012finite}\cite{castro2013finite}\cite{coutand2014finite} \cite{coutand2016impossibility} . Note that most of the aforementioned works are done in irrotational setting, both for mathematical convenience and physical considerations. 
 
 \subsubsection{Rigorous mathematical analysis: rotational case}
 Much less rigorous mathematical analysis have been done for rotational water waves. For vorticity $\omega$ that is a smooth function,  Iguchi, Tanaka, and Tani \cite{iguchi1999free} proved the local wellposedness of the free boundary problem for an incompressible ideal fluid in two space dimensions without surface tension. Ogawa and Tani \cite{ogawa2002free} generalized Iguchi, Tanaka, and Tani's work to the case with surface tension. In \cite{ogawa2003incompressible}, Ogawa and Tani generalized the wellposedness result to the finite depth case.  In \cite{christodoulou2000motion}, Chritodoulou and Lindblad obtained a priori energy estimates of $n$ dimensional incompressible fluid, without assuming irrotationality condition in a bounded domain without gravity. In particular, the authors introduce a geometrical point of view, estimating quantities such as the second fundamental form and the velocity of the free surface. For the same problem as in \cite{christodoulou2000motion}, H. Lindblad proved local wellposedness in \cite{lindblad2005well}. The local wellposedness of rotational 3d infinite depth, inviscid incompressible water waves is proved by P. Zhang and Z. Zhang \cite{zhang2008free}. All the aforementioned existence results for rotational water waves are locally in time, and under the assumption that the strong Taylor sign condition holds. 
 
 Regarding the long time behavior, Ifrim and Tataru\cite{ifrim2015two} prove cubic lifespan for 2d inviscid incompressible infinite depth water waves with constant vorticity. Assume the volocity field is $(u,v)$. Assume the vorticity is $v_x-u_y\equiv c$, where $c$ is a constant, replacing the velocity field $(u,v)$ by $(u+cy,v)$, the problem is reduced to irrotational incompressible water waves, and from which the long time existence is proved. In \cite{bieri2017motion}, Bieri, Miao, Shahshahani and Wu prove cubic lifespan for the motion of a self-gravitating incompressible fluid in a bounded domain with free boundary for small initial data for the irrotational case and the case of constant vorticity. The case of constant vorticity is reduced to the irrotational one by working in a rotating framework with constant angular velocity. 
 
 \vspace*{2ex}
 
For water waves with point vortices, there is no obvious transformation to reduce the problem to the irrotational one, so we need some new ideas to study such cases.

\subsection{Governing equation for free boundary}
It's easy to see that the system (\ref{vortex_model}) is completely determined by the free surface $\Sigma(t)$, the velocity and the acceleration along the free surface, and the position of the point vortices. 
\subsubsection{Lagrangian formulation} We parametrize the free surface by Lagrangian coordinates, i.e., let $\alpha$ be such that 
\begin{equation}
z_t(\alpha,t)=v(z(\alpha,t),t).
\end{equation}
We identify $\mathbb{R}^2$ with the complex plane. With this identification, a point $(x,y)$ is the same as $x+iy$. Since $P\equiv 0$ along $\Sigma(t)$, we can write $\nabla P$ as $-iaz_{\alpha}$, where $a=-\frac{\partial P}{\partial\boldmath{n}}\frac{1}{|z_{\alpha}|}$ is real valued. So the momentum equation along the free boundary becomes
\begin{equation}
z_{tt}-iaz_{\alpha}=-i.
\end{equation}
Note that the second and the third equation in (\ref{vortex_model}) imply that  $\overline{v-\sum_{j=1}^N \frac{\lambda_j i}{2\pi(\overline{z-z_j(t)}})}=\bar{v}+\sum_{j=1}^N \frac{\lambda_j i}{2\pi(z-z_j(t))}$ is holomorphic in $\Omega(t)$ with the value at the boundary $\Sigma(t)~$ given by $\bar{z}_t+\sum_{j=1}^N \frac{\lambda_j i}{2\pi (z(\alpha,t)-z_j(t))}$ . Assume that $\bar{z}_t+\sum_{j=1}^N \frac{\lambda_j i}{2\pi (z(\alpha,t)-z_j(t))}\in L^2(\mathbb{R})$. We know that $\bar{z}_t+\sum_{j=1}^N \frac{\lambda_j i}{2\pi (z(\alpha,t)-z_j(t))}$ is the boundary value of a holomorphic function in $\Omega(t)$ if and only if 
\begin{equation}
(I-\mathfrak{H})\Big(\bar{z}_t+\sum_{j=1}^N \frac{\lambda_j i}{2\pi (z(\alpha,t)-z_j(t))}\Big)=0,
\end{equation}
where $\mathfrak{H}$ is the Hilbert transform associated with the curve $z(\alpha,t)$, i.e.,
\begin{equation}
\mathfrak{H}f(\alpha):=\frac{1}{\pi i}p.v.\int_{-\infty}^{\infty}\frac{z_{\beta}}{z(\alpha,t)-z(\beta,t)}f(\beta)d\beta.
\end{equation}
So the system (\ref{vortex_model}) is reduced to a system of equations for the free boundary coupled with the dynamic equation for the motion of the point vortices:
\begin{equation}\label{vortex_boundary}
\begin{cases}
z_{tt}-iaz_{\alpha}=-i\\
\frac{d}{dt}z_j(t)=(v-\frac{\lambda_j i}{2\pi(\overline{z-z_j})})\Big |_{z=z_j}\\
(I-\mathfrak{H})\Big(\bar{z}_t+\sum_{j=1}^N \frac{\lambda_j i}{2\pi(z(\alpha,t)-z_j(t))}\Big)=0.
\end{cases}
\end{equation}
Note that $v$ can be recovered from (\ref{vortex_boundary}). Indeed, we have 
\begin{equation}\label{velocity}
\bar{v}(z)+\sum_{j=1}^N\frac{\lambda_j i}{2\pi(z-z_j(t))}=\frac{1}{2\pi i}\int \frac{z_{\beta}}{z-z(\beta)}\Big(\bar{z}_t(\beta)+\sum_{j=1}^N \frac{\lambda_j i}{2\pi(z(\beta)-z_j(t)})\Big)d\beta.
\end{equation}
So the system (\ref{vortex_model}) and the system (\ref{vortex_boundary}) are equivalent. 

\subsubsection{The Riemann mapping formulation and the Taylor sign condition.} Let $\bold{n}$ be the outward unit normal to the fluid-air interface $\Sigma(t)$.  The quantity $-\frac{\partial P}{\partial \bold{n}}\Big|_{\Sigma(t)}$ plays an important role in the study of water waves.
\begin{definition}
(The Taylor sign condition and the strong Taylor sign condition)
\begin{itemize}
\item [(1)] If $-\frac{\partial P}{\partial \bold{n}}\Big|_{\Sigma(t)}\geq 0$ pointwisely, then we say the Taylor sign condition holds.

\item [(2)] If there is some positive constant $c_0$ such that $-\frac{\partial P}{\partial \bold{n}}\Big|_{\Sigma(t)}\geq c_0>0$ pointwisely, then we say the strong Taylor sign condition holds.
\end{itemize}
\end{definition}
It is well known that when surface tension is neglected and the Taylor sign condition fails, the motion of the water waves can be subject to the Taylor instability \cite{beale1993growth},\cite{birkhoff1962helmholtz},\cite{taylor1950instability},\cite{ebin1987equations},\cite{wu2016wellposedness}. For irrotational incompressible infinite depth water waves without surface tension, S. Wu \cite{Wu1997} \cite{Wu1999} shows that the strong Taylor sign condition always holds provided that the interface is non self-intersecting and smooth. 

For rotational water waves, by constructing explicit examples, we'll show that Taylor sign condition can fail if the point vortice are sufficient close to the interface. We'll also give a criterion for the Taylor sign condition to hold. To calculate the important quantity $-\frac{\partial P}{\partial \bold{n}}$, we use the Riemann mapping formulation of the system (\ref{vortex_boundary}), which we are to describe. 

Let $\Phi(\cdot,t):\Omega(t)\rightarrow \mathbb{P}_-$ be the Riemann mapping such that $\Phi_z\rightarrow 1$ as $z\rightarrow \infty$. Let $h(\alpha,t):=\Phi(z(\alpha,t),t)$. Denote
\begin{equation}
Z(\alpha,t):=z\circ h^{-1}(\alpha,t),\quad \quad b=h_t\circ h^{-1}, \quad \quad D_t:=\partial_t+b\partial_{\alpha},\quad \quad 
\end{equation}
\begin{equation}
A:=(ah_{\alpha})\circ h^{-1}.\quad \quad 
\end{equation}
In Riemann mapping variables, the system (\ref{vortex_boundary}) becomes
\begin{equation}\label{vortex_model_Riemann}
\begin{cases}
(D_t^2-iA\partial_{\alpha})Z=-i\\
\frac{d}{dt}z_j(t)=(v-\frac{\lambda_j i}{2\pi(\overline{z-z_j})})\Big |_{z=z_j}\\
(I-\mathbb{H})(D_t\bar{Z}+\sum_{j=1}^N \frac{\lambda_j i}{2\pi(Z(\alpha,t)-z_j(t))})=0.
\end{cases}
\end{equation}
Here, $\mathbb{H}$ is the standard Hilbert transform which is defined by
\begin{equation}
\mathbb{H}f(\alpha):=\frac{1}{\pi i}p.v.\int_{-\infty}^{\infty}\frac{1}{\alpha-\beta}f(\beta)d\beta.
\end{equation}
Denote 
\begin{equation}
A_1:= A|Z_{\alpha}|^2.
\end{equation}
Since $-\frac{\partial P}{\partial \bold{n}}\Big|_{\Sigma(t)}=\frac{A_1}{|Z_{\alpha}|}$, it's clear that the Taylor sign condition holds if and only if
\begin{equation}\label{conditionA1}
\inf_{\alpha\in \mathbb{R}}\frac{A_1}{|Z_{\alpha}|}\geq 0,
\end{equation}
and the strong Taylor sign condition holds if and only if
\begin{equation}\label{conditionA1}
\inf_{\alpha\in \mathbb{R}}\frac{A_1}{|Z_{\alpha}|}> 0,
\end{equation}

\subsection{The main results} Our first result is a formula for the quantity $A_1$, from which we show that Taylor sign condition could fail if the point vortices are sufficient close to the interface. We also use this formula to find a criterion for strong Taylor sign condition to hold.

Let $F$ be the holomorphic extension of $\bar{z}_t+\sum_{j=1}^N \frac{\lambda_j i}{2\pi}\frac{1}{z(\alpha,t)-z_j(t)}$ in the domain $\Omega(t)$.
\begin{theorem}\label{taylorsignfail}
Denote
\begin{equation}
c_0^j:=(\Phi^{-1})_z(\omega_0^j,t),\quad \quad \omega_0^j:=\Phi(z_j(t),t).
\end{equation}
\begin{equation}
\beta_0(t):= \inf_{\alpha\in \mathbb{R}}|Z_{\alpha}(\alpha,t)|,\quad \quad M_0(t):=\|F(\cdot,t)\|_{\infty}
\end{equation}
\begin{equation}
\tilde{\lambda}=:\frac{\sum_{j=1}^N|\lambda_j|}{2\pi},\quad \quad \tilde{d}_I(t):=\min_{1\leq j\leq N}\inf_{\alpha\in \mathbb{R}}|\alpha-\Phi(z_j(t), t)|,\quad \quad \tilde{d}_P(t)=\min_{j\neq k}|z_j(t)-z_k(t)|. 
\end{equation}
\begin{itemize}
\item [(1)](Formula for the Taylor sign condition ) We have 
\begin{equation}\label{A1formula}
A_1=1+\frac{1}{2\pi}\int \frac{|D_tZ(\alpha,t)-D_tZ(\beta,t)|^2}{(\alpha-\beta)^2}d\beta-\sum_{j=1}^N \frac{\lambda_j}{\pi} Re\Big\{\frac{D_tZ-\dot{z}_j}{c_0^j(\alpha-w_0^j)^2}\Big\},
\end{equation}

\item [(2)](Failure of the Taylor sign condition) Taylor sign condition could Fail if $\tilde{d}_I(t)$ is sufficiently small.

\item [(3)] (A criterion for strong Taylor sign condition)
If 
\begin{equation}\label{taylorassumption}
\frac{\tilde{\lambda}^2}{\tilde{d}_I(t)^3\beta_0}+\frac{\tilde{\lambda}^2}{\tilde{d}_I(t)^2\tilde{d}_P(t)}+\frac{2M_0\tilde{\lambda}}{\tilde{d}_I(t)^2}< \beta_0,
\end{equation}
then the strong Taylor sign condition holds.
\end{itemize}
\end{theorem}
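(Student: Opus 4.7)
\textbf{Plan for formula (\ref{A1formula}).} I would extract $A_1$ by differentiating in $t$ the holomorphic-boundary identity $(I-\mathbb{H})G=0$, where $G:=D_t\bar Z+\sum_{j=1}^N \lambda_j i/(2\pi(Z-z_j))$. As a first step, writing $1/(\Phi^{-1}(\omega,t)-z_j(t))=1/(c_0^j(\omega-\omega_0^j))+(\text{holomorphic in }\mathbb{P}_-)$ and using that $1/(\alpha-\omega_0^j)$ extends holomorphically to $\mathbb{P}_+$, one gets $(I-\mathbb{H})[1/(Z-z_j)]=2/(c_0^j(\alpha-\omega_0^j))$, so
\[
(I-\mathbb{H})D_t\bar Z=-\sum_{j=1}^N\frac{\lambda_j i}{\pi c_0^j(\alpha-\omega_0^j)}.
\]
Now apply $D_t$. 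Using the commutator identity $[D_t,\mathbb{H}]f(\alpha)=\frac{1}{\pi i}\int (b(\alpha)-b(\beta))\partial_\beta f(\beta)/(\alpha-\beta)\,d\beta$ together with the momentum equation $D_t^2\bar Z=i-iA\bar Z_\alpha$, and then multiplying by $Z_\alpha$ and integrating by parts, the irrotational part reproduces the Wu-type identity $1+\frac{1}{2\pi}\int|D_tZ(\alpha)-D_tZ(\beta)|^2/(\alpha-\beta)^2\,d\beta$. The time derivative of the vortex polar part combines with the commutator contribution via the kinematic relation $D_t\omega_0^j=\dot z_j/c_0^j$ and the chain rule through $\Phi^{-1}$; the difference $b(\alpha)-b(\omega_0^j)$ is identified with the fluid-particle difference $(D_tZ-\dot z_j)/c_0^j$, and these pieces assemble, upon taking real parts, into exactly $-\sum_j(\lambda_j/\pi)\Re\{(D_tZ-\dot z_j)/(c_0^j(\alpha-\omega_0^j)^2)\}$. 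The main technical hurdle is verifying that the derivatives of $c_0^j$ and $\omega_0^j$, together with the commutator terms, combine exactly to produce the advective combination $D_tZ-\dot z_j$ with no residual pieces.

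\textbf{Plan for (2).} I would build an explicit one-parameter family. Take $N=1$, flat initial interface $Z(\alpha,0)=\alpha$ so that $\Phi=\mathrm{id}$ and $c_0^1=1$, a single vortex at $z_1(0)=-id$ for small $d>0$ with strength $\lambda_1$, and an initial surface velocity chosen so that $\Re(D_tZ(0,0)-\dot z_1(0))$ is a nonzero constant independent of $d$ (this is secured by freely prescribing the holomorphic extension $F$ of $\bar v$ at $t=0$). Evaluating (\ref{A1formula}) at $\alpha=0$ and using $(\alpha-\omega_0^1)^2\big|_{\alpha=0}=-d^2$ yields
\[
A_1(0,0)=1+O(1)+\frac{\lambda_1}{\pi d^2}\,\Re\bigl(D_tZ(0,0)-\dot z_1(0)\bigr),
\]
and choosing the sign of $\lambda_1$ opposite to $\Re(D_tZ(0,0)-\dot z_1(0))$ drives $A_1(0,0)<0$ as $d\to 0^+$, showing failure of the Taylor sign condition whenever $\tilde d_I=d$ is small enough.

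\textbf{Plan for (3).} I would bound the vortex correction pointwise. The denominator factors are controlled by $|\alpha-\omega_0^j|\geq\tilde d_I$ and by the minimum-modulus bound $|c_0^j|\geq\beta_0$: since $(\Phi^{-1})_\omega$ is holomorphic and nonvanishing on $\mathbb{P}_-$ and tends to $1$ at infinity, $\log|(\Phi^{-1})_\omega|$ is harmonic with boundary values $\geq\log\beta_0$, so attains its minimum on $\mathbb{R}\cup\{\infty\}$. For the numerator, using (\ref{velocity}) one rewrites
\[
D_tZ(\alpha)-\dot z_j=\overline{F(Z(\alpha))-F(z_j)}+\frac{\lambda_j i}{2\pi\,\overline{Z(\alpha)-z_j}}+\sum_{k\neq j}\frac{\lambda_k i}{2\pi}\,\frac{\overline{z_j-Z(\alpha)}}{\overline{(Z(\alpha)-z_k)(z_j-z_k)}},
\]
and then the Koebe $1/4$ distortion estimate for the univalent map $\Phi^{-1}$, applied on the disk of radius $|\Im\,\omega_0^k|$ centered at $\omega_0^k$, gives $|Z(\alpha)-z_k|\gtrsim\beta_0\tilde d_I$. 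Combined with $\|F\|_\infty=M_0$ and $|z_j-z_k|\geq\tilde d_P$, one gets $|D_tZ-\dot z_j|\lesssim M_0+\tilde\lambda/(\beta_0\tilde d_I)+\tilde\lambda/\tilde d_P$. Plugging this back into the formula from part (1) and simplifying produces a lower bound of the shape $A_1\geq\beta_0\bigl(\beta_0-C(\tilde\lambda^2/(\beta_0\tilde d_I^3)+\tilde\lambda^2/(\tilde d_I^2\tilde d_P)+2M_0\tilde\lambda/\tilde d_I^2)\bigr)$, which is strictly positive under (\ref{taylorassumption}); dividing by $|Z_\alpha|$ gives $A_1/|Z_\alpha|\geq c>0$ uniformly, i.e., the strong Taylor sign condition. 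The main analytic input is the Koebe distortion estimate for $\Phi^{-1}$; everything else is careful bookkeeping of the three terms entering (\ref{taylorassumption}).
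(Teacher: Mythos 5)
Your route differs from the paper's and contains a gap. The paper never applies $D_t$ to the holomorphic identity $(I-\mathbb{H})G=0$; instead it differentiates $\bar z_t=F\circ z-\sum_j\frac{\lambda_j i}{2\pi(z-z_j)}$ directly in $t$, which produces the combination $z_t-\dot z_j$ automatically via the chain rule. The momentum equation is then multiplied by $Z_\alpha$, $(I-\mathbb{H})$ is applied, and the pole structure is handled by the simple lemma $(I-\mathbb{H})\frac{1}{Z(\alpha,t)-z_0}=\frac{2}{c_1(\alpha-w_0)}$. Your plan relies on the kinematic relation $D_t\omega_0^j=\dot z_j/c_0^j$, but this is false: since $\omega_0^j(t)=\Phi(z_j(t),t)$, one has $\partial_t\omega_0^j=\Phi_z(z_j,t)\dot z_j+\Phi_t(z_j,t)=\dot z_j/c_0^j+\Phi_t(z_j,t)$, and the $\Phi_t$ term does not vanish. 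You flag "the main technical hurdle is verifying that the derivatives of $c_0^j$ and $\omega_0^j$, together with the commutator terms, combine exactly to produce $D_tZ-\dot z_j$" — this is where the argument would fail without substantial repair, and the paper's direct route is cleaner precisely because it sidesteps all commutators of $D_t$ with $\mathbb{H}$.

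\textbf{Part (2).} There is a scaling error. You write $A_1(0,0)=1+O(1)+\frac{\lambda_1}{\pi d^2}\Re(D_tZ(0,0)-\dot z_1(0))$ with the middle terms supposedly $O(1)$ and the real part independent of $d$. But with $Z(\alpha,0)=\alpha$ and $z_1=-id$, the boundary velocity is $D_tZ=\bar F(\alpha)+\frac{\lambda_1 i}{2\pi(\alpha-id)}$, which at $\alpha=0$ contributes $-\frac{\lambda_1}{2\pi d}$ to $\Re(D_tZ(0)-\dot z_1)$ — not a $d$-independent constant. Since $F$ must be holomorphic and bounded (it is the regular part of the velocity), you cannot choose it to remove this $1/d$ growth. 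Moreover, the positive integral $\frac{1}{2\pi}\int\frac{|D_tZ(\alpha)-D_tZ(\beta)|^2}{(\alpha-\beta)^2}d\beta$ is not $O(1)$ either: with the vortex contribution it scales like $\lambda_1^2/d^3$. The paper handles this by taking pure vortex flow ($F\equiv 0$), computing all three contributions explicitly by residues, and getting $A_1(0)=1-\frac{3\lambda^2}{8\pi^2 d^3}$, so that failure occurs precisely when $\lambda^2/d^3>8\pi^2/3$. The correct scaling is $\lambda^2/d^3$, not $\lambda/d^2$, and the sign of the vortex term does not dominate until you correctly account for the integral term's compensation.

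\textbf{Part (3).} This is essentially the paper's argument. You bound $|c_0^j|\geq\beta_0$ by a harmonic-minorant argument (the paper applies the maximum principle to $1/(\Phi^{-1})_z$, which is the same idea), bound $|\alpha-\omega_0^j|\geq\tilde d_I$, and estimate $|D_tZ-\dot z_j|$ by $M_0$ plus vortex contributions bounded via the distance estimate $|Z(\alpha)-z_k|\gtrsim\beta_0\tilde d_I$. You propose the Koebe distortion estimate for this last step; the paper appeals to a (loosely phrased) mean-value claim $\Phi^{-1}(\alpha)-\Phi^{-1}(\omega_0^j)=\Phi^{-1}_z(z')(\alpha-\omega_0^j)$, which for holomorphic functions is not literally valid but gives the same final bound. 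Your Koebe-based route to that inequality is arguably the more defensible one, and the rest of the bookkeeping matches the paper.
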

Part (1) is proved in Corollary \ref{goodformula},  part (2) is proved by examples from \S \ref{examplesectionone}, \S \ref{examplesection2}, and part (3) is proved in Proposition \ref{almostsharp}.
\vspace*{2ex}

Our second result justifies that (\ref{vortex_boundary}) is locally wellposed in Sobolev spaces, provided that the strong Taylor sign condition holds initially. Let $H^s$ represents the Sobolev space $H^s(\mathbb{R})$, which is define as  $$H^s(\mathbb{R}):=\Big\{f\in L^2(\mathbb{R}): \quad \norm{f}_{H^s}^2:= \int_{-\infty}^{\infty}(1+|2\pi\xi|^{2s})|\mathcal{F}f(\xi)|^2d\xi<\infty\Big\}.$$
Here, $\mathcal{F}f(\xi)=\int_{-\infty}^{\infty}f(x)e^{-2\pi ix\xi}dx$ is the Fourier transform of $f$. If $s$ is a nonnegative integer, then 
\begin{equation}
\norm{f}_{H^s}^2\leq \sum_{k=0}^s \norm{\partial_{\alpha}^k f}_{L^2}^2.
\end{equation}

\noindent Decompose $z_t$ as 
\begin{equation}\label{decomposition}
\bar{z}_t(\alpha,t)=f+p,\quad \quad \text{where}\quad p=-\sum_{j=1}^N\frac{\lambda_j i}{2\pi}\frac{1}{z(\alpha,t)-z_j(t)}.
\end{equation}
Note that $p$ and $p_t$ are determined by $z(\alpha,t)$ and $f$. 

\noindent $\bullet$ \underline{A discussion on the initial data}
Assume that the initial value for (\ref{vortex_boundary}) is given by 
\begin{equation}
\xi_0(\alpha):=z(\alpha,t=0)-\alpha,\quad \quad  v_0:=z_t(t=0), \quad \quad w_0:=z_{tt}(t=0),
\end{equation}
and denote 
\begin{equation}
 a_0:=a(t=0).
\end{equation}
$\xi_0, v_0, w_0,  a_0$ must satisfy
\begin{equation}\label{initialone}
    w_0-ia_0(\partial_{\alpha}\xi_0+1)=-i,
\end{equation}
where $a_0$ is determined by 
\begin{equation}\label{initialtwo}
    a_0|\partial_{\alpha}\xi_0+1|=|w_0+i|.
\end{equation}
$v_0$ satisfies 
\begin{equation}\label{initialthree}
    (I-\mathfrak{H}_0)\bar{v}_0=-\sum_{j=1}^N\frac{\lambda_j i}{\pi}\frac{1}{\xi_0(\alpha)+\alpha-z_j(0)},
\end{equation}
where $\mathfrak{H}_0$ is the Hilbert transform associated with the curve $z(\alpha,0)=\xi_0+\alpha$. 

\vspace*{1ex}

\noindent Denote 
\begin{equation}\label{interactionmeasure}
d_I(t):=\min_{1\leq j\leq N}\{d(z_j(t), \Sigma(t))\},  \quad \quad d_P(t):=\min_{\substack{1\leq i,j\leq N\\ i\neq j}}\{ d(z_i(t), z_j(t))\}.
\end{equation}
\begin{remark}
$d_I(t)$ represents the distance of the point vortices to the free boundary, the $'I'$ means interface. $d_P(t)$ represents the distance among the point vortices, $'P'$ means point vortices. 
\end{remark}
Let $\delta>0$. Let $|D|^{\delta}$ be defined by
\begin{equation}
    \mathcal{F}|D|^{\delta}f(\xi)=(2\pi |\xi|)^{\delta}\mathcal{F}f(\xi).
\end{equation}
\begin{theorem}(The local wellposedness)\label{theorem1} Assume $s\geq 4$. Assume $(|D|^{1/2}\xi_0, v_0, w_0)\in H^{s}\times H^{s+1/2}\times H^s$, satisfying (\ref{initialone}), (\ref{initialtwo}), (\ref{initialthree}), and 
\begin{itemize}
\item [(H1)]\underline{Strong Taylor sign assumption.} There is some $\alpha_0>0$ such that 
\begin{equation}\label{taylorinitial}
\inf_{\alpha\in \mathbb{R}}a(\alpha,0)|z_{\alpha}(\alpha,t=0)|\geq \alpha_0>0.
\end{equation}
\item [(H2)] \underline{Chord-arc assumption.} There are constants $C_1, C_2>0$ such that 
\begin{equation}
C_1|\alpha-\beta|\leq |z(\alpha,0)-z(\beta,0)|\leq C_2|\alpha-\beta|.
\end{equation}
\end{itemize}
Then exists $T_0>0$ such that (\ref{vortex_boundary}) admits a unique solution 
$$(|D|^{1/2}(z-\alpha), z_t, z_{tt})\in C([0,T_0]; H^{s}\times H^{s+1/2}\times H^s),$$ with $T_0$ depends on $\|(\partial_{\alpha}\xi_0, v_0, w_0)\|_{H^{s-1}\times H^{s}\times H^s}$, $d_I(0)^{-1}, d_P(0)^{-1}, C_1, C_2, \alpha_0$, $s$,  and 
\begin{equation}
\inf_{t\in [0,T_0]}\inf_{\alpha\in\mathbb{R}}a(\alpha,t)|z_{\alpha}(\alpha,t)|\geq \alpha_0/2.
\end{equation}
Moreover, if $T_0^*$ is the maximal lifespan, then either $T_0^*=\infty$, or $T_0^{\ast}<\infty$, but
\begin{equation}
\begin{split}
\lim_{T\rightarrow T_0^*-}\| (z_t, z_{tt})\|_{C([0,T];H^s\times H^s)}+\sup_{t\rightarrow T_0^*}(d_I(t)^{-1}+d_P(t)^{-1})=\infty.
\end{split}
\end{equation}
or 
\begin{equation}
   \lim_{t\rightarrow T_0^{\ast}-} \inf_{\alpha\in \mathbb{R}} a(\alpha,t)|z_{\alpha}(\alpha,t)|\leq 0,
\end{equation}
or 
\begin{equation}
   \sup_{\substack{\alpha\neq \beta\\ 0\leq t<T_0^*}}\Big |\frac{\alpha-\beta}{z(\alpha,t)-z(\beta,t)}\Big|+ \sup_{\substack{\alpha\neq \beta\\ 0\leq t<T_0^*}}\Big |\frac{z(\alpha,t)-z(\beta,t)}{\alpha-\beta}\Big|=\infty.
\end{equation}
\end{theorem}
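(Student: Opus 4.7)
The plan is to adapt the Riemann-mapping energy method developed for the irrotational case (\ref{system}) by Wu \cite{Wu1997,Wu1999} to the system (\ref{vortex_model_Riemann}), while carefully tracking the extra terms produced by the point vortices. The key structural observation is the decomposition (\ref{decomposition}): $\bar z_t = f + p$, where $p$ is an explicit meromorphic function of $z$ and $\{z_j\}$, so that the essential unknowns become $(\xi,f,z_{tt},\{z_j\})$. Sobolev bounds on $f$ and $z_{tt}$ will come from an energy method, while the positions $\{z_j\}$ are evolved by the coupled ODE in (\ref{vortex_boundary}), whose regularity requires a priori control of $d_I(t)$ and $d_P(t)$.

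\textbf{A priori energy estimate.} In Riemann-mapping coordinates, I would apply $\partial_\alpha^k D_t$, for $k\le s$, to $(D_t^2-iA\partial_\alpha)Z=-i$ and set an energy of the schematic form
\begin{equation*}
E_s(t) \;:=\; \sum_{k \le s} \int \left( \frac{1}{A}\,|D_t \partial_\alpha^k D_t Z|^2 + |\partial_\alpha^k D_t Z|^2 \right) d\alpha \;+\; \|f\|_{H^{s+1/2}}^2 + \|z_{tt}\|_{H^s}^2 + d_I(t)^{-N} + d_P(t)^{-N}.
\end{equation*}
The leading quadratic form is coercive thanks to the strong Taylor sign assumption (H1). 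Differentiating $E_s$ in time generates commutators $[D_t,\partial_\alpha]$, $[D_t^2,\partial_\alpha]$, $[\mathfrak{H},g\partial_\alpha]$, and $[\mathfrak{H},D_t]$; each is controlled by Wu-type commutator lemmas that remain valid on chord-arc curves with constants depending only on $C_1,C_2$. The genuinely new source terms are $p$ and $\sum_j \lambda_j i/(2\pi(Z-z_j))$: these are smooth in $\alpha$ because $|Z(\alpha,t)-z_j(t)|\gtrsim d_I(t)$, and their $H^s$ norms are majorized by polynomials in $d_I(t)^{-1}$, $\|Z_\alpha\|_{L^\infty}$ and the $|\lambda_j|$. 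The ODEs for $\dot z_j$, whose right-hand sides follow from (\ref{velocity}), are Lipschitz as long as $d_P(t)$ and $d_I(t)$ stay positive. Combining these ingredients I would derive $\frac{d}{dt} E_s(t) \le \Phi(E_s(t))$ with $\Phi$ continuous, yielding uniform control on a time interval whose length depends only on the quantities listed in the statement.

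\textbf{Existence and uniqueness.} Following the mollification/iteration scheme of \cite{Wu1997,Wu1999} and \cite{zhang2008free}, I would construct a sequence of approximate solutions to a regularized version of (\ref{vortex_boundary}) (smoothing either the initial data or the curve-dependent Hilbert kernel), verify the uniform $E_s$-bound above, and extract a strongly convergent subsequence. The limit solves (\ref{vortex_boundary}) at the claimed regularity. Uniqueness follows by running the same energy identity on the difference of two solutions, the positive-definite quadratic form again supplying the necessary coercivity. The time of existence $T_0$ is then determined by choosing it small enough to preserve $A|z_\alpha|\ge \alpha_0/2$, $d_I(t)\ge d_I(0)/2$, $d_P(t)\ge d_P(0)/2$, and the chord-arc inequalities with constants $C_1/2,\,2C_2$.

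\textbf{Continuation criterion.} The breakdown alternative is a standard bootstrap: if $T_0^\ast<\infty$ but each of $\|(z_t,z_{tt})\|_{H^s}$, $d_I^{-1}$, $d_P^{-1}$, the chord-arc constants, and $\inf_{\alpha} A|z_\alpha|$ stays under control on $[0,T_0^\ast)$, then $E_s$ remains bounded and the local existence statement can be restarted from a time arbitrarily close to $T_0^\ast$, contradicting maximality. \emph{The main obstacle} will be the commutator analysis: the interaction of the curve-dependent Hilbert transform $\mathfrak{H}$, the material derivative $D_t$, and the meromorphic kernels $(Z(\alpha,t)-z_j(t))^{-1}$ forces delicate contour and holomorphic-extension manipulations to isolate the cancellations needed for $E_s$ to close at the intended order. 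A second delicate point, specific to the vortex setting and with no analogue in the irrotational theory, is ensuring that the point vortices neither approach the free interface nor collide during the proof, since the Taylor sign condition supplies no a priori mechanism of its own for these separations.
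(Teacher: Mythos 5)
Your proposal is substantively the same energy-method strategy the paper follows: quasilinearize by applying a material derivative to the momentum equation so that $a_t\bar z_\alpha$ becomes lower-order, use the decomposition $\bar z_t=f+p$ so the meromorphic contribution from the vortices is isolated as an explicit source, close an $H^s$ energy under the Taylor-sign coercivity, track $d_I(t)^{-1}$ and $d_P(t)^{-1}$ by ODE arguments on $\dot z_j,\ddot z_j$, and finish with a Wu-type iteration and a continuation criterion.

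The one genuine route difference is the coordinate frame. You propose to run the energy argument in Riemann-mapping coordinates $(Z,A,D_t)$ using the flat Hilbert transform $\mathbb H$. The paper's local wellposedness proof instead works entirely in Lagrangian coordinates with the curve-dependent Hilbert transform $\mathfrak H$ and the layer potentials $\mathfrak K,\mathfrak K^*$: applying $I-\mathfrak H$ to the differentiated momentum equation and inverting $I+\mathfrak K^*$ gives a closed formula for $a_t|z_\alpha|$ (equations (\ref{goodgood})--(\ref{GG2})), and the potential part of the energy is the Dirichlet integral $\operatorname{Re}\int\mathbf n|z_\alpha|D^{k+1}f\,\overline{D^k f}\,d\alpha$ of the holomorphic extension of $D^kf$. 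Your Riemann-coordinate route would work, but trades invertibility of $I+\mathfrak K^*$ for the need to control the Riemann map and its derivative along the flow, and a precise energy must use a half-derivative/Dirichlet-type term rather than the $\int|\partial_\alpha^kD_tZ|^2$ you wrote (that $L^2$ term does not pair with $iA\partial_\alpha$ to give coercivity; the $\|f\|_{H^{s+1/2}}^2$ piece you also list is the term doing that job). Finally, the paper does not insert $d_I^{-N},d_P^{-N}$ into the energy functional itself; it keeps them as separate quantities and derives the joint differential inequality (\ref{alltime}) before bootstrapping—an equivalent but cleaner bookkeeping, since those are ODE-controlled rather than PDE-controlled quantities.
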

\begin{remark}
(H1) is the strong Taylor sign condition. By Theorem \ref{taylorsignfail}, the Taylor sign condition (\ref{taylorinitial}) does not always hold. As was explained before, if surface tension is neglected and Taylor sign condition fails, the motion of the water waves could be subject to the Taylor instability. In order for the system (\ref{vortex_boundary}) to be wellposed in Sobolev spaces, we need to assume that the Taylor sign condition (\ref{taylorinitial}) to hold. 
\end{remark}

Our third result is concerned with the long time behavior of the water waves with point vortices. We show that if the water wave is symmetric with a symmetric vortex pair traveling downward initially, then the free interface remains smooth for a long time, and for initial data satisfying
$$\|(|D|^{1/2}(z(\alpha,0)-\alpha), f, f_t)\|_{H^{s}\times H^{s+1/2}\times H^s}\leq \epsilon\ll 1,$$ 
the lifespan is at least $\delta_0\epsilon^{-2}$, for some $\delta_0>0$. Define\footnote{We use the notation $\hat{d}_I$ to distinguish it from $d_I$. Please keep in mind that $\hat{d}_I$ is not the Fourier transform of $d_I$.}
\begin{equation}
    \hat{d}_I(t):=\min_{j=1,2}\inf_{\alpha\in \mathbb{R}}Im\{z(\alpha,t)-z_j(t)\}.
\end{equation}
We make the following assumptions:
\begin{itemize}
\item [(H3)] \underline{Vortex pair assumption.} Assume $N=2$, i.e., there are two point vortices, with positions $z_1(t)=x_1(t)+iy_1(t)), z_2(t)=x_2(t)+iy_2(t)$, strength $\lambda_1, \lambda_2$, respectively. Assume further that $z_1(t)$ and $z_2(t)$ are symmetric about the $y$-axis, i.e., 
$$x_1(t)=-x_2(t)=-x(t)<0,\quad y_1(t)=y_2(t):=y(t)<0, $$
and assume $\lambda_1=-\lambda_2:=\lambda<0$.

\item [(H4)] \underline{Symmetry assumption.}  Assume that velocity field $v=v_1+iv_2$ satisfies: $v_1$ odd in $x$, $v_2$ even in $x$, and the free boundary $\Sigma(t)$ is symmetric about the $y$-axis. 

\item [(H5)]\underline{Smallness assumption.}  Assume that at $t=0$, $$\|(|D|^{1/2}\xi_0, f(t=0), f_t(t=0))\|_{H^{s}\times H^{s+1/2}\times H^s}\leq \epsilon,\quad \quad \lambda^2+|\lambda x(0)|\leq c_0\epsilon,$$
for some constant $c_0=c_0(s)$. We can take $c_0=\frac{1}{((s+12)!)^2}$.

\item [(H6)] \underline{Vortex-vortex interaction.} Assume $\frac{|\lambda|}{x(0)}\geq M \epsilon$ for some constant $M\gg 1$ (say, $M=200\pi$). 

\item [(H7)] \underline{Vortex-interface interaction.} Assume $\hat{d}_I(0)\geq 1$. Assume $|\lambda|+x(0)\leq 1$.
\end{itemize}

\begin{remark}
Assume (H3)-(H4) holds at $t=0$, then by the uniqueness of the solutions to the system (\ref{vortex_model}), (H3)-(H4) holds for all $t\in [0,T]$ when the solution exists.
\end{remark}

\begin{theorem}[Long time behavior]\label{longtime}
Let $s\geq 4$. Assume (H3)-(H7). There exists $\epsilon_0>0$ and $\delta_0>0$ such that for all 
$0<\epsilon\leq \epsilon_0$, the lifespan $T_0^{\ast}$ of the solution to (\ref{vortex_model}) satisfies $T_0^{\ast}\geq \delta_0\epsilon^{-2}$. Moreover, there exists a constant $C_s$ only depends on $s$ such  that
\begin{equation}
\sup_{t\in [0,\delta_0\epsilon^{-2}]}\norm{(|D|^{1/2}(z(\alpha,t)-\alpha), f(\cdot, t), f_t(\cdot, t))}_{H^{s}\times H^{s+1/2}\times H^s}\leq C_s\epsilon.
\end{equation}
Here, $\delta_0$ is an absolute constant independent of $\epsilon$ and $s$.
\end{theorem}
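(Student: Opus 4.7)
The plan is a bootstrap built on Wu's cubic-lifespan structure for irrotational water waves (\cite{Wu2009}, \cite{Wu2011}), coupled to precise tracking of the symmetric vortex pair. The working unknowns are $Z-\alpha$ and the holomorphic piece $f=\bar z_t - p$ introduced in (\ref{decomposition}), with the meromorphic part $p$ treated as an explicit functional of the vortex trajectory $(x(t),y(t))$. The key observation is that under (H3)--(H7) the vortex pair drifts rapidly downward at speed $|\lambda|/(4\pi x(t)) \geq M\epsilon/(4\pi) \gg \epsilon$, so $\hat{d}_I(t)$ is monotonically increasing, and the vortex-induced forcing terms in the equation for $f$ are integrable in time at the cubic rate.

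First I would analyze the vortex trajectory. Substituting $z_1=-x+iy$, $z_2=x+iy$, $\lambda_1=-\lambda_2=\lambda<0$ into the ODE in (\ref{vortex_boundary}) and using (\ref{velocity}) together with the Sobolev bound $\norm{f}_{L^\infty} \lesssim \sqrt{E(t)} \lesssim \epsilon$ from the bootstrap assumption, one obtains
\begin{equation*}
\dot y(t) = -\frac{|\lambda|}{4\pi x(t)} + O(\epsilon), \qquad \dot x(t) = O(\epsilon).
\end{equation*}
By (H6) the mutual-induction term dominates, giving $\dot y(t) \leq -c\,|\lambda|/x(0)$ and
\begin{equation*}
\hat{d}_I(t) \geq \hat{d}_I(0) + c\,\frac{|\lambda|}{x(0)}\, t, \qquad \tfrac{1}{2}x(0) \leq x(t) \leq 2\,x(0),
\end{equation*}
on $[0,\delta_0\epsilon^{-2}]$. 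In particular $\int_0^{\delta_0\epsilon^{-2}} \hat{d}_I(t)^{-2}\, dt \lesssim x(0)/|\lambda|$, which is small by (H6), and $d_P(t)\geq x(0)$ throughout.

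Next I would derive a closed quasi-linear system for $(Z-\alpha,f,D_tf)$ in Riemann-mapping variables and apply Wu's cubic reduction. Applying $(I-\mathbb{H})$ to the equation for $D_t\bar Z$ and commuting $D_t^2-iA\partial_\alpha$ through as in \cite{Wu2009}, the irrotational sector admits a cubic-null-form energy identity of the schematic form $\tfrac{d}{dt}E_{\text{irr}}(t)\lesssim \sqrt{E_{\text{irr}}(t)}\,E_{\text{irr}}(t)$. The vortex contribution enters through (a) a perturbation of the transport velocity $b$ and (b) explicit source terms proportional to $\partial_\alpha^k p$ and $\partial_\alpha^k p_t$ for $k\leq s+1$. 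Each such derivative is controlled by $C|\lambda|/\hat{d}_I(t)^{k+1/2}$ in $H^s$ thanks to the chord-arc hypothesis and the lower bound on $d_P$; combining with (H5), the contribution of these terms to $\tfrac{d}{dt}E$ is bounded by $C(\lambda^2+|\lambda x(0)|)\hat{d}_I(t)^{-N}(1+\sqrt{E(t)})$ for some $N=N(s)$, which integrates to $O(\epsilon^2)$ on $[0,\delta_0\epsilon^{-2}]$ by the growth of $\hat{d}_I$.

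Finally I would close the bootstrap. Define
\begin{equation*}
E(t) = \norm{|D|^{1/2}(Z-\alpha)}_{H^s}^2 + \norm{f}_{H^{s+1/2}}^2 + \norm{D_t f}_{H^s}^2,
\end{equation*}
augmented by Wu-type higher geometric energies where needed. Assuming $E(t)\leq C_s^2\epsilon^2$ on $[0,T]$, Theorem \ref{taylorsignfail}(3) together with the lower bounds on $\hat{d}_I,d_P$ (and smallness of $M_0\lesssim\epsilon$) ensures the strong Taylor sign condition persists with a uniform constant, so Theorem \ref{theorem1} extends the solution past $T$. Gr\"onwall applied to $\tfrac{d}{dt}E(t)\lesssim\sqrt{E(t)}\,E(t)+(\text{vortex sources})$ then yields $E(t)\leq 4E(0)\leq 4C_0\epsilon^2$ for $t\leq\delta_0\epsilon^{-2}$ with $\delta_0$ absolute. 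The main obstacle, and the technical heart of the argument, is identifying Wu's cubic null-form structure in the presence of the nonzero source $(I-\mathbb{H})p$ in the reduction of (\ref{vortex_boundary}): Wu's original computation relies on $(I-\mathbb{H})\bar z_t = 0$, whereas here $(I-\mathbb{H})\bar z_t = -(I-\mathbb{H})p$ has explicit poles at $(z_j(t))$, and one must verify that the resulting corrections to the cubic energy identity all carry the small factor $|\lambda|/\hat{d}_I^k$ rather than an extra power of the interface energy. This is what allows the weakened smallness $\lambda^2+|\lambda x(0)|\lesssim \epsilon$ in (H5) to suffice for the $O(\epsilon^{-2})$ lifespan.
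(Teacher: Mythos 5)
Your proposal captures the global shape of the argument—Wu-type flat coordinates, a cubic energy identity, tracking of the vortex trajectory, a bootstrap over a time interval of length $\epsilon^{-2}$—but there are two places where the quantitative bookkeeping does not close, and both are exactly where the paper leans hardest on the symmetry assumptions (H3)--(H4). First, the trajectory estimate $\dot{x}(t)=O(\epsilon)$ is too coarse: over a time scale $\delta_0\epsilon^{-2}$ an $O(\epsilon)$ drift can change $x$ by $O(\epsilon^{-1})$, destroying the claimed two-sided comparison $\tfrac12 x(0)\leq x(t)\leq 2x(0)$ and with it the lower bound on the downward speed $|\lambda|/(4\pi x(t))$. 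The paper's key control, Proposition~\ref{parallel}, exploits the oddness of $\operatorname{Re}F$ from (H4): since $\dot{x}(t)=\operatorname{Re}F(z_2(t),t)$ and $\operatorname{Re}F(iy(t),t)=0$, the mean value theorem yields the improved bound $|\dot{x}(t)|\lesssim \epsilon\,\hat{d}_I(t)^{-3/2}\,x(t)$, and the time integral of $\epsilon\,\hat d_I(t)^{-3/2}$ is $\lesssim \epsilon\,x(0)/|\lambda|\leq 1/M$, not merely small per time step. Without this symmetry reduction nothing prevents the pair from separating or colliding over the cubic time scale.

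Second, the claimed bound $C(\lambda^2+|\lambda x(0)|)\hat{d}_I(t)^{-N}(1+\sqrt{E(t)})$ on the vortex source terms does not integrate to $O(\epsilon^2)$, and is not even a correct pointwise bound when $|\lambda|/x(0)$ is large, which (H6) and Remark~\ref{largeinteraction} explicitly allow. The dangerous forcing is $D_t^2 q$, containing $\sum_j\lambda_j\dot{z}_j^2/(\zeta-z_j)^3$ with $\dot{z}_j\sim|\lambda|/(4\pi x(t))$: its $H^s$ norm is of size $\epsilon\cdot\tfrac{|\lambda|}{x(0)}\cdot d_I(t)^{-5/2}$, where the factor $\tfrac{|\lambda|}{x(0)}$ is not small. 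The paper (Lemmas~\ref{dotzjdotzj}, \ref{dotzjsquare}, \ref{ddotzjnorm}, and Proposition~\ref{energy_estimate}) closes because $\int_0^T d_I(t)^{-5/2}\,dt\lesssim x(0)/|\lambda|$ cancels the large prefactor exactly, giving total contribution $O(\epsilon^2)$. By contrast your bound integrates to $\lesssim c_0\epsilon\cdot x(0)/|\lambda|\leq c_0/M=O(1)$, not $O(\epsilon^2)$, so the bootstrap does not close. Related: the intermediate estimate $\|\partial_\alpha^k p\|_{H^s}\lesssim|\lambda|/\hat d_I^{k+1/2}$ misses the pair cancellation from $\lambda_1=-\lambda_2$, which produces the crucial extra factor $x(t)$; (H5) is then applied to the \emph{product} $|\lambda x(0)|\leq c_0\epsilon$, not to $|\lambda|$ alone (cf.\ Lemma~\ref{smallestimate}). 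A minor further remark: the paper works in the Lagrangian parametrization precomposed with Wu's change of variables $\kappa$ (so that $\bar\zeta-\alpha$ is holomorphic with respect to the curve-associated transform $\mathcal{H}$), not in Riemann-mapping variables, and derives explicit formulas showing $b$, $D_tb$, $A-1$, $(a_t/a)\circ\kappa^{-1}$ are quadratic plus rapidly decaying; your Riemann-mapping route is plausible but would require reproving these in the new frame, and in either frame the two quantitative issues above are the real obstacles.
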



\begin{remark}
If the initial data is sufficiently localized, then we can prove global wellposedness and modified scattering. A brief discussion of the main idea will be given in \S \ref{global_remark}. We will give the full details of the proof in a forthcoming paper.
\end{remark}




\begin{remark}
The assumption $\frac{|\lambda|}{x(0)}\geq M\epsilon$ ensures that the point vortices travel downward at $t=0$. In the proof of Theorem \ref{longtime}, we will show that when $\frac{|\lambda |}{x(0)}=M\epsilon$, the velocity of the point vortices is comparable to $\epsilon$,  which is slow in  some sense. Theorem \ref{longtime} demonstrates that even if the point vortices moves at an initial velocity as slow as $M\epsilon$, the water waves still remain smooth and small for a long time.
\end{remark}

\begin{remark}
Assumption (H7) implies that the strong Taylor sign condition holds initially. The assumption $\hat{d}_I(0)\geq 1$ can be relaxed.  To avoid getting into too many technical issues, we simply assume $\hat{d}_I(0)\geq 1$. The assumption $|\lambda|+x(0)\leq 1$ is not an essential assumption.  We assume this merely for convenience.

\end{remark}

\begin{remark}\label{largeinteraction}
The assumptions (H5), (H6), (H7) do allow $x(0)$ to be as small as we want. So $\frac{|\lambda|}{x(0)}$ can be very large.
\end{remark}

\vspace*{2ex}

\noindent \underline{A discussion on initial data.}
We need to show that initial data for this system satisfy the assumptions of Theorem \ref{longtime} exist. As before, denote 
$$\xi_0(\alpha)=z(\alpha,0)-\alpha,\quad z_0=\alpha+\xi_0,\quad v_0=z_t(\alpha,0),\quad w_0=z_{tt}(\alpha,0).$$
We need $\xi_0, v_0, w_0$ satisfy (\ref{initialone}), (\ref{initialtwo}), and (\ref{initialthree}).
We need also the symmetry condition 
\begin{equation}\label{is4}
Re\{v_0\} \text{ is odd in } \alpha,\quad \quad Im\{v_0\} \text{ is even in } \alpha,\quad \quad Re\{\xi_0\}~is~odd\quad \quad Im\{\xi_0\}~is~even.
\end{equation}
Denote the Hilbert transform associates to $z_0(\alpha)$ by $\mathfrak{H}_0$. We'll use the following lemma.
\begin{lemma}\label{hilbertodd}
Let $Im\{\xi_0\}$ be even, $Re\{\xi_0\}$ be odd. Let $f=f_1+if_2$ be such that $f_1$ is odd and $f_2$ is even. Then $Re\{\mathfrak{H}_0f\}$ is odd and $Im\{\mathfrak{H}_0f\}$ is even.
\end{lemma}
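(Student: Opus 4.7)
The plan is to prove the identity $\mathfrak{H}_0 f(-\alpha)=-\overline{\mathfrak{H}_0 f(\alpha)}$ by a change of variable, from which the claimed parities follow immediately. First I would translate all symmetry hypotheses into complex-conjugation identities. The conditions ``$\operatorname{Re}\{\xi_0\}$ odd, $\operatorname{Im}\{\xi_0\}$ even'' are equivalent to $\xi_0(-\alpha)=-\overline{\xi_0(\alpha)}$, which gives
\begin{equation*}
z_0(-\alpha)=-\overline{z_0(\alpha)},\qquad z_0'(-\alpha)=\overline{z_0'(\alpha)},
\end{equation*}
(the second by differentiating the first). Likewise ``$f_1$ odd, $f_2$ even'' reads $f(-\beta)=-\overline{f(\beta)}$. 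A direct consequence of the first identity is
\begin{equation*}
z_0(-\alpha)-z_0(-\gamma)=-\overline{z_0(\alpha)}+\overline{z_0(\gamma)}=-\overline{z_0(\alpha)-z_0(\gamma)}.
\end{equation*}

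Next I would substitute $\beta=-\gamma$ (so $d\beta=-d\gamma$, and the limits flip back) in the defining integral
\begin{equation*}
\mathfrak{H}_0 f(-\alpha)=\frac{1}{\pi i}\,\mathrm{p.v.}\!\int_{-\infty}^{\infty}\frac{z_0'(\beta)}{z_0(-\alpha)-z_0(\beta)}\,f(\beta)\,d\beta,
\end{equation*}
and plug in the three identities above. The product of the three sign factors is $\overline{z_0'(\gamma)}\cdot\bigl(-\overline{1/(z_0(\alpha)-z_0(\gamma))}\bigr)\cdot\bigl(-\overline{f(\gamma)}\bigr)=\overline{z_0'(\gamma)f(\gamma)/(z_0(\alpha)-z_0(\gamma))}$, so the integrand becomes the pointwise conjugate of the integrand at $(\alpha,\gamma)$.

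Finally, using $\overline{1/(\pi i)}=-1/(\pi i)$, one pulls the conjugation out of the whole integral to conclude $\mathfrak{H}_0 f(-\alpha)=-\overline{\mathfrak{H}_0 f(\alpha)}$. Writing $\mathfrak{H}_0 f=u+iv$, this says $u(-\alpha)+iv(-\alpha)=-u(\alpha)+iv(\alpha)$, i.e. $u$ is odd and $v$ is even, as desired. No genuine analytic obstacle arises; the only care required is sign bookkeeping in the change of variable and verifying that the principal value is preserved under $\beta\mapsto -\gamma$, which is clear since the singularity at $\beta=\alpha$ (corresponding to $\gamma=-\alpha$) is symmetric.
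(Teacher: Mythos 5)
Your proof is correct and is exactly the ``direct calculation'' the paper alludes to (the paper itself gives no details). The change of variable $\beta\mapsto-\gamma$ together with the conjugation identities for $z_0$, $z_0'$, $f$ and the sign flip from $\overline{1/(\pi i)}=-1/(\pi i)$ cleanly produces $\mathfrak{H}_0 f(-\alpha)=-\overline{\mathfrak{H}_0 f(\alpha)}$, which is precisely the asserted parity.
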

\begin{proof}
This is proved by direct calculation.
\end{proof}
Given $\xi_0$ be such that $Re\{\xi_0\}$ odd and $Im\{\xi_0\}$ even, and given any real valued odd function $f$, if we let $v_0$ be such that
\begin{equation}
\bar{v}_0=\frac{1}{2}(I+\mathfrak{H}_0)f-\sum_{j=1}^2 \frac{\lambda_j i}{2\pi}\frac{1}{z_0(\alpha)-z_j(0)},
\end{equation}
then by lemma \ref{hilbertodd}, we have $Re\{v_0\}$ is odd and $Im\{v_0\}$  is even, and satisfying the compatiability condition
\begin{equation}
(I-\mathfrak{H}_0)(\bar{v}_0+\sum_{j=1}^2 \frac{\lambda_j i}{2\pi}\frac{1}{z_0(\alpha)-z_j(0)})=0.
\end{equation}

\subsection{Strategy of proof} we illustrate the strategy of proving the main theorems in this subsection. The first two theorems are more or less routine, while Theorem \ref{longtime} requires some new idea of controlling the motion of the point vortices. 
\subsubsection{The Taylor sign condition: proof of Theorem \ref{taylorsignfail}} We follow S. Wu's work \cite{Wu1997} to calculate the Taylor sign condition.    Using Riemann variables, the momentum equation is written as $(D_t^2+iA\partial_{\alpha})\bar{Z}=i$. Recall that $A_1:=A|Z_{\alpha}|^2$. Multiply $(D_t^2+iA\partial_{\alpha})\bar{Z}=i$ by $Z_{\alpha}$, apply $I-\mathbb{H}$ on both sides of the resulting equation, then take imaginary part. By using the facts  
\begin{equation}
(I-\mathbb{H})(Z_{\alpha}-1)=0, \quad \quad (I-\mathbb{H})(D_t\bar{Z}+\sum_{j=1}^N \frac{\lambda_j i}{2\pi}\frac{1}{Z(\alpha,t)-z_j(t)})=0,
\end{equation}
we obtain
$$A_1=1+\frac{1}{2\pi}\int \frac{|D_tZ(\alpha,t)-D_tZ(\beta,t)|^2}{(\alpha-\beta)^2}d\beta-Im\Big\{\sum_{j=1}^N \frac{\lambda_j i}{2\pi} \Big((I-\mathbb{H})\frac{Z_{\alpha}}{(Z(\alpha,t)-z_j(t))^2}\Big)(D_tZ-\dot{z}_j(t)) \Big\}.$$
Then use some tools from complex analysis, we obtain (\ref{A1formula}). 

To construct examples for which the Taylor sign condition fails, we consider initially still water waves with its motion purely generated by the point vortices. We are able to derive a formula for $A_1$ in terms of the intensity and location of these point vortices, from which we can see that Taylor sign condition could fail if the point voritces are close to the interface.


\subsubsection{Local wellposedness:proof of Theorem \ref{theorem1}} If there is no point vortices in the water waves,  S. Wu observes that one can obtain quasilinearization of the system (\ref{vortex_boundary}) by taking one time derivative to the momentum equation. It turns out that this is still true for water waves with point vortices:  take $\partial_t$ on both sides of $(\partial_t^2+ia\partial_{\alpha})\bar{z}=i$, we obtain
\begin{equation}\label{onetimederivative}
(\partial_t^2+ia\partial_{\alpha})\bar{z}_t=-ia_t\bar{z}_{\alpha}.
\end{equation}
In (\ref{decomposition}), we decompose $\bar{z}_t$ as $\bar{z}_t=f+p$, where $p=-\sum_{j=1}^N\frac{\lambda_j i}{2\pi}\frac{1}{z(\alpha,t)-z_j(t)}$. A key observation is that $(\partial_t^2+ia\partial_{\alpha})p$ consists of lower order terms. Apply $I-\mathfrak{H}$ on both sides of equation (\ref{onetimederivative}), we obtain
\begin{equation}\label{goodgood}
-i(I-\mathfrak{H})a_t\bar{z}_{\alpha}=g_1+g_2,
\end{equation}
where
\begin{equation}
g_1:=2[z_{tt}, \mathfrak{H}]\frac{\bar{z}_{t\alpha}}{z_{\alpha}}+2[z_t, \mathfrak{H}] \frac{\bar{z}_{tt\alpha}}{z_{\alpha}}-\frac{1}{\pi i}\int \Big(\frac{z_t(\alpha,t)-z_t(\beta,t)}{z(\alpha,t)-z(\beta,t)}\Big)^2 \bar{z}_{t\beta}d\beta.
\end{equation}
\begin{equation}\label{g2g2}
g_2:=\frac{i}{\pi}\sum_{j=1}^N \lambda_j\Big(\frac{2z_{tt}+i-\partial_t^2 z_j}{(z(\alpha,t)-z_j(t))^2}-2\frac{(z_t-\dot{z}_j(t))^2}{(z(\alpha,t)-z_j(t))^3}\Big).
\end{equation}
So $a_t\bar{z}_{\alpha}$  is of lower order. The quasilinear system 
\begin{equation}\label{firstquasilinear}
\begin{cases}
(\partial_t^2+ia\partial_{\alpha})\bar{z}_t=-ia_t\bar{z}_{\alpha}\\
\dot{z}_j(t)=(v-\frac{\lambda_j i}{2\pi}\frac{1}{\overline{z(\alpha,t)-z_j(t)}})\Big|_{z=z_j(t)}\\
(I-\mathfrak{H})(\bar{z}_t+\sum_{j=1}^N\frac{\lambda_j i}{2\pi}\frac{1}{z(\alpha,t)-z_j(t)})=0.
\end{cases}
\end{equation}
is of hyperbolic type as long as the Taylor sign condition $a|z_{\alpha}|\geq \alpha_0>0$ holds.
The local wellposedness is obtained by energy method. 

\subsubsection{Long time behavior: proof of Theorem \ref{longtime}}  To illustrate the idea of studying long time behavior, we begin with the following toy model.

\vspace*{2ex}

\noindent \textbf{\underline{Toy model:}}  Consider 
\begin{equation}
    u_{tt}+|D|u=u_t^p+\frac{C}{(\alpha+it)^m},\quad \quad p\geq 2, m\geq 2.
\end{equation}
for some constant $C$ such that $|C|\lesssim \epsilon$.
Define an energy 
\begin{equation}
    E_s(t)=\sum_{k\leq s}\int |\partial_{\alpha}^k u_t(\alpha,t)|^2+||D|^{1/2}\partial_{\alpha}^ku|^2 d\alpha.
\end{equation}
Then we have 
\begin{align}
    \frac{d}{dt}E_s(t)\lesssim E_s(t)^{(p+1)/2}+\epsilon(1+|t|)^{-(m-1/2)}E_s(t)^{1/2}.
\end{align}
Assume $E_s(0)\lesssim \epsilon^2$. By the bootstrap argument, we can prove 
\begin{equation}
    E_s(t)\lesssim \epsilon^2,\quad \quad \forall~t\lesssim \epsilon^{1-p}.
\end{equation}
If the nonlinearity is at least cubic, i.e., $p\geq 3$, then the lifespan is at least $\epsilon^{-2}$.

\vspace*{2ex}

\noindent \textbf{\underline{The water waves:}}
If we can find $\theta$, $\theta\approx z_t$, such that $(\partial_t^2+|D|)\theta=F(z_t, |D|z,  z_{tt})+O(\frac{1}{(\alpha+it})^m)$, where $F$ is at least cubic and $m\geq 2$, then use an argument similar to that for the toy model, we expect cubic lifespan. Note that the nonlinearity of the quasilinear system (\ref{firstquasilinear}) is quadratic, so it does not directly lead to lifespan of order $O(\epsilon^{-2})$.  In the irrotational cases, S. Wu\cite{Wu2009} found that the fully nonlinear transform $\theta:=(I-\mathfrak{H})(z-\bar{z})$  satisfies
\begin{equation}\label{perfect}
\begin{split}
(\partial_t^2-ia\partial_{\alpha})\theta=-2[z_t,\mathfrak{H}\frac{1}{z_{\alpha}}+\bar{\mathfrak{H}}\frac{1}{\bar{z}_{\alpha}}]z_{t\alpha}+\frac{1}{\pi i}\int_{-\infty}^{\infty}\Big(\frac{z_t(\alpha,t)-z_t(\beta,t)}{z(\alpha,t)-z(\beta,t)}\Big)^2(z-\bar{z})_{\beta}d\beta:=g.
\end{split}
\end{equation}
$g$ is cubic, while $a-1$ contains first order terms, so $(\partial_t^2+|D|)\theta$ contains quadratic terms, which does not imply cubic lifespan. To resolve the problem, S. Wu considered change of variables $\kappa:\mathbb{R}\rightarrow \mathbb{R}$. She let $\zeta=z\circ\kappa^{-1}$, $b=\kappa_t\circ\kappa^{-1}$, $A=(a\kappa_{\alpha})\circ\kappa^{-1}$. 
In new variables, the system (\ref{vortex_boundary}) is written as (with $\lambda_j=0$ for irrotational case)
\begin{equation}\label{vortex_new}
\begin{cases}
(D_t^2-iA\partial_{\alpha})\zeta=-i\\
(I-\mathcal{H})D_t\bar{\zeta}=0,
\end{cases}
\end{equation}
and (\ref{perfect}) becomes 
\begin{equation}\label{perfect_new}
(D_t^2-iA\partial_{\alpha})\theta\circ\kappa^{-1}=-2[D_t\zeta,\mathcal{H}\frac{1}{\zeta_{\alpha}}+\bar{\mathcal{H}}\frac{1}{\bar{\zeta}_{\alpha}}]\partial_{\alpha}D_t\zeta+\frac{1}{\pi i}\int_{-\infty}^{\infty}\Big(\frac{D_t\zeta(\alpha,t)-D_t\zeta(\beta,t)}{\zeta(\alpha,t)-\zeta(\beta,t)}\Big)^2(\zeta-\bar{\zeta})_{\beta}d\beta,
\end{equation}
where 
\begin{equation}
\mathcal{H}f(\alpha):=\frac{1}{\pi i}p.v.\int_{-\infty}^{\infty}\frac{\zeta_{\beta}}{\zeta(\alpha,t)-\zeta(\beta,t)}f(\beta)d\beta.
\end{equation}
She realized that there exists a change of variables $\kappa$ such that 
\begin{equation}\label{formulaforbbb}
(I-\mathcal{H})b=-[D_t\zeta,\mathcal{H}]\frac{\bar{\zeta}_{\alpha}-1}{\zeta_{\alpha}},
\end{equation}
\begin{equation}\label{formulaforaaa}
 (I-\mathcal{H})(A-1)=i[D_t\zeta,\mathcal{H}]\frac{\partial_{\alpha}D_t\bar{\zeta}}{\zeta_{\alpha}}+i[D_t^2\zeta,\mathcal{H}]\frac{\bar{\zeta}_{\alpha}-1}{\zeta_{\alpha}}.
\end{equation}
So $b$, $A-1$ are quadratic. Using this, S. Wu was able to prove the almost global existence for the irrotational water waves with small localized initial data. The method implies lifespan of order $O(\epsilon^{-2})$ for nonlocalized data of size $O(\epsilon)$ for irrotational water waves.

\vspace*{2ex}

Assume there are point vortices in the fluid. We use S. Wu's change of variables, by taking $\kappa:\mathbb{R}\rightarrow \mathbb{R}$, satisfying that for $\zeta=z\circ \kappa^{-1}$, 
\begin{equation}
(I-\mathcal{H})(\bar{\zeta}-\alpha)=0.
\end{equation}
In new variables, by  direct calculation, we have 
\begin{equation}\label{evolutiong1}
\begin{split}
(D_t^2-iA\partial_{\alpha})\tilde{\theta}=G_c+G_d,
\end{split}
\end{equation}
where $\tilde{\theta}=(I-\mathcal{H})(\zeta-\bar{\zeta})$, $A=(a\kappa_{\alpha})\circ \kappa^{-1}$, $b=\kappa_t\circ\kappa^{-1}$, $D_t=\partial_t+b\partial_{\alpha}$. Let $\mathfrak{F}=f\circ\kappa^{-1}$, $q=p\circ\kappa^{-1}$. We have 
\begin{equation}\label{evolutiong2}
G_c:=-2[\bar{\mathfrak{F}}, \mathcal{H}\frac{1}{\zeta_{\alpha}}+\bar{\mathcal{H}}\frac{1}{\bar{\zeta}_{\alpha}}]\bar{\mathfrak{F}}_{\alpha}+\frac{1}{\pi i}\int \Big(\frac{D_t\zeta(\alpha,t)-D_t\zeta(\beta,t)}{\zeta(\alpha,t)-\zeta(\beta,t)}\Big)^2(\zeta-\bar{\zeta})_{\beta}d\beta.
\end{equation}
\begin{equation}\label{evolutiong3}
G_d:=-2[\bar{q}, \mathcal{H}]\frac{\partial_{\alpha}\bar{\mathfrak{F}}}{\zeta_{\alpha}}-2[\bar{\mathfrak{F}},\mathcal{H}]\frac{\partial_{\alpha}\bar{q}}{\zeta_{\alpha}}-2[\bar{q},\mathcal{H}]\frac{\partial_{\alpha}\bar{q}}{\zeta_{\alpha}}-4D_t q.
\end{equation}
\begin{equation}
(I-\mathcal{H})b=-[D_t\zeta, \mathcal{H}]\frac{\bar{\zeta}_{\alpha}-1}{\zeta_{\alpha}}-\frac{i}{\pi} \sum_{j=1}^2 \frac{\lambda_j}{\zeta(\alpha,t)-z_j(t)}.
\end{equation}

\begin{equation}
\begin{split}
(I-\mathcal{H})A=&1+i[D_t\zeta,\mathcal{H}]\frac{\partial_{\alpha}\mathfrak{F}}{\zeta_{\alpha}}+i[D_t^2\zeta,\mathcal{H}]\frac{\bar{\zeta}_{\alpha}-1}{\zeta_{\alpha}}-(I-\mathcal{H})\frac{1}{2\pi}\sum_{j=1}^2 \frac{\lambda_j(D_t\zeta(\alpha,t)-\dot{z}_j(t))}{(\zeta(\alpha,t)-z_j(t))^2}.
\end{split}
\end{equation}
To control the acceleration $D_t^2\zeta$, we consider $\tilde{\sigma}=(I-\mathcal{H})D_t\tilde{\theta}$. We have 
\begin{equation}\label{evoG1}
(D_t^2-iA\partial_{\alpha})\tilde{\sigma}=\tilde{G}, 
\end{equation}
where 
\begin{equation}\label{evG2}
\begin{split}
\tilde{G}=& (I-\mathcal{H})(D_t G+i\frac{a_t}{a}\circ\kappa^{-1}A((I-\mathcal{H})(\zeta-\bar{\zeta}))_{\alpha})-2[D_t\zeta,\mathcal{H}]\frac{\partial_{\alpha}D_t^2(I-\mathcal{H})(\zeta-\bar{\zeta})}{\zeta_{\alpha}}\\
&+\frac{1}{\pi i}\int \Big(\frac{D_t\zeta(\alpha,t)-D_t\zeta(\beta,t)}{\zeta(\alpha,t)-\zeta(\beta,t)}\Big)^2 \partial_{\beta} D_t(I-\mathcal{H})(\zeta-\bar{\zeta})d\beta\\
\end{split}
\end{equation}
\vspace*{2ex}

\noindent \textbf{\underline{The difficulty:}} 
\begin{itemize}
\item [(1)] $G_c$ is cubic,
 while $G_d$ consists of quadratic and first order terms. $G_d$ is the contribution from the point vortices. Similarly, $b$, $A-1$ contains first order terms due to the presence of point vortices. 
 
 \item [(2)] Each term of $G_d$ contains factors of the form $\sum_{j=1}^N\frac{\lambda_j}{(\zeta(\alpha,t)-z_j(t))^k}$ for some $k\geq 1$. It's possible that the point vortices travel upward and get closer and closer to the free interface. In that case, $G_d$ becomes very large. 
 
 \item [(3)] The strong interaction between the point vortices could excite the water waves and make it significantly larger in a short time. Assume two point vortices $z_1(t)=-x+iy, z_2(t)=x+iy$, with strength $\lambda_1$ and $\lambda_2$, respectively.  Then the velocity of $z_1$ is 
 $$\dot{z}_1=-\frac{\lambda_2 i}{2\pi(z_2-z_1)}+\bar{F}(z_1(t),t),\quad \quad \dot{z}_2=\frac{\lambda_1 i}{2\pi(z_2-z_1)}+\bar{F}(z_2(t),t).$$
Roughly speaking, if $\frac{|\lambda|}{2\pi |z_2-z_1|}$ is large, then $|\dot{z}_j(t)|$ is large. Since $p_t=\sum_{j=1}^2\frac{\lambda_j i}{2\pi}\frac{z_t-\dot{z}_j(t)}{(z(\alpha,t)-z_j(t))^2}$, in general, $\|p_t\|_{H^s}$ could be large as well. Therefore, small data theory does not directly apply in such situation. Moreover,  $\tilde{G}$ contains the term $\sum_{j=1}^2\frac{\lambda_j i}{2\pi}\frac{(\dot{z}_j)^2}{(\zeta(\alpha,t)-z_j(t))^3}$, which is even worse than $p_t$ if $\dot{z}_j(t)$ is large. In theorem \ref{longtime}, we do allow $|\frac{\lambda}{z_1-z_2}|$ large. See Remark \ref{largeinteraction}.

\item [(4)] If the point vortices collide, after the collision, we cannot use the same system to describe the motion of the fluid anymore, for the reason that the vorticity after the collision is not the same as that before it, which vialates the conservation of vorticity.
\end{itemize}

\vspace*{2ex}

\noindent \textbf{\underline{The idea:}} Intuitively, if each point vortices $z_j(t)$ moves away from the free boundary rapidly, with the factor $\frac{1}{z(\alpha,t)-z_j(t)}$  decaying in time at least at a linear rate, then we could overcome the difficulties (1) and (2). We will show that this indeed is true if (H3)-(H6) holds initially. To overcome the difficulty (3), we use $\lambda_1=-\lambda_2$ from assumption (H3), and by direct calculation, $\|p_t\|_{H^s}$ does not depend on $\dot{z}_1, \dot{z}_2$, resolving difficulty (3). Also, although the term $\norm{\sum_{j=1}^2\frac{\lambda_j i}{2\pi}\frac{(\dot{z}_j)^2}{(z(\alpha,t)-z_j(t))^3}}_{H^s}$ could be large at time $t=0$, yet its long time effect remains small, i.e., 
$$\int_0^T \norm{\sum_{j=1}^2\frac{\lambda_j i}{2\pi}\frac{(\dot{z}_j)^2}{(z(\alpha,t)-z_j(t))^3}}_{H^s} dt\leq   C' \epsilon,  $$
for some constant $C'$ which is independent of $\frac{|\lambda|}{4\pi x(0)}$.
So we are able to overcome the difficulty of large vortex-vortex interaction, provided that the point vortices keeps traveling away from the free interface.

\vspace*{2ex}
However, the motion of point vortices interferes with the motion of the water waves, it's not obvious at all that why the point vortices should escape to the deep water (toward $y=-\infty$). Indeed, in some cases, they can travel upwards toward the interface.  Recall that the velocity of $z_j$ is given by
\begin{equation}
\dot{z}_j(t)=(v-\frac{\lambda_j i}{2\pi}\frac{1}{\overline{z-z_j(t)}})\Big|_{z=z_j(t)}
\end{equation}
The point vortices could interact with each other and with the water waves. In general, it's not always true that $Im \{\dot{z}_j(t)\}<0$ (i.e., travels downward).  

In the following, we discuss how the number of point vortices, the sign of $\lambda_j$, and the strength of $|\lambda_j|$ affect the motion of point vortices.

\vspace{2ex}
\begin{itemize}
\item[(1)] For $N=1$, the motion of the point vortex is hard to predict except for some special cases.  

\item [(2)] For $N=2$, $\lambda_1=\lambda_2$. The two point vortices is more likely to rotate about each other and excite the fluid. 

\item [(3)] $N=2$, $\lambda_1=-\lambda_2=\lambda>0$. In this case, the point vortices are more likely to move upward and getting closer and closer to the interface and hence cause the Taylor sign condition to fail. 

\item [(4)] $N=2$, $\lambda_1=-\lambda_2=\lambda<0$ and $\frac{|\lambda|}{|z_1-z_2|}$ is relatively large (say, $\frac{|\lambda|}{|z_1-z_2|}\gg \epsilon$, where $\epsilon$ is the size of the initial data), then the point vortices moving straight downward rapidly and hence the term $\frac{1}{z(\alpha,t)-z_j(t)}$ decays like $t^{-1}$.

\item [(5)] $N\geq 3$. This is far beyond understood. Indeed, even for 2d Euler equations (fixed boundary) with point vortices, the problem is still not fully understood. When $N=3$, the problem resembles the three-body problem .

\item [(6)] Vortex pairs. With $2N$ point vortices, denoted by $\{(z_{i1}, z_{i2}\}_{i=1}^N$. Let the corresponding strength be $\lambda_{i1}, \lambda_{i2}$, respectively. Assume $\lambda_{i1}=-\lambda_{i2}<0$. Assume $|z_{i1}-z_{i2}|$ is sufficiently small, while different pairs  are sufficiently far away from each other. Then the point vortices travel downward, at least for a short time. It's likely that the factor $\frac{1}{z(\alpha,t)-z_{ij}}$ decays linearly in time. So long time existence will not be a surprise. For brevity, we consider only one vortex pair.
\end{itemize}

\vspace*{2ex}
Therefore, from the above discussion, if we assume $N=2$ and $\lambda_1=-\lambda_2<0$, we expect that the point vortices keep traveling downward at a speed comparable to its initial speed,
 hence 
\begin{equation}
|\zeta(\alpha,t)-z_j(t)|^{-1}=O(\frac{1}{\alpha+i\frac{|\lambda|}{x(0)}t}),
\end{equation}
and we can expect to have 
$$(D_t^2-iA\partial_{\alpha})\tilde{\theta}=G_c+O(\frac{1}{(\alpha+i\frac{|\lambda|}{x(0)}t)^2}),$$
and 
$$(D_t^2-iA\partial_{\alpha})\tilde{\theta}=(\partial_t^2+|D|)\tilde{\theta}+cubic+O(\frac{1}{(\alpha+i\frac{|\lambda|}{x(0)}t)^2},$$
hence
$$(\partial_t^2+|D|)\tilde{\theta}=cubic+O(\frac{1}{(\alpha+i\frac{|\lambda|}{x(0)}t)^2}).$$
Similarly, at least formally, we have 
$$(\partial_t^2+|D|)\tilde{\sigma}=cubic+O(\frac{1}{(\alpha+i\frac{|\lambda|}{x(0)}t)^2}).$$
From the discussion on the Toy model, we expect to prove lifespan of order $O(\epsilon^{-2})$ for small nonlocalized data of size $O(\epsilon)$.
\vspace*{2ex}

Let's summarize our previous discussion in a more precisely way as the following.
\begin{itemize}
\item [Step 1.] \underline{Change of variables.} Let $\kappa$ be the change of variables such that $(I-\mathcal{H})(\bar{\zeta}-\alpha)=0$, where $\zeta=z\circ\kappa^{-1}$. Then we derive the formula (\ref{formulaforbbb}) for the quantity $b$ and the formula (\ref{formulaforaaa}) for the quantity $A-1$.

\item [Step 2.] \underline{Nonlinear transform.}  Let $\tilde{\theta}=\theta\circ\kappa^{-1}, \tilde{\sigma}=D_t\tilde{\theta}$, where $\tilde{\theta}=(I-\mathcal{H})(\zeta-\bar{\zeta})$. Then we derive water wave equations (\ref{evolutiong1})-(\ref{evolutiong3}) for $\tilde{\theta}$ and water wave equations (\ref{evoG1})-(\ref{evG2}) for $\tilde{\sigma}$.

\item [Step 3.] \underline{Bootstrap assumption.} Assume that on $[0, T]$, we have 
\begin{equation}\label{aprioriaaa}
\|\zeta_{\alpha}-1\|_{H^s}\leq 5\epsilon,\quad \quad  \|\mathfrak{F}\|_{H^{s+1/2}}\leq 5\epsilon,\quad \quad \|D_t \mathfrak{F}\|_{H^s}\leq 5\epsilon\quad \quad, \forall~ t\in [0,T],
\end{equation}
where $\mathfrak{F}=f\circ\kappa^{-1}$.

\item [Step 4.] \underline{Control the motion of $z_j(t)$.} Under the bootstrap assumption (\ref{aprioriaaa}), we show that for any $t\in [0,T]$,
\begin{equation}
\frac{1}{2}\leq \frac{x(t)}{x(0)}\leq 2. 
\end{equation}
In another words, the trajectory of the point vortices are almost parallel to each other.  

Therefore, we obtain decay estimate
\begin{equation}
d_I(t)^{-1}=\Big(\min_{j=1,2}\inf_{\alpha\in \mathbb{R}}|\zeta(\alpha,t)-z_j(t)|\Big)^{-1}\leq (1+\frac{|\lambda|}{20\pi x(0)}t)^{-1},
\end{equation}

\item [Step 5.] \underline{Energy estimates.} Denote $\theta_k=(I-\mathcal{H})\partial_{\alpha}^k \tilde{\theta}$, $\sigma_k=(I-\mathcal{H})\partial_{\alpha}^k \tilde{\sigma}$. Define energy 
\begin{equation}
E(t)=\sum_{0\leq k\leq s}\Big\{ \int \frac{1}{A}|D_t\theta_k|^2+\int \frac{1}{A}|D_t\sigma_k|^2 +i\int \theta_k\overline{\partial_{\alpha}\theta_k} +i\int \sigma_k\overline{\partial_{\alpha}\sigma_k}\Big\}.
\end{equation}
By energy estimates, use the time decay of $\frac{1}{\zeta(\alpha,t)-z_j(t)}$, we obtain control of $D_t\tilde{\theta}$ and $D_t\tilde{\sigma}$.  As a consequence, by bootstrap argument,  
we show that $T^{\ast}\geq \delta\epsilon^{-2}$. 

\item [Step 6.] Change of variables back to lagrangian coordinates and completes the proof.
\end{itemize}

\subsection{A remark on global existence and modified scattering.}\label{global_remark} If the initial data is suffciently localized similar to those considered by Ionescu $\&$ Pusateri in \cite{Ionescu2015} and Alazard $\&$ Delort in \cite{AlazardDelort}, then we expect to prove that the system (\ref{vortex_model}) (or equivalently, the system (\ref{vortex_boundary})) is globally wellposed and there is  modified scattering. In this subsection, we explain  why this should be true. 

Let's consider localized initial data of size $\epsilon$ (in weighted Sobolev spaces), with $\epsilon$ small. Let's assume the assumptions (H3)-(H7) (The smallness assumption in Sobolev spaces is replaced by smallness assumption in weighted Sobolev spaces). By Theorem \ref{longtime}, the system (\ref{vortex_boundary}) admits a unique solution on $[0,\delta\epsilon^{-2}]$. For $t\in [0,\delta\epsilon^{-2}]$, the position of the point vortices $z_j=x_j+iy_j$ satisfies
    \begin{equation}
        \frac{1}{2}\leq \frac{|x_j(t)|}{|x_j(0)|}\leq 2,\quad \quad y_j(t)\leq -\frac{|\lambda|}{20\pi x(0)}t.
    \end{equation}
Therefore, as long as the growth of the water waves (measured by $\zeta_{\alpha}-1, f, f_t$) is slow,  the vortex pair will keep travelling downward at a speed comparable to its initial speed, and therefore the effect of the point vortices will keep small for all time. Indeed, we have 
\begin{equation}
\int_0^T \norm{G_d(\cdot,t)}_{H^s}dt\leq C'\epsilon, 
\end{equation}
for some absolute constant $C'>0$ on any time $[0,T]$ on which the solution exists. From this point of view, the vortex pair is a globally small perturbation of the irrotational flow. If the initial data is sufficiently localized, then the effect from the vortex pair is also localized, and a similar argument as in \cite{Ionescu2015} or \cite{AlazardDelort}  will give global existence and  modified scattering.


\subsection{Outline of the paper}
In \S\ref{notation}, we introduce some basic notation and convention. Further notation and convention will be made throughout the paper if necessary. In \S\ref{prelim} we will provide some analytical tools that will be used in later sections. In \S\ref{sectionsign}, we give a systematic investigation of the Taylor sign condition. We give examples that Taylor sign condition fails. We also give a sufficient condition which implies the strong Taylor sign condition .  In \S\ref{sectionlocal}, we prove Theorem \ref{theorem1}. In \S\ref{sectionlongtime}, we prove Theorem \ref{longtime}. 

\subsection{Notation and convention}\label{notation} We assume that the velocity field $|v(z,t)|\rightarrow 0$ as $|z|\rightarrow \infty$. We assume also that $z(\alpha,t)-\alpha\rightarrow 0$ as $|\alpha|\rightarrow \infty$. We use $C(X_1, X_2,...,X_k)$ to denote a positive constant $C$ depends continuous on the parameters $X_1,...,X_k$. Throughout this paper, such constant $C(X_1,...,X_k)$ could be different even we use the same letter $C$. 
The commutator $[A,B]=AB-BA$. Given a function $g(\cdot,t):\mathbb{R}\rightarrow \mathbb{R}$, the composition $f(\cdot, t)\circ g:=f(g(\cdot,t),t)$. For a function $f(\alpha,t)$ along the free surface $\Sigma(t)$,  we say $f$ is holomorphic in $\Omega(t)$ if there is some holomorphic function $F:\Omega(t)\rightarrow \mathbb{C}$ such that $f=F|_{\Sigma(t)}$. We identify the $\mathbb{R}^2$ with the complex plane. A point $(x,y)$ is identified as $x+iy$. For a point $z=x+iy$, $\bar{z}$ represents the complex conjugate of $z$.

\section{Preliminaries and basic analysis}\label{prelim}

\begin{lemma}[Sobolev embedding]\label{sobolevembedding}
Let $s>1/2$. Let $f\in H^{s}(\mathbb{R})$. Then $f\in L^{\infty}$, and 
$$\|f\|_{\infty}\leq C\|f\|_{H^s},$$
where $C=C(s)$. If $s=1$, we can take $C=1$.
\end{lemma}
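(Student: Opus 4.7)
The plan is the standard Fourier-analytic proof of Sobolev embedding on the line, adapted to the Fourier normalization $\mathcal{F}f(\xi)=\int f(x)e^{-2\pi i x\xi}dx$ used in the paper.

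First I would reduce to the Schwartz class by density and work with the Fourier inversion formula $f(x)=\int \mathcal{F}f(\xi)\,e^{2\pi i x\xi}\,d\xi$, so that $|f(x)|\leq \int|\mathcal{F}f(\xi)|\,d\xi$. I would then insert the weight $(1+|2\pi\xi|^{2s})^{1/2}$ and its reciprocal inside the integral and apply Cauchy--Schwarz:
\begin{equation*}
|f(x)|\;\leq\;\Bigl(\int (1+|2\pi\xi|^{2s})^{-1}\,d\xi\Bigr)^{1/2}\Bigl(\int (1+|2\pi\xi|^{2s})\,|\mathcal{F}f(\xi)|^{2}\,d\xi\Bigr)^{1/2}=C(s)\,\|f\|_{H^{s}}.
\end{equation*}
The weight integral $\int(1+|2\pi\xi|^{2s})^{-1}\,d\xi$ is finite precisely when $2s>1$, which is why the hypothesis $s>1/2$ is needed. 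Taking the supremum in $x$ gives $f\in L^{\infty}$ with the claimed bound, and the constant $C(s)$ depends only on $s$ as the weight integral is independent of $f$.

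For the sharper statement at $s=1$, I would simply evaluate the weight integral explicitly. Substituting $u=2\pi\xi$ yields
\begin{equation*}
\int_{-\infty}^{\infty}\frac{d\xi}{1+(2\pi\xi)^{2}}=\frac{1}{2\pi}\int_{-\infty}^{\infty}\frac{du}{1+u^{2}}=\frac{1}{2},
\end{equation*}
so in fact $\|f\|_{\infty}\leq \tfrac{1}{\sqrt{2}}\,\|f\|_{H^{1}}\leq \|f\|_{H^{1}}$, which is precisely the assertion $C=1$ at $s=1$.

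There is no real obstacle here; this is a textbook result. The only thing to watch is that the paper's definition of $\|f\|_{H^{s}}^{2}$ uses the weight $1+|2\pi\xi|^{2s}$ (rather than $(1+|\xi|^{2})^{s}$ or $(1+|2\pi\xi|^{2})^{s}$), so the constant in $C(s)$ is given exactly by $(\int(1+|2\pi\xi|^{2s})^{-1}d\xi)^{1/2}$, and one must keep the $2\pi$ factors consistent throughout in order to recover the clean value $C=1$ at $s=1$.
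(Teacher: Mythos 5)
Your proof is correct and is the standard Fourier-analytic argument for Sobolev embedding on $\mathbb{R}$; the paper states this lemma without proof, so there is no alternative argument to compare against. The explicit computation $\int_{\mathbb{R}}(1+(2\pi\xi)^{2})^{-1}d\xi=\tfrac{1}{2}$ correctly gives $C(1)=1/\sqrt{2}\le 1$, verifying the sharper claim at $s=1$ with the paper's normalization.
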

\subsection{Hilbert transform, layer potentials}
\begin{definition}[Hilbert transform]
We define the Hilbert transform associates with a curve $z(\alpha,t)$ as 
\begin{equation}
\mathfrak{H}f(\alpha):=\frac{1}{\pi i}p.v.\int_{-\infty}^{\infty}\frac{z_{\beta}(\beta,t)}{z(\alpha,t)-z(\beta,t)}f(\beta,t)d\beta.
\end{equation}
The standard Hilbert transform is the one associated with $z(\alpha)=\alpha$, which is denoted by
\begin{equation}
\mathbb{H}f(\alpha):=\frac{1}{\pi i}p.v.\int_{-\infty}^{\infty}\frac{1}{\alpha-\beta}f(\beta)d\beta.
\end{equation}
\end{definition}
\noindent It is well-known that the following holds:
\begin{lemma}\label{holomorphic}
Let $f\in L^2(\mathbb{R})$. Then $f$ is the boundary value of a holomorphic function in $\Omega(t)$ if and only if $(I-\mathfrak{H})f=0$. $f$ is the boundary value of a holomorphic function in $\Omega(t)^c$ if and only if $(I+\mathfrak{H})f=0$. 
\end{lemma}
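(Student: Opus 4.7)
The plan is to prove this classical Plemelj-type characterization via the Cauchy integral off the curve. Introduce, for $z \notin \Sigma(t)$, the Cauchy integral
\begin{equation*}
\mathcal{C}f(z) := \frac{1}{2\pi i}\int_{-\infty}^{\infty}\frac{z_\beta(\beta,t)}{z-z(\beta,t)}f(\beta)\,d\beta,
\end{equation*}
which is manifestly holomorphic in $\C\setminus \Sigma(t)$ and, under our assumption $z(\alpha,t)-\alpha\to 0$ and the chord-arc property that will be in force, decays as $|z|\to\infty$. The heart of the proof is the Plemelj-Sokhotski jump relations for $\mathcal{C}f$ on the curve: if $\mathcal{C}f^\pm(\alpha)$ denotes the nontangential boundary values from $\Omega(t)$ and $\Omega(t)^c$ respectively, then
\begin{equation*}
\mathcal{C}f^+(\alpha) = \tfrac{1}{2}(I+\mathfrak{H})f(\alpha), \qquad \mathcal{C}f^-(\alpha) = -\tfrac{1}{2}(I-\mathfrak{H})f(\alpha),
\end{equation*}
with the sign convention dictated by the orientation of $\Sigma(t)$ (fluid on the appropriate side). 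These formulas are standard on chord-arc curves; the $L^2$ boundedness of $\mathfrak{H}$ underlying the almost-everywhere existence of the boundary values is the Coifman-McIntosh-Meyer theorem, which I would cite rather than reprove.

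For the forward direction of the first statement, assume $f = F|_{\Sigma(t)}$ for some $F$ holomorphic in $\Omega(t)$ that vanishes at infinity. Then the Cauchy integral formula, applied with $\Sigma(t)$ traversed in the appropriate direction as the boundary of $\Omega(t)$, gives $\mathcal{C}f(z) = F(z)$ for $z \in \Omega(t)$, while for $z \in \Omega(t)^c$ the integrand extends holomorphically inside $\Omega(t)$ and the integral vanishes, so $\mathcal{C}f(z)\equiv 0$ there. Taking the limit from the exterior and applying the jump formula yields $-\tfrac{1}{2}(I-\mathfrak{H})f = 0$, hence $(I-\mathfrak{H})f=0$.

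For the converse, given $(I-\mathfrak{H})f=0$, set $F(z):=\mathcal{C}f(z)$ for $z \in \Omega(t)$. This is holomorphic with the right decay, and the jump formula gives the boundary value $F^+ = \tfrac{1}{2}(I+\mathfrak{H})f = f + \tfrac{1}{2}(\mathfrak{H}-I)f = f$, exhibiting $f$ as a boundary value. The second statement about $\Omega(t)^c$ and $(I+\mathfrak{H})f=0$ is entirely parallel: reverse the roles of the two sides (equivalently, replace $\mathfrak{H}$ by $-\mathfrak{H}$ to account for the opposite orientation of $\Sigma(t)$ as the boundary of $\Omega(t)^c$), and repeat the argument.

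The main obstacle is not conceptual but technical: one must know that the nontangential boundary limits exist almost everywhere for $f\in L^2$ on a chord-arc curve and that the jump relations hold in the $L^2$ sense. This is precisely what Coifman-McIntosh-Meyer plus the standard Calderón-type approximation argument delivers, and in the water-wave literature (e.g.\ Wu's works cited earlier) these facts are used as a black box. I would state the jump formulas as a lemma with citation and then the proof reduces to the two short contour-integral arguments above.
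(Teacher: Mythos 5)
Your proof is correct, and it is precisely the standard argument: the paper itself offers no proof of this lemma (it is prefaced by ``It is well-known\ldots'' and then used as a black box throughout), and the Plemelj jump relations for the Cauchy integral on a chord-arc curve, with Coifman--McIntosh--Meyer providing the $L^2$ bounds, are exactly the right tools. Your sign conventions are correct for this paper: since $\Phi:\Omega(t)\to\mathbb{P}_-$, the fluid domain plays the role of the lower half-plane, and the boundary value of $\mathcal{C}f$ from $\Omega(t)$ is indeed $\tfrac{1}{2}(I+\mathfrak{H})f$ while from $\Omega(t)^c$ it is $-\tfrac{1}{2}(I-\mathfrak{H})f$. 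One genuine subtlety, which you handle correctly by inserting the hypothesis that the extension $F$ vanishes at infinity: the ``only if'' direction fails if ``boundary value of a holomorphic function'' is read without a decay (Hardy-class) condition. For example, $F(z)=e^{-iz}/(z+i)$ is holomorphic in the upper half-plane with $L^2$ boundary trace $f(x)=e^{-ix}/(x+i)$, yet $(I+\mathbb{H})f\neq 0$, because $F$ grows along the positive imaginary axis and lies outside the Hardy class. Your reading is the intended one, consistent with the paper's standing convention that $v\to 0$ and $z(\alpha,t)-\alpha\to 0$ at infinity.
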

Because of the singularity of the velocity at the point vortices, we don't have $(I-\mathfrak{H})\bar{z}_t=0$. However, the following lemma asserts that $\bar{z}_t$ is almost holomorphic, in the sense that $(I-\mathfrak{H})\bar{z}_t$ consists of lower order terms.
\begin{lemma}[Almost holomorphicity]\label{almost}
We have
\begin{equation}
\begin{split}
(I-\mathfrak{H})\bar{z}_t=&-\frac{i}{\pi}\sum_{j=1}^N \frac{\lambda_j}{z(\alpha,t)-z_j(t)}.
\end{split}
\end{equation}
\end{lemma}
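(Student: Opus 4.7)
The plan is to deduce the identity directly from the two-sided boundary characterization of holomorphy provided by Lemma~\ref{holomorphic}, applied first to the velocity field itself and then to each point-vortex singular kernel.

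First, I would recall the structural observation already used in deriving~(\ref{vortex_boundary}): the divergence-free and curl conditions on $v$ imply that
$$\bar v(z,t)+\sum_{j=1}^N \frac{\lambda_j i}{2\pi(z-z_j(t))}$$
is holomorphic on all of $\Omega(t)$ (the additive corrections exactly kill the point-vortex singularities of $\bar v$) and decays at infinity under the standing assumption $|v(z,t)|\to 0$. Restricting to the free boundary via $z_t=v\circ z$ and invoking the first half of Lemma~\ref{holomorphic}, one obtains
$$(I-\mathfrak{H})\Big(\bar{z}_t+\sum_{j=1}^N \frac{\lambda_j i}{2\pi(z(\alpha,t)-z_j(t))}\Big)=0,$$
which reduces the problem to evaluating $(I-\mathfrak{H})\tfrac{1}{z(\alpha,t)-z_j(t)}$ for each $j$.

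Next, since $z_j(t)\in\Omega(t)$ sits strictly inside the fluid, the function $z\mapsto \tfrac{1}{z-z_j(t)}$ is holomorphic on the complementary air domain $\Omega(t)^c$ and vanishes at infinity. The second half of Lemma~\ref{holomorphic} therefore yields $(I+\mathfrak{H})\tfrac{1}{z(\alpha,t)-z_j(t)}=0$, so $\mathfrak{H}$ acts on this kernel as multiplication by $-1$, and hence $(I-\mathfrak{H})\tfrac{1}{z(\alpha,t)-z_j(t)}=\tfrac{2}{z(\alpha,t)-z_j(t)}$. Substituting back, the factor of $2$ combines with $\tfrac{1}{2\pi}$ to produce the claimed coefficient $\tfrac{1}{\pi}$.

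I do not expect a serious obstacle; the one bit of book-keeping is checking $L^2$ integrability so that Lemma~\ref{holomorphic} actually applies. Along $\Sigma(t)$ the kernel $\tfrac{1}{z(\alpha,t)-z_j(t)}$ is bounded (by $d_I(t)^{-1}$) and decays like $|\alpha|^{-1}$ since $z_j(t)\in\Omega(t)$ and $z(\alpha,t)-\alpha\to 0$; similarly $\bar z_t\in L^2$ under the standing decay assumptions. Both invocations of Lemma~\ref{holomorphic} are therefore legitimate, and the identity follows.
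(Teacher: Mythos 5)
Your proof is correct and follows the same route as the paper: apply Lemma~\ref{holomorphic} to the combination $\bar z_t+\sum_j \frac{\lambda_j i}{2\pi(z-z_j)}$ to get that $I-\mathfrak{H}$ annihilates it, then use the second half of the same lemma (holomorphy of $\frac{1}{z-z_j(t)}$ on $\Omega(t)^c$) to evaluate $(I-\mathfrak{H})\frac{1}{z(\alpha,t)-z_j(t)}=\frac{2}{z(\alpha,t)-z_j(t)}$ and substitute back. The only addition is your explicit remark about $L^2$ integrability, which the paper leaves implicit.
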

\begin{proof}
Since $\bar{z}_t+\sum_{j=1}^N \frac{\lambda_j i}{2\pi(z(\alpha,t)-z_j(t))}$ is the boundary value of a holomorphic function in $\Omega(t)$, by lemma \ref{holomorphic},
$$(I-\mathfrak{H})\Big(\bar{z}_t+\sum_{j=1}^N \frac{\lambda_j i}{2\pi(z(\alpha,t)-z_j(t))}\Big)=0,$$
hence 
\begin{equation}\label{holo1}
(I-\mathfrak{H})\bar{z}_t=-\sum_{j=1}^N (I-\mathfrak{H}) \frac{\lambda_j i}{2\pi(z(\alpha,t)-z_j(t))}.
\end{equation}
Since $\frac{1}{z(\alpha,t)-z_j(t)}$ is the boundary value of the holomorphic function $\frac{1}{z-z_j(t)}$ in $\Omega(t)^c$, by lemma \ref{holomorphic} again,  we have 
\begin{equation}\label{holo2}
(I-\mathfrak{H})\frac{1}{z(\alpha,t)-z_j(t)}=\frac{2}{z(\alpha,t)-z_j(t)}.
\end{equation}
(\ref{holo1}) together with (\ref{holo2}) complete the proof of the lemma.
\end{proof}

\begin{definition}[Double layer potential]
\begin{equation}
\mathfrak{K} f(\alpha):=p.v.\int_{-\infty}^{\infty} Re\{\frac{1}{\pi i} \frac{z_{\beta}}{z(\alpha,t)-z(\beta,t)}\} f(\beta)d\beta.
\end{equation}
\end{definition}

\begin{definition}[Adjoint of double layer potential]
\begin{equation}
\mathfrak{K}^{\ast} f(\alpha):=p.v.\int_{-\infty}^{\infty} Re\{-\frac{1}{\pi i} \frac{z_{\alpha}}{|z_{\alpha}|}\frac{|z_{\beta}|}{z(\alpha,t)-z(\beta,t)}\} f(\beta)d\beta.
\end{equation}
\end{definition}
\begin{lemma}\label{layer}
Let $z(\alpha)$ be a chord-arc curve such that 
\begin{equation}\label{lipchord}
\beta_0|\alpha-\beta|\leq |z(\alpha)-z(\beta)|\leq \beta_1|\alpha-\beta|,\quad \quad \forall~\alpha,\beta\in \mathbb{R}.
\end{equation}
Then we have 
\begin{equation}
\norm{ \mathfrak{H}f}_{L^2}\leq C(\beta_0, \beta_1)\|f\|_{L^2}.
\end{equation}
\begin{equation}
\norm{\mathfrak{K} f}_{L^2}\leq C(\beta_0,\beta_1)\|f\|_{L^2}.
\end{equation}
\begin{equation}
\norm{\mathfrak{K}^{\ast}f}_{L^2}\leq C(\beta_0,\beta_1)\|f\|_{L^2}.
\end{equation}
\begin{equation}
\norm{(I\pm \mathfrak{K})^{-1}f}_{L^2}\leq C(\beta_0,\beta_1)\|f\|_{L^2}.
\end{equation}
\begin{equation}\label{adjoint}
\|(I\pm \mathfrak{K}^{\ast})^{-1}f\|_{L^2}\leq C(\beta_0,\beta_1)\|f\|_{L^2}.
\end{equation}
\end{lemma}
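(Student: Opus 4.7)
My plan is to reduce the five bounds to two classical ingredients: the Coifman–McIntosh–Meyer theorem on $L^2$-boundedness of Cauchy integrals on Lipschitz curves, and Verchota's theorem on invertibility of the double layer potential on Lipschitz domains. For $\mathfrak{H}$, the hypothesis (\ref{lipchord}) makes $z:\mathbb{R}\to\mathbb{C}$ bi-Lipschitz with constants $\beta_0,\beta_1$, so $\Gamma:=z(\mathbb{R})$ is a Lipschitz curve. Rewriting
\begin{equation*}
\mathfrak{H}f(\alpha)=\frac{1}{\pi i}\,\mathrm{p.v.}\int \frac{1}{z(\alpha)-z(\beta)}f(\beta)\,z_\beta(\beta)\,d\beta
\end{equation*}
and reparametrizing $\Gamma$ by arc-length presents $\mathfrak{H}f$ as a Cauchy principal value along $\Gamma$ applied to the pullback of $f$. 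Coifman–McIntosh–Meyer gives an $L^2(\Gamma)$ bound depending only on the Lipschitz character, and the bi-Lipschitz change of variables between $d\alpha$ and arc-length costs only factors of $\beta_0,\beta_1$.

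Boundedness of $\mathfrak{K}$ is then immediate: for real $f$, inspection of kernels yields $\mathfrak{K}f=\Re(\mathfrak{H}f)$ pointwise, and the complex case follows by splitting into real and imaginary parts. For $\mathfrak{K}^*$, I would pull the factor $z_\alpha/|z_\alpha|$ outside the integral to write
\begin{equation*}
\mathfrak{K}^*f(\alpha)=\Re\left\{-\frac{z_\alpha(\alpha)/|z_\alpha(\alpha)|}{\pi i}\,\mathrm{p.v.}\int\frac{|z_\beta(\beta)|}{z(\alpha)-z(\beta)}f(\beta)\,d\beta\right\},
\end{equation*}
and the inner integral is again a Cauchy integral on $\Gamma$ (the weight $|z_\beta|$ is absorbed into the density under the arc-length change of variables), so the Coifman–McIntosh–Meyer bound applies.

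For invertibility, I will invoke Verchota's theorem on the double layer potential on Lipschitz domains in the plane: in our normalization, $\tfrac{1}{2}(I\pm\mathfrak{K})$ plays the role of his $\pm\tfrac{1}{2}I+K$, so that $L^2$-invertibility of the latter translates directly into $L^2$-invertibility of the former, with inverse-norm bounds depending only on the Lipschitz character (hence on $\beta_0,\beta_1$). The underlying mechanism is the Rellich identity applied to the harmonic single- and double-layer potentials built from $f$, producing a coercivity estimate $\norm{f}_{L^2}\lesssim \norm{(I\pm\mathfrak{K}^*)f}_{L^2}$ together with uniqueness for the associated Dirichlet/Neumann problems; a parallel argument (or duality) yields the analogous estimate for $I\pm\mathfrak{K}$. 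The bound (\ref{adjoint}) then follows at once, since under the bi-Lipschitz weight $\mathfrak{K}^*$ is the $L^2$-adjoint of $\mathfrak{K}$ up to absorption of $|z_\alpha|$. The only genuinely hard step, were one to attempt a self-contained proof, is the Rellich coercivity — all other pieces are soft consequences of Coifman–McIntosh–Meyer; in this preliminary section, citing Verchota is enough.
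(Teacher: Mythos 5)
Your proposal is correct and follows the same route the paper intends: the paper's ``proof'' is precisely a citation to Coifman--McIntosh--Meyer \cite{coifman1982integrale} for the Cauchy integral bound (hence for $\mathfrak{H}$, $\mathfrak{K}$, $\mathfrak{K}^*$) and to Taylor \cite{taylor2007tools}, which contains the Verchota/Rellich-identity treatment of $(I\pm\mathfrak{K})^{-1}$ and $(I\pm\mathfrak{K}^*)^{-1}$. Your sketch is a faithful expansion of those references, including the correct reductions $\mathfrak{K}f=\Re(\mathfrak{H}f)$ for real $f$ and the absorption of $|z_\alpha|$ into the arc-length measure.
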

\noindent For proof, see for example \cite{coifman1982integrale}, \cite{taylor2007tools}.

\subsection{Commutator estimates}
Denote
\begin{equation}
S_1(A,f)=p.v.\int \prod_{j=1}^m \frac{A_j(\alpha)-A_j(\beta)}{\gamma_j(\alpha)-\gamma_j(\beta)} \frac{f(\beta)}{\gamma_0(\alpha)-\gamma_0(\beta)}d\beta.
\end{equation}

\begin{equation}
S_2(A,f)=\int \prod_{j=1}^m \frac{A_j(\alpha)-A_j(\beta)}{\gamma_j(\alpha)-\gamma_j(\beta)} f_{\beta}(\beta)d\beta.
\end{equation}
\noindent We have the following comutator estimates, which can be found in \cite{Totz2012}, \cite{Wu2009}.
\begin{lemma}\label{singular}
(1) Assume each $\gamma_j$ satisfies the chord-arc condition
\begin{equation}\label{chordarc1}
C_{0,j}|\alpha-\beta|\leq |\gamma_j(\alpha)-\gamma_j(\beta)|\leq C_{1,j}|\alpha-\beta|.
\end{equation}
Then both $\norm{S_1(A,f)}_{L^2}$ and $\norm{S_2(A,f)}_{L^2}$ are bounded by
$$C\prod_{j=1}^m \norm{A_j'}_{X_j}\norm{f}_{X_0},$$
where one of the $X_0, X_1, ...X_m$ is equal to $L^2$ and the rest are $L^{\infty}$. The constant $C$ depends on $\norm{\gamma_j'}_{L^{\infty}}^{-1}, j=1,..,m$.

(2) Let $s\geq 3$ be given, and suppose chord-arc condition (\ref{chordarc1}) holds for each $\gamma_j$, then 
$$\norm{S_2(A,f)}_{H^s}\leq C\prod_{j=1}^m \norm{A_j'}_{Y_j}\norm{f}_Z,$$
where for all $j=1,...,m$, $Y_j=H^{s-1}$ or $W^{s-2,\infty}$ and $Z=H^s$ or $W^{s-1,\infty}$. The constant $C$ depends on $\norm{\gamma_j'}_{H^{s-1}}$, $\norm{\gamma_j}_{\infty}^{-1}, j=1,...,m$.
\end{lemma}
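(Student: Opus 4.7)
My plan is to derive both parts of Lemma \ref{singular} from the Coifman--McIntosh--Meyer (CMM) theorem on $L^2$-boundedness of the Cauchy integral along a Lipschitz curve, together with its multilinear Calder\'on commutator extensions. The strategy rests on the observation that, under the chord-arc condition (\ref{chordarc1}), every quotient $\frac{\alpha-\beta}{\gamma_j(\alpha)-\gamma_j(\beta)}$ is uniformly bounded in $L^\infty(\mathbb{R}^2)$, so the singular structure of both $S_1$ and $S_2$ is essentially that of a standard Cauchy kernel on $\mathbb{R}$, with the $\gamma_j$'s acting as Lipschitz perturbations that CMM is designed to handle.

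For the $L^2$-bound on $S_1$, I would split
\begin{equation*}
\frac{1}{\gamma_0(\alpha)-\gamma_0(\beta)}=\frac{1}{\alpha-\beta}\cdot\frac{\alpha-\beta}{\gamma_0(\alpha)-\gamma_0(\beta)},\qquad \frac{A_j(\alpha)-A_j(\beta)}{\gamma_j(\alpha)-\gamma_j(\beta)}=\int_0^1 A_j'\bigl(\beta+t(\alpha-\beta)\bigr)\,dt\cdot\frac{\alpha-\beta}{\gamma_j(\alpha)-\gamma_j(\beta)},
\end{equation*}
and interpret the remaining object as an $(m{+}1)$-linear Calder\'on commutator along $\gamma_0$. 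CMM, together with its multilinear refinement, then yields the desired bound with any single input in $L^2$ and the rest in $L^\infty$; symmetry of the roles of $f$ and the $A_j'$ places the $L^2$ slot wherever convenient. For the $L^2$-bound on $S_2$, I integrate by parts in $\beta$ to move the derivative off $f$; the Leibniz rule produces a finite sum of terms, each of which (after absorbing one chord-arc factor to regularise the double pole coming from $\partial_\beta(\gamma_k(\alpha)-\gamma_k(\beta))^{-1}$) is of $S_1$-type and is controlled by the previous step.

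For the $H^s$-bound in part (2), I would apply $\partial_\alpha^s$ and expand via the Leibniz rule, then use the identity $\gamma_j'(\beta)\partial_\alpha=-\gamma_j'(\alpha)\partial_\beta$ on the Cauchy kernel followed by integration by parts in $\beta$ to push excess $\alpha$-derivatives off of the kernel and onto either $f$ or the $A_j,\gamma_j$. Each resulting piece is of $S_1$- or $S_2$-type with the $s$ derivatives distributed among the factors, and a Moser-type product estimate in Sobolev spaces gives a bound of the form $\prod\|A_j'\|_{Y_j}\|f\|_Z$ with one factor in an $H^{s-1}$/$H^s$ slot and the rest in $W^{s-2,\infty}$/$W^{s-1,\infty}$; since $s\geq 3$, the Sobolev embedding $H^{s-1}\hookrightarrow W^{s-2,\infty}$ lets us trade freely between the two norm options, which explains the stated dichotomy. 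The main technical obstacle is the bookkeeping: one must arrange the repeated integrations by parts so that no term ends up with more than one singular factor $(\gamma_j(\alpha)-\gamma_j(\beta))^{-1}$ per index $j$, as otherwise the CMM step cannot be applied; the worst case is when all $s$ derivatives land on a single chord-arc quotient, and reducing that term carefully is the delicate point of the argument.
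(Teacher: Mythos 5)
The paper does not supply a proof of this lemma: it is quoted verbatim with references to \cite{Totz2012} and \cite{Wu2009}. So there is no internal argument for you to have matched or diverged from; your proposal has to stand on its own merits. Reducing to Coifman--McIntosh--Meyer and the multilinear Calder\'on commutator machinery is exactly the right technology and is what those references use. Your $S_2\!\to\! S_1$ reduction in part (1) (integrate by parts in $\beta$, absorb the double pole from $\partial_\beta(\gamma_j(\alpha)-\gamma_j(\beta))^{-1}$ by pairing it with the chord-arc quotient) is correct, as is your strategy for part (2): apply $\partial_\alpha^s$, use $\gamma_j'(\beta)\partial_\alpha=-\gamma_j'(\alpha)\partial_\beta$ on the kernel and integrate by parts, distribute the derivatives by Leibniz and Moser products, and use $H^{s-1}\hookrightarrow W^{s-2,\infty}$ (valid since $s\ge 3$) to trade between the two admissible norms.

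The genuine gap is the clause ``symmetry of the roles of $f$ and the $A_j'$ places the $L^2$ slot wherever convenient.'' The operator is \emph{not} symmetric between $f$ and the $A_j'$: $f$ appears pointwise while each $A_j$ enters only as a difference $A_j(\alpha)-A_j(\beta)$, and the averaged representation $A_j(\alpha)-A_j(\beta)=(\alpha-\beta)\int_0^1 A_j'(\beta+t(\alpha-\beta))\,dt$ that you wrote evaluates $A_j'$ at an intermediate point, which is useful only for the $L^\infty$-factor case; if $A_j'\in L^2$ the resulting kernel is merely bounded by $C/|\alpha-\beta|$ without the regularity CMM needs. To actually put the $L^2$ norm on some $A_{j_0}'$ one must write $A_{j_0}(\alpha)-A_{j_0}(\beta)=\int_\beta^\alpha A_{j_0}'(s)\,ds$, interchange the order of integration, and bound the resulting \emph{linear} operator acting on $A_{j_0}'$, whose kernel---after absorbing the remaining chord-arc quotients and $f$ as bounded multipliers---retains the required singular-integral structure. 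This is a concrete Fubini/CMM step, not an appeal to symmetry; as written, your part (1) only establishes the case $f\in L^2$, $A_j'\in L^\infty$.
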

As a consequence of lemma \ref{singular}, we have the following commutator estimates.

\begin{lemma}\label{lemmacommutator1}
Let $k\geq 1$. Assume $z(\alpha,t)$ satisfies chord-arc condition
\begin{equation}\label{chordarc}
C_1|\alpha-\beta|\leq |z(\alpha,t)-z(\beta,t)|\leq C_2|\alpha-\beta|,
\end{equation}
and $z_{\alpha}-1\in H^{k-1}$. 
Then 
\begin{equation}
\norm{[\partial_{\alpha}^k, \mathfrak{H}]f}_{L^2}\leq C\|\partial_{\alpha}f\|_{H^{k-1}},
\end{equation}
where the constant $C=C(\|z_{\alpha}-1\|_{H^{k-1}}, C_1, C_2)$.
\end{lemma}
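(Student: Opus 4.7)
My plan is to prove this by induction on $k$, with the case $k=1$ doing the structural work and higher $k$ handled by distributing derivatives and reducing again to Lemma \ref{singular}.

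\textbf{Base case $k=1$.} I would start by differentiating under the integral:
\[ \partial_\alpha \mathfrak{H}f(\alpha) = -\frac{1}{\pi i}\int \frac{z_\alpha(\alpha)\,z_\beta(\beta)}{(z(\alpha)-z(\beta))^2}f(\beta)\,d\beta, \]
and integrating by parts inside $\mathfrak{H}(\partial_\alpha f)$:
\[ \mathfrak{H}(\partial_\alpha f)(\alpha) = -\frac{1}{\pi i}\int\left[\frac{z_{\beta\beta}(\beta)}{z(\alpha)-z(\beta)} + \frac{z_\beta(\beta)^2}{(z(\alpha)-z(\beta))^2}\right]f(\beta)\,d\beta. \]
Subtracting, writing $z_\alpha(\alpha)-z_\beta(\beta) = g(\alpha)-g(\beta)$ with $g:=z_\alpha-1$, and using $\frac{z_\beta}{(z(\alpha)-z(\beta))^2}=\partial_\beta\frac{1}{z(\alpha)-z(\beta)}$ for a second integration by parts, the piece with $z_{\beta\beta}=g'(\beta)$ over $z(\alpha)-z(\beta)$ cancels exactly, leaving the clean formula
\[ [\partial_\alpha,\mathfrak{H}]f(\alpha) = \frac{1}{\pi i}\int\frac{g(\alpha)-g(\beta)}{z(\alpha)-z(\beta)}\,f'(\beta)\,d\beta. \]
This is precisely $\frac{1}{\pi i}S_2(g,f)$ in the notation of Lemma \ref{singular}, so part (1) of that lemma (with $m=1$, $A_1=g$, $\gamma_1=z$) yields $\|[\partial_\alpha,\mathfrak{H}]f\|_{L^2}\leq C\,\|f'\|_{L^2}$ with $C$ depending on $\|z_{\alpha\alpha}\|_{L^\infty}$ (itself controlled by $\|z_\alpha-1\|_{H^{k-1}}$ via Sobolev embedding once $k\geq 2$) and the chord-arc constants $C_1,C_2$.

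\textbf{Inductive step.} Writing $[\partial_\alpha^{k+1},\mathfrak{H}]f = \partial_\alpha\bigl([\partial_\alpha^k,\mathfrak{H}]f\bigr) + [\partial_\alpha,\mathfrak{H}]\,\partial_\alpha^k f$, the second summand is handled by the base case applied to $\partial_\alpha^k f$, which gives $\leq C\,\|\partial_\alpha^{k+1}f\|_{L^2}\leq C\,\|\partial_\alpha f\|_{H^k}$. For the first summand, I would apply $\partial_\alpha^k$ directly to the integral representation from the base case; by Leibniz, the $k$ derivatives distribute across $g$, across the factor $(z(\alpha)-z(\beta))^{-1}$ (producing higher negative powers), and across $f'$. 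Any denominators $(z(\alpha)-z(\beta))^{-m}$ with $m\geq 2$ are compensated by extracting further commutator factors $\frac{(\cdot)(\alpha)-(\cdot)(\beta)}{z(\alpha)-z(\beta)}$, so that each resulting integral fits the $S_1$ or $S_2$ template. Lemma \ref{singular} part (2), applied with $s=k$, then controls each such term by a product of $H^{k-1}$ or $W^{k-2,\infty}$ norms of $z_\alpha-1$ and an $H^{k}$-type norm of $f'$, yielding the claimed bound.

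The main obstacle is the combinatorics in the inductive step: repeated differentiation of the kernel produces a proliferating family of terms with progressively more singular denominators, and each must be massaged (via integration by parts, or by extracting difference quotients of the form $\frac{g^{(j)}(\alpha)-g^{(j)}(\beta)}{z(\alpha)-z(\beta)}$) into the $S_1/S_2$ template of Lemma \ref{singular}. The systematic approach is to induct simultaneously on $k$ and on the maximum singularity order present, reducing the negative-power exponent by one integration by parts whenever it exceeds the number of commutator factors available to absorb it, and tracking that each differentiation either lands on $g$ (producing a higher derivative of $z_\alpha-1$, bounded in the appropriate Sobolev norm) or on the denominator (where it is paid for by a $g$-difference).
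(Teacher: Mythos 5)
Your approach is essentially the paper's: the proof sketch in the paper uses the telescoping identity $[\partial_\alpha^k,\mathfrak{H}]=\sum_l \partial_\alpha^l[\partial_\alpha,\mathfrak{H}]\partial_\alpha^{k-1-l}$ and then invokes induction and Lemma~\ref{singular}, which is exactly your decomposition $[\partial_\alpha^{k+1},\mathfrak{H}]f = \partial_\alpha[\partial_\alpha^k,\mathfrak{H}]f + [\partial_\alpha,\mathfrak{H}]\partial_\alpha^k f$ unrolled; your explicit derivation of the single-commutator kernel $\frac{g(\alpha)-g(\beta)}{z(\alpha)-z(\beta)}$ with $g=z_\alpha-1$ is the (unstated) engine behind the paper's two-line argument, and matching each Leibniz term against the $S_1/S_2$ template is the intended step. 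One small correction: for the constant to really depend only on $\|z_\alpha-1\|_{H^{k-1}}$ through Sobolev embedding of $z_{\alpha\alpha}$ into $L^\infty$, you need $k-2>1/2$, i.e.\ $k\geq 3$, not $k\geq 2$ as you wrote; this is the same implicit smallness-of-$k$ caveat present (silently) in the paper's statement, and irrelevant in the range $s\geq 4$ where the lemma is actually used.
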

\begin{proof}
Use $$[\partial_{\alpha}^k, \mathfrak{H}]=\sum_{l=0}^k\partial_{\alpha}^l [\partial_{\alpha}, \mathfrak{H}]\partial_{\alpha}^{k-l}$$
Then use induction to complete the proof.
\end{proof}

\subsection{Some estimates involving point vortices} In this subsection we estimate some integrals involving the point vortices. 
\begin{lemma}\label{integral}
Assume $z(\alpha,t)$ satisfies the same condition as in lemma \ref{lemmacommutator1}.
Let $k>1$. Then 
\begin{equation}
\int_{-\infty}^{\infty}\frac{1}{|z_j(t)-z(\beta,t)|^k}d\beta\leq Cd_I(t)^{-k+1},
\end{equation}
where $C=4C_0^{-1}+\frac{4C_1^{k-1}}{(k-1)C_0^{k-1}}$.
\end{lemma}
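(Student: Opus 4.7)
The plan is to reduce the integral to two elementary pieces by splitting the $\beta$-axis near a point where $z(\beta,t)$ is close to realizing the distance $d(z_j(t),\Sigma(t))\geq d_I(t)$. First I would fix (for each $\eta>0$) a point $\alpha_0=\alpha_0(\eta)\in\mathbb{R}$ with $|z_j(t)-z(\alpha_0,t)|\leq d(z_j(t),\Sigma(t))+\eta$, and choose a splitting radius $r$ proportional to $d_I(t)/C_0$ (namely $r=2d_I(t)/C_0$, where $C_0$ is the lower chord-arc constant in \eqref{chordarc}). The integral is then decomposed as
\begin{equation*}
\int_{-\infty}^{\infty}\frac{d\beta}{|z_j(t)-z(\beta,t)|^k}
=\int_{|\beta-\alpha_0|\leq r}+\int_{|\beta-\alpha_0|>r}=:I_{\mathrm{near}}+I_{\mathrm{far}}.
\end{equation*}

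For $I_{\mathrm{near}}$ I would simply use the fact that $z(\beta,t)\in\Sigma(t)$, so $|z_j(t)-z(\beta,t)|\geq d_I(t)$ for every $\beta$; this yields $I_{\mathrm{near}}\leq 2r\,d_I(t)^{-k}=4/(C_0 d_I(t)^{k-1})$. For $I_{\mathrm{far}}$ I would apply the reverse triangle inequality together with the lower chord-arc bound: for $|\beta-\alpha_0|>r=2d_I/C_0$,
\begin{equation*}
|z_j(t)-z(\beta,t)|\geq |z(\alpha_0,t)-z(\beta,t)|-|z_j(t)-z(\alpha_0,t)|\geq C_0|\alpha_0-\beta|-d_I(t)-\eta\geq \tfrac{1}{2}C_0|\alpha_0-\beta|
\end{equation*}
(for $\eta$ small enough), so that $I_{\mathrm{far}}\leq 2\int_r^{\infty}(C_0\rho/2)^{-k}\,d\rho$, which is finite precisely because $k>1$ and gives the bound $4/((k-1)C_0\,d_I(t)^{k-1})$ after inserting the value of $r$. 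Letting $\eta\to 0$ and adding the two contributions yields the claimed estimate, with the constant $C$ depending only on $C_0$, $C_1$, and $k$ as asserted (any factor of the upper chord-arc constant $C_1$ can be absorbed using $C_1\geq C_0$).

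There is no serious obstacle here: the proof is purely geometric once one notices that $z(\beta,t)$ lies on $\Sigma(t)$ and the chord-arc condition controls how quickly $z(\beta,t)$ escapes from a fixed near-minimizer. The only minor care required is selecting the splitting radius $r$ to exactly balance the two contributions and to produce a constant in the form stated in the lemma.
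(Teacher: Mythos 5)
Your proof is correct and follows essentially the same strategy as the paper: split at a (near-)minimizer of the distance from $z_j(t)$ to the curve, bound the near piece by the interval length times $d_I(t)^{-k}$, and bound the far piece with the reverse triangle inequality plus the lower chord-arc bound, integrating a power. The one genuine difference is where you cut: you split in \emph{parameter} space, $|\beta-\alpha_0|\lessgtr r$ with $r\sim d_I/C_0$, while the paper splits in \emph{image} space, on $|z(0,t)-z(\beta,t)|\lessgtr 2d_I(t)$, and then converts the outer condition to $|\beta|\geq 2d_I/C_1$ via the upper chord-arc bound. Your version never needs the upper constant $C_1$, and the far-piece bound comes out as $4/((k-1)C_0)\,d_I^{-k+1}$ rather than the paper's $4C_1^{k-1}/((k-1)C_0^{k-1})\,d_I^{-k+1}$; since $C_1\geq C_0$, you are actually proving a slightly sharper constant than the lemma asserts (there is no need to ``absorb'' $C_1$; your bound simply dominates). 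Your use of a near-minimizer $\alpha_0(\eta)$ followed by $\eta\to 0$ also handles the possibility that the infimum over the unbounded curve is not attained, a point the paper glosses over by writing ``we may assume $d_I(t)=d(z_j(t),z(0,t))$.'' One small caveat: for the inequality $C_0|\alpha_0-\beta|-d_I(t)-\eta\geq\tfrac12 C_0|\alpha_0-\beta|$ to hold on the far region you need $r\geq 2(d_I(t)+\eta)/C_0$, so take $r=2(d_I(t)+\eta)/C_0$ (rather than $2d_I(t)/C_0$) and send $\eta\to 0$ after the integration; this does not change the limit.
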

\begin{proof}
We may assume that $d_I(t)=d(z_j(t), z(0,t))$. 
\begin{align*}
&\int_{-\infty}^{\infty}\frac{1}{|z_j(t)-z(\beta,t)|^k}d\beta\\
=& \int_{|z(0,t)-z(\beta,t)|\leq 2d_I(t)}\frac{1}{|z_j(t)-z(\beta,t)|^k}d\beta+\int_{|z(0,t)-z(\beta,t)|\geq 2d_I(t)}\frac{1}{|z_j(t)-z(\beta,t)|^k}d\beta\\
:=&  I+\it{II}.
\end{align*}
Denote $$E:=\{\beta: |z(0,t)-z(\beta,t)|\leq 2d_I(t)\}.$$
Since 
$$C_0|\beta-0|\leq |z(\beta,t)-z(0,t)|,$$
we have for $\beta\in E$,
$$|\beta-0|\leq \frac{2}{C_0}d_I(t).$$
Therefore
\begin{align*}
I\leq & 4C_0^{-1} d_I(t)^{-k+1}.
\end{align*}
For $\beta\in E^c$, use the chord-arc condition (\ref{chordarc}), we have 
\begin{equation}
|z(\beta,t)-z(0,t)-d_I(t)|\geq |z(\beta,t)-z(0,t)|-d_I(t)\geq \frac{1}{2}|z(\beta,t)-z(0,t)|\geq \frac{C_0}{2}|\beta-0|.
\end{equation}
Also, we have 
\begin{equation}
C_1|\beta-0|\geq |z(\beta,t)-z(0,t)|\geq 2d_I(t).
\end{equation}
So 
\begin{equation}
|\beta|\geq \frac{2}{C_1}d_I(t)
\end{equation}
Therefore, for $\it{II}$, we have 
\begin{align*}
\it{II}\leq & \frac{2^k}{C_0^k}\int_{|\beta|\geq \frac{2}{C_1}d_I(t) }|\beta|^{-k} d\beta=2\frac{2^k}{(k-1)C_0^k}\frac{C_1^{k-1}}{2^{k-1}}d_I(t)^{-k+1}=\frac{4C_1^{k-1}}{(k-1)C_0^{k-1}}d_I(t)^{-k+1}.
\end{align*}
\end{proof}

\begin{corollary}\label{integral2}
Assume $z(\alpha,t)$ satisfies the same condition as in lemma \ref{lemmacommutator1}. Given $m\geq 2$, there exist $C=(k+m)!C(C_0, C_1, \norm{z_{\alpha}-1}_{H^{m-1}})$ such that 
\begin{equation}
\norm{ \frac{1}{(z(\alpha,t)-z_j(t))^k}}_{H^m}\leq C(d_I(t)^{-k+1/2}+d_I(t)^{-k-m+1/2})
\end{equation}
In particular, if $d_I(t)\geq 1$, then we have 
\begin{equation}
\norm{ \frac{1}{(z(\alpha,t)-z_j(t))^k}}_{H^m}\leq Cd_I(t)^{-k+1/2}.
\end{equation}
\end{corollary}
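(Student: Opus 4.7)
The plan is to expand $\partial_\alpha^r (z-z_j)^{-k}$ for $0\leq r\leq m$ by Fa\`a di Bruno's formula and estimate each term separately, using Lemma~\ref{integral} as the only nontrivial analytic input. Concretely, I will use
\begin{equation*}
\partial_\alpha^r\frac{1}{(z(\alpha,t)-z_j(t))^k}
=\sum_{\pi\in\mathcal{P}(r)}(-1)^{|\pi|}\frac{(k+|\pi|-1)!}{(k-1)!}\,\frac{\prod_{B\in\pi}\partial_\alpha^{|B|}z}{(z(\alpha,t)-z_j(t))^{k+|\pi|}},
\end{equation*}
where $\mathcal{P}(r)$ is the set of partitions of $\{1,\ldots,r\}$. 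The combinatorial coefficient is bounded by $(k+m)!$, giving the factor in the constant.

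For $r=0$ I will apply Lemma~\ref{integral} with exponent $2k$ (which is $>1$) to obtain
\begin{equation*}
\bigl\|(z-z_j)^{-k}\bigr\|_{L^2}^2
=\int|z(\alpha,t)-z_j(t)|^{-2k}\,d\alpha
\leq C(C_0,C_1)\,d_I(t)^{-2k+1},
\end{equation*}
yielding the $d_I^{-k+1/2}$ part of the bound. For each $1\leq r\leq m$ and each partition term, I will distinguish two cases. If every block satisfies $|B|\leq m-1$, then the Sobolev embedding $H^{m-|B|}\hookrightarrow L^\infty$ combined with $z_\alpha-1\in H^{m-1}$ gives $\|\partial_\alpha^{|B|}z\|_{L^\infty}\leq 1+C\|z_\alpha-1\|_{H^{m-1}}$, so the product of derivative factors is uniformly bounded, and Lemma~\ref{integral} applied to $(z-z_j)^{-k-|\pi|}$ controls the remaining $L^2$ norm by $C\,d_I^{-k-|\pi|+1/2}$. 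If some block has $|B|=r=m$ (necessarily the single-block partition $|\pi|=1$), then $\partial_\alpha^m z$ is only in $L^2$, so I will use H\"older with the pointwise bound $\|(z-z_j)^{-k-1}\|_{L^\infty}\leq d_I^{-k-1}$ to get $C\,\|z_\alpha-1\|_{H^{m-1}}\,d_I^{-k-1}$.

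Summing over all contributions, the exponents of $d_I$ that appear lie in the range $\{-k+1/2,-k-1/2,\ldots,-k-m+1/2\}$ (together with $-k-1$ from Case B). Since $m\geq 2$, one checks that $d_I^{-k-1}\leq d_I^{-k+1/2}+d_I^{-k-m+1/2}$ holds for both $d_I\leq 1$ and $d_I\geq 1$, and likewise every intermediate exponent is dominated by the larger of $-k+1/2$ and $-k-m+1/2$ in the two regimes. Using $\|f\|_{H^m}^2\leq\sum_{r=0}^m\|\partial_\alpha^r f\|_{L^2}^2$ then gives the claimed inequality, and specializing to $d_I(t)\geq 1$ kills the $d_I^{-k-m+1/2}$ term.

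The proof is essentially bookkeeping once Lemma~\ref{integral} is in hand; the mildly delicate point is the endpoint case when the highest-order derivative $\partial_\alpha^m z$ of $z$ falls on a single factor, which forces the H\"older-with-$L^\infty$ split rather than the cleaner all-$L^\infty$ estimate. Verifying that this endpoint term still fits inside $C(d_I^{-k+1/2}+d_I^{-k-m+1/2})$ is where the hypothesis $m\geq 2$ is used.
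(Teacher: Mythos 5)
Your proposal is correct, and since the paper states Corollary~\ref{integral2} without proof (treating it as an immediate consequence of Lemma~\ref{integral}), your Fa\`a di Bruno expansion is essentially the intended argument written out in full. The one place that merits an extra sanity check is the endpoint case you flagged: for the single-block partition with $|B|=m$ one cannot place $\partial_\alpha^m z=\partial_\alpha^{m-1}(z_\alpha-1)$ in $L^\infty$, and your $L^2\times L^\infty$ split produces the exponent $d_I^{-k-1}$, which indeed sits between $d_I^{-k+1/2}$ and $d_I^{-k-m+1/2}$ in both regimes $d_I\le 1$ and $d_I\ge 1$ exactly because $m\ge 2$. All other exponents $-(k+|\pi|)+\tfrac12$ with $1\le|\pi|\le m$ are likewise dominated. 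The combinatorial growth from the Bell number and the factor $(k+|\pi|-1)!/(k-1)!\le (k+m-1)!/(k-1)!$ is absorbed into the $(k+m)!$ in the constant, and the dependence on $\|z_\alpha-1\|_{H^{m-1}}$ is polynomial (at most degree $m$), which is permitted by the form of $C$ in the statement. I see no gap.
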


\subsection{Basic identities}
\begin{lemma}\label{basic1}
Assume $z_t, z_{\alpha}-1\in C^1([0,T];H^1(\mathbb{R}))$, $f\in C(\mathbb{R}\times [0,T])$ and $f_{\alpha}(\alpha,t)\rightarrow 0$ as $|\alpha|\rightarrow \infty$. We have 
\begin{align}
[\partial_t,\mathfrak{H}]f=&[z_t,\mathfrak{H}]\frac{f_{t\alpha}}{z_{\alpha}}\\
[\partial_t^2,\mathfrak{H}]f=&2[z_{tt},\mathfrak{H}]\frac{f_{\alpha}}{z_{\alpha}}+2[z_t,\mathfrak{H}]\frac{f_{t\alpha}}{z_{\alpha}}-\frac{1}{\pi i}\int (\frac{z_t(\alpha,t)-z_t(\beta,t)}{z(\alpha,t)-z(\beta,t)})^2f_{\beta}(\beta,t)d\beta\\
[a\partial_{\alpha},\mathfrak{H}]f=&[az_{\alpha},\mathfrak{H}]\frac{f_{\alpha}}{z_{\alpha}},\quad \quad  \partial_{\alpha}\mathfrak{H}f=z_{\alpha}\mathfrak{H}\frac{f_{\alpha}}{z_{\alpha}}\\
[\partial_t^2-ia\partial_{\alpha},\mathfrak{H}]f=&2[z_{t},\mathfrak{H}]\frac{f_{t\alpha}}{z_{\alpha}}-\frac{1}{\pi i}\int (\frac{z_t(\alpha,t)-z_t(\beta,t)}{z(\alpha,t)-z(\beta,t)})^2f_{\beta}(\beta,t)d\beta \label{abcde}
\end{align}
\end{lemma}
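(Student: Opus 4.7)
All four identities are proved by direct computation from the integral representation
\[
\mathfrak{H}f(\alpha,t) = \frac{1}{\pi i}\,p.v.\int_{-\infty}^{\infty} \frac{z_\beta(\beta,t)}{z(\alpha,t)-z(\beta,t)}\,f(\beta,t)\,d\beta,
\]
exploiting the algebraic identity $\partial_\beta\{1/(z(\alpha)-z(\beta))\} = z_\beta/(z(\alpha)-z(\beta))^2$ to integrate by parts in $\beta$. The regularity assumptions $z_t, z_\alpha-1\in C^1([0,T];H^1)$ and the decay $f_\alpha(\alpha,t)\to 0$ as $|\alpha|\to\infty$ together justify differentiating under the principal-value integral and discarding boundary terms at $\beta = \pm\infty$ after each integration by parts.

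For the first identity I differentiate the kernel in $t$, producing a term from $\partial_t z_\beta$, the expected $\mathfrak{H}f_t$, and a term carrying the factor $(z_t(\alpha)-z_t(\beta))z_\beta/(z(\alpha)-z(\beta))^2$. Rewriting this last factor via the identity above and integrating by parts consolidates with the $z_{t\beta}$ contribution to produce $\frac{1}{\pi i}\int (z_t(\alpha)-z_t(\beta))/(z(\alpha)-z(\beta))\,f_\beta\,d\beta$, which I recognize as $[z_t,\mathfrak{H}]\frac{f_\alpha}{z_\alpha}$ after multiplying and dividing the integrand by $z_\beta$. The formula $\partial_\alpha \mathfrak{H}f = z_\alpha\mathfrak{H}\frac{f_\alpha}{z_\alpha}$ is derived in the same spirit: differentiating the kernel in $\alpha$ brings down $-z_\alpha z_\beta/(z(\alpha)-z(\beta))^2$, and one integration by parts in $\beta$ delivers $z_\alpha\mathfrak{H}\frac{f_\alpha}{z_\alpha}$ using the decay of $f$ at infinity.

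For the second identity I use $[\partial_t^2,\mathfrak{H}] = \partial_t[\partial_t,\mathfrak{H}] + [\partial_t,\mathfrak{H}]\partial_t$. Applying $\partial_t$ to the integral form of the first identity and grouping pieces by the quotient rule gives a $(z_{tt}(\alpha)-z_{tt}(\beta))$ contribution equal to $[z_{tt},\mathfrak{H}]\frac{f_\alpha}{z_\alpha}$, an $f_{t\beta}$ contribution equal to $[z_t,\mathfrak{H}]\frac{f_{t\alpha}}{z_\alpha}$, and the quadratic cross-term $((z_t(\alpha)-z_t(\beta))/(z(\alpha)-z(\beta)))^2 f_\beta$. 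Adding the second piece $[\partial_t,\mathfrak{H}]f_t = [z_t,\mathfrak{H}]\frac{f_{t\alpha}}{z_\alpha}$ yields the stated formula. The $[a\partial_\alpha,\mathfrak{H}]$ identity then follows at once from $\partial_\alpha \mathfrak{H}f = z_\alpha\mathfrak{H}\frac{f_\alpha}{z_\alpha}$: both $a\partial_\alpha \mathfrak{H}f = (az_\alpha)\mathfrak{H}\frac{f_\alpha}{z_\alpha}$ and $\mathfrak{H}(a f_\alpha) = \mathfrak{H}((az_\alpha)\frac{f_\alpha}{z_\alpha})$ are expressed with weight $az_\alpha$, whose difference is precisely $[az_\alpha,\mathfrak{H}]\frac{f_\alpha}{z_\alpha}$.

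The final identity (\ref{abcde}) is obtained by subtracting $i$ times the $[a\partial_\alpha,\mathfrak{H}]f$ formula from the $[\partial_t^2,\mathfrak{H}]f$ formula and invoking the momentum equation $z_{tt} - iaz_\alpha = -i$ along the free boundary to replace $[z_{tt},\mathfrak{H}]\frac{f_\alpha}{z_\alpha}$ by $i[az_\alpha,\mathfrak{H}]\frac{f_\alpha}{z_\alpha} = i[a\partial_\alpha,\mathfrak{H}]f$, using that the constant $-i$ commutes with $\mathfrak{H}$. This cancels the $-i[a\partial_\alpha,\mathfrak{H}]f$ term exactly and leaves $2[z_t,\mathfrak{H}]\frac{f_{t\alpha}}{z_\alpha}$ together with the quadratic integral, as claimed. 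The only technical point throughout is the justification of differentiation and integration by parts inside the principal-value integral; this is standard given the chord-arc property of $z(\cdot,t)$ and the stated Sobolev regularity, and is the step I anticipate as mildly delicate rather than genuinely hard.
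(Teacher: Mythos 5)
Your proposal is correct and gives exactly the standard direct-computation argument; note that the paper does not actually prove this lemma, it simply cites Wu's 2009 work, so you are supplying the missing elementary proof rather than departing from one. A point worth flagging: the lemma as printed contains two typographical errors, and your computation silently produces the correct forms. In the first identity the printed $[z_t,\mathfrak{H}]\tfrac{f_{t\alpha}}{z_\alpha}$ should read $[z_t,\mathfrak{H}]\tfrac{f_\alpha}{z_\alpha}$ (the commutator $[\partial_t,\mathfrak{H}]$ is linear in $f$ and cannot depend on $f_t$), which is what you derive. In the second identity the coefficient on $[z_{tt},\mathfrak{H}]\tfrac{f_\alpha}{z_\alpha}$ should be $1$, not $2$ --- your decomposition $[\partial_t^2,\mathfrak{H}]=\partial_t[\partial_t,\mathfrak{H}]+[\partial_t,\mathfrak{H}]\partial_t$ produces one copy from the $\partial_t$ of $z_t$, plus two copies of $[z_t,\mathfrak{H}]\tfrac{f_{t\alpha}}{z_\alpha}$, plus the quadratic term, and only with the coefficient $1$ does the cancellation you describe in the last step (using $z_{tt}-iaz_\alpha=-i$) go through cleanly to give identity (\ref{abcde}) exactly as stated. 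This is corroborated by the paper's later use of $[\partial_t^2+ia\partial_\alpha,\mathfrak{H}]$ in \S\ref{quasilinearizationsubsection}: there the factor $2$ on $[z_{tt},\cdot]$ appears precisely because $z_{tt}+iaz_\alpha=2z_{tt}+i$, which is consistent with the unit coefficient in the pure $[\partial_t^2,\mathfrak{H}]$ formula. You are also right that the fourth identity requires the momentum equation as an additional hypothesis beyond what the lemma lists; this is implicit in the paper's standing conventions but worth making explicit, as you do.
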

\noindent For proof, see \cite{Wu2009}.
\begin{lemma}\label{commutehigh}
Let $D_t=\partial_t+b\partial_{\alpha}$, then 
\begin{equation}
[D_t^2, \partial_{\alpha}]=-D_t(b_{\alpha})\partial_{\alpha}-b_{\alpha}D_t\partial_{\alpha}-b_{\alpha}\partial_{\alpha}D_t
\end{equation}
\begin{equation}
\begin{split}
[D_t^2,\partial_{\alpha}^k]=&\sum_{m=0}^{k-1}\Big[\partial_{\alpha}^m(D_tb_{\alpha})\partial_{\alpha}^{k-m}+\partial_{\alpha}^m (b_{\alpha}\partial_{\alpha}^{k-m}D_t)+\partial_{\alpha}^m (b_{\alpha}[b\partial_{\alpha},\partial_{\alpha}^{k-m}])+\partial_{\alpha}^m b_{\alpha}\partial_{\alpha}^{k-m}D_t\\
&+\partial_{\alpha}^mb_{\alpha}[b\partial_{\alpha},b]\partial_{\alpha}^{k-m-1}\Big]
\end{split}
\end{equation}
\end{lemma}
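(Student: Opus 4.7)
The plan is a direct computation, since both identities in Lemma \ref{commutehigh} are purely algebraic manipulations that only use the product rule and the definition $D_t=\partial_t+b\partial_\alpha$. First I would establish the first-order commutator. Because $\partial_t$ and $\partial_\alpha$ commute, we have
\begin{equation*}
[D_t,\partial_\alpha] = [b\partial_\alpha,\partial_\alpha] = -b_\alpha\partial_\alpha,
\end{equation*}
by the Leibniz rule applied to the test function. I would then use the general derivation identity $[D_t^2,\partial_\alpha] = D_t[D_t,\partial_\alpha] + [D_t,\partial_\alpha]D_t$, substitute the formula for $[D_t,\partial_\alpha]$, and apply $D_t(b_\alpha\partial_\alpha) = (D_t b_\alpha)\partial_\alpha + b_\alpha D_t\partial_\alpha$. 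This produces exactly the three terms $-(D_tb_\alpha)\partial_\alpha - b_\alpha D_t\partial_\alpha - b_\alpha\partial_\alpha D_t$ of the first claim.

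For the second formula, the cleanest approach is the standard telescoping identity
\begin{equation*}
[D_t^2,\partial_\alpha^k] = \sum_{m=0}^{k-1} \partial_\alpha^m [D_t^2,\partial_\alpha] \partial_\alpha^{k-1-m},
\end{equation*}
combined with the $k=1$ formula just derived. Each resulting summand carries a trailing $\partial_\alpha^{k-1-m}$ that must be brought into the canonical form of the lemma. Using $D_t\partial_\alpha^{k-1-m} = \partial_\alpha^{k-1-m}D_t + [b\partial_\alpha,\partial_\alpha^{k-1-m}]$ (again because $\partial_t$ and $\partial_\alpha$ commute) and then moving $b\partial_\alpha$ past $b$ via $[b\partial_\alpha,b]$, one collects exactly the five groups of terms displayed in the statement: the $\partial_\alpha^m(D_tb_\alpha)\partial_\alpha^{k-m}$ piece from the $-(D_tb_\alpha)\partial_\alpha$ summand, the $\partial_\alpha^m(b_\alpha\partial_\alpha^{k-m}D_t)$ and $\partial_\alpha^m(b_\alpha[b\partial_\alpha,\partial_\alpha^{k-m}])$ pieces from the $-b_\alpha D_t\partial_\alpha$ summand after commuting $D_t$ to the right, and the $\partial_\alpha^m b_\alpha\partial_\alpha^{k-m}D_t$ plus $\partial_\alpha^m b_\alpha[b\partial_\alpha,b]\partial_\alpha^{k-m-1}$ pieces from the $-b_\alpha\partial_\alpha D_t$ summand.

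Alternatively, and perhaps more transparently, one can proceed by induction on $k$. The base case $k=1$ is the first identity. For the inductive step I would write $\partial_\alpha^{k+1} = \partial_\alpha^k\partial_\alpha$, use $[D_t^2,AB] = [D_t^2,A]B + A[D_t^2,B]$, apply the inductive hypothesis to $[D_t^2,\partial_\alpha^k]$ and the $k=1$ formula to $[D_t^2,\partial_\alpha]$, and re-index the summation. No analytical content is needed beyond the commutator Leibniz rule; the only real obstacle is bookkeeping, because the five term-types in the stated formula look unwieldy but each has a single unambiguous source in the expansion. Since Lemma \ref{commutehigh} will only be used to estimate $\|(D_t^2-iA\partial_\alpha)\partial_\alpha^k\tilde\theta\|_{L^2}$ and similar energy-estimate quantities, I would also pause at the end to confirm that each of the five summands, when paired with the regularity controls in the bootstrap assumption \eqref{aprioriaaa}, falls into the form already handled by Lemmas \ref{singular}--\ref{lemmacommutator1}.
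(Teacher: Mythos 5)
Your route is essentially the paper's: the paper also obtains the first identity from the derivation rule $[D_t^2,\partial_\alpha]=D_t[D_t,\partial_\alpha]+[D_t,\partial_\alpha]D_t$ together with $[D_t,\partial_\alpha]=[b\partial_\alpha,\partial_\alpha]=-b_\alpha\partial_\alpha$, and then telescopes $[D_t^2,\partial_\alpha^k]=\sum_{m=0}^{k-1}\partial_\alpha^m[D_t^2,\partial_\alpha]\partial_\alpha^{k-1-m}$ and rearranges. So there is no genuine difference of method, and the first part of your argument is fine.

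However, the rearrangement of the $-b_\alpha\partial_\alpha D_t$ summand that you assert (and that the paper also asserts) does not actually check out. Commuting $D_t$ through the trailing $\partial_\alpha^{k-1-m}$ in $\partial_\alpha^m\,b_\alpha\,\partial_\alpha D_t\,\partial_\alpha^{k-1-m}$ yields $\partial_\alpha^m\,b_\alpha\,\partial_\alpha^{k-m}D_t+\partial_\alpha^m\,b_\alpha\,\partial_\alpha[b\partial_\alpha,\partial_\alpha^{k-1-m}]$, not $\partial_\alpha^m\,b_\alpha\,\partial_\alpha^{k-m}D_t+\partial_\alpha^m\,b_\alpha[b\partial_\alpha,b]\partial_\alpha^{k-m-1}$. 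Since $[b\partial_\alpha,b]=bb_\alpha$ is a pure multiplication operator while $\partial_\alpha[b\partial_\alpha,\partial_\alpha^{j}]$ involves $b_\alpha,b_{\alpha\alpha},\ldots$ with no undifferentiated $b$, these are genuinely different operators. A quick internal consistency check exposes the slip: specialize the five-term formula to $k=1$, $m=0$. The five pieces read $(D_tb_\alpha)\partial_\alpha$, $b_\alpha\partial_\alpha D_t$, $b_\alpha[b\partial_\alpha,\partial_\alpha]=-b_\alpha^2\partial_\alpha$, $b_\alpha\partial_\alpha D_t$, and $b_\alpha[b\partial_\alpha,b]=bb_\alpha^2$, summing to $(D_tb_\alpha)\partial_\alpha+2b_\alpha\partial_\alpha D_t-b_\alpha^2\partial_\alpha+bb_\alpha^2$; but rewriting the displayed $k=1$ identity using $D_t\partial_\alpha=\partial_\alpha D_t-b_\alpha\partial_\alpha$ gives $-[D_t^2,\partial_\alpha]=(D_tb_\alpha)\partial_\alpha+2b_\alpha\partial_\alpha D_t-b_\alpha^2\partial_\alpha$, so the $k$-formula carries a spurious extra operator $bb_\alpha^2$. (The statement also drops the overall minus sign that the paper's own proof carries.) This is a pre-existing slip in the lemma rather than something you introduced, and it is harmless for how the lemma is used downstream, since every term in the \emph{correct} expansion is still a product of $b$, its derivatives, and powers of $\partial_\alpha$ and falls within the scope of Lemmas~\ref{singular} and~\ref{lemmacommutator1}; but you should either stop at the telescoped form $-\sum_m\big[\partial_\alpha^m(D_tb_\alpha)\partial_\alpha^{k-m}+\partial_\alpha^m(b_\alpha D_t\partial_\alpha^{k-m})+\partial_\alpha^m b_\alpha\partial_\alpha D_t\partial_\alpha^{k-m-1}\big]$ or correct the fifth slot to $\partial_\alpha[b\partial_\alpha,\partial_\alpha^{k-m-1}]$, rather than claim your expansion reproduces the displayed formula verbatim.
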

\begin{proof}
It's easy to see that 
\begin{align*}
[D_t^2,\partial_{\alpha}^k]=&-\sum_{m=0}^{k-1}\Big[\partial_{\alpha}^m(D_tb_{\alpha})\partial_{\alpha}^{k-m}+\partial_{\alpha}^m (b_{\alpha}D_t\partial_{\alpha}^{k-m})+\partial_{\alpha}^mb_{\alpha}\partial_{\alpha}D_t\partial_{\alpha}^{k-m-1}\Big]\\
=&-\sum_{m=0}^{k-1}\Big[\partial_{\alpha}^m(D_tb_{\alpha})\partial_{\alpha}^{k-m}+\partial_{\alpha}^m (b_{\alpha}\partial_{\alpha}^{k-m}D_t)+\partial_{\alpha}^m (b_{\alpha}[b\partial_{\alpha},\partial_{\alpha}^{k-m}])+\partial_{\alpha}^m b_{\alpha}\partial_{\alpha}^{k-m}D_t\\
&+\partial_{\alpha}^mb_{\alpha}[b\partial_{\alpha},b]\partial_{\alpha}^{k-m-1}\Big]\\
\end{align*}
\end{proof}

\subsection{Preservation of symmetries.} The water waves with point vortices preserve the symmetry (H4). Such symmetry is well-known if there is no point vortex. 

\begin{lemma}[Preservation of symmetries]\label{symmetry}
Let $\Omega(0)\in \mathbb{R}^2$ be symmetric about $x=0$. Let $Re\{F\}$ be odd in $x$, and $Im\{F\}$ be even in $x$ at time $t=0$. 	Suppose the solution to the system (\ref{vortex_model}) exists on $[0,T_0]$. Then $Re\{F\}$ remains odd in $x$, and $Im\{F\}$ remains even in $x$ for all $t\in [0,T_0]$.
\end{lemma}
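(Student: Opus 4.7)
The plan is to argue by uniqueness. Let $\bigl(v,\Omega(t),\{(z_j(t),\lambda_j)\}\bigr)$ be the solution furnished by Theorem~\ref{theorem1} on $[0,T_0]$, and let $R(w):=-\bar w$ denote reflection across the $y$-axis. Introduce the reflected configuration
\[
\Omega^R(t):=R(\Omega(t)),\quad v^R(w,t):=-\overline{v(R(w),t)},\quad z_j^R(t):=R(z_j(t)),\quad \lambda_j^R:=-\lambda_j,
\]
together with $P^R(w,t):=P(R(w),t)$. A direct computation shows that this tuple satisfies (\ref{vortex_model}): the gravity vector $(0,-1)$ is fixed by $R$; one has $\operatorname{div}v^R=0$ and $\operatorname{curl}v^R=\sum_j\lambda_j^R\delta_{z_j^R(t)}$; the Euler equation and the boundary condition $P^R|_{\partial\Omega^R}=0$ are transported by $R$; and the ODE for $z_j^R$ follows because the singular part $\frac{\lambda_j i}{2\pi\,\overline{z-z_j}}$ of $v$ at $z_j$ transforms under $v\mapsto v^R$ into $\frac{\lambda_j^R i}{2\pi\,\overline{w-z_j^R}}$, consistently with the choice $\lambda_j^R=-\lambda_j$.

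Next I verify that at $t=0$ the reflected configuration coincides with the original one. Since $F$ is the holomorphic extension of $\bar v+\sum_j\frac{\lambda_j i}{2\pi(z-z_j)}$ in $\Omega(t)$, the parity hypotheses on $\operatorname{Re}F$ and $\operatorname{Im}F$ are equivalent to the complex identity $F(R(w),0)=-\overline{F(w,0)}$ on $\Omega(0)=\Omega^R(0)$. Comparing boundary data and invoking uniqueness of the holomorphic extension, this forces $v^R(\cdot,0)=v(\cdot,0)$ on $\Omega(0)$ and the equality of multisets $\{(z_j^R(0),\lambda_j^R)\}=\{(z_j(0),\lambda_j)\}$ up to relabeling. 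Since the strong Taylor sign condition, the chord-arc bound, and the positive separations $d_I,d_P$ are manifestly preserved by $R$, the uniqueness part of Theorem~\ref{theorem1} applies and yields
\[
(v^R,\Omega^R,\{(z_j^R,\lambda_j^R)\})=(v,\Omega,\{(z_j,\lambda_j)\})\quad\text{on }[0,T_0].
\]
This is precisely the symmetry of $\Omega$, of $v$, and of the vortex data for every $t\in[0,T_0]$.

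To transfer the conclusion to $F$, set $\tilde F(w,t):=-\overline{F(R(w),t)}$. This is holomorphic in $w$ on $\Omega(t)$ and vanishes at infinity. Using the symmetries of $v$ and of the vortex configuration just established, a direct check of its boundary values shows that $\tilde F$ is the holomorphic extension of the same datum $\bar v+\sum_j\frac{\lambda_j i}{2\pi(z-z_j)}$ as $F$, hence $\tilde F\equiv F$ on $\Omega(t)$ for every $t\in[0,T_0]$. Reading this identity in real and imaginary parts gives exactly that $\operatorname{Re}F$ is odd and $\operatorname{Im}F$ is even in $x$.

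The main obstacle is the bookkeeping required to verify that reflection genuinely preserves the entire system --- in particular handling the sign flip $\lambda_j\mapsto-\lambda_j$ consistently in both the curl identity and the vortex ODE --- and checking that every hypothesis of Theorem~\ref{theorem1} is invariant under $R$ so that uniqueness may be applied. These are elementary verifications, and no analytic input beyond the local well-posedness of Theorem~\ref{theorem1} is required.
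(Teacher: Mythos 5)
Your strategy---reflect across the $y$-axis, verify that the reflected tuple solves the same system (\ref{vortex_model}), and then invoke uniqueness---is exactly the argument the paper compresses into its one-line proof, and you spell it out correctly in the parts concerning the Euler equation, the pressure, the curl, and the vortex ODE. The transformation rules $v^R(w,t):=-\overline{v(R(w),t)}$, $z_j^R=R(z_j)$, $\lambda_j^R=-\lambda_j$ and the bookkeeping of the singular part $\frac{\lambda_j i}{2\pi\,\overline{z-z_j}}$ under $R$ are the right ones.

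There is, however, a genuine gap in the step where you identify the initial data. You assert that the parity identity $F(R(w),0)=-\overline{F(w,0)}$, via uniqueness of the holomorphic extension, ``forces $v^R(\cdot,0)=v(\cdot,0)$ on $\Omega(0)$ and the equality of multisets $\{(z_j^R(0),\lambda_j^R)\}=\{(z_j(0),\lambda_j)\}$.'' This does not follow. The parity of $F$ constrains only the holomorphic part $F$; the vortex configuration $\{(z_j(0),\lambda_j)\}$ is independent initial data, and $\bar v=F-\sum_j\frac{\lambda_j i}{2\pi(z-z_j)}$ depends on it. To see that the implication genuinely fails, take $\Omega(0)$ the lower half-plane, $F\equiv 0$ (so $\operatorname{Re}F$ is odd and $\operatorname{Im}F$ is even), and a single vortex at $z_1(0)=1-i$ with $\lambda_1=1$; the hypotheses of the lemma and of Theorem~\ref{theorem1} are met (the Taylor sign condition holds by Proposition~\ref{exampleone} since $\lambda^2/|y|^3=1<8\pi^2/3$), yet the solution is evidently not reflection-symmetric and the parity of $F$ is lost for $t>0$. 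In other words, the reflection-symmetry of the vortex multiset, $\{(z_j(0),\lambda_j)\}=\{(R(z_j(0)),-\lambda_j)\}$, is an independent hypothesis that cannot be derived from the parity of $F$; it is precisely what (H3) supplies in the paper's application of this lemma. Once you add it as an explicit hypothesis (rather than claiming to derive it), your initial-time identification of the reflected and original configurations is valid, and the rest of the uniqueness argument, including the final transfer of the symmetry back to $F$ via $\tilde F(w,t):=-\overline{F(R(w),t)}$, goes through as written.
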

\noindent This is the consequence of the uniqueness of the solutions to equation (\ref{vortex_model}).

\section{Taylor sign condition}\label{sectionsign}
In this section we give a systematic study of the Taylor sign condition. We derive the formula (\ref{A1formula}), and then use this formula to show that the Taylor sign condition could fail if the point vortices are sufficiently close to the free interface. We also obtain a criterion for the Taylor sign condition to hold.

\subsection{The Taylor sign condition in Riemann variables}
Recall that 
$\bar{z}_t+\sum_{j=1}^N \frac{\lambda_j i}{2\pi(z(\alpha,t)-z_j(t))}$ is holomorphic, i.e., there is a holomorphic function $F(z,t)$ in $\Omega(t)$ such that $$F(z(\alpha,t),t)=\bar{z}_t(\alpha,t)+\sum_{j=1}^N \frac{\lambda_j i}{2\pi(z(\alpha,t)-z_j(t))}.$$
So we have 
\begin{equation}\label{calculateholo}
\begin{split}
\bar{z}_{tt}=& \partial_t \bar{z}_t=\partial_t \Big(F(z(\alpha,t),t)-\sum_{j=1}^N \frac{\lambda_j i}{2\pi(z(\alpha,t)-z_j(t))}\Big)\\
=& F_z(z(\alpha,t),t) z_t+F_t+\sum_{j=1}^N \frac{\lambda_j i(z_t(\alpha,t)-\dot{z}_j(t))}{2\pi (z(\alpha,t)-z_j(t))^2}
\end{split}
\end{equation}
Note that 
\begin{equation}
\dot{z}_j(t)=(v-\frac{\lambda_j i}{\overline{2\pi(z-z_j(t)})})\Big|_{z=z_j(t)}=\bar{F}(z_j(t),t)-\sum_{k: k\neq j}\frac{\lambda_k i}{2\pi\overline{z_k(t)-z_j(t)}}
\end{equation}
Let $\Phi(\cdot,t):\Omega(t)\rightarrow \mathbb{P}_-$ be the Riemann mapping such that $\Phi_z\rightarrow 1$ as $z\rightarrow \infty$. Let $h(\alpha,t):=\Phi(z(\alpha,t),t)$. Denote \footnote{In \S \ref{sectionlongtime}, we also use the notation $A, b, D_t$. We'd like the readers to keep in mind that they are not the same. In \S \ref{sectionlongtime}, $A=(a\kappa_{\alpha})\circ\kappa^{-1}, b=\kappa_t\circ\kappa^{-1}, D_t=\partial_t+\kappa_t\circ\kappa^{-1}\partial_{\alpha}$.}
\begin{equation}
Z(\alpha,t):=z\circ h^{-1}(\alpha,t),\quad \quad b=h_t\circ h^{-1}, \quad \quad D_t:=\partial_t+b\partial_{\alpha},\quad \quad 
\end{equation}
\begin{equation}
A:=(ah_{\alpha})\circ h^{-1}.\quad \quad 
\end{equation}
Use (\ref{calculateholo}), apply $h^{-1}$ on both sides of $\bar{z}_{tt}+ia\bar{z}_{\alpha}=i$, we obtain
\begin{equation}\label{transform}
\begin{split}
F_z\circ Z(\alpha,t)D_tZ+&F_t\circ Z(\alpha,t)+\sum_{j=1}^N \frac{\lambda_j i(D_tZ(\alpha,t)- \dot{z}_j(t))}{2\pi (Z(\alpha,t)-z_j(t))^2}+iA\bar{Z}_{\alpha}=i.
\end{split}
\end{equation}
Multiply by $Z_{\alpha}$ on both sides of (\ref{transform}), and denote
$$A_1:= A|Z_{\alpha}|^2,$$
we obtain
\begin{equation}
F_z\circ Z Z_{\alpha}D_tZ+F_t\circ Z Z_{\alpha}+\sum_{j=1}^N \frac{\lambda_j i D_t Z Z_{\alpha}-\lambda_j i\dot{z}_j(t)Z_{\alpha}}{2\pi(Z(\alpha,t)-z_j(t))^2}+iA_1=iZ_{\alpha}.
\end{equation}
Apply $I-\mathbb{H}$ on both sides of the above equation, then take imaginary parts, we obtain
\begin{equation}
A_1=1-Im\Big\{ (I-\mathbb{H})(F_z\circ Z Z_{\alpha} D_t Z)+(I-\mathbb{H})\sum_{j=1}^N \frac{\lambda_j i (D_t Z Z_{\alpha}-\dot{z}_j(t)Z_{\alpha})}{2\pi(Z(\alpha,t)-z_j(t))^2}\Big\}
\end{equation}
Note that 
$$F_z\circ Z Z_{\alpha}=\partial_{\alpha}(D_t\bar{Z}+\sum_{j=1}^N \frac{\lambda_j i}{2\pi(Z(\alpha,t)-z_j(t))})=\partial_{\alpha}D_t\bar{Z}-\sum_{j=1}^N\frac{\lambda_j i Z_{\alpha}}{2\pi(Z(\alpha,t)-z_j(t))^2}$$
is holomorphic. So we have 
\begin{equation}
\begin{split}
(I-\mathbb{H})F_z\circ Z Z_{\alpha} D_tZ=&[D_tZ,\mathbb{H}](\partial_{\alpha}D_t\bar{Z}-\sum_{j=1}^N\frac{\lambda_j i Z_{\alpha}}{2\pi(Z(\alpha,t)-z_j(t))^2})\\
=& [D_tZ,\mathbb{H}]\partial_{\alpha}D_t\bar{Z}-\sum_{j=1}^N[D_tZ,\mathbb{H}]\frac{\lambda_j i Z_{\alpha}}{2\pi(Z(\alpha,t)-z_j(t))^2})
\end{split}
\end{equation}
We know that 
\begin{equation}
-Im[D_tZ,\mathbb{H}]\partial_{\alpha}D_t\bar{Z}=\frac{1}{2\pi}\int \frac{|D_tZ(\alpha,t)-D_tZ(\beta,t)|^2}{(\alpha-\beta)^2}d\beta\geq 0.
\end{equation}
Also,
\begin{equation}
\begin{split}
&-\sum_{j=1}^N[D_tZ,\mathbb{H}]\frac{\lambda_j i Z_{\alpha}}{2\pi(Z(\alpha,t)-z_j(t))^2})+(I-\mathbb{H})\sum_{j=1}^N \frac{\lambda_j i (D_t Z Z_{\alpha}-\dot{z}_j(t)Z_{\alpha})}{2\pi(Z(\alpha,t)-z_j(t))^2}\\
=& \sum_{j=1}^N[D_tZ,I-\mathbb{H}]\frac{\lambda_j i Z_{\alpha}}{2\pi(Z(\alpha,t)-z_j(t))^2})+(I-\mathbb{H})\sum_{j=1}^N \frac{\lambda_j i (D_t Z Z_{\alpha}-\dot{z}_j(t)Z_{\alpha})}{2\pi(Z(\alpha,t)-z_j(t))^2}\\
=& \sum_{j=1}^N \frac{\lambda_j i}{2\pi} D_tZ(I-\mathbb{H})\frac{Z_{\alpha}}{(Z(\alpha,t)-z_j(t))^2}-\sum_{j=1}^N \frac{\lambda_j i}{2\pi} (I-\mathbb{H})\frac{\dot{z}_j(t) Z_{\alpha}}{(Z(\alpha,t)-z_j(t))^2}\\
=& \sum_{j=1}^N \frac{\lambda_j i}{2\pi} \Big((I-\mathbb{H})\frac{Z_{\alpha}}{(Z(\alpha,t)-z_j(t))^2}\Big)(D_tZ-\dot{z}_j(t))
\end{split}
\end{equation}
So we have 
\begin{equation}\label{AONE}
\begin{split}
A_1=&1+\frac{1}{2\pi}\int \frac{|D_tZ(\alpha,t)-D_tZ(\beta,t)|^2}{(\alpha-\beta)^2}d\beta-Im\Big\{\sum_{j=1}^N \frac{\lambda_j i}{2\pi} \Big((I-\mathbb{H})\frac{Z_{\alpha}}{(Z(\alpha,t)-z_j(t))^2}\Big)(D_tZ-\dot{z}_j(t)) \Big\}\\
=& 1+\frac{1}{2\pi}\int \frac{|D_tZ(\alpha,t)-D_tZ(\beta,t)|^2}{(\alpha-\beta)^2}d\beta-\sum_{j=1}^N \frac{\lambda_j}{2\pi} Re\Big\{\Big((I-\mathbb{H})\frac{Z_{\alpha}}{(Z(\alpha,t)-z_j(t))^2}\Big)(D_tZ-\dot{z}_j(t))\Big\}
\end{split}
\end{equation}
To get an estimate as sharp as possible for the Taylor sign condition, we'd like to get rid of the Hilbert transform $\mathbb{H}$ in the formula above.

It's easy to see that, if $Z=\alpha$, then $\frac{Z_{\beta}}{(Z(\beta,t)-z_j(t))^2}(D_tZ-\dot{z}_j(t))$ is boundary value of a holomorphic function in $\mathbb{P}_+$, so $$\Big((I-\mathbb{H})\frac{Z_{\alpha}}{(Z(\alpha,t)-z_j(t))^2}\Big)(D_tZ-\dot{z}_j(t))=2\frac{Z_{\alpha}}{(Z(\alpha,t)-z_j(t))^2}(D_tZ-\dot{z}_j(t))$$
For the general case, we use the following lemma.
\begin{lemma}
Let $z_0\in \Omega(t)$.  Then 
\begin{equation}
(I-\mathbb{H})\frac{1}{Z(\alpha,t)-z_0}=\frac{2}{c_1(\alpha-w_0)},\quad \quad c_1=(\Phi^{-1})_z(w_0),\quad \quad w_0=\Phi(z_0,t).
\end{equation}
\end{lemma}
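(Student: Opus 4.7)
The strategy is to split $\frac{1}{Z(\alpha,t)-z_0}$ into its principal part at the pole $w_0$ plus a remainder that is holomorphic on the appropriate half-plane, and then to apply Lemma \ref{holomorphic} to each piece.

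By definition $Z(\alpha,t)=z\circ h^{-1}(\alpha,t)$, and $h=\Phi\circ z$, so $Z(\cdot,t)=\Phi^{-1}(\cdot,t)$ when restricted to the real line $\partial\mathbb{P}_-$. Hence the function $\alpha\mapsto \frac{1}{Z(\alpha,t)-z_0}$ is the boundary value of $\tilde G(w):=\frac{1}{\Phi^{-1}(w,t)-z_0}$ on $\mathbb{P}_-$. Since $\Phi^{-1}$ is holomorphic and univalent on $\mathbb{P}_-$ with $\Phi^{-1}(w_0)=z_0$, the function $\tilde G$ is meromorphic on $\mathbb{P}_-$ with a single simple pole at $w_0=\Phi(z_0,t)$. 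The Taylor expansion
\[
\Phi^{-1}(w)-z_0 = c_1(w-w_0)+O((w-w_0)^2),\qquad c_1=(\Phi^{-1})_z(w_0),
\]
yields the decomposition $\tilde G(w)=\frac{1}{c_1(w-w_0)}+H(w)$, where $H$ is holomorphic on all of $\mathbb{P}_-$. The asymptotic $\Phi^{-1}(w)=w+O(1)$ at infinity (from the normalization $\Phi_z\to 1$) ensures the required $L^2$ decay of each term on $\mathbb{R}$.

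Finally, I apply $I-\mathbb{H}$ to each piece. Because $H$ is the boundary value of a function holomorphic in $\mathbb{P}_-=\Phi(\Omega(t))$ with $L^2$ decay, the half-plane version of Lemma \ref{holomorphic} gives $(I-\mathbb{H})H=0$. On the other hand, the principal part $\frac{1}{c_1(w-w_0)}$ has its only pole $w_0$ inside $\mathbb{P}_-$ and is therefore holomorphic on $\mathbb{P}_+$ with $L^2$ decay, so by the complementary half of Lemma \ref{holomorphic} one has $(I+\mathbb{H})\frac{1}{c_1(\alpha-w_0)}=0$, i.e.\ $(I-\mathbb{H})\frac{1}{c_1(\alpha-w_0)}=\frac{2}{c_1(\alpha-w_0)}$. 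Adding the two contributions yields the claimed identity. I do not foresee any serious obstacle; the main point that must be checked carefully is the $L^2$ behavior at infinity of $\tilde G$ and of the remainder $H$, which is handled by the asymptotic normalization of the Riemann map, ensuring the $L^2$ form of Lemma \ref{holomorphic} is applicable.
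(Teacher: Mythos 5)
Your proof is correct and follows essentially the same route as the paper: decompose $\frac{1}{Z(\alpha,t)-z_0}$ into the simple-pole principal part $\frac{1}{c_1(\alpha-w_0)}$ plus a remainder holomorphic in $\mathbb{P}_-$, then observe that $I-\mathbb{H}$ annihilates the holomorphic remainder and doubles the principal part since the latter is holomorphic in $\mathbb{P}_+$. The extra remark about $L^2$ decay at infinity (via $\Phi^{-1}(w)=w+O(1)$) is a reasonable sanity check that the paper leaves implicit, but the underlying argument is the same.
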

\begin{proof}
Note that $Z(\alpha,t)=\Phi^{-1}(\alpha,t)$. So $Z(\alpha,t)-z_0$ is the boundary value of $\Phi^{-1}(z,t)-z_0$ in the lower half plane. Since $\Phi^{-1}$ is 1-1 and onto, $\Phi^{-1}(z,t)-z_0$ has a unique  zero $w_0:=\Phi(z_0)$, so $\frac{1}{Z(\alpha,t)-z_0}$ has a exactly one pole of multiplicity one. For $z$ near $w_0$, we have 
\begin{equation}
\Phi^{-1}(z,t)-z_0=c_1(z-w_0)+\sum_{n=2}^{\infty}c_n(z-w_0)^n,\quad \quad where\quad c_1=(\Phi^{-1})_z(w_0)\neq 0.
\end{equation}
Therefore, we have $\frac{1}{Z(\alpha,t)-z_0}-\frac{1}{c_1(\alpha-w_0)}$ is holomorphic in $\mathbb{P}_-$, and hence 
\begin{equation}
(I-\mathbb{H})(\frac{1}{Z(\alpha,t)-z_0}-\frac{1}{c_1(\alpha-w_0)})=0.
\end{equation}
Since $\frac{1}{c_1(\alpha-w_0)}$ is holomorphic in $\mathbb{P}_+$, we obtain
\begin{equation}
(I-\mathbb{H})\frac{1}{Z(\alpha,t)-z_0}=(I-\mathbb{H})\frac{1}{c_1(\alpha-w_0)}=\frac{2}{c_1(\alpha-w_0)}.
\end{equation}
\end{proof}

\begin{corollary}
We have 
\begin{equation}
(I-\mathbb{H})\frac{Z_{\alpha}}{(Z(\alpha,t)-z_j(t))^2}=\frac{2}{(\Phi^{-1})_z(\Phi(z_j(t)))(\alpha-\Phi(z_j(t)))^2}
\end{equation}
\end{corollary}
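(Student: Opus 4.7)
The plan is to obtain this corollary as an immediate consequence of the preceding lemma, by differentiating in $\alpha$ and using the fact that the standard Hilbert transform $\mathbb{H}$ commutes with $\partial_{\alpha}$. Take $z_0 = z_j(t)$ in the lemma, so that $w_0 = \Phi(z_j(t),t)$ and $c_1 = (\Phi^{-1})_z(w_0)$, and one has
\begin{equation*}
(I-\mathbb{H})\frac{1}{Z(\alpha,t)-z_j(t)} = \frac{2}{c_1(\alpha - w_0)}.
\end{equation*}

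Applying $\partial_{\alpha}$ to both sides, and using $[\partial_{\alpha}, \mathbb{H}] = 0$ together with
\begin{equation*}
\partial_{\alpha}\frac{1}{Z(\alpha,t)-z_j(t)} = -\frac{Z_{\alpha}}{(Z(\alpha,t)-z_j(t))^2}, \qquad \partial_{\alpha}\frac{2}{c_1(\alpha-w_0)} = -\frac{2}{c_1(\alpha-w_0)^2},
\end{equation*}
gives the identity immediately (the two minus signs cancel).

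The only genuine step is to justify the commutation $\partial_{\alpha}\mathbb{H} = \mathbb{H}\partial_{\alpha}$ applied to $\frac{1}{Z(\alpha,t)-z_j(t)}$. Since $z_j(t) \notin \Sigma(t)$, the function is smooth in $\alpha$ and decays at infinity together with its derivatives (under the working hypothesis $z(\alpha,t)-\alpha \to 0$ as $|\alpha|\to\infty$), so $\partial_{\alpha}$ may be passed through the principal-value integral in the definition of $\mathbb{H}$ in the standard way. Alternatively, one can argue directly along the same lines as the proof of the lemma: observe that $\frac{Z_{\alpha}}{(Z(\alpha,t)-z_j(t))^2}$ is the boundary value of the meromorphic function $w \mapsto \frac{(\Phi^{-1})_z(w)}{(\Phi^{-1}(w)-z_j(t))^2}$ on $\mathbb{P}_-$, which has a double pole at $w_0 = \Phi(z_j(t))$ with principal part $\frac{1}{c_1(w-w_0)^2}$, subtract this principal part to obtain a function holomorphic in $\mathbb{P}_-$, and conclude from Lemma \ref{holomorphic} that $(I-\mathbb{H})$ of the difference vanishes; since $\frac{1}{c_1(\alpha-w_0)^2}$ extends holomorphically to $\mathbb{P}_+$, $(I-\mathbb{H})$ doubles it, and the formula follows. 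I do not foresee any obstacle; this is a bookkeeping corollary rather than a result with nontrivial content.
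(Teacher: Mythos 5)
Your proposal is correct and follows essentially the same route as the paper: differentiate the identity from the preceding lemma with respect to $\alpha$, use $[\partial_{\alpha},\mathbb{H}]=0$, and note $\partial_{\alpha}\frac{1}{Z(\alpha,t)-z_j(t)}=-\frac{Z_{\alpha}}{(Z(\alpha,t)-z_j(t))^2}$ together with $\partial_{\alpha}\frac{2}{c_1(\alpha-w_0)}=-\frac{2}{c_1(\alpha-w_0)^2}$, so the signs cancel. Your alternative argument by subtracting the double-pole principal part is also valid and simply redoes the lemma's proof one order higher, but the paper uses the differentiation shortcut.
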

\begin{proof}
We have 
\begin{align*}
(I-\mathbb{H})\frac{Z_{\alpha}}{(Z(\alpha,t)-z_j(t))^2}=-&\partial_{\alpha}(I-\mathbb{H})\frac{1}{Z(\alpha,t)-z_j(t)}\\
=&-\partial_{\alpha}\frac{2}{(\Phi^{-1})_z(\Phi(z_j(t)))(\alpha-\Phi(z_j(t)))}\\
=&\frac{2}{(\Phi^{-1})_z(\Phi(z_j(t)))(\alpha-\Phi(z_j(t)))^2}.
\end{align*}
\end{proof}
\begin{corollary}\label{corollaryformulaA1}
We have 
\begin{equation}\label{formulaA1}
A_1=1+\frac{1}{2\pi}\int \frac{|D_tZ(\alpha,t)-D_tZ(\beta,t)|^2}{(\alpha-\beta)^2}d\beta-\sum_{j=1}^N \frac{\lambda_j}{\pi} Re\Big\{\frac{D_tZ-\dot{z}_j}{c_0^j(\alpha-w_0^j)^2}\Big\},
\end{equation}
where 
\begin{equation}
c_0^j=(\Phi^{-1})_z(\omega_0^j),\quad \quad \omega_0^j=\Phi(z_j).
\end{equation}
\end{corollary}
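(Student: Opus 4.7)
The plan is to obtain Corollary \ref{corollaryformulaA1} as an immediate consequence of the formula (\ref{AONE}) combined with the preceding corollary, which evaluates the $(I-\mathbb{H})$ acting on $\frac{Z_\alpha}{(Z(\alpha,t)-z_j(t))^2}$ in closed form via the Riemann map. No new computation is required beyond a substitution and a rearrangement of constants.

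Concretely, I would start from the identity
\begin{equation*}
A_1 = 1 + \frac{1}{2\pi}\int \frac{|D_tZ(\alpha,t)-D_tZ(\beta,t)|^2}{(\alpha-\beta)^2}\,d\beta - \sum_{j=1}^N \frac{\lambda_j}{2\pi}\,\mathrm{Re}\Big\{\Big((I-\mathbb{H})\frac{Z_{\alpha}}{(Z(\alpha,t)-z_j(t))^2}\Big)(D_tZ-\dot{z}_j(t))\Big\},
\end{equation*}
which was derived in (\ref{AONE}) by multiplying the momentum equation in Riemann variables by $Z_\alpha$, applying $I-\mathbb{H}$, taking imaginary parts, and using the holomorphicity of $\partial_\alpha D_t\bar Z - \sum_j \frac{\lambda_j i Z_\alpha}{2\pi(Z-z_j)^2}$. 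Then I substitute the previous corollary's identity
\begin{equation*}
(I-\mathbb{H})\frac{Z_{\alpha}}{(Z(\alpha,t)-z_j(t))^2} = \frac{2}{c_0^j(\alpha-w_0^j)^2},
\end{equation*}
with $c_0^j = (\Phi^{-1})_z(w_0^j)$ and $w_0^j = \Phi(z_j(t),t)$, directly into the preceding expression. The factor of $2$ combines with the $\frac{1}{2\pi}$ to produce the $\frac{1}{\pi}$ in (\ref{formulaA1}), yielding the stated formula.

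There is essentially no obstacle here, since the hard analytic work is already packaged into (\ref{AONE}) and the lemma identifying $(I-\mathbb{H})\frac{1}{Z-z_0}$ via the inverse Riemann map. The only mild care needed is to verify that taking real parts commutes properly with the substitution (it does, since $c_0^j$ and $w_0^j$ are fixed complex numbers at each $\alpha,t$, and $\mathrm{Re}$ is $\mathbb{R}$-linear). Thus the corollary follows in two lines of algebra.
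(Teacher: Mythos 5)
Your proposal is correct and is exactly what the paper does: the corollary follows by substituting the identity $(I-\mathbb{H})\frac{Z_\alpha}{(Z-z_j)^2}=\frac{2}{c_0^j(\alpha-w_0^j)^2}$ from the preceding corollary into formula (\ref{AONE}) and absorbing the factor of $2$ into the $\frac{1}{2\pi}$. The paper treats this as immediate and gives no explicit proof, which is consistent with your observation that no new idea is required.
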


\subsection{\textbf{A formula for $A_1$ when $Z(\alpha,t)=\alpha$, $D_tZ=\sum_{j=1}^N\frac{\lambda_j i}{2\pi}\frac{1}{\overline{\alpha-z_j(t)}}$}}
If the point vortices are very close to the interface, then Taylor sign condition can fail. To see this, we study the special case when $Z(\alpha,t)=\alpha$ and $D_tZ(\alpha,t)=\sum_{j=1}^N\frac{\lambda_j i}{2\pi}\frac{1}{\overline{\alpha-z_j(t)}}$. Since the integral term of the formula (\ref{formulaA1}) is nonlocal, in order to obtain a more convenient form of (\ref{formulaA1}), we use residue theorem to calculate this integral. We'll use the following formula.
\begin{lemma}\label{rational}
Let $w_1, w_2\in \mathbb{P}_-$. Then 
\begin{equation}
\int_{-\infty}^{\infty}\frac{1}{(\beta-w_1)(\beta-\overline{w_2})}d\beta=\frac{2\pi i}{\overline{w_2}-w_1}
\end{equation}
\end{lemma}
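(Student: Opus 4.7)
The statement is a standard residue calculation, so the plan is short. I would view the integrand $\frac{1}{(z-w_1)(z-\overline{w_2})}$ as a meromorphic function of a complex variable $z$ with exactly two simple poles: one at $w_1 \in \mathbb{P}_-$ (below the real axis) and one at $\overline{w_2} \in \mathbb{P}_+$ (above the real axis, since $w_2 \in \mathbb{P}_-$). Note $w_1 \neq \overline{w_2}$ because they lie in opposite half-planes, so $\overline{w_2}-w_1 \neq 0$ and the right-hand side makes sense.

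The plan is to close the contour in the upper half-plane. Take the contour $\Gamma_R$ consisting of the segment $[-R,R]$ on the real axis together with the semicircle $C_R := \{Re^{i\theta} : 0 \leq \theta \leq \pi\}$, oriented counterclockwise. For $R$ large enough, $\Gamma_R$ encloses only the pole at $\overline{w_2}$, so by the residue theorem
\begin{equation*}
\int_{\Gamma_R}\frac{dz}{(z-w_1)(z-\overline{w_2})} = 2\pi i \cdot \operatorname{Res}_{z=\overline{w_2}}\frac{1}{(z-w_1)(z-\overline{w_2})} = \frac{2\pi i}{\overline{w_2}-w_1}.
\end{equation*}

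Next I would check that the contribution from $C_R$ vanishes as $R \to \infty$: on $C_R$ the integrand is $O(R^{-2})$ while the arc has length $\pi R$, so the arc integral is $O(R^{-1}) \to 0$. Passing to the limit on the real segment gives the principal value integral, which converges absolutely since the integrand decays like $|\beta|^{-2}$ at infinity and is bounded on compact sets. This yields the identity. There is no real obstacle here; the only small point to be careful about is recording that $w_1$ and $\overline{w_2}$ are in opposite half-planes (so exactly one residue is captured), which is where the hypothesis $w_1, w_2 \in \mathbb{P}_-$ is used.
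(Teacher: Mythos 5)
Your proof is correct and uses the same approach as the paper: the paper's proof simply observes that $\overline{w_2}$ is the only pole in the upper half-plane and invokes the residue theorem, while you spell out the contour, the arc estimate, and the convergence, which is exactly what that one-line invocation is standing in for.
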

\begin{proof}
$\overline{w_2}$ is the only residue of $\frac{1}{(\beta-w_1)(\beta-\overline{w_2})}$ in $\mathbb{P}_+$. By residue Theorem, 
$$\int_{-\infty}^{\infty}\frac{1}{(\beta-w_1)(\beta-\overline{w_2})}d\beta=\frac{2\pi i}{\overline{w_2}-w_1}.$$
\end{proof}
As a consequence, we have 
\begin{corollary}
Assume $Z(\alpha,t)=\alpha$, $D_tZ=\sum_{j=1}^N\frac{\lambda_j i}{2\pi}\frac{1}{\overline{\alpha-z_j(t)}}$. We have 
\begin{equation}
\frac{1}{2\pi}\int \frac{|D_tZ(\alpha,t)-D_tZ(\beta,t)|^2}{(\alpha-\beta)^2}d\beta=\sum_{1\leq j,k\leq N}\frac{\lambda_j\lambda_k}{(2\pi)^2}\frac{1}{(\alpha-z_j)\overline{(\alpha-z_k)}} \frac{i}{\overline{z_k}-z_j}.
\end{equation}
\end{corollary}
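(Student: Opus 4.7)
The plan is to reduce the integral to a sum of elementary rational integrals and then apply Lemma \ref{rational} term by term. Since $Z(\alpha,t)=\alpha$ means the interface is the real axis and $\Omega(t)=\mathbb{P}_-$, each $z_j(t)\in\mathbb{P}_-$ and $\overline{z_j(t)}\in\mathbb{P}_+$. For real $\alpha,\beta$, the assumed formula becomes
\begin{equation*}
D_tZ(\alpha,t)=\sum_{j=1}^N\frac{\lambda_j i}{2\pi(\alpha-\overline{z_j(t)})},
\end{equation*}
so the difference quotient is
\begin{equation*}
\frac{D_tZ(\alpha,t)-D_tZ(\beta,t)}{\alpha-\beta}=-\sum_{j=1}^N\frac{\lambda_j i}{2\pi(\alpha-\overline{z_j(t)})(\beta-\overline{z_j(t)})}.
\end{equation*}

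The next step is to take the modulus squared. Using that $\lambda_j\in\mathbb{R}$, that $i\cdot\overline{i}=1$, and that $\overline{\alpha-\overline{z_k(t)}}=\alpha-z_k(t)$ and $\overline{\beta-\overline{z_k(t)}}=\beta-z_k(t)$ for real $\alpha,\beta$, I obtain
\begin{equation*}
\left|\frac{D_tZ(\alpha,t)-D_tZ(\beta,t)}{\alpha-\beta}\right|^2=\sum_{j,k}\frac{\lambda_j\lambda_k}{(2\pi)^2}\frac{1}{(\alpha-\overline{z_j(t)})(\alpha-z_k(t))}\cdot\frac{1}{(\beta-\overline{z_j(t)})(\beta-z_k(t))}.
\end{equation*}
The double sum is uniformly integrable in $\beta$, since for each $(j,k)$ the $\beta$-factor decays like $|\beta|^{-2}$ as $|\beta|\to\infty$.

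Now I integrate term by term in $\beta$. For fixed $(j,k)$, the poles of $1/[(\beta-\overline{z_j(t)})(\beta-z_k(t))]$ are $\overline{z_j(t)}\in\mathbb{P}_+$ and $z_k(t)\in\mathbb{P}_-$, one in each half-plane, so Lemma \ref{rational}—applied with $w_1=z_k(t)$ and $w_2=z_j(t)$—gives
\begin{equation*}
\int_{-\infty}^{\infty}\frac{d\beta}{(\beta-\overline{z_j(t)})(\beta-z_k(t))}=\frac{2\pi i}{\overline{z_j(t)}-z_k(t)}.
\end{equation*}
Plugging this in and cancelling the prefactor $\tfrac{1}{2\pi}$ against $2\pi$ yields
\begin{equation*}
\frac{1}{2\pi}\int\frac{|D_tZ(\alpha,t)-D_tZ(\beta,t)|^2}{(\alpha-\beta)^2}\,d\beta=\sum_{j,k}\frac{\lambda_j\lambda_k}{(2\pi)^2}\frac{i}{(\alpha-\overline{z_j(t)})(\alpha-z_k(t))(\overline{z_j(t)}-z_k(t))}.
\end{equation*}
Finally I swap the dummy indices $j\leftrightarrow k$ and rewrite $\alpha-\overline{z_k(t)}=\overline{\alpha-z_k(t)}$ (valid because $\alpha\in\mathbb{R}$) to recover the stated form
$\sum_{j,k}\tfrac{\lambda_j\lambda_k}{(2\pi)^2}\tfrac{1}{(\alpha-z_j)\overline{(\alpha-z_k)}}\cdot\tfrac{i}{\overline{z_k}-z_j}$.

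There is no real obstacle here: the argument is an exercise in residues and bookkeeping. The only points requiring care are (i) using the location $z_j\in\mathbb{P}_-$ to justify applying Lemma \ref{rational} (both poles split correctly between the upper and lower half planes), (ii) keeping track of which factor is conjugated when squaring the modulus, and (iii) the final relabelling $j\leftrightarrow k$ to match the exact form displayed in the statement.
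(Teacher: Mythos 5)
Your proof is correct and follows the same route as the paper's: expand the modulus-squared into a double sum, apply Lemma \ref{rational} term by term, and reassemble. The only cosmetic difference is that the paper keeps the conjugation bar over the whole product $\overline{(\alpha-z_k)(\beta-z_k)}$ so the indices land directly in the stated form, whereas you distribute the conjugate onto $\overline{z_j}$ first and therefore need a final $j\leftrightarrow k$ relabelling; otherwise the computations coincide.
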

\begin{proof}
We have
\begin{equation}
\begin{split}
D_tZ(\alpha,t)-D_t(\beta,t)=\sum_{j=1}^N \frac{\lambda_j i}{2\pi}\frac{\beta-\alpha}{\overline{(\alpha-z_j)(\beta-z_j)}}.
\end{split}
\end{equation}
So we have 
\begin{equation}
\begin{split}
&\Big| \frac{D_tZ(\alpha,t)-D_t(\beta,t)}{\alpha-\beta}\Big|^2=\Big|\sum_{j=1}^N\frac{\lambda_j}{2\pi}\frac{1}{(\alpha-z_j)(\beta-z_j)}\Big|^2\\
=&\sum_{j=1}^N\sum_{k=1}^N\frac{\lambda_j\lambda_k}{(2\pi)^2}\frac{1}{(\alpha-z_j)(\beta-z_j)\overline{(\alpha-z_k)(\beta-z_k)}}\\
\end{split}
\end{equation}
Apply lemma \ref{rational}, we have 
\begin{align*}
\int_{-\infty}^{\infty}\frac{1}{(\alpha-z_j)(\beta-z_j)\overline{(\alpha-z_k)(\beta-z_k)}}d\beta=\frac{1}{(\alpha-z_j)\overline{(\alpha-z_k)}}\frac{2\pi i}{\overline{z_k}-z_j}.
\end{align*}
So we have 
\begin{align*}
\frac{1}{2\pi}\int \frac{|D_tZ(\alpha,t)-D_tZ(\beta,t)|^2}{(\alpha-\beta)^2}d\beta= & \sum_{1\leq j,k\leq N}\frac{\lambda_j\lambda_k}{(2\pi)^3}\frac{1}{(\alpha-z_j)\overline{(\alpha-z_k)}} \frac{2\pi i}{\overline{z_k}-z_j}\\
=& \sum_{1\leq j,k\leq N}\frac{\lambda_j\lambda_k}{(2\pi)^2}\frac{1}{(\alpha-z_j)\overline{(\alpha-z_k)}} \frac{i}{\overline{z_k}-z_j}.
\end{align*}
\end{proof}

\begin{corollary}\label{goodformula}
Assume $Z(\alpha,t)=\alpha$, $D_tZ=\sum_{j=1}^N\frac{\lambda_j i}{2\pi}\frac{1}{\overline{\alpha-z_j(t)}}$. Then 
\begin{equation}
A_1=1+\sum_{1\leq j,k\leq N}\frac{\lambda_j\lambda_k}{(2\pi)^2}\frac{1}{(\alpha-z_j)\overline{(\alpha-z_k)}} \frac{i}{\overline{z_k}-z_j}-\sum_{j=1}^N \frac{\lambda_j}{\pi} Re\Big\{\frac{D_tZ-\dot{z}_j}{(\alpha-z_j)^2}\Big\}.
\end{equation}
\end{corollary}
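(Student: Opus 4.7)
The plan is to assemble the formula directly from two ingredients already established: the general identity (\ref{formulaA1}) in Corollary \ref{corollaryformulaA1}, and the explicit evaluation of the double integral in the corollary immediately preceding the statement. Nothing new needs to be computed; the work is entirely bookkeeping under the special choice $Z(\alpha,t)=\alpha$.

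First I would identify the Riemann map under the hypothesis. When $Z(\alpha,t)=\alpha$, the free surface is the real axis and the fluid domain $\Omega(t)$ is the lower half plane $\mathbb{P}_-$. Therefore the normalized Riemann map $\Phi(\cdot,t):\Omega(t)\to \mathbb{P}_-$ with $\Phi_z\to 1$ at infinity is the identity. This gives
\[
\omega_0^j=\Phi(z_j(t),t)=z_j(t),\qquad c_0^j=(\Phi^{-1})_z(\omega_0^j)=1,
\]
so the singular contribution of each point vortex in (\ref{formulaA1}) reduces to $-\sum_{j}\frac{\lambda_j}{\pi}\Re\{(D_tZ-\dot z_j)/(\alpha-z_j)^2\}$, which is exactly the last term in the claimed identity.

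Next I would quote the integral identity proved just above: under the same hypotheses,
\[
\frac{1}{2\pi}\int\frac{|D_tZ(\alpha,t)-D_tZ(\beta,t)|^2}{(\alpha-\beta)^2}\,d\beta
=\sum_{1\le j,k\le N}\frac{\lambda_j\lambda_k}{(2\pi)^2}\frac{1}{(\alpha-z_j)\overline{(\alpha-z_k)}}\frac{i}{\overline{z_k}-z_j},
\]
which was obtained by expanding $D_tZ(\alpha,t)-D_tZ(\beta,t)$ as a rational function, squaring term by term, and applying the residue lemma \ref{rational} (each inner integral has a single pole $\overline{z_k}\in\mathbb{P}_+$). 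Substituting both evaluations into (\ref{formulaA1}) produces precisely the right-hand side of the stated equation. There is no real obstacle; the only thing one should be careful about is to keep track of the conjugations carefully so that the residue calculation lines up with the quadratic term, and to verify that the identity map does satisfy the normalization $\Phi_z\to 1$ at infinity so that $c_0^j=1$ is the correct prefactor in the formula. With those two small verifications, the proof is complete in a single line of substitution.
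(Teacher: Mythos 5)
Your proposal is correct and follows the paper's intended route: the paper leaves Corollary \ref{goodformula} as an immediate consequence of Corollary \ref{corollaryformulaA1} and the integral evaluation stated just before it, and your observation that $Z(\alpha,t)=\alpha$ forces $\Phi$ to be the identity (so $c_0^j=1$, $\omega_0^j=z_j$) is exactly the substitution needed. The only cosmetic difference is that the paper also records, two steps earlier, the direct observation that for $Z=\alpha$ the term $(I-\mathbb{H})\frac{Z_\alpha}{(Z-z_j)^2}$ is simply $\frac{2}{(\alpha-z_j)^2}$ because the function is holomorphic in $\mathbb{P}_+$, which bypasses the Riemann-map language; but this is equivalent to your $\Phi=\mathrm{Id}$ argument.
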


In the following two subsections, we use Corollary \ref{goodformula} to construct examples for which the Taylor sign condition fails.
\subsection{One point vortex: An example that the Taylor condition fails}\label{examplesectionone}
We have the following characterization. Recall that $\frac{A_1}{|Z_{\alpha}|}=-\frac{\partial P}{\partial\bold{n}}$.
\begin{proposition}\label{exampleone}
Assume that at time $t$, the interface is $\Sigma(t)=\mathbb{R}$, the fluid velocity is $v(z,t)=\frac{\lambda i}{2\pi}\frac{1}{\overline{z-z_1(t)}}$, i.e., it is generated by a single point vortex $z_1(t):=x(t)+iy(t)$. 
Then
\begin{itemize}
\item [(1)] If $\frac{\lambda^2}{|y|^3}<\frac{8\pi^2}{3}$, strong Taylor sign condition holds. We have 
\begin{equation}
\inf_{\alpha\in \mathbb{R}}A_1(\alpha,t)\geq 1-\frac{3}{8\pi^2}\frac{\lambda^2}{|y|^3}>0.
\end{equation}
\item [(2)] If $\frac{\lambda^2}{|y|^3}>\frac{8\pi^2}{3}$,  Taylor sign condition fails, i.e., there exists $\alpha\in \mathbb{R}$ such that $A_1(\alpha,t)<0$.
\item [(3)] If $\frac{\lambda^2}{|y|^3}=\frac{8\pi^2}{3}$, 'Degenerate Taylor sign condition' holds, i.e., 
\begin{equation}
A_1(\alpha,t)>0, \quad \forall~\alpha\neq x(t);\quad \quad and~~ A_1(x(t),t)=0.
\end{equation}
\end{itemize}
\end{proposition}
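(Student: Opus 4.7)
The plan is to apply the formula from Corollary \ref{goodformula} directly to this one-vortex configuration and reduce the analysis of the Taylor sign condition to minimizing an explicit rational function of one real variable. Since $\Sigma(t) = \mathbb{R}$ with the fluid occupying the lower half-plane, we have $y < 0$ and the Riemann mapping is the identity, so $Z(\alpha,t) = \alpha$ and the hypotheses of Corollary \ref{goodformula} are met. Moreover, for $N=1$ the velocity $v(z,t) = \frac{\lambda i}{2\pi}\frac{1}{\overline{z-z_1}}$ produces $F \equiv 0$ in the formula $\bar v + \frac{\lambda i}{2\pi(z-z_1)} = F$, so $\dot z_1 = \bar F(z_1,t) = 0$: the vortex does not move. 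Also, $D_tZ(\alpha,t) = \frac{\lambda i}{2\pi}\frac{1}{\alpha - \bar z_1}$, which is well-defined since $\bar z_1 \in \mathbb{P}_+$.

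Next I would substitute these quantities into the formula
\begin{equation*}
A_1 = 1 + \frac{\lambda^2}{(2\pi)^2}\frac{1}{|\alpha-z_1|^2}\cdot\frac{i}{\bar z_1 - z_1} - \frac{\lambda}{\pi}\Re\left\{\frac{D_tZ}{(\alpha-z_1)^2}\right\}.
\end{equation*}
Using $\bar z_1 - z_1 = -2iy$ and writing $\beta := \alpha - x$, straightforward algebra gives
\begin{equation*}
(\alpha-\bar z_1)(\alpha-z_1)^2 = (\beta^2 + y^2)(\beta - iy),
\end{equation*}
so that $\Re\bigl\{i/[(\alpha-\bar z_1)(\alpha-z_1)^2]\bigr\} = -y/(\beta^2+y^2)^2 = |y|/(\beta^2+y^2)^2$. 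Combining the two contributions yields the single clean expression
\begin{equation*}
A_1(\alpha,t) = 1 + \frac{\lambda^2}{8\pi^2|y|}\cdot\frac{\beta^2 - 3y^2}{(\beta^2+y^2)^2}.
\end{equation*}

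From this closed form the three assertions are immediate. The function $g(\beta) := (\beta^2 - 3y^2)/(\beta^2+y^2)^2$ attains its unique minimum at $\beta = 0$ with value $-3/y^2 = -3/|y|^2$, and $g(\beta) > 0$ for $|\beta| > \sqrt{3}|y|$. Hence $\inf_\alpha A_1 = A_1(x,t) = 1 - \frac{3\lambda^2}{8\pi^2|y|^3}$. Part (1) follows by reading off the strict inequality, part (2) by observing that $A_1(x,t) < 0$ when $\lambda^2/|y|^3 > 8\pi^2/3$, and part (3) follows because at equality $A_1(x,t) = 0$ while $A_1(\alpha,t) > 0$ for all $\alpha \neq x$ (since $\beta = 0$ is the unique minimizer of $g$).

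There is no real obstacle beyond bookkeeping: the main thing to be careful about is sign conventions, particularly that $y < 0$ (the vortex lies in $\Omega(t)$, the lower half-plane), which makes $\bar z_1$ lie in $\mathbb{P}_+$ and hence $D_tZ$ well-defined on $\mathbb{R}$. The identity $\dot z_1 = 0$ for a single vortex generating the entire velocity field is what makes the example fully explicit and the minimization trivial.
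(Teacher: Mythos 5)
Your proof is correct and follows essentially the same route as the paper: you apply Corollary \ref{goodformula} with $Z=\alpha$, $\dot z_1 = 0$, reduce to a rational function in $\beta = \alpha - x$, and identify $\beta=0$ as the unique global minimizer. The only cosmetic difference is that you combine the two contributions into a single closed form $A_1 = 1 + \frac{\lambda^2}{8\pi^2|y|}\frac{\beta^2-3y^2}{(\beta^2+y^2)^2}$ before minimizing, whereas the paper minimizes the two-term expression directly; the conclusions and the key evaluation $A_1(x,t) = 1 - \frac{3\lambda^2}{8\pi^2|y|^3}$ agree.
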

\begin{remark}
The quantity $\frac{\lambda^2}{|y|^3}$ is a measurement of the interface-vortex interaction. $\lambda$ is the intensity of the point vortex, and $|y|$ is the distance from the point vortex to the interface.
\end{remark}

\begin{proof}
Note that 
$$\dot{z}_1(t)=0.$$
So we have 
\begin{equation}\label{badpart}
\begin{split}
-\sum_{j=1}^N \frac{\lambda_j}{\pi} Re\Big\{\frac{D_tZ-\dot{z}_j}{(\alpha-z_j)^2}\Big\}=&-\frac{\lambda}{\pi}Re\Big\{ \frac{D_tZ(\alpha,t)}{(\alpha-z_1(t))^2}\Big\}\\
=&-\frac{\lambda}{\pi}Re\Big\{ \frac{1}{(\alpha-z_1(t))^2}\frac{\lambda i}{2\pi\overline{\alpha-z_1(t)}}\Big\}\\
=& \frac{\lambda^2}{2\pi^2} \frac{y}{|\alpha-z_1(t)|^4}.
\end{split}
\end{equation}

Therefore, by Corollary \ref{goodformula}, we have 
\begin{align*}
A_1(\alpha,t)=&1+\frac{\lambda^2}{4\pi^2}\frac{1}{|\alpha-z_1|^2}\frac{i}{-2iy}+\frac{\lambda^2}{2\pi^2} \frac{y}{|\alpha-z_1(t)|^4}.
\end{align*}
Without loss of generality, we can assume $x=0$. Setting $\partial_{\alpha}A_1(\alpha,t)=0$, it's easy to see that $A_1(\alpha,t)$ admits a unique local minimum at $\alpha=x=0$. Moreover, it's easy to see that 
\begin{equation}
    \lim_{\alpha\rightarrow \pm\infty}A_1(\alpha,t)=1.
\end{equation}
Therefore, $\inf_{\alpha\in \mathbb{R}}A(\alpha,t)\leq 0$ if and only if $A_1(0,t)\leq 0$.

We have 
$$A_1(0,t)=1-\frac{\lambda^2}{8\pi^2}\frac{1}{y^3}+\frac{\lambda^2}{2\pi^2}\frac{1}{y^3}=1-\frac{3\lambda^2}{8\pi^2}\frac{1}{|y|^3}.$$
If $\frac{\lambda^2}{|y|^3}<\frac{8\pi^2}{3}$, then $A(\alpha,t)\geq A(0,t)>0$. If $\frac{\lambda^2}{|y|^3}>\frac{8\pi^2}{3}$, then $A_1(0,t)<0$. If $\frac{\lambda^2}{|y|^3}=\frac{8\pi^2}{3}$, then $A_1(0,t)=0$, and for $\alpha\neq 0$, $A_1(\alpha,t)>A_1(0,t)=0$.
\end{proof}

\subsection{Two point vortices: Another example that Taylor sign condition fails}\label{examplesection2} We show that if the point vortices are too close to the interface, then the Taylor sign condition fails. 
  
Assume that $Z(\alpha,t)=\alpha$. Assume $D_tZ=\sum_{j=1}^2 \frac{\lambda_j i}{2\pi}\frac{1}{\overline{\alpha-z_j(t)}}$. Assume $z_1(t)=-x(t)+iy(t)$, $z_2(t)=x(t)+iy(t)$, with $x(t)>0$, $y(t)<0$, and $\lambda_1=-\lambda_2:=\lambda$. Let's  calculate $A_1(0,t)$.

We have 

\begin{align*}
D_tZ(\alpha,t)=\sum_{j=1}^2\frac{\lambda_j i}{2\pi}\frac{1}{\overline{\alpha-z_j(t)}}=\frac{\lambda i}{2\pi}\frac{\overline{z_1(t)-z_2(t)}}{\overline{(\alpha-z_1(t))(\alpha-z_2(t))}}
\end{align*}
Since $z_1-z_2=-2x$,
$$D_tZ(0,t)=\frac{\lambda i}{\pi}\frac{x}{x^2+y^2}.$$
At $\alpha=0$, we have 
\begin{align*}
\sum_{j=1}^2 \frac{\lambda_j }{\pi}\frac{1}{(0-z_j(t))^2}=\frac{\lambda}{\pi}\frac{4xyi}{(x^2+y^2)^2}.
\end{align*}
So 
\begin{equation}
-\sum_{j=1}^2 \frac{\lambda_j }{\pi}\frac{1}{(0-z_j(t))^2}D_tZ(0)=-\frac{\lambda i}{\pi}\frac{x}{x^2+y^2}\frac{\lambda}{\pi}\frac{4xyi}{(x^2+y^2)^2}=\frac{4\lambda^2 x^2y}{\pi^2 (x^2+y^2)^3}
\end{equation}
We have 
\begin{align*}
\dot{z}_1=\frac{-\lambda i}{2\pi}\frac{1}{\overline{z_1-z_2}}=\frac{\lambda i}{4\pi}\frac{1}{x}.
\end{align*}
\begin{align*}
\dot{z}_2=\frac{\lambda i}{2\pi}\frac{1}{\overline{z_2-z_1}}=\frac{\lambda i}{4\pi}\frac{1}{x}.
\end{align*}
So we have 
\begin{align*}
\sum_{j=1}^2 \frac{\lambda_j}{\pi}\frac{1}{(0-z_j)^2}\dot{z}_j(t)=\frac{\lambda i}{4\pi}\frac{1}{x}\sum_{j=1}^2 \frac{\lambda_j}{\pi}\frac{1}{(0-z_j)^2}=\frac{\lambda i}{4\pi x}\frac{\lambda}{\pi}\frac{4xyi}{(x^2+y^2)^2}=-\frac{\lambda^2 y}{\pi^2(x^2+y^2)^2}.
\end{align*}
So we have 
$$-\sum_{j=1}^N \frac{\lambda_j}{\pi} Re\Big\{ \frac{1}{(0-z_j(t))^2}(D_tZ(0,t)-\dot{z}_j(t))\Big\}=\frac{\lambda^2y(3x^2-y^2)}{\pi^2(x^2+y^2)^3}.$$
On the other hand, we have 
\begin{align*}
&\sum_{1\leq j,k\leq 2}\frac{\lambda_j\lambda_k}{(2\pi)^2}\frac{1}{(0-z_j)\overline{(0-z_k)}} \frac{i}{\overline{z_k}-z_j}\\
=&\frac{\lambda^2}{4\pi^2}\frac{1}{|z_1|^2}\frac{i}{\bar{z}_1-z_1}+\frac{\lambda^2}{4\pi^2}\frac{1}{|z_2|^2}\frac{i}{\bar{z}_2-z_2}-\frac{\lambda^2}{4\pi^2}\frac{1}{z_1\bar{z}_2}\frac{i}{\bar{z}_2-z_1}-\frac{\lambda^2}{4\pi^2}\frac{1}{z_2\bar{z}_1}\frac{i}{\bar{z}_1-z_2}\\
=& \frac{\lambda^2}{4\pi^2}\frac{1}{|z_1|^2}\frac{1}{|y|}+2Re\Big\{ \frac{\lambda^2}{8\pi^2}\frac{i}{(x-iy)^3}\Big\}\\
=& \frac{\lambda^2}{4\pi^2}\frac{1}{x^2+y^2}\frac{1}{|y|}+\frac{\lambda^2}{4\pi^2}\frac{y^3-3x^2y}{(x^2+y^2)^3}\\
=& \frac{\lambda^2}{4\pi^2}\frac{x^4+5x^2y^2}{|y|(x^2+y^2)^3}.
\end{align*}
Therefore, by Corollary \ref{goodformula}, we have 
\begin{equation}
\begin{split}
A_1(0,t)=&1+\frac{\lambda^2}{4\pi^2}\frac{x^4+5x^2y^2}{|y|(x^2+y^2)^3}+\frac{\lambda^2y(3x^2-y^2)}{\pi^2(x^2+y^2)^3}\\
=&1+\frac{\lambda^2}{4\pi^2}\frac{x^4+5x^2y^2-12x^2y^2+4y^4}{|y|(x^2+y^2)^3}\\
=&1+\frac{\lambda^2}{4\pi^2}\frac{x^4+4y^4-7x^2y^2}{|y|(x^2+y^2)^3}
\end{split}
\end{equation}
So we have 
\begin{proposition}\label{twopoint}
Assume that at time $t$, the interface is $\Sigma(t)=\mathbb{R}$, the fluid velocity is $v(z,t)=\sum_{j=1}^2\frac{\lambda_j i}{2\pi}\frac{1}{\overline{z-z_j(t)}}$, where $\lambda_1=-\lambda_2:=\lambda$. Assume 
$$z_1(t)=-x+iy,\quad \quad z_2(t)=x+iy,\quad where\quad x>0, y<0.$$
If 
\begin{equation}
1+\frac{\lambda^2}{4\pi^2}\frac{x^4+4y^4-7x^2y^2}{|y|(x^2+y^2)^3}<0,
\end{equation}
then Taylor sign condition fails.
\end{proposition}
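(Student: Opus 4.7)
The plan is to apply the formula from Corollary \ref{goodformula} at the symmetric point $\alpha=0$, evaluate each of its three pieces using the symmetry of the configuration, and then invoke the pointwise definition of the Taylor sign condition. Since $Z(\alpha,t)=\alpha$ and $|Z_\alpha|=1$, the Taylor sign condition is equivalent to $\inf_\alpha A_1(\alpha,t)\geq 0$, so it suffices to exhibit a single $\alpha$ at which $A_1$ is strictly negative; the symmetry $z_1=-x+iy$, $z_2=x+iy$, $\lambda_1=-\lambda_2=\lambda$ strongly suggests choosing $\alpha=0$.

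First I would compute the individual ingredients entering Corollary \ref{goodformula} at $\alpha=0$. Since $D_tZ=\sum_j \frac{\lambda_j i}{2\pi}\frac{1}{\overline{\alpha-z_j(t)}}$, the symmetry immediately gives
\[
D_tZ(0,t)=\frac{\lambda i}{\pi}\frac{x}{x^2+y^2}.
\]
Next, each $\dot z_j$ equals the regular part of the velocity at $z_j$; for a symmetric vortex pair with opposite strengths this reduces to $\dot z_j=\frac{\lambda i}{4\pi x}$ for both $j=1,2$. Plugging these into $-\sum_j\frac{\lambda_j}{\pi}\mathrm{Re}\{(D_tZ(0,t)-\dot z_j)/(0-z_j)^2\}$ and using $\sum_j \frac{\lambda_j}{\pi}\frac{1}{z_j^2}=\frac{\lambda}{\pi}\frac{4xyi}{(x^2+y^2)^2}$, a direct calculation yields
\[
-\sum_{j=1}^2\frac{\lambda_j}{\pi}\mathrm{Re}\Big\{\frac{D_tZ(0,t)-\dot z_j}{(0-z_j)^2}\Big\}=\frac{\lambda^2 y(3x^2-y^2)}{\pi^2(x^2+y^2)^3}.
\]

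For the nonlocal ``positive'' term $\frac{1}{2\pi}\int|D_tZ(0)-D_tZ(\beta)|^2(\beta)^{-2}d\beta$, I would use the expression derived just above the corollary, namely
\[
\sum_{1\le j,k\le 2}\frac{\lambda_j\lambda_k}{(2\pi)^2}\frac{1}{(0-z_j)\overline{(0-z_k)}}\frac{i}{\overline{z_k}-z_j},
\]
which, separating the diagonal $j=k$ (giving $\frac{\lambda^2}{4\pi^2}\frac{1}{|y|(x^2+y^2)}$) from twice the real part of the cross term (giving $\frac{\lambda^2}{4\pi^2}\frac{y^3-3x^2y}{(x^2+y^2)^3}$) and combining over the common denominator, simplifies to $\frac{\lambda^2}{4\pi^2}\frac{x^4+5x^2y^2}{|y|(x^2+y^2)^3}$. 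Adding this to $1$ and to the previous negative contribution gives exactly
\[
A_1(0,t)=1+\frac{\lambda^2}{4\pi^2}\frac{x^4+4y^4-7x^2y^2}{|y|(x^2+y^2)^3}.
\]
Therefore, under the hypothesis of the proposition, $A_1(0,t)<0$, so the Taylor sign condition fails at $\alpha=0$.

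The main obstacle is really just algebraic: keeping signs straight given $y<0$ (in particular $\frac{1}{\overline{z_j}-z_j}=\frac{1}{-2iy}$, which combined with the factor $i$ produces the $\frac{1}{|y|}$ rather than $\frac{1}{y}$), and correctly combining the three rational fractions in $x,y$ over the common denominator $|y|(x^2+y^2)^3$ to get the numerator $x^4+4y^4-7x^2y^2$. Nothing is conceptually deep; every step reduces to a residue calculation already packaged in Lemma \ref{rational} together with the explicit Hilbert-transform identity from the corollary preceding the statement.
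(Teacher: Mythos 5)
Your proposal is correct and follows essentially the same route as the paper: evaluate the formula from Corollary \ref{goodformula} at $\alpha=0$, compute $D_tZ(0,t)$, $\dot z_j$, the negative term and the nonlocal positive term separately by symmetry and the residue identity of Lemma \ref{rational}, and combine over the common denominator $|y|(x^2+y^2)^3$ to recover $A_1(0,t)=1+\frac{\lambda^2}{4\pi^2}\frac{x^4+4y^4-7x^2y^2}{|y|(x^2+y^2)^3}$. The only thing worth adding for completeness is the observation (implicit in both your write-up and the paper's) that $F\equiv 0$ here because the prescribed velocity is exactly the vortex field, which is why $\dot z_j$ reduces to the contribution from the opposite vortex alone.
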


\begin{corollary}\label{x=y}
Under the assumption of Proposition \ref{twopoint}, if $|x|=|y|$ and $\frac{\lambda^2}{|y|^3}>16\pi^2$, then the strong Taylor sign condition fails.
\end{corollary}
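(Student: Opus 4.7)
The plan is to verify Corollary \ref{x=y} by direct substitution into the inequality provided by Proposition \ref{twopoint}. Since strong Taylor sign condition is stronger than Taylor sign condition, it suffices to show that under the hypothesis $|x|=|y|$ and $\frac{\lambda^2}{|y|^3}>16\pi^2$, the quantity $A_1(0,t)$ is strictly negative, which by Proposition \ref{twopoint} exhibits failure of (and thus of the strong) Taylor sign condition.

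First, I would evaluate the two polynomial factors appearing in the criterion of Proposition \ref{twopoint} at $|x|=|y|$. Setting $a:=|y|=|x|>0$, one immediately computes
\begin{equation*}
x^4+4y^4-7x^2y^2 = a^4+4a^4-7a^4 = -2a^4,
\end{equation*}
and $|y|(x^2+y^2)^3 = a\cdot(2a^2)^3 = 8a^7$. Therefore
\begin{equation*}
\frac{x^4+4y^4-7x^2y^2}{|y|(x^2+y^2)^3}\bigg|_{|x|=|y|} = \frac{-2a^4}{8a^7} = -\frac{1}{4|y|^3}.
\end{equation*}

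Substituting this identity into the failure criterion of Proposition \ref{twopoint}, the condition $1+\frac{\lambda^2}{4\pi^2}\frac{x^4+4y^4-7x^2y^2}{|y|(x^2+y^2)^3}<0$ reduces to
\begin{equation*}
1 - \frac{\lambda^2}{16\pi^2 |y|^3} < 0,
\end{equation*}
which is exactly equivalent to $\frac{\lambda^2}{|y|^3}>16\pi^2$. Hence under the hypotheses of the corollary, Proposition \ref{twopoint} applies and the Taylor sign condition fails (so, a fortiori, the strong Taylor sign condition fails), completing the proof.

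There is no real obstacle here; the statement is a clean specialization of Proposition \ref{twopoint} to the symmetric diagonal configuration $|x|=|y|$, chosen so that the polynomial $x^4+4y^4-7x^2y^2$ becomes negative and the scaling in $|y|$ simplifies to a single dimensionless quantity $\lambda^2/|y|^3$. The only mild care is to verify the sign of the coefficient and track the factor $16\pi^2$ correctly through the algebra.
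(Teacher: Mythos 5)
Your proof is correct and is exactly the intended specialization: the paper states Corollary~\ref{x=y} without proof immediately after Proposition~\ref{twopoint}, and direct substitution of $|x|=|y|$ into the criterion there, yielding $A_1(0,t)=1-\frac{\lambda^2}{16\pi^2|y|^3}$, is the computation the author has in mind. Your remark that failure of the Taylor sign condition a fortiori gives failure of the strong Taylor sign condition (so the conclusion as stated is actually weaker than what you prove) is a helpful observation and closes the one small logical gap between the proposition's conclusion and the corollary's phrasing.
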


\subsection{A criterion that implies the strong Taylor sign condition} If the vortex-vortex, vortex-interface interaction is weak, then the Taylor sign condition holds. Let's recall that we denote $F$ by
\begin{equation}
\bar{F}(z,t):=v(z,t)-\sum_{j=1}^N \frac{\lambda_j i}{2\pi}\frac{1}{\overline{z-z_j(t)}}.
\end{equation}
We have the following.
\begin{proposition}\label{almostsharp}
Assume $\inf_{\alpha\in \mathbb{R}}|Z_{\alpha}|= \beta_0$, $\|F\|_{\infty}= M_0$. Denote $$\tilde{\lambda}=:\frac{\sum_{j=1}^N|\lambda_j|}{\pi},\quad \quad \tilde{d}_I(t):=\min_{1\leq j\leq N}\inf_{\alpha\in \mathbb{R}}|\alpha-\Phi(z_j(t))|,\quad \quad \tilde{d}_P(t)=\min_{j\neq k}|z_j(t)-z_k(t)|. $$
If 
\begin{equation}\label{taylorassumption}
\frac{\tilde{\lambda}^2}{2\tilde{d}_I(t)^3\beta_0}+\frac{\tilde{\lambda}^2}{2\tilde{d}_I(t)^2\tilde{d}_P(t)}+\frac{2M_0\tilde{\lambda}}{\tilde{d}_I(t)^2}< \beta_0,
\end{equation}
then the strong Taylor sign condition holds.
\end{proposition}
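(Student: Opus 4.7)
The plan is to exploit the formula (\ref{formulaA1}) proved in Corollary \ref{corollaryformulaA1}: since
\[
A_1 = 1 + \frac{1}{2\pi}\int \frac{|D_tZ(\alpha,t)-D_tZ(\beta,t)|^2}{(\alpha-\beta)^2}d\beta - \sum_{j=1}^N \frac{\lambda_j}{\pi}\mathrm{Re}\Big\{\frac{D_tZ-\dot z_j}{c_0^j(\alpha-w_0^j)^2}\Big\},
\]
the first two terms on the right are $\geq 1$, so it suffices to bound the third term in absolute value, uniformly in $\alpha$, by a quantity strictly less than $1$. Once this is done, $\inf_\alpha A_1 > 0$ and the strong Taylor sign condition $\inf_\alpha A_1/|Z_\alpha| > 0$ follows because $|Z_\alpha|$ is bounded above on $\mathbb{R}$.

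The first step is to extract two geometric lower bounds from the Riemann mapping $Z:\mathbb{P}_-\to\Omega(t)$. Because $Z$ is univalent and $Z'(\omega)\to 1$ at infinity, $Z'$ is holomorphic and nowhere zero on $\mathbb{P}_-$, so $\log|Z'|$ is a bounded harmonic function on $\mathbb{P}_-$ which vanishes at infinity. The Poisson representation then gives
\[
\log|Z'(\omega)| = \frac{1}{\pi}\int_{\mathbb{R}} \frac{|\mathrm{Im}\,\omega|}{|\beta-\omega|^2}\log|Z_\beta(\beta,t)|\,d\beta, \qquad \omega\in\mathbb{P}_-,
\]
and since $|Z_\beta|\geq \beta_0$ on $\mathbb{R}$ while the Poisson kernel has mass one, we conclude $|c_0^j|=|Z'(w_0^j)|\geq \beta_0$. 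Next, applying the Koebe $1/4$ theorem to the composition $f=Z\circ\phi:\mathbb{D}\to\Omega(t)$, where $\phi:\mathbb{D}\to\mathbb{P}_-$ is a M\"obius map sending $0$ to $w_0^j$ (so that $|\phi'(0)|=2\tilde d_I(t)$), yields
\[
|Z(\alpha)-z_j|\;\geq\;\mathrm{dist}(z_j,\Sigma(t))\;\geq\;\tfrac14|f'(0)| \;=\;\tfrac12|c_0^j|\,\tilde d_I(t)\;\geq\;\tfrac12\beta_0\tilde d_I(t), \qquad \forall\,\alpha\in\mathbb{R}.
\]

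With these bounds in hand, I decompose the numerator. Using $\overline{D_tZ(\alpha)}=F(Z(\alpha),t)-\sum_k\frac{\lambda_k i}{2\pi(Z(\alpha)-z_k)}$ together with $\overline{\dot z_j}=F(z_j,t)-\sum_{k\neq j}\frac{\lambda_k i}{2\pi(z_j-z_k)}$ (which comes from the definition of $\dot z_j$), and simplifying the difference $\frac{1}{Z(\alpha)-z_k}-\frac{1}{z_j-z_k}$, I obtain
\[
\overline{D_tZ(\alpha)-\dot z_j} = [F(Z(\alpha))-F(z_j)] - \frac{\lambda_j i}{2\pi(Z(\alpha)-z_j)} - \sum_{k\neq j}\frac{\lambda_k i}{2\pi}\Big(\frac{1}{Z(\alpha)-z_k}-\frac{1}{z_j-z_k}\Big).
\]
Taking absolute values and using $|F(Z(\alpha))-F(z_j)|\leq 2M_0$, the lower bound $|Z(\alpha)-z_k|\geq \beta_0\tilde d_I/2$ from the Koebe step, and $|z_j-z_k|\geq \tilde d_P$, I get a pointwise estimate of $|D_tZ-\dot z_j|$. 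Multiplying through by $\frac{|\lambda_j|}{\pi|c_0^j||\alpha-w_0^j|^2}\leq \frac{|\lambda_j|}{\pi\beta_0\tilde d_I^2}$, summing in $j$, and collecting the three types of terms produces a bound of the shape
\[
\Big|\sum_j\frac{\lambda_j}{\pi}\frac{D_tZ-\dot z_j}{c_0^j(\alpha-w_0^j)^2}\Big| \;\leq\; \frac{1}{\beta_0}\Big(\frac{\tilde\lambda^2}{2\tilde d_I(t)^3\beta_0}+\frac{\tilde\lambda^2}{2\tilde d_I(t)^2\tilde d_P(t)}+\frac{2M_0\tilde\lambda}{\tilde d_I(t)^2}\Big),
\]
and the hypothesis (\ref{taylorassumption}) says precisely that the parenthesized quantity is $<\beta_0$, whence the right-hand side is $<1$. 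This gives $A_1>0$ uniformly and completes the proof.

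The only genuinely nontrivial points are the two geometric inequalities $|c_0^j|\geq\beta_0$ and $|Z(\alpha)-z_j|\geq\tfrac12\beta_0\tilde d_I$; everything else is the triangle inequality and bookkeeping. Of these, the Poisson-integral lower bound is the conceptual heart, since it transfers a modulus bound for $Z_\alpha$ on $\Sigma(t)$ to a modulus bound for $Z'$ at the preimage $w_0^j$ of each point vortex, without which we would have no way to compare $|c_0^j|$ to the assumed quantity $\beta_0$.
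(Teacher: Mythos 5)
Your proof has the same overall architecture as the paper's: start from the formula $A_1=1+\text{(nonnegative integral)}-\sum_j\frac{\lambda_j}{\pi}\mathrm{Re}\{\cdot\}$ and bound the vortex sum uniformly by something less than $1$, using $|c_0^j|\geq\beta_0$ and a lower bound on $|Z(\alpha)-z_j|$. The decomposition of $\overline{D_tZ-\dot z_j}$ you write is equivalent for the final count to the paper's cruder $|D_tZ-\dot z_j|\leq\|D_tZ\|_\infty+|\dot z_j|$, since you also bound $|F(Z(\alpha))-F(z_j)|$ by $2M_0$. The places where you genuinely deviate are the two geometric estimates, and it is worth comparing them.

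For $|c_0^j|\geq\beta_0$: the paper applies the maximum modulus principle to the bounded holomorphic function $1/(\Phi^{-1})_z$. Your Poisson-integral argument proves the same thing, but as written it is applied to $\log|Z'|$, which need not be bounded above on $\mathbb{P}_-$ (nothing assumed keeps $|Z_\alpha|$ bounded near the boundary); you should apply it to $-\log|Z'|=\log|1/Z'|$, which \emph{is} bounded (by $-\log\beta_0$), vanishes at infinity, and then gives $|1/Z'(\omega)|\leq 1/\beta_0$ directly. This is a small but real fix, and after it the two arguments are equivalent in strength.

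The more substantive issue is the Koebe step. Koebe $1/4$ applied to $f=Z\circ\phi$ gives
\[
\mathrm{dist}(z_j,\Sigma(t))\;\geq\;\tfrac14|f'(0)|\;=\;\tfrac12\,|c_0^j|\,|\mathrm{Im}\,\omega_0^j|\;\geq\;\tfrac12\,\beta_0\,\tilde d_I(t),
\]
and the $\tfrac14$ in Koebe's theorem is sharp, so one cannot do better this way. By contrast the paper uses the pointwise estimate $|Z(\alpha)-z_j|\geq\beta_0|\alpha-\omega_0^j|\geq\beta_0\tilde d_I(t)$, a full factor of $2$ better. If you run your bookkeeping honestly with the Koebe constant, the $j$-term gives $\frac{|\lambda_j|}{\pi\beta_0\tilde d_I}$ rather than $\frac{|\lambda_j|}{2\pi\beta_0\tilde d_I}$, and the final estimate you obtain is
\[
\Big|\sum_j\frac{\lambda_j}{\pi}\frac{D_tZ-\dot z_j}{c_0^j(\alpha-\omega_0^j)^2}\Big|\;\leq\;\frac{1}{\beta_0}\Big(\frac{\tilde\lambda^2}{\tilde d_I^3\beta_0}+\frac{\tilde\lambda^2}{2\tilde d_I^2\tilde d_P}+\frac{2M_0\tilde\lambda}{\tilde d_I^2}\Big),
\]
not the bound you wrote down. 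In other words, your proof establishes the conclusion only under the stronger hypothesis $\frac{\tilde\lambda^2}{\tilde d_I^3\beta_0}+\frac{\tilde\lambda^2}{2\tilde d_I^2\tilde d_P}+\frac{2M_0\tilde\lambda}{\tilde d_I^2}<\beta_0$, and the claimed final display does not follow from your own intermediate estimates. This is the genuine gap: the Koebe route cannot reach the constant in the hypothesis as stated.

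One remark in your favor: the paper's own derivation of $|Z(\alpha)-z_j|\geq\beta_0|\alpha-\omega_0^j|$ rests on the identity $\Phi^{-1}(\alpha)-\Phi^{-1}(\omega_0^j)=\Phi^{-1}_z(z')(\alpha-\omega_0^j)$ ``for some $z'\in\mathbb{P}_-$,'' which is a complex mean value theorem that does not hold for holomorphic functions; the correct statement is that the difference quotient lies in the closed convex hull of $\Phi^{-1}_z$ over the segment, and the convex hull of the annulus $\{|w|\geq\beta_0\}$ is all of $\mathbb{C}$. So your instinct to avoid that step is sound; but you should then accept the weaker constant it forces, or find a different argument (e.g., one exploiting more of the structure of $Z'$) that recovers the sharp bound.
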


\begin{proof}
Use formula
\begin{equation}
A_1=1+\frac{1}{2\pi}\int \frac{|D_tZ(\alpha,t)-D_tZ(\beta,t)|^2}{(\alpha-\beta)^2}d\beta-\sum_{j=1}^N \frac{\lambda_j}{\pi} Re\Big\{ \frac{D_tZ-\dot{z}_j}{c_0^j(\alpha-\omega_0^j)^2}\Big\}
\end{equation}
For $\sum_{j=1}^N \frac{\lambda_j}{\pi} Re\Big\{ \frac{D_tZ-\dot{z}_j}{c_0^j(\alpha-\omega_0^j)^2}\Big\}$, we have 
\begin{align*}
\Big|\sum_{j=1}^N \frac{\lambda_j}{\pi} Re\Big\{ \frac{D_tZ-\dot{z}_j}{c_0^j(\alpha-\omega_0^j)^2}\Big\}\Big|\leq & \frac{\sum_{j=1}^N |\lambda_j| }{\pi}\max_{1\leq j\leq N}|c_0^j|^{-1}(\|D_tZ\|_{\infty}+|\dot{z}_j|)(\inf_{\alpha\in \mathbb{R}}|\alpha-\Phi(z_j)|)^{-2}
\end{align*}
Since $Z(\alpha)=\Phi^{-1}(\alpha)$,  we have $\partial_{\alpha}\Phi^{-1}(\alpha)=Z_{\alpha}$ and $\partial_{\alpha}\Phi^{-1}(\alpha)$ is the boundary value of $(\Phi^{-1})_z$. Note that $(\Phi^{-1})_z$ never vanishes. By maximum modules principle of holomorphic functions (apply to $\frac{1}{(\Phi^{-1})_z}$), 
\begin{equation}
|c_0^j|=|(\Phi^{-1})_z(\Phi(z_j))|\geq \inf_{\alpha\in \mathbb{R}}|Z_{\alpha}|\geq \beta_0.
\end{equation}
Since 
\begin{equation}
    Z(\alpha,t)-z_j(t)=\Phi^{-1}(\alpha,t)-\Phi^{-1}(\omega_0^j)=\Phi^{-1}_z(z')(\alpha-\omega_0^j)
\end{equation} for some $z'\in \mathbb{P}_-$, so we have 
\begin{equation}
    |Z(\alpha,t)-z_j(t)|\geq \beta_0|\alpha-\omega_0^j|.
\end{equation}
Therefore,
\begin{align*}
|D_tZ|\leq |F|+\sum_{j=1}^N \frac{|\lambda_j|}{2\pi \beta_0}\frac{1}{|\alpha-\omega_0^j|}= M_0+\frac{\tilde{\lambda}}{ 2\tilde{d}_I(t)\beta_0}.
\end{align*}
similarly, 
\begin{align*}
|\dot{z}_j(t)|=|\bar{F}+\sum_{k\neq j}\frac{\lambda_k i}{2\pi}\frac{1}{\overline{z_j(t)-z_k(t)}}|\leq M_0+\frac{\sum_{j=1}^N|\lambda_j|}{2\pi} \tilde{d}_P(t)^{-1}=M_0+\frac{\tilde{\lambda}}{ 2\tilde{d}_P(t)}.
\end{align*}
So we obtain
\begin{align*}
|\sum_{j=1}^N \frac{\lambda_j}{\pi} Re\Big\{ \frac{D_tZ-\dot{z}_j}{c_0^j(\alpha-\omega_0^j)^2}\Big\}|\leq & \beta_0^{-1}\frac{\tilde{\lambda}}{\tilde{d}_I(t)^2}(M_0+\frac{\tilde{\lambda}}{ 2\tilde{d}_I(t)\beta_0}+M_0+\frac{\tilde{\lambda}}{ 2\tilde{d}_P(t)})\\
\leq & \beta_0^{-1}(\frac{\tilde{\lambda}^2}{2\tilde{d}_I(t)^3\beta_0}+\frac{\tilde{\lambda}^2}{2\tilde{d}_I(t)^2\tilde{d}_P(t)}+\frac{2M_0\tilde{\lambda}}{\tilde{d}_I(t)^2}).
\end{align*}
If (\ref{taylorassumption}) holds, then 
$$|\sum_{j=1}^N \frac{\lambda_j}{\pi} Re\Big\{ \frac{D_tZ-\dot{z}_j}{c_1^j(\alpha-\omega_0^j)^2}\Big\}|<1.$$
Then $A_1>0$, so strong Taylor sign condition holds. 
\end{proof}
In particular, if $Z_{\alpha}\sim 1$, $d_I(t)\gtrsim 1$, $M_0\ll 1$, and $|\lambda|\ll 1$, then the strong Taylor sign condition holds.

\section{Local wellposedness: proof of Theorem \ref{theorem1}}\label{sectionlocal}

In this section we prove Theorem \ref{theorem1}, i.e., prove local wellposedness of water waves with general $N$ point vortices. As was explained in the introduction, our strategy is to quasilinearize the system (\ref{vortex_boundary}) by taking one time derivative of the momentum equation $(\partial_t^2+ia\partial_{\alpha})\bar{z}=i$, and then obtain a closed energy estimate. 

Recall that we assume
\begin{equation}
\inf_{\alpha\in \mathbb{R}}a(\alpha,0)|z_{\alpha}(\alpha,0)|\geq \alpha_0>0.
\end{equation}
\begin{equation}
C_1|\alpha-\beta|\leq |z(\alpha,0)-z(\beta,0)|\leq C_2|\alpha-\beta|.
\end{equation}
Let $T_0\geq 0$, we make the following a priori assumptions:
\begin{equation}\label{apriori}
\inf_{t\in [0,T_0]}\inf_{\alpha\in \mathbb{R}} a(\alpha,t)|z_{\alpha}(\alpha,t)|\geq \frac{\alpha_0}{2}, \quad \quad  \frac{1}{2}C_1|\alpha-\beta|\leq |z(\alpha,0)-z(\beta,0)|\leq 2C_2|\alpha-\beta|,
\end{equation}
and
\begin{equation}\label{apriori2}
    \sup_{t\in [0,T_0]}\|z_{tt}(\cdot, t)\|_{H^1}\leq 2\|w_0\|_{H^1}.
\end{equation}
Without loss of generality, we assume $T_0\leq 1$.
\begin{remark}
The a priori assumptions (\ref{apriori}) and (\ref{apriori2}) hold at $t=0$. For $0<t\leq T_0$, (\ref{apriori}) and (\ref{apriori2}) will be justified by a bootstrap argument. 
\end{remark}
\subsection{Velocity and acceleration of the point vortices.} For water waves with point vortices, the motion of the point vortices affects the dynamics of the water waves in a fundamental way, so we need to have a good understanding of the velocity and acceleration of the point vortices.
We decompose the velocity field $\bar{v}$ as 
\begin{equation}
\bar{v}(z,t)=F(z,t)-\sum_{j=1}^N\frac{\lambda_j i}{2\pi}\frac{1}{z-z_j(t)}.
\end{equation}
So $F$ is holomorphic in $\Omega(t)$. We have the following estimate for the velocity and acceleration of the point vortices.

\begin{lemma}\label{velocity}
Assume that the assumptions of Theorem \ref{theorem1} hold, and assume the a priori assumptions (\ref{apriori}) and (\ref{apriori2}). Then 
\begin{equation}
|\dot{z}_j(t)|+|\ddot{z}_j(t)|\leq C(N\lambda_{max}, \|z_t\|_{H^2}, \|z_{tt}\|_{H^1},d_I(t)^{-1} , d_P(t)^{-1}, C_1).\end{equation}
 where 
\begin{equation}
\lambda_{max}=\max_{1\leq j\leq N} |\lambda_j|,
\end{equation}
and $C: (\mathbb{R}_+\cup \{0\})^6\rightarrow \mathbb{R}_+\cup\{0\}$ is a polynomial with positive coefficients. 
\end{lemma}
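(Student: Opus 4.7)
The plan is to derive closed-form representations for $\dot z_j$ and $\ddot z_j$ from the Cauchy integral formula (\ref{velocity}) and then estimate the resulting integrals via Cauchy--Schwarz together with Lemma \ref{integral} and Corollary \ref{integral2}. The chord--arc assumption preserved by (\ref{apriori}) supplies the hypotheses needed to invoke these lemmas.

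\textbf{Bounding $\dot z_j$.} The identity
$$\dot z_j(t)=\overline{F(z_j(t),t)}-\sum_{k\neq j}\frac{\lambda_k i}{2\pi\,\overline{z_k(t)-z_j(t)}}$$
from the discussion preceding the lemma reduces matters to controlling $F(z_j,t)$. Evaluating (\ref{velocity}) at $z=z_j(t)\in\Omega(t)$ and applying Cauchy--Schwarz gives
$$|F(z_j,t)|\le \frac{1}{2\pi}\|z_\beta\|_\infty\left\|\frac{1}{z_j-z(\cdot,t)}\right\|_{L^2}\Bigl(\|z_t\|_{L^2}+\sum_k\frac{|\lambda_k|}{2\pi}\left\|\frac{1}{z(\cdot,t)-z_k(t)}\right\|_{L^2}\Bigr).$$
By Lemma \ref{integral} with $k=2$, each of these $L^2$ norms is bounded by a constant depending on the chord--arc constants times $d_I(t)^{-1/2}$, which together with the Sobolev embedding $\|z_t\|_\infty\lesssim\|z_t\|_{H^2}$ and $\|z_\beta\|_\infty\lesssim 1+\|z_\alpha-1\|_{H^1}$ gives the stated polynomial bound for $|\dot z_j|$ in terms of $\|z_t\|_{H^2}$, $d_I^{-1}$, $d_P^{-1}$, $N\lambda_{\max}$ and $C_1$.

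\textbf{Bounding $\ddot z_j$.} Differentiating the above formula in $t$,
$$\ddot z_j=\overline{\tfrac{d}{dt}F(z_j(t),t)}-\sum_{k\neq j}\frac{\lambda_k i}{2\pi}\,\frac{\overline{\dot z_k-\dot z_j}}{\overline{(z_k-z_j)^2}}.$$
The second sum is immediately controlled by $d_P(t)^{-2}$ and the already-bounded quantities $|\dot z_k|, |\dot z_j|$. For the first, I differentiate (\ref{velocity}) in $t$, accounting for the dependence of $z_j(t)$ and of the integrand on time, to obtain
\begin{align*}
\tfrac{d}{dt}F(z_j(t),t)=&\frac{1}{2\pi i}\int\frac{z_{t\beta}\,\Phi}{z_j-z(\beta)}\,d\beta+\frac{1}{2\pi i}\int\frac{z_\beta(z_t(\beta)-\dot z_j)\Phi}{(z_j-z(\beta))^2}\,d\beta\\
&+\frac{1}{2\pi i}\int\frac{z_\beta\,\Phi_t}{z_j-z(\beta)}\,d\beta,
\end{align*}
where $\Phi(\beta,t)=\bar z_t+\sum_k\frac{\lambda_k i}{2\pi(z(\beta)-z_k)}$ and $\Phi_t=\bar z_{tt}-\sum_k\frac{\lambda_k i(z_t(\beta)-\dot z_k)}{2\pi(z(\beta)-z_k)^2}$. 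Each of the three integrals is now estimated by Cauchy--Schwarz, using the uniform bounds on $\|z_\beta\|_\infty$, $\|z_t\|_\infty$, $\|z_{tt}\|_\infty$ (from Sobolev embedding), the bounds on $|\dot z_j|$ and $|\dot z_k|$ just established, and the $L^2$ estimates of Lemma \ref{integral} applied with $k=2$ and $k=4$, giving $\|(z_j-z(\cdot))^{-1}\|_{L^2}\lesssim d_I^{-1/2}$ and $\|(z_j-z(\cdot))^{-2}\|_{L^2}\lesssim d_I^{-3/2}$ respectively.

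\textbf{Main obstacle.} The delicate piece is the third integral, which contains $\bar z_{tt}$ along $\Sigma(t)$; here the a priori assumption (\ref{apriori2}) that $\|z_{tt}(\cdot,t)\|_{H^1}\leq 2\|w_0\|_{H^1}$ is what makes the boundary integral tractable without needing to invoke the momentum equation to substitute for $z_{tt}$. Once this bound is in hand, assembling the various $d_I^{-\ell}$, $d_P^{-m}$ factors into a single polynomial with positive coefficients in the stated arguments is bookkeeping; the absence of any cancellation-sensitive estimate keeps the dependence on all six parameters polynomial, as asserted.
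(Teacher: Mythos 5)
Your route --- Cauchy--Schwarz applied to the Cauchy integral formula for $F$ and for $\frac{d}{dt}F(z_j(t),t)$, combined with the $L^2$ estimates of Lemma \ref{integral} --- is genuinely different from the paper's, which instead invokes the maximum modulus principle: the paper writes down the boundary values of $F$, $F_\zeta$ and $F_t$ on $\Sigma(t)$ explicitly and bounds $|F(z_j)|$, $|F_\zeta(z_j)|$, $|F_t(z_j)|$ by the $L^\infty$ norms of these boundary traces, controlled in turn by Sobolev embedding. Your algebra is sound: the decompositions of $\dot z_j$ and $\ddot z_j$, the formula for $\Phi_t$, and the three-term expansion of $\frac{d}{dt}F(z_j(t),t)$ are all correct.

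There is one genuine discrepancy with the statement, though. Your Cauchy--Schwarz estimate necessarily extracts $\|z_\beta\|_\infty$ from the contour integral, and the constant in Lemma \ref{integral} involves the \emph{upper} chord--arc constant as well as the lower one. The bound you produce therefore depends additionally on $C_2$ (equivalently on $\|z_\alpha-1\|_{H^1}$), which is not among the listed arguments of $C$; so this does not quite prove the lemma as stated. The maximum-principle argument sidesteps this entirely: the boundary value of $F$ is $\bar z_t + \sum_j\frac{\lambda_j i}{2\pi(z(\alpha,t)-z_j(t))}$, whose $L^\infty$ norm is controlled by $\|z_t\|_{H^1}$ and $N\lambda_{max}d_I(t)^{-1}$, and the boundary value of $F_\zeta$ is $\frac{\bar z_{t\alpha}}{z_\alpha} - \sum_j\frac{\lambda_j i}{2\pi(z(\alpha,t)-z_j(t))^2}$, where the only chord--arc constant entering is the lower one, via $\|1/z_\alpha\|_\infty \le 2/C_1$. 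That is why the paper's constant depends only on the six listed quantities. Finally, your remark that (\ref{apriori2}) is ``what makes the boundary integral tractable'' is slightly off: $\|z_{tt}\|_{H^1}$ is already one of the arguments of $C$, so (\ref{apriori2}) is not needed inside this lemma at all --- it is used downstream to convert this polynomial bound into a time-uniform one.
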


\begin{proof} The main tool is the maximum principle of holomorphic functions.

\vspace*{2ex}

\noindent \underline{\textbf{Estimate $\dot{z}_j$:  }}
By (\ref{velocity}), we have 
\begin{equation}\label{movement1}
\begin{split}
\dot{z}_j(t)=&(v-\frac{\lambda_j i}{\overline{z-z_j(t)}})\Big|_{z=z_j(t)}
=\sum_{1\leq k\leq N, k\neq j}\frac{\lambda_k i}{2\pi(\overline{z_j(t)-z_k(t)})}+\bar{F}(z_j(t)).
\end{split}
\end{equation}
Note that $\bar{F}$ is an anti-holomorphic function with boundary value $z_t-\sum_{j=1}^N\frac{\lambda_j i}{2\pi}\frac{1}{\overline{z(\alpha,t)-z_j(t))}}$, by maximum principle, we have 
\begin{equation}
|\bar{F}(z_j(t), t)|\leq \norm{F(\cdot, t)}_{L^{\infty}(\Sigma(t))}=\norm{z_t-\sum_{j=1}^N\frac{\lambda_j i}{2\pi}\frac{1}{\overline{z(\alpha,t)-z_j(t))}}}_{\infty}.
\end{equation}
By Triangle inequality, we obtain
\begin{equation}
\begin{split}
|\dot{z}_j(t)|\leq & \|z_t\|_{\infty}+\norm{\sum_{k\neq j}\frac{\lambda_k i}{2\pi(\overline{z_j(t)-z_k(t)})}}_{\infty}+\norm{\sum_{k=1}^N \frac{\lambda_k i}{2\pi\overline{z(\alpha,t)-z_k(t)}}}_{\infty}\\
\leq &\|z_t\|_{H^1}+  N \lambda_{max} (d_P(t)^{-1}+d_I(t)^{-1}).
\end{split}
\end{equation}

\vspace*{2ex}

\noindent \underline{\textbf{Estimate $\ddot{z}_j(t)$:  }}
Take time derivative of both sides of $\dot{z}_j(t)=\sum_{k:k\neq j}\frac{\lambda_k i}{2\pi(\overline{z_j(t)-z_k(t)})}+\bar{F}(z_j(t),t)$, we obtain
\begin{equation}
\ddot{z}_j(t)=-\sum_{k:k\neq j}\frac{\lambda_k i\overline{\dot{z}_j(t)-\dot{z}_k(t)}}{2\pi(\overline{z_j(t)-z_k(t)})^2}+\bar{F}_z(z_j(t),t)\dot{z}_j(t)+\bar{F}_t(z_j(t),t)
\end{equation}
The boundary value of $F_z$ is 
\begin{equation}\label{Fz}
F_z(z(\alpha,t),t)=\frac{\partial_{\alpha}F(z(\alpha,t),t)}{z_{\alpha}}=\frac{\bar{z}_{t\alpha}}{z_{\alpha}}-\sum_{j=1}^N\frac{\lambda_j i}{2\pi}\frac{1}{(z(\alpha,t)-z_j(t))^2}.
\end{equation}
So 
\begin{equation}
|F_z(z(\alpha,t),t)|\leq \norm{z_{t\alpha}}_{\infty}\norm{\frac{1}{z_{\alpha}}}_{\infty}+N\lambda_{max} d_I(t)^{-2}.
\end{equation}
The boundary value of $F_t$ is 
\begin{equation}\label{Ft}
\begin{split}
F_t(z(\alpha,t),t)=&\partial_t F(z(\alpha,t),t)-\frac{\partial_{\alpha}F(z(\alpha,t),t)}{z_{\alpha}}z_t\\
=&\partial_t \Big(\bar{z}_t+\sum_{j=1}^N \frac{\lambda_j i}{2\pi} \frac{1}{z(\alpha,t)-z_j(t)}\Big)-\frac{\partial_{\alpha}(\bar{z}_t+\sum_{j=1}^N \frac{\lambda_j i}{2\pi} \frac{1}{z(\alpha,t)-z_j(t)}}{z_{\alpha}}z_t\\
=&\bar{z}_{tt}-\sum_{j=1}^N \frac{\lambda_j i}{2\pi}\frac{z_t-\dot{z}_j(t)}{(z(\alpha,t)-z_j(t))^2}-\frac{\bar{z}_{t\alpha}}{z_{\alpha}}z_t+\sum_{j=1}^N \frac{\lambda_j i}{2\pi}\frac{z_t}{(z(\alpha,t)-z_j(t))^2}
\end{split}
\end{equation}
By maximum principle, 
\begin{align*}
\|F_t\|_{\infty}=& \norm{\bar{z}_{tt}-\sum_{j=1}^N \frac{\lambda_j i}{2\pi}\frac{z_t-\dot{z}_j(t)}{(z(\alpha,t)-z_j(t))^2}-\frac{\bar{z}_{t\alpha}}{z_{\alpha}}z_t+\sum_{j=1}^N \frac{\lambda_j i}{2\pi}\frac{z_t}{(z(\alpha,t)-z_j(t))^2}}_{\infty}\\
\leq & \norm{z_{tt}}_{\infty}+\|z_{t\alpha}\|_{\infty}\norm{\frac{1}{z_{\alpha}}}_{\infty}\|z_t\|_{\infty}+N\lambda_{max} \|z_t\|_{\infty} d_I(t)^{-2}+N\lambda_{max} |\dot{z}_j(t)| d_I(t)^{-2}.
\end{align*}
By a priori assumption (\ref{apriori}), for smooth free interface, we have 
\begin{equation}
\inf_{\alpha\in \mathbb{R}}|z_{\alpha}(\alpha,t)|\geq \frac{C_1}{2}.
\end{equation}

\noindent Substitute the estimate from the previous lemma for $\dot{z}_j(t)$, use Sobolev embedding $\|f\|_{L^{\infty}}\leq \|f\|_{H^1}$, we have 
\begin{equation}\label{secondtimederivative}
\begin{split}
|\ddot{z}_j(t)|\leq & \Big\{N\lambda_{max} d_P(t)^{-2}(\|z_t\|_{H^1}+  N\lambda_{max} (d_P(t)^{-1}+d_I(t)^{-1}))\Big\}+\Big\{\|z_{tt}\|_{H^1}+\frac{2}{C_1}\|z_{t}\|_{H^2}\|z_t\|_{H^1}\\
&+N\lambda_{max} \|z_t\|_{H^1} d_I(t)^{-2}+N\lambda_{max} (\|z_t\|_{H^1}+  N\lambda_{max} (d_P(t)^{-1}+d_I(t)^{-1})) d_I(t)^{-2}\Big\}\\
&+\Big\{ ((\|z_t\|_{H^1}+  N\lambda_{max} (d_P(t)^{-1}+d_I(t)^{-1}))\frac{2}{C_1}\|z_t\|_{H^2}+N\lambda_{max}d_I(t)^{-2})\Big\}
\end{split}
\end{equation}
(Here, the first bracket is the estimate for $-\sum_{k\neq j}\frac{\lambda_k i\overline{\dot{z}_j(t)-\dot{z}_k(t)}}{2\pi(\overline{z_j(t)-z_k(t)})^2}$, the second bracket is the estimate for $\bar{F}_z\dot{z}_j(t)$, and the third bracket is the estimate for $\bar{F}_t(z_j(t), t)$).

In abbreviate form, we write the estimate for $\dot{z}_j(t), \ddot{z}_j(t)$ as 
\begin{equation}\label{firstandsecond}
|\dot{z}_j(t)|+|\ddot{z}_j(t)|\leq C(N\lambda_{max}, \|z_t\|_{H^2}, \|z_{tt}\|_{H^1},d_I(t)^{-1}, d_P(t)^{-1}, C_1).
\end{equation}
\end{proof}

\subsection{Quasilinearization.}\label{quasilinearizationsubsection}
Take time derivative on both sides of $\bar{z}_{tt}+ia\bar{z}_{\alpha}=i$, let $u=\bar{z}_t$. We obtain
\begin{equation}\label{time_derivative}
u_{tt}+iau_{\alpha}=-ia_t\bar{z}_{\alpha}=-\frac{\bar{z}_{tt}-i}{|z_{tt}+i|}a_t|z_{\alpha}|:=g.
\end{equation}
To show that $a_t\bar{z}_{\alpha}$ is of lower order, we apply $I-\mathfrak{H}$ on both sides of (\ref{time_derivative}). Then we have 
\begin{equation}\label{quasi}
\begin{split}
&-i(I-\mathfrak{H})a_t \bar{z}_{\alpha}= (I-\mathfrak{H})(u_{tt}+iau_{\alpha})\\
=& [\partial_t^2+ia\partial_{\alpha}, \mathfrak{H}]u+(\partial_t^2+ia\partial_{\alpha})(I-\mathfrak{H})u.
\end{split}
\end{equation}
By lemma \ref{basic1},
\begin{equation}
\begin{split}
[\partial_t^2+ia\partial_{\alpha}, \mathfrak{H}]u= 2[z_{tt}, \mathfrak{H}]\frac{\bar{z}_{t\alpha}}{z_{\alpha}}+2[z_t, \mathfrak{H}] \frac{\bar{z}_{tt\alpha}}{z_{\alpha}}-\frac{1}{\pi i}\int \Big(\frac{z_t(\alpha,t)-z_t(\beta,t)}{z(\alpha,t)-z(\beta,t)}\Big)^2 \bar{z}_{t\beta}d\beta
\end{split}
\end{equation}
By lemma \ref{almost}, we have 
\begin{equation}
\begin{split}
(\partial_t^2+ia\partial_{\alpha})(I-\mathfrak{H})u=& -\frac{i}{\pi}\sum_{j=1}^N (\partial_t^2+ia\partial_{\alpha})\frac{\lambda_j}{z(\alpha,t)-z_j(t)}\\
=& \frac{i}{\pi}\sum_{j=1}^N \lambda_j\Big(\frac{2z_{tt}+i-\ddot{z}_j}{(z(\alpha,t)-z_j(t))^2}-2\frac{(z_t-\dot{z}_j(t))^2}{(z(\alpha,t)-z_j(t))^3}\Big)
\end{split}
\end{equation}
By (\ref{firstandsecond}), 
\begin{equation}
|\dot{z}_j(t)|+|\ddot{z}_j(t)|\leq C(N\lambda_{max},  \|z_t\|_{H^2}, \|z_{tt}\|_{H^1}, d_I(t)^{-1}, d_P(t)^{-1}, C_1).
\end{equation}
We rewrite $-i(I-\mathfrak{H})a_t\bar{z}_{\alpha}$ as 
\begin{equation}\label{goodgood}
-i(I-\mathfrak{H})a_t\bar{z}_{\alpha}=g_1+g_2,
\end{equation}
where
\begin{equation}\label{GG1}
g_1:=2[z_{tt}, \mathfrak{H}]\frac{\bar{z}_{t\alpha}}{z_{\alpha}}+2[z_t, \mathfrak{H}] \frac{\bar{z}_{tt\alpha}}{z_{\alpha}}-\frac{1}{\pi i}\int \Big(\frac{z_t(\alpha,t)-z_t(\beta,t)}{z(\alpha,t)-z(\beta,t)}\Big)^2 \bar{z}_{t\beta}d\beta.
\end{equation}
\begin{equation}\label{GG2}
g_2:=\frac{i}{\pi}\sum_{j=1}^N \lambda_j\Big(\frac{2z_{tt}+i-\ddot{z}_j}{(z(\alpha,t)-z_j(t))^2}-2\frac{(z_t-\dot{z}_j(t))^2}{(z(\alpha,t)-z_j(t))^3}\Big).
\end{equation}
Multiply both sides of (\ref{goodgood}) by $i\frac{z_{\alpha}}{|z_{\alpha}|}$ and take real parts, we obtain,
\begin{equation}
(I+\mathfrak{K}^{\ast})a_t|z_{\alpha}|=Re(\frac{iz_{\alpha}}{|z_{\alpha}|}(g_1+g_2)),
\end{equation}
where 
\begin{equation}
\mathfrak{K}^{\ast} f(\alpha)=p.v. \int Re\Big\{-\frac{1}{\pi i}\frac{z_{\alpha}}{|z_{\alpha}|}\frac{|z_{\beta}(\beta,t)|}{z(\alpha)-z(\beta)}   \Big\}f(\beta)d\beta.
\end{equation}
Both $g_1$ and $g_2$ are lower order terms.

Assuming the a priori assumptions (\ref{apriori}) and (\ref{apriori2}), for $0\leq t\leq T_0$, $(I+\mathfrak{K}^{\ast})$ is invertible on $L^2(\Sigma)$, so we have 
\begin{equation}
a_t|z_{\alpha}|=(I+\mathfrak{K}^{\ast})^{-1}\Big\{Re(\frac{iz_{\alpha}}{|z_{\alpha}|}(g_1+g_2))\Big\}.
\end{equation}
By Lemma \ref{layer}, we have 
\begin{equation}
    \|a_tz_{\alpha}\|_{H^s}\leq C\norm{\frac{iz_{\alpha}}{|z_{\alpha}|}(g_1+g_2)}_{H^s},
\end{equation}
for $C$  depends on $C_1, C_2$, and $\|z_{\alpha}-1\|_{H^{s-1}}$.

So (\ref{time_derivative}) can be written as 
\begin{equation}\label{time22}
u_{tt}+a|z_{\alpha}|\bold{n}\frac{u_{\alpha}}{z_{\alpha}}=g,
 \end{equation}
where
\begin{equation}
\bold{n}=\frac{iz_{\alpha}}{|z_{\alpha}|},\quad \quad a|z_{\alpha}|=|z_{tt}+i|=|\bar{u}_t+i|,
\end{equation}
and
\begin{equation}
g:=\frac{\bar{z}_{tt}-i}{|z_{tt}+i|}(I+\mathfrak{K}^{\ast})^{-1}\Big\{Re(\frac{iz_{\alpha}}{|z_{\alpha}|}(g_1+g_2))\Big\}
\end{equation}
Denote \footnote{This $A$ here is not the same as that in \S 3 and \S 5. }
\begin{equation}
A:=a|z_{\alpha}|,\quad \quad D:=\frac{\partial_{\alpha}}{z_{\alpha}}.
\end{equation}
Let $k\in \mathbb{N}$. Apply $\partial_{\alpha}^k$ on both sides of (\ref{time22}), we have 
\begin{equation}\label{alpha_k}
(\partial_{\alpha}^k u)_{tt}+A\bold{n}\partial_{\alpha}^k 
(\frac{\partial_{\alpha}}{z_{\alpha}}u)=\partial_{\alpha}^k g-[\partial_{\alpha}^k,  A\bold{n}]Du.
\end{equation}
We have 
\begin{equation}
\begin{split}
[\partial_{\alpha}^k, A\bold{n}]Du
=& \sum_{m=1}^k c_{m,k}\partial_{\alpha}^m(A\bold{n}) \partial_{\alpha}^{k-m}Du,
\end{split}
\end{equation}
where 
$$c_{m,k}=\frac{k!}{m!(k-m)!}.$$
So we obtain
\begin{equation}\label{alpha_k1}
\begin{cases}
\partial_t^2\partial_{\alpha}^k u+A\bold{n}\partial_{\alpha}^k \frac{\partial_{\alpha}}{z_{\alpha}}u=g_k,\\
A=a|z_{\alpha}|\\
\bold{n}=\frac{iz_{\alpha}}{|z_{\alpha}|}=\frac{\bar{u}_t+i}{|\bar{u}_t+i|}\\
g_k=\partial_{\alpha}^k g-\sum_{m=1}^k c_{m,k}\partial_{\alpha}^m (A\bold{n}) \partial_{\alpha}^{k-m}Du.
\end{cases}
\end{equation}

\subsection{Energy estimates.}
Decompose $u=f+p$ as in (\ref{decomposition}).
Let $s\in \mathbb{N}$. With quasilinearization (\ref{alpha_k1}), we define the energy $E(t)$ as 
\begin{equation}\label{energy}
E(t):=\sum_{k=0}^s \Big\{\int \frac{|z_{\alpha}|^{-2k+1}}{a|z_{\alpha}|}|\partial_{\alpha}^k u_t|^2 d\alpha + Re\int \bold{n} |z_{\alpha}|D^{k+1}f \overline{D^k f} d\alpha\Big\}
\end{equation}
Note that $f$ is holomorphic in $\Omega(t)$, so is $D^m f$, for any integer $m\geq 0$. 
So we have 
\begin{align*}
Re\int \bold{n}|z_{\alpha}| D^{k+1}f\overline{D^k f}d\alpha=Re \int i\partial_{\alpha} D^k f\overline{D^k f}=\int_{\Omega(t)} |\nabla D^k f|^2 dxdy\geq 0.
\end{align*}
So the energy $E$ is positive. 

We can bound $u_t$ by the energy $E$. Assume the bootstrap assumptions (\ref{apriori}) and (\ref{apriori2}), since $a|z_{\alpha}|=|z_{tt}+i|$, we have 
\begin{equation}
    a|z_{\alpha}|\leq |z_{tt}|+1\leq \|z_{tt}\|_{L^{\infty}}+1\leq \|z_{tt}\|_{H^1}+1\leq 2\|w_0\|_{H^s}+1.
\end{equation}

Without loss of generality, we assume $C_2>1$. By the definition of $E$ and the definition of $\mathcal{T}$, for $t\in [0,T]$, we have 
\begin{align}
    E(t)\geq &\sum_{k=0}^s\int \frac{\inf_{\alpha\in \mathbb{R}}|z_{\alpha}|^{-2k+1}}{\sup_{\alpha\in \mathbb{R}}a|z_{\alpha}|}|\partial_{\alpha}^k u_t(\alpha,t)|^2 d\alpha\\
    \geq & \sum_{k=0}^s \frac{(2C_2)^{-2k+1}}{2\|w_0\|_{H^s}+1}\int |\partial_{\alpha}^k u_t(\alpha,t)|^2 d\alpha\\
    \geq &\frac{(2C_2)^{-2s+1}}{2\|w_0\|_{H^s}+1}\|z_{tt}(\cdot,t)\|_{H^s}^2.
\end{align}
So we have 
\begin{equation}\label{boundaceleration}
    \|z_{tt}(\cdot,t)\|_{H^s}\leq \frac{(2\|w_0\|_{H^s}+1)^{1/2}}{(2C_2)^{-s+1/2}}E(t)^{1/2}.
\end{equation}

\noindent Let 
\begin{equation}
\mathcal{E}(t):=\max_{\tau\in [0,t]}E(\tau).
\end{equation}

Note that 
\begin{equation}
\begin{split}
&\frac{d}{dt}Re\int \bold{n}|z_{\alpha}|D^{k+1}f \overline{D^k f} d\alpha=\frac{d}{dt}Re\int \frac{iz_{\alpha}}{|z_{\alpha}|}|z_{\alpha}|\frac{\partial_{\alpha}}{z_{\alpha}}D^{k}f \overline{D^k f} d\alpha\\
=& Re \frac{d}{dt}\int i \partial_{\alpha} D^k f \overline{D^k f}\\
=& Re \frac{d}{dt}\Big\{\int i \partial_{\alpha} D^k u \overline{D^k u}-\int i \partial_{\alpha} D^k u \overline{D^k p}-\int i \partial_{\alpha} D^k p \overline{D^k u}+\int i \partial_{\alpha} D^k p \overline{D^k p}\Big\}\\
:=& Re\frac{d}{dt} (I_1+I_2+I_3+I_4).
\end{split}
\end{equation}
Note that  
\begin{equation}
p=-\frac{i}{2\pi}\sum_{j=1}^N \frac{\lambda_j}{z(\alpha,t)-z_j(t)}.
\end{equation}
So we have 
\begin{equation}
(p)_t=\frac{i}{2\pi}\sum_{j=1}^N \frac{\lambda_j(z_t-\dot{z}_j(t))}{(z(\alpha,t)-z_j(t))^2}.
\end{equation}
Observe that 
\begin{equation}
D^m\frac{1}{z(\alpha,t)-z_j(t)}=\frac{(-1)^m m!}{ (z(\alpha,t)-z_j(t))^{m+1}},
\end{equation}
\begin{equation}
D^m\frac{1}{(z(\alpha,t)-z_j(t))^2}=\frac{(-1)^{m}(m+1)!}{(z(\alpha,t)-z_j(t))^{m+2}},
\end{equation}

Therefore, for $k\geq 2$, by lemma \ref{integral} and the a priori assumptions (\ref{apriori}) and (\ref{apriori2}), we have 
\begin{equation}\label{part1}
\|D^{k} p\|_{L^2\cap L^{\infty}}+\|\partial_{\alpha}D^k p\|_{L^2\cap L^{\infty}}+
\|\partial_t D^{k} p\|_{L^2\cap L^{\infty}}+\|\partial_t\partial_{\alpha}D^k p\|_{L^2\cap L^{\infty}}\leq \tilde{C},
\end{equation}
for some 
$$\tilde{C}=\tilde{C}(\|z_{\beta}\|_{\infty},\|z_t\|_{L^2}, \|z_{tt}\|_{L^2}, d_I(t)^{-1}, d_P(t)^{-1}, N\lambda_{\max},C_1, C_2).$$
We can take $\tilde{C}$ to be a polynomial with positive coefficients.

We need to estimate $\|D^k u\|_{L^2}$ as well. Note that for $0\leq t\leq T_0\leq 1$,  
\begin{equation}
\begin{split}
\|z_t(t)\|_{H^s}\leq &\|v_0\|_{H^s}+\norm{\int_0^t z_{tt}(\cdot, \tau)d\tau}_{H^s}\leq \|v_0\|_{H^s}+t\sup_{\tau \in [0,t]}\|z_{tt}(t=\tau)\|_{H^s}\\
\leq & \|v_0\|_{H^s}+\frac{(2\|w_0\|_{H^s}+1)^{1/2}}{(2C_2)^{-s+1/2}}\mathcal{E}(t)^{1/2}\\
\leq &C(\|v_0\|_{H^s}, \|w_0\|_{H^s}, C_2, \mathcal{E}(t)).
\end{split}
\end{equation}
Similarly, we have 
\begin{equation}
\|z_{\alpha}(t)-1\|_{H^{s-1}}\leq C( \|\partial_{\alpha}\xi_0\|_{H^{s-1}},\|v_0\|_{H^s}, \|w_0\|_{H^s}, C_2, \mathcal{E}(t)).
\end{equation}
Therefore, under the a priori assumption (\ref{apriori}), using that $D^k=(\frac{\partial_{\alpha}}{z_{\alpha}})^k$, we have 
\begin{equation}
\|D^k u\|_{L^2}\leq C( \|\partial_{\alpha}\xi_0\|_{H^{s-1}},\|v_0\|_{H^s}, \|w_0\|_{H^s}, C_2, \mathcal{E}(t)),
\end{equation}
for some polynomial $C$ with positive coefficients.

From (\ref{part1}), integration by parts if necessary, we see that 
\begin{align*}
Re\frac{d}{dt} (I_2+I_3+I_4)\leq C(\|\partial_{\alpha}\xi_0\|_{H^{s-1}},\|v_0\|_{H^s}, \|w_0\|_{H^s}, \mathcal{E}(t), d_I(t)^{-1}, d_P(t)^{-1}, N\lambda_{max}, C_1, C_2)
\end{align*}
For $I_1$, 
\begin{align*}
&Re \frac{d}{dt}\int i \partial_{\alpha} D^k u \overline{D^k u}
= Re~ i\int \partial_{\alpha}\partial_t D^k u \overline{D^k u}+\partial_{\alpha} D^k u \overline{\partial_t D^k u}\\
=&2Re\int i \partial_{\alpha}D^k u\overline{\partial_t D^k u}.
\end{align*}
We have
\begin{equation}
\partial_{\alpha} D^k u=\frac{1}{z_{\alpha}^{k-1}}\partial_{\alpha}^k D u+\frac{1}{z_{\alpha}^{k-2}}\partial_{\alpha}^{k-1}(\frac{1}{z_{\alpha}})\partial_{\alpha}Du+F_k,
\end{equation}
where $F_k$ consists of lower order terms. We have 
\begin{equation}
\| F_k\|_{H^1}\leq C(\|\partial_{\alpha}\xi_0\|_{H^{s-1}},\|v_0\|_{H^s}, \|w_0\|_{H^s}, \mathcal{E}(t), d_I(t)^{-1}, d_P(t)^{-1}, N\lambda_{max}, C_1, C_2).
\end{equation}
We have also that 
\begin{equation}
\begin{split}
&\norm{\frac{1}{z_{\alpha}^{k-2}}\partial_{\alpha}^{k-1}(\frac{1}{z_{\alpha}})\partial_{\alpha}Du}_{L^2}\\
\leq  &C(\|\partial_{\alpha}\xi_0\|_{H^{s-1}},\|v_0\|_{H^s}, \|w_0\|_{H^s}, \mathcal{E}(t), d_I(t)^{-1}, d_P(t)^{-1}, N\lambda_{max}, C_1, C_2).
\end{split}
\end{equation}
Similarly, we write
\begin{equation}
\partial_t D^ku=\frac{1}{z_{\alpha}^k}\partial_t \partial_{\alpha}^ku+\frac{1}{z_{\alpha}^{k-1}}\partial_{\alpha}^{k-1}\partial_t (\frac{1}{z_{\alpha}})u_{\alpha}+G_k,
\end{equation}
where $G_k$ consists of lower order terms, and
\begin{equation}
\|G_k\|_{H^1}\leq C(\|\partial_{\alpha}\xi_0\|_{H^{s-1}},\|v_0\|_{H^s}, \|w_0\|_{H^s}, \mathcal{E}(t), d_I(t)^{-1}, d_P(t)^{-1}, N\lambda_{max}, C_1, C_2).
\end{equation}
Note that 
\begin{equation}
\partial_{\alpha}^{k-1}\partial_t z_{\alpha}^{-1}=-\frac{\partial_{\alpha}^k z_t}{z_{\alpha}^2}+H_k,
\end{equation}
where $H_k$ consists of lower order terms. So we can obtain
\begin{equation}
\begin{split}
&\norm{ \frac{1}{z_{\alpha}^{k-1}}\partial_{\alpha}^{k-1}\partial_t (\frac{1}{z_{\alpha}})u_{\alpha}}_{L^2}\\
\leq &C(\|\partial_{\alpha}\xi_0\|_{H^{s-1}},\|v_0\|_{H^s}, \|w_0\|_{H^s}, \mathcal{E}(t), d_I(t)^{-1}, d_P(t)^{-1}, N\lambda_{max}, C_1, C_2).
\end{split}
\end{equation}
Therefore, from the above estimates, we have 
\begin{align*}
2Re\int i \partial_{\alpha}D^k u\overline{\partial_t D^k u}=&2Re \int i \frac{1}{z_{\alpha}^{k-1}}\partial_{\alpha}^k Du \frac{1}{\bar{z}_{\alpha}^k}\overline{\partial_t \partial_{\alpha}^k u}+error_k\\
=&2Re\int \frac{iz_{\alpha}}{|z_{\alpha}|}\frac{1}{|z_{\alpha}|^{2k-1}}\partial_{\alpha}^k Du \overline{\partial_t \partial_{\alpha}^k u}+error_k\\
=& 2Re\int \bold{n} \frac{1}{|z_{\alpha}|^{2k-1}}\partial_{\alpha}^k Du \overline{\partial_t \partial_{\alpha}^k u}+error_k,
\end{align*}
where 
\begin{equation}
\begin{split}
error_k=&2Re\int i \partial_{\alpha}D^k u\overline{\partial_t D^k u}-2Re\int \bold{n} \frac{1}{|z_{\alpha}|^{2k-1}}\partial_{\alpha}^k Du \overline{\partial_t \partial_{\alpha}^k u}\\
\leq & C(\|\partial_{\alpha}\xi_0\|_{H^{s-1}},\|v_0\|_{H^s}, \|w_0\|_{H^s}, \mathcal{E}(t), d_I(t)^{-1}, d_P(t)^{-1}, N\lambda_{max}, C_1, C_2).
\end{split}
\end{equation}

Observe that 
\begin{equation}
\begin{split}
\frac{dE}{dt}=& \sum_{k=0}^s  \Big\{\int \Big(\frac{|z_{\alpha}|^{-2k+1}}{a|z_{\alpha}|}\Big)_t|\partial_{\alpha}^k u_t|^2   +2 Re\int  \frac{|z_{\alpha}|^{-2k+1}}{a|z_{\alpha}|}(\partial_{\alpha}^k u_{tt})\overline{\partial_{\alpha}^ku_t} +\frac{d}{dt}Re\int \bold{n} |z_{\alpha}|D^{k+1}f \overline{D^k f} d\alpha\\
=&\sum_{k=0}^s  \Big\{\int \Big(\frac{|z_{\alpha}|^{-2k+1}}{a|z_{\alpha}|}\Big)_t|\partial_{\alpha}^k u_t|^2   +2 Re\int  \frac{|z_{\alpha}|^{-2k+1}}{a|z_{\alpha}|}(\partial_{\alpha}^k u_{tt})\overline{\partial_{\alpha}^ku_t} +2Re\int \bold{n} \frac{1}{|z_{\alpha}|^{2k-1}}\partial_{\alpha}^k Du \overline{\partial_t \partial_{\alpha}^k u}\\
&\quad\quad+error_k  \Big\}\\
=& \sum_{k=0}^s  \Big\{\int \Big(\frac{|z_{\alpha}|^{-2k+1}}{a|z_{\alpha}|}\Big)_t|\partial_{\alpha}^k u_t|^2   +2 Re\int  \frac{|z_{\alpha}|^{-2k+1}}{a|z_{\alpha}|}\Big\{(\partial_{\alpha}^k u_{tt})+a|z_{\alpha}|\bold{n} \partial_{\alpha}^k Du\Big\}\overline{\partial_{\alpha}^ku_t}+error_k  \Big\}\\
=&\sum_{k=0}^s  \Big\{\int \Big(\frac{|z_{\alpha}|^{-2k+1}}{a|z_{\alpha}|}\Big)_t|\partial_{\alpha}^k u_t|^2   +2 Re\int  \frac{|z_{\alpha}|^{-2k+1}}{a|z_{\alpha}|}g_k\overline{\partial_{\alpha}^ku_t}+error_k  \Big\}\\
\leq & \sum_{k=0}^s \norm{ \Big(\frac{|z_{\alpha}|^{-2k+1}}{a|z_{\alpha}|}\Big)_t}_{\infty} \|\partial_{\alpha}u_t\|_{L^2}^2+2\norm{\frac{|z_{\alpha}|^{-2k+1}}{a|z_{\alpha}|}}_{\infty} \|g_k\|_{L^2} \|\partial_{\alpha}^k u_t\|_{L^2}+error_k
\end{split}
\end{equation}
It's easy to obtain the estimate that 
\begin{equation}
\|g_k\|_{L^2}\leq C(\|\partial_{\alpha}\xi_0\|_{H^{s-1}},\|v_0\|_{H^s}, \|w_0\|_{H^s}, \mathcal{E}(t), d_I(t)^{-1}, d_P(t)^{-1}, N\lambda_{max}, C_1, C_2, \alpha_0),
\end{equation}
and
\begin{equation}
\begin{split}
&\norm{ \Big(\frac{|z_{\alpha}|^{-2k}}{a}\Big)_t}_{\infty}+\norm{\frac{|z_{\alpha}|^{-2k}}{a}}_{\infty}\\
\leq  & C(\|\partial_{\alpha}\xi_0\|_{H^{s-1}},\|v_0\|_{H^s}, \|w_0\|_{H^s}, \mathcal{E}(t), d_I(t)^{-1}, d_P(t)^{-1}, N\lambda_{max}, C_1, C_2, \alpha_0)
\end{split}
\end{equation}

Then we obtain
\begin{equation}
\frac{dE}{dt}\leq C(\|\partial_{\alpha}\xi_0\|_{H^{s-1}},\|v_0\|_{H^s},  \|w_0\|_{H^s}, \mathcal{E}(t), d_I(t)^{-1}, d_P(t)^{-1}, N\lambda_{max}, C_1, C_2, \alpha_0)
\end{equation}
For some polynomial $C$ with positive coefficients.  So we obtain
\begin{equation}
\begin{split}
E(t & )\leq  E(0)\\
&+\int_0^t C(\|\partial_{\alpha}\xi_0\|_{H^{s-1}},\|v_0\|_{H^s}, \|w_0\|_{H^s}, \mathcal{E}(\tau), d_I(\tau)^{-1}, d_P(\tau)^{-1}, N\lambda
_{max}, C_1, C_2, \alpha_0) ds.
\end{split}
\end{equation}
Note that $E(0)=\mathcal{E}(0)$. Take $sup_{0\leq \tau\leq t}$,  we obtain
\begin{equation}\label{growE}
\begin{split}
\mathcal{E}(t &)\leq \mathcal{E}(0)\\
&+\int_0^t C(\|\partial_{\alpha}\xi_0\|_{H^{s-1}},\|v_0\|_{H^s}, \|w_0\|_{H^s}, \mathcal{E}(\tau), d_I(\tau)^{-1}, d_P(\tau)^{-1}, N\lambda
_{max}, C_1, C_2, \alpha_0) d\tau.
\end{split}
\end{equation}

\vspace*{2ex}

\noindent \underline{\textbf{Growth of } $d_P(t)^{-1}, d_I(t)^{-1}$.} To obtain a closed energy estimate, we need also to control the growth of $d_P(t)^{-1}$ and $d_I(t)^{-1}$.  Recall that $d_P(t)=\min_{j\neq k}\{|z_j(t)-z_k(t)|\}$, so we have 
\begin{equation}
    d_P(t)^{-1}=\max_{1\leq j\neq k\leq N}\frac{1}{|z_j(t)-z_k(t)|}.
\end{equation}
Note that
\begin{align*}
\Big|\frac{d}{dt}|z_j(t)-z_k(t)|^{-1}\Big|=&\Big|\frac{(z_j(t)-z_k(t))\cdot (\dot{z}_j(t)-\dot{z}_k(t))}{|z_j(t)-z_k(t)|^3}\Big|\leq |\dot{z}_j(t)-\dot{z}_k(t)|d_p(t)^{-2},
\end{align*}
Use (\ref{firstandsecond}), and control $\|z_t\|_{H^2}, \|z_{tt}\|_{H^1}$ by $\mathcal{E}$, we obtain
\begin{equation}\label{growP}
|\frac{d}{dt}d_P(t)^{-1}|\leq C(N\lambda_{max}, d_P(t)^{-1}, \mathcal{E}, d_I(t)^{-1}, C_1, C_2, \alpha_0).
\end{equation}
To estimate $\frac{d}{dt}d_I(t)^{-1}$, we estimate $\frac{d}{dt}|z_j(t)-z(\alpha,t)|^{-1}$. We have
$$\Big|\frac{d}{dt}|z_j(t)-z(\alpha,t)|^{-1}\Big|=\Big|\frac{(z_j(t)-z(\alpha,t))\cdot (\dot{z}_j(t)-z_t)}{|z_j(t)-z(\alpha,t)|^3}\Big|\leq |\dot{z}_j(t)-z_t|d_I(t)^{-2}.$$
Since $|\frac{d}{dt} d_I(t)^{-1}|\leq \max_{1\leq j\leq N}\sup_{\alpha}|\frac{d}{dt}|z_j(t)-z(\alpha,t)|^{-1}|$, use (\ref{firstandsecond}), and control $\|z_t\|_{H^2}, \|z_{tt}\|_{H^1}$ by $\mathcal{E}$, we obtain
\begin{equation}\label{growI}
|\frac{d}{dt}d_I(t)^{-1}|\leq C(N\lambda_{max}, d_P(t)^{-1}, \mathcal{E}, d_I(t)^{-1}, C_1, C_2, \alpha_0).
\end{equation}
Combine (\ref{growE}), (\ref{growP}), (\ref{growI}), we obtain
\begin{equation}\label{alltime}
\begin{split}
&\frac{d}{dt}\Big(d_P(t)^{-1}+d_I(t)^{-1}+\mathcal{E}(t)\Big)
\leq C,
\end{split}
\end{equation}
where
\begin{equation}\label{207}
    C=C(\|\partial_{\alpha}\xi_0\|_{H^{s-1}},\|v_0\|_{H^s},  \|w_0\|_{H^s}, \mathcal{E}(t), d_I(t)^{-1}, d_P(t)^{-1}, N\lambda_{max}, C_1, C_2, \alpha_0)
\end{equation}
is a polynomial  with positive coefficients (the coefficients are absolute constants which do not depend on $N\lambda_
{max}$, $d_P(0)^{-1}, d_I(0)^{-1}, \mathcal{E}, C_1, C_2, \alpha_0$, $\|\partial_{\alpha}\xi_0\|_{H^{s-1}}, \|v_0\|_{H^s}$, $\|w_0\|_{H^s}$).  So we can use bootstrap argument to obtain closed energy estimates.

\begin{lemma}
Assume the assumptions of Theorem \ref{theorem1}. There exists $T_0$ depends on $N\lambda_{max}$, $d_P(0)^{-1}$, $d_I(0)^{-1}$,  $\norm{(\partial_{\alpha}\xi_0,v_0,w_0)}_{H^{s-1}\times H^s\times H^s}$, $\mathcal{E}(0), C_1, C_2, \alpha_0, s$ such that for all $0\leq t\leq T_0$,
\begin{equation}\label{double}
\begin{cases}
\mathcal{E}(t)+d_P(t)^{-1}+d_I(t)^{-1}\leq 2(\mathcal{E}(0)+d_P(0)^{-1}+d_I(0)^{-1}),\\
\|z_{tt}(\cdot,t)\|_{H^s}\leq 2\|w_0\|_{H^s},\\
\frac{C_1}{2}|\alpha-\beta|\leq |z(\alpha,t)-z(\beta,t)|\leq 2C_2|\alpha-\beta|\\
\inf_{\alpha\in \mathbb{R}}a(\alpha,t)|z_{\alpha}|\geq \frac{1}{2}\alpha_0.
\end{cases}
\end{equation}
\end{lemma}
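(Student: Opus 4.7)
The plan is a standard continuity/bootstrap argument. Let me define the combined quantity $Q(t):=\mathcal{E}(t)+d_P(t)^{-1}+d_I(t)^{-1}$ and let $\mathcal{T}\subset [0,1]$ be the set of times $T$ for which all four inequalities in (\ref{double}) hold on $[0,T]$. At $t=0$ the first reduces to $Q(0)\leq 2Q(0)$, the second is $\|w_0\|_{H^s}\leq 2\|w_0\|_{H^s}$, and the last two specialize to the hypotheses of Theorem \ref{theorem1}, so $0\in\mathcal{T}$. By continuity in $t$ of the quantities $\mathcal{E},\ d_P^{-1},\ d_I^{-1},\ \|z_{tt}(\cdot,t)\|_{H^s},\ z(\alpha,t),\ a(\alpha,t)|z_\alpha(\alpha,t)|$ (see the obstacle discussion below), $\mathcal{T}$ is closed. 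The goal is to show $\sup\mathcal{T}\geq T_0$ for a quantitatively specified $T_0>0$.

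For any $T\in \mathcal{T}$, the four inequalities of (\ref{double}) imply the weaker a priori assumptions (\ref{apriori}), (\ref{apriori2}) on $[0,T]$ (using $\|\cdot\|_{H^1}\leq \|\cdot\|_{H^s}$), so the energy inequality (\ref{alltime}) with the polynomial bound (\ref{207}) applies. This is a closed differential inequality $Q'(t)\leq F(Q(t))$ in which the coefficients of the polynomial $F$ depend only on the listed data $N\lambda_{max},\ \|\partial_\alpha\xi_0\|_{H^{s-1}},\ \|v_0\|_{H^s},\ \|w_0\|_{H^s},\ C_1,\ C_2,\ \alpha_0,\ s$. Standard ODE comparison with the solution of $\tilde Q'=F(\tilde Q),\ \tilde Q(0)=Q(0)$, yields a time $T_1>0$ (depending only on these parameters and $Q(0)$) such that $Q(t)\leq \tfrac{3}{2}Q(0)$ on $[0,\min(T,T_1)]$, strictly improving the first inequality in (\ref{double}). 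For the other three inequalities, we combine (\ref{boundaceleration}) with $\mathcal{E}(t)\leq \tfrac{3}{2}\mathcal{E}(0)$ and the continuity of $z_{tt}$ in $H^s$ to obtain $\|z_{tt}(\cdot,t)\|_{H^s}\leq \tfrac{3}{2}\|w_0\|_{H^s}$ on some $[0,T_2]$; we use $z(\alpha,t)-z(\alpha,0)=\int_0^t z_t(\alpha,\tau)d\tau$ together with a Sobolev control of $\|z_{t\alpha}\|_\infty$ by $\mathcal{E}(t)$ to preserve the chord-arc condition on $[0,T_3]$; and we use $a|z_\alpha|=|z_{tt}+i|$ together with the uniform-in-$\alpha$ continuity of $z_{tt}$ in $t$ (via Sobolev embedding) to preserve the Taylor lower bound on $[0,T_4]$. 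Taking $T_0:=\min(T_1,T_2,T_3,T_4)$, every inequality of (\ref{double}) holds strictly on $[0,\min(T,T_0)]$, so openness forces $\sup\mathcal{T}\geq T_0$.

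The main obstacle is justifying the required time-continuity of high-regularity quantities, without which the closedness of $\mathcal{T}$ and the continuity arguments for the last three inequalities are not a priori legitimate. The standard remedy is to run the bootstrap on a mollified/Picard-iterated family of approximate solutions, whose smoothness in $t$ is automatic; the energy bounds above are uniform in the mollification parameter, and passing to the limit produces the actual solution together with the needed continuity. Once this is in place, the argument of the previous paragraph closes the bootstrap and yields the lemma.
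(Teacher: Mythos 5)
Your overall plan is the same continuity/bootstrap scheme the paper uses, and most pieces track the paper's proof: defining a bootstrap set containing $0$, invoking \eqref{alltime} and the polynomial bound \eqref{207} to get a closed differential inequality for $Q=\mathcal{E}+d_P^{-1}+d_I^{-1}$, integrating it for a short time to improve the constant from $2$ to $3/2$, propagating the chord--arc condition by writing $z_\alpha(\cdot,t)-z_\alpha(\cdot,0)=\int_0^t z_{t\alpha}\,d\tau$ and using Sobolev control, and propagating the Taylor bound through $a|z_\alpha|=|z_{tt}+i|$.

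There is a genuine gap, though, in how you propagate the second inequality $\|z_{tt}(\cdot,t)\|\leq 2\|w_0\|$. You propose to ``combine \eqref{boundaceleration} with $\mathcal{E}(t)\leq\tfrac32\mathcal{E}(0)$ and the continuity of $z_{tt}$.'' But \eqref{boundaceleration} bounds $\|z_{tt}(\cdot,t)\|_{H^s}$ by $C(C_2,s,\|w_0\|_{H^s})\,\mathcal{E}(0)^{1/2}$, which is a quantity with no a priori relation to $\tfrac32\|w_0\|_{H^s}$ (the energy $E$ contains the positive potential terms $\mathrm{Re}\int \mathbf{n}|z_\alpha|D^{k+1}f\,\overline{D^kf}$ as well, so $E(0)$ is not comparable to $\|w_0\|^2$ with a universal constant). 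So this uniform bound alone cannot ``strictly improve'' the bootstrap constant. And invoking ``continuity'' by itself yields no quantitative $T_2$, so it does not deliver a $T_0$ depending only on the listed data. What is actually needed --- and what the paper does --- is a \emph{rate} estimate: multiply $(\partial_t^2+ia\partial_\alpha)\bar z_t=-ia_t\bar z_\alpha$ by $\overline{z_{ttt}}$ (and its derivative), integrate in $\alpha$, bound $\tfrac{d}{dt}\|z_{tt}(\cdot,t)\|^2$ by a polynomial in the already-controlled quantities, and then integrate in $t$ to get $\|z_{tt}(\cdot,t)\|^2\leq\|w_0\|^2+T_0C$. The crude bound from \eqref{boundaceleration} enters only as an ingredient of the polynomial $C$, not as the mechanism that closes this part of the bootstrap.

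Two smaller remarks. First, the Taylor-bound step should likewise be quantified by computing $\tfrac{d}{dt}(a|z_\alpha|)=\tfrac{(z_{tt}+i)\cdot z_{ttt}}{|z_{tt}+i|}$ and integrating; ``uniform-in-$\alpha$ continuity'' alone does not give an explicit $T_4$. Second, there is a derivative-count subtlety in the $z_{tt}$ bound: $z_{ttt}=-ia_t z_\alpha - iaz_{t\alpha}$ only lies in $H^{s-1}$ in general, so $\tfrac{d}{dt}\|z_{tt}\|_{H^s}^2$ cannot be bounded naively at top order. The paper in fact runs the bootstrap for $\|z_{tt}\|$ only in $H^1$ (the a priori assumption \eqref{apriori2} is stated in $H^1$), and the $H^s$ control of $z_{tt}$ is recovered from the energy via \eqref{boundaceleration}. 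Your proposal, by targeting the $H^s$ bound directly, inherits this difficulty without resolving it. The mollification/Picard-iteration remark about time regularity is a legitimate background concern but is orthogonal to the quantitative bootstrap here.
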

\begin{proof}
Let $T_0$ to be determined. Define
\begin{equation}
   \mathcal{T}:=\{T\in [0, T_0]:\quad   ~~(\ref{apriori})~and ~(\ref{apriori2}) \text{ hold for all } 0\leq t\leq T\}
\end{equation}
By continuity, $\mathcal{T}$ is closed. Moreover, since $0\in \mathcal{T}$, we have  $\mathcal{T}\neq \emptyset$. Let $T\in \mathcal{T}$. Then (\ref{alltime}) holds on $[0,T]$. So we have 
\begin{align}
   d_P(t)^{-1}+d_I(t)^{-1}+\mathcal{E}(t)\leq & d_P(0)^{-1}+d_I(0)^{-1}+\mathcal{E}(0)+\int_0^t Cd\tau,
\end{align}
where $C$ is given by (\ref{207}).
Since $C$ is a polynomial with positive coefficients, by taking $T_0$ sufficiently small, $T_0$ depends only on $N\lambda_{max}$, $d_P(0)^{-1}$, $d_I(0)^{-1}$,  $\norm{(\partial_{\alpha}\xi_0,v_0,w_0)}_{H^{s-1}\times H^s\times H^s}$, $\mathcal{E}(0), C_1, C_2, \alpha_0, s$, we have 
\begin{equation}
    \mathcal{E}(t)+d_P(t)^{-1}+d_I(t)^{-1}\leq \frac{3}{2}(\mathcal{E}(0)+d_P(0)^{-1}+d_I(0)^{-1}).
\end{equation}

\noindent  For $t\in [0,T]$, we have $\mathcal{E}(t)\leq 2\mathcal{E}(0)$, so we have by (\ref{boundaceleration}),
\begin{equation}
    \|z_{tt}(\cdot,t)\|_{H^s}\leq \sqrt{2}\frac{(2\|w_0\|_{H^s}+1)^{1/2}}{(2C_2)^{-s+1/2}}\mathcal{E}(0)^{1/2}:=M_1.
\end{equation}
Use $z_t(\cdot,t)=z_t(\cdot,0)+\int_0^t z_{tt}(\cdot,\tau)d\tau$, we have 
\begin{equation}
    \|z_t(\cdot,t)\|_{H^s}\leq \|z_t(\cdot,0)\|_{H^s}+T_0 M_1:=M_2.
\end{equation}
Use $z_{\alpha}(\cdot,t)-1=z_{\alpha}(\cdot,0)-1+\int_0^t z_{\tau\alpha}(\cdot, \tau) d\tau$, we obtain
\begin{equation}
    \|z_{\alpha}(\cdot,t)-1\|_{H^{s-1}}\leq \|\partial_{\alpha}\xi_0\|_{H^{s-1}}+T_0 M_2:=M_3,
\end{equation}
and
\begin{equation}
    \|z_{\alpha}(\cdot,t)-z_{\alpha}(\cdot,0)\|_{\infty}\leq \int_0^t \|z_{t\alpha}\|_{\infty}d\tau \leq T_0M_2.
\end{equation}
By choosing $T_0$ sufficiently small, we have 
\begin{equation}
    \sup_{t\in [0,T]}\|z_{\alpha}(\cdot,t)\|_{\infty}\leq \frac{3}{2}\|z_{\alpha}(\cdot,0)\|_{\infty}\leq  \frac{3}{2}C_2,
\end{equation}
and
\begin{equation}
    \inf_{t\in [0,T]}\inf_{\alpha\in \mathbb{R}}|z_{\alpha}(\alpha,t)|\geq \frac{2}{3}|z_{\alpha}(\alpha,0)|\geq \frac{2}{3}C_1.
\end{equation}
Therefore, since $|z(\alpha,t)-z(\beta,t)|=|z_{\alpha}(\gamma,t)(\alpha-\beta)|$ for some $\gamma$ between $\alpha$ and $\beta$,  we have 
\begin{equation}\label{chordarcchordarc}
    \frac{2}{3}C_1|\alpha-\beta|\leq |z(\alpha,t)-z(\beta,t)|\leq \frac{3}{2}|\alpha-\beta|,\quad \quad t\in [0,T].
\end{equation}

Multiply both sides of the equation $(\partial_t^2+ia\partial_{\alpha})u=-ia_t\bar{z}_{\alpha}$ by $\bar{u}_t$ and integrate in $\alpha$, then take real parts, we have
\begin{align}\label{accee}
    \frac{1}{2}\frac{d}{dt}\int |u_t|^2 d\alpha=Re\Big\{ -i\int au_{\alpha} \bar{u}_t d\alpha-i\int a_t\bar{z}_{\alpha} \bar{u}_t d\alpha  \Big\}.
\end{align}
For $0\leq t\leq T$, we have 
\begin{align}
    \Big |-i\int au_{\alpha} \bar{u}_t d\alpha-i\int a_t\bar{z}_{\alpha} \bar{u}_t d\alpha\Big|\leq & \|a|z_{\alpha}|\|_{\infty} \norm{\frac{u_{\alpha}}{z_{\alpha}}}_{L^2}\|u_t\|_{L^2}+\|a_tz_{\alpha}\|_{L^2}\|u_t\|_{L^2}\\
    \leq & C(\|w_0\|_{H^s} ,M_1, M_2, M_3, C_1, C_2, \alpha_0).
\end{align}
Similarly, 
\begin{equation}
   \frac{d}{dt} \|u_t\|_{\dot{H}^1}^2\leq C(\|w_0\|_{H^s}, M_1, M_2, M_3, C_1, C_2, \alpha_0).
\end{equation}
So we have for $0\leq t\leq T$,
\begin{equation}
    \|u_t(\cdot,t)\|_{H^1}^2= \|u_t(\cdot,0)\|_{H^1}^2+\int_0^t \frac{d}{d\tau}\|u_t(\cdot,\tau)\|_{H^1}^2 d\tau \leq \|u_t(\cdot,0)\|_{H^1}^2+T_0C.
\end{equation}
By choosing $T_0$ sufficiently small, we have for $0\leq t\leq T$,
\begin{equation}\label{utut}
    \|u_t(\cdot,t)\|_{H^1}^2\leq \frac{3}{2}\|u_t(\cdot,0)\|_{H^1}^2.
\end{equation}

Since $a|z_{\alpha}|=|z_{tt}+i|$, we have 
\begin{equation}\label{badc}
    \frac{d}{dt}a|z_{\alpha}|=\frac{(z_{tt}+i)\cdot z_{ttt}}{|z_{tt}+i|}=\frac{(z_{tt}+i)\cdot (iaz_{t\alpha}+ia_tz_{\alpha})}{|z_{tt}+i|}.
\end{equation}
Using (\ref{badc}), it's easy to obtain
\begin{equation}
    \| a|z_{\alpha}|(\cdot,t)-a|z_{\alpha}|(\cdot,0)\|_{\infty}\leq T_0C.
\end{equation}
By choosing $T_0$ sufficiently small, we have for $0\leq t\leq T$,
\begin{equation}
\| a|z_{\alpha}|(\cdot,t)-a|z_{\alpha}|(\cdot,0)\|_{\infty}\leq \frac{1}{3}\alpha_0.
\end{equation}
So we have for 
\begin{equation}\label{alpha0alpha0}
    \inf_{t\in [0,T]}\inf_{\alpha\in \mathbb{R}}a|z_{\alpha}|(\alpha,t)\geq \frac{2}{3}\alpha_0.
\end{equation}
Combining (\ref{chordarcchordarc}), (\ref{utut}), and (\ref{alpha0alpha0}), together with continuity of these quantities, there must exist $\delta>0$ such that for $0\leq t< T+\delta$,
\begin{equation}
    \begin{split}
        \frac{1}{2}C_1|\alpha-\beta|\leq |z(\alpha,t)-z(\beta,t)|\leq 2|\alpha-\beta|,\\
        \|u_t(\cdot,t)\|_{H^1}^2\leq 2\|u_t(\cdot,0)\|_{H^1}^2,\\
        \inf_{\alpha\in \mathbb{R}}a|z_{\alpha}|(\alpha,t)\geq \frac{1}{2}\alpha_0.
    \end{split}
\end{equation}
So $[0,T+\delta)\subset \mathcal{T})$ and therefore $\mathcal{T}=[0,T_0]$, provided that $T_0$ is sufficiently small, and $T_0$ depends only on $N\lambda_{max}$, $d_P(0)^{-1}$, $d_I(0)^{-1}$,  $\norm{(\partial_{\alpha}\xi,v_0, w_0)}_{H^{s-1}\times H^s\times H^s}$, $\mathcal{E}(0), C_1, C_2, \alpha_0, s$.


\end{proof}

\subsection{Proof of Theorem \ref{theorem1}.} Uniqueness is obtained by a similar argument as the energy estimate above. For local existence, one can use iteration method. We refer the readers to S. Wu's works \cite{Wu1997}\cite{Wu1999} for details of this iteration scheme. Moreover, if we let $T_0^{\ast}$ be the maximal lifespan, then either $T_0^{\ast}=\infty$, or $T_0^{\ast}<\infty$, but
\begin{equation}
\begin{split}
\lim_{T\rightarrow T_0^*-}\| (z_t, z_{tt})\|_{C([0,T];H^s\times H^s)}+\sup_{t\rightarrow T_0^*}(d_I(t)^{-1}+d_P(t)^{-1})=\infty.
\end{split}
\end{equation}
or 
\begin{equation}
   \lim_{t\rightarrow T_0^{\ast}-} \inf_{\alpha\in \mathbb{R}} a(\alpha,t)|z_{\alpha}(\alpha,t)|\leq 0,
\end{equation}
or 
\begin{equation}
    \sup_{\substack{\alpha\neq \beta\\ 0\leq t<T_0^*}}\Big |\frac{z(\alpha,t)-z(\beta,t)}{\alpha-\beta}\Big|+\sup_{\substack{\alpha\neq \beta\\ 0\leq t<T_0^*}}\Big |\frac{\alpha-\beta}{z(\alpha,t)-z(\beta,t)}\Big|=\infty.
\end{equation}
\vspace*{2ex}


\section{Long time behavior for small data}\label{sectionlongtime}
In this section we prove Theorem \ref{longtime}. 
\subsection{Derivation of the cubic structure.}
As was explained in the introduction, the main difficulty of studying long time behavior of the system (\ref{vortex_boundary}) is to find a cubic structure for this system. In \cite{Wu2009}, S. Wu uses $\theta:=(I-\mathfrak{H})(z-\bar{z})$ and shows that $(\partial_t^2-ia\partial_{\alpha})\theta$ is cubic for the irrotational case. We use the same $\theta$ here. 
Using lemma \ref{basic1},
\begin{align*}
&(\partial_t^2-ia\partial_{\alpha})\theta= (I-\mathfrak{H})(\partial_t^2-ia\partial_{\alpha})(z-\bar{z})-[\partial_t^2-ia\partial_{\alpha}, \mathfrak{H}](z-\bar{z})\\
=&-2(I-\mathfrak{H})\partial_t \bar{z}_t-2[z_t,\mathfrak{H}]\frac{\partial_{\alpha}(z_t-\bar{z}_t)}{z_{\alpha}}+\frac{1}{\pi i}\int \Big(\frac{z_t(\alpha,t)-z_t(\beta,t)}{z(\alpha,t)-z(\beta,t)}\Big)^2(z-\bar{z})_{\beta}d\beta\\
=&-2\partial_t(I-\mathfrak{H})\bar{z}_t-2[z_t,\mathfrak{H}]\frac{\partial_{\alpha}z_t}{z_{\alpha}}+\frac{1}{\pi i}\int \Big(\frac{z_t(\alpha,t)-z_t(\beta,t)}{z(\alpha,t)-z(\beta,t)}\Big)^2(z-\bar{z})_{\beta}d\beta.
\end{align*}
Decompose $\bar{z}_t=f+p$ as before, with $p=-\sum_{j=1}^2\frac{\lambda_j i}{2\pi} \frac{1}{z(\alpha,t)-z_j(t)}$. Since $(I-\mathfrak{H})p=2p$, we have 
\begin{align*}
-2\partial_t(I-\mathfrak{H})\bar{z}_t=-2\partial_t(I-\mathfrak{H})p=& -4p_t,
\end{align*}
and
\begin{align*}
-2[z_t,\mathfrak{H}]\frac{\partial_{\alpha}z_t}{z_{\alpha}}=& -2[\bar{f}, \mathfrak{H}]\frac{\partial_{\alpha}\bar{f}}{z_{\alpha}}-2[\bar{p}, \mathfrak{H}]\frac{\partial_{\alpha}\bar{f}}{z_{\alpha}}-2[\bar{f},\mathfrak{H}]\frac{\partial_{\alpha}\bar{p}}{z_{\alpha}}-2[\bar{p},\mathfrak{H}]\frac{\partial_{\alpha}\bar{p}}{z_{\alpha}}
\end{align*}
Since $f$ is holomorphic, we have $[f,\mathfrak{H}]\frac{f_{\alpha}}{z_{\alpha}}=0$, and hence $[\bar{f},\bar{\mathfrak{H}}]\frac{\bar{f}_{\alpha}}{\bar{z}_{\alpha}}=0$, so
\begin{equation}
-2[\bar{f}, \mathfrak{H}]\frac{\partial_{\alpha}\bar{f}}{z_{\alpha}}=-2[\bar{f}, \mathfrak{H}\frac{1}{z_{\alpha}}+\bar{\mathfrak{H}}\frac{1}{\bar{z}_{\alpha}}]\bar{f}_{\alpha},
\end{equation}
which is cubic. So we obtain
\begin{equation}
\begin{split}
(\partial_t^2-ia\partial_{\alpha})\theta
=&-2[\bar{f}, \mathfrak{H}\frac{1}{z_{\alpha}}+\bar{\mathfrak{H}}\frac{1}{\bar{z}_{\alpha}}]\bar{f}_{\alpha}+\frac{1}{\pi i}\int \Big(\frac{z_t(\alpha,t)-z_t(\beta,t)}{z(\alpha,t)-z(\beta,t)}\Big)^2(z-\bar{z})_{\beta}d\beta\\
&-2[\bar{p}, \mathfrak{H}]\frac{\partial_{\alpha}\bar{f}}{z_{\alpha}}-2[\bar{f},\mathfrak{H}]\frac{\partial_{\alpha}\bar{p}}{z_{\alpha}}-2[\bar{p},\mathfrak{H}]\frac{\partial_{\alpha}\bar{p}}{z_{\alpha}}-4p_t.
\end{split}
\end{equation}
Denote 
\begin{equation}
g_c:=-2[\bar{f}, \mathfrak{H}\frac{1}{z_{\alpha}}+\bar{\mathfrak{H}}\frac{1}{\bar{z}_{\alpha}}]\bar{f}_{\alpha}+\frac{1}{\pi i}\int \Big(\frac{z_t(\alpha,t)-z_t(\beta,t)}{z(\alpha,t)-z(\beta,t)}\Big)^2(z-\bar{z})_{\beta}d\beta.
\end{equation}
\begin{equation}
g_d:=-2[\bar{p}, \mathfrak{H}]\frac{\partial_{\alpha}\bar{f}}{z_{\alpha}}-2[\bar{f},\mathfrak{H}]\frac{\partial_{\alpha}\bar{p}}{z_{\alpha}}-2[\bar{p},\mathfrak{H}]\frac{\partial_{\alpha}\bar{p}}{z_{\alpha}}-4p_t.
\end{equation}

\vspace*{2ex}

To control $z_{tt}$, we consider the quantity 
$$\sigma:=(I-\mathfrak{H})\partial_t \theta=(I-\mathfrak{H})\partial_t (I-\mathfrak{H})(z-\bar{z}).$$
We have 
\begin{equation}
\begin{split}
(\partial_t^2-ia\partial_{\alpha})\partial_t(I-\mathfrak{H})(z-\bar{z})=&\partial_t (\partial_t^2-ia\partial_{\alpha})(I-\mathfrak{H})(z-\bar{z})+ia_t((I-\mathfrak{H})(z-\bar{z}))_{\alpha}\\
=&\partial_t g+ia_t((I-\mathfrak{H})(z-\bar{z}))_{\alpha}.
\end{split}
\end{equation}
Here,  $g=g_c+g_d$. Use lemma \ref{basic1} , 
\begin{equation}
\begin{split}
&(\partial_t^2-ia\partial_{\alpha})\sigma
=(I-\mathfrak{H})(\partial_t^2-ia\partial_{\alpha})\partial_t (I-\mathfrak{H})(z-\bar{z})-[\partial_t^2-ia\partial_{\alpha},\mathfrak{H}]\partial_t (I-\mathfrak{H})(z-\bar{z})\\
=&(I-\mathfrak{H})(\partial_t g+ia_t((I-\mathfrak{H})(z-\bar{z}))_{\alpha})-2[z_t,\mathfrak{H}]\frac{\partial_{\alpha}\partial_t^2(I-\mathfrak{H})(z-\bar{z})}{z_{\alpha}}\\
&+\frac{1}{\pi i}\int \Big(\frac{z_t(\alpha,t)-z_t(\beta,t)}{z(\alpha,t)-z(\beta,t)}\Big)^2 ((I-\mathfrak{H})(z-\bar{z}))_{t\beta}d\beta\\
:=&\tilde{g}_1+\tilde{g}_2+\tilde{g}_3.
\end{split}
\end{equation}
\begin{remark}
We have 
\begin{equation}
\begin{split}
    \tilde{g}_1=&(I-\mathfrak{H})\partial_t g_c+(I-\mathfrak{H})\partial_t g_d+(I-\mathfrak{H})ia_t(I-\mathfrak{H})(z-\bar{z})_{\alpha}\\
    :=&\tilde{g}_{11}+\tilde{g}_{12}+\tilde{g}_{13}.
    \end{split}
\end{equation}
Note that $\tilde{g}_{11}$ and $\tilde{g}_3$ are obvious cubic or enjoy nice time decay. As one can see later, $\tilde{g}_2$ is cubic as well. Since $a_t\bar{z}_{\alpha}$ consists of quadratic nonlinearities and terms with sufficiently fast time decay, as long as the point vortices move away from the interface at a speed which has a positive lower bound, so $(\partial_t^2-ia\partial_{\alpha})(I-\mathfrak{H})\partial_t(I-\mathfrak{H})(z-\bar{z})$ consists of cubic or higher order nonlinearities, or nonlinearities with rapid time decay, as long as the point vortices move away from the interface rapidly.
\end{remark}


\subsection{Change of coordinates.} Note that $(a-1)\theta_{\alpha}$ involves quadratic nonlinearities, which does not directly lead to cubic lifespan. To resolve the problem, we use the diffeomorphism $\kappa:\mathbb{R}\rightarrow \mathbb{R}$ such that $\bar{\zeta}-\alpha$ is holomorphic, where $\bar{\zeta}=z\circ\kappa^{-1}$. This $\kappa$ was used in \cite{Wu2009}\cite{Totz2012} for the irrotational case. 
Here we need to derive the formulae for $b$ and $A$ for the case with point vortices. Let $\Psi$ be the holomorphic function on $\Omega(t)$ such that 
$$\bar{\zeta}-\alpha=\Psi\circ \zeta.$$

We denote 
$$D_t\zeta=z_t\circ\kappa^{-1},\quad \quad A:=(a\kappa_{\alpha})\circ \kappa^{-1},\quad \quad b=\kappa_t\circ\kappa^{-1}.$$
Then 
\begin{equation}\label{recover}
\kappa_t=b\circ \kappa.
\end{equation}
Suppose we know $b$, then we can recover $\kappa$ by solving the ODE (\ref{recover}).

Recall that in (\ref{decomposition}), we decompose $\bar{z}_t$ as $\bar{z}_t=f+p$. We denote 
\begin{equation}
    \mathfrak{F}=f\circ\kappa^{-1},\quad \quad q=p\circ \kappa^{-1}.
\end{equation}
Since $f$ is the boundary value of the holomorphic function $F$ on $\Omega(t)$, we have 
\begin{equation}
    \mathfrak{F}(\alpha,t)=F(\zeta(\alpha,t),t),
\end{equation}

In new variables, the water wave system (\ref{vortex_boundary}) can be written as

\begin{equation}\label{vortex_boundary_new}
\begin{cases}
D_t^2\zeta-iA\zeta_{\alpha}=-i\\
\frac{d}{dt}z_j(t)=(v-\frac{\lambda_j i}{2\pi(\overline{z-z_j})})\Big |_{z=z_j}\\
(I-\mathcal{H})(D_t\bar{\zeta}+\sum_{j=1}^N \frac{\lambda_j i}{2\pi(\zeta(\alpha,t)-z_j(t))})=0\\
(I-\mathcal{H})(\bar{\zeta}-\alpha)=0.
\end{cases}
\end{equation}
Here, $\mathcal{H}$ is the Hilbert transform associates with $\zeta$, i.e.
\begin{equation}
\mathcal{H}f(\alpha):=\frac{1}{\pi i}p.v.\int_{-\infty}^{\infty}\frac{\zeta_{\beta}(\beta,t)}{\zeta(\alpha,t)-\zeta(\beta,t)}f(\beta)d\beta.
\end{equation}
To show that (\ref{vortex_boundary_new}) is a closed system, we need to derive formula for $b$ and $A$ in terms of the new variable. Once we have shown that this is a closed system, and prove wellposedness for this system, then in turn, this justifies the existence of such change of variable $\kappa^{-1}$.  Moreover,  if we let $\epsilon_0$ be sufficiently small, then in new variables, we have at $t=0$, 
\begin{equation}\label{initial_new}
\norm{|D|^{1/2}(\zeta(\alpha,0)-\alpha)}_{H^s}+\norm{\mathfrak{F}(\cdot, 0)}_{H^{s+1/2}}+\norm{D_t\mathfrak{F}(\cdot, 0)}_{H^{s}}\leq \frac{3}{2}\epsilon.
\end{equation}


\subsubsection{Formula for the quantities $b$ and $D_tb$}
Note that 

\begin{equation}\label{b11}
D_t\bar{\zeta}=F\circ \zeta-\frac{i}{2\pi}\sum_{j=1}^2 \frac{\lambda_j}{\zeta(\alpha,t)-z_j(t)},\quad \lambda_1=-\lambda_2=\lambda,
\end{equation}
Also, $D_t\bar{\zeta}$ can be written as 
\begin{equation}\label{b22}
\begin{split}
D_t\bar{\zeta}=&D_t(\bar{\zeta}-\alpha)+b=D_t\zeta \Psi_{\zeta}\circ \zeta+\Psi_t\circ \zeta+b.
\end{split}
\end{equation}
By (\ref{b11}) and (\ref{b22}), we have 
\begin{equation}
    F\circ \zeta-\frac{i}{2\pi}\sum_{j=1}^2 \frac{\lambda_j}{\zeta(\alpha,t)-z_j(t)}=D_t\zeta \Psi_{\zeta}\circ \zeta+\Psi_t\circ \zeta+b.
\end{equation}
Apply $I-\mathcal{H}$ on both sides of the above equation, use the fact that 
$$(I-\mathcal{H})\Psi_t\circ \zeta=0,\quad \quad (I-\mathcal{H})F\circ \zeta=0,\quad \quad \Psi_{\zeta}\circ \zeta=\frac{\bar{\zeta}_{\alpha}-1}{\zeta_{\alpha}},$$
we obtain
\begin{equation}\label{forb}
\begin{split}
(I-\mathcal{H})b=&-(I-\mathcal{H})D_t\zeta \frac{\bar{\zeta}_{\alpha}-1}{\zeta_{\alpha}}-(I-\mathcal{H})\frac{i}{2\pi} \sum_{j=1}^2 \frac{\lambda_j}{\zeta(\alpha,t)-z_j(t)}\\
=&-[D_t\zeta, \mathcal{H}]\frac{\bar{\zeta}_{\alpha}-1}{\zeta_{\alpha}}-2\frac{i}{2\pi} \sum_{j=1}^2 \frac{\lambda_j}{\zeta(\alpha,t)-z_j(t)}\\
=& -[D_t\zeta, \mathcal{H}]\frac{\bar{\zeta}_{\alpha}-1}{\zeta_{\alpha}}-\frac{i}{\pi} \sum_{j=1}^2 \frac{\lambda_j}{\zeta(\alpha,t)-z_j(t)},
\end{split}
\end{equation}
where we've used the fact that $\frac{1}{\zeta(\alpha,t)-z_j(t)}$ is boundary value of a holomorphic function in $\Omega(t)^c$, so 
\begin{equation}
(I-\mathcal{H})\frac{1}{\zeta(\alpha,t)-z_j(t)}=\frac{2}{\zeta(\alpha,t)-z_j(t)}.
\end{equation}
So $b$ is quadratic plus terms with sufficient rapid time decay, as long as $z_j(t)$ moves away from the interface rapidly.

We need a formula for $D_tb$ as well. Use $(I-\mathcal{H})b=-[D_t\zeta, \mathcal{H}]\frac{\bar{\zeta}_{\alpha}-1}{\zeta_{\alpha}}-\frac{i}{\pi} \sum_{j=1}^2 \frac{\lambda_j}{\zeta(\alpha,t)-z_j(t)}$, change of variables, we get
\begin{equation}
(I-\mathfrak{{H}})b\circ \kappa=-[z_t,\mathfrak{H}]\frac{\bar{z}_{\alpha}-1}{z_{\alpha}}-\frac{i}{\pi}\sum_{j=1}^2\frac{\lambda_j}{z(\alpha,t)-z_j(t)}.
\end{equation}
So we have 
\begin{equation}
\begin{split}
(I-\mathfrak{H})\partial_t b\circ\kappa=&[z_t,\mathfrak{H}]\frac{\partial_{\alpha}b\circ\kappa}{z_{\alpha}}-[z_{tt},\mathfrak{H}]\frac{\bar{z}_{\alpha}-1}{z_{\alpha}}-[z_{t},\mathfrak{H}]\frac{\bar{z}_{t\alpha}}{z_{\alpha}}\\
&+\frac{1}{\pi i}\int \Big(\frac{z_t(\alpha,t)-z_t(\beta,t)}{z(\alpha,t)-z(\beta,t)}\Big)^2(\bar{z}_{\beta}(\beta,t)-1)d\beta+\frac{i}{\pi}\sum_{j=1}^2\frac{\lambda_j(z_t-\dot{z}_j(t))}{(z(\alpha,t)-z_j(t))^2}.
\end{split}
\end{equation}
Changing coordinates by precomposing with $\kappa^{-1}$, we obtain
\begin{equation}
\begin{split}
(I-\mathcal{H})D_t b=&[D_t\zeta,\mathcal{H}]\frac{\partial_{\alpha}b}{\zeta_{\alpha}}-[D_t^2\zeta,\mathcal{H}]\frac{\bar{\zeta}_{\alpha}-1}{\zeta_{\alpha}}-[D_t\zeta,\mathcal{H}]\frac{\partial_{\alpha}D_t\bar{\zeta}}{\zeta_{\alpha}}\\
&+\frac{1}{\pi i}\int \Big(\frac{D_t\zeta(\alpha,t)-D_t\zeta(\beta,t)}{\zeta(\alpha,t)-\zeta(\beta,t)}\Big)^2(\bar{\zeta}_{\beta}(\beta,t)-1)d\beta+\frac{i}{\pi}\sum_{j=1}^2\frac{\lambda_j(D_t\zeta-\dot{z}_j(t))}{(\zeta(\alpha,t)-z_j(t))^2}.
\end{split}
\end{equation}
So $D_tb$ is quadratic plus terms with sufficient rapid time decay, as long as $z_j(t)$ moves away from the interface rapidly.

\subsection{The quantity $A$} Since $\partial_{\alpha}\mathfrak{F}=\partial_{\alpha}F(\zeta(\alpha,t),t)=F_{\zeta}\circ \zeta\zeta_{\alpha}$, we have 
\begin{equation}
    F_{\zeta}\circ \zeta=\frac{\partial_{\alpha}\mathfrak{F}}{\zeta_{\alpha}}.
\end{equation}
Use $D_t^2\bar{\zeta}+iA\bar{\zeta}_{\alpha}=i$. We have 
\begin{equation}
\begin{split}
D_t^2\bar{\zeta}=&D_t(D_t\bar{\zeta})=D_tF\circ \zeta-\frac{i}{2\pi} D_t\sum_{j=1}^2 \frac{\lambda_j}{\zeta(\alpha,t)-z_j(t)}\\
=& D_t\zeta F_{\zeta}\circ \zeta+\frac{i}{2\pi}\sum_{j=1}^2 \frac{\lambda_j(D_t\zeta(\alpha,t)-\dot{z}_j(t))}{(\zeta(\alpha,t)-z_j(t))^2}\\
=& D_t\zeta \frac{\partial_{\alpha} \mathfrak{F}}{\zeta_{\alpha}}+\frac{i}{2\pi}\sum_{j=1}^2 \frac{\lambda_j(D_t\zeta(\alpha,t)-\dot{z}_j(t))}{(\zeta(\alpha,t)-z_j(t))^2}.
\end{split}
\end{equation}
Also,
\begin{equation}
iA\bar{\zeta}_{\alpha}=iA+iA\partial_{\alpha}(\bar{\zeta}-\alpha)=iA+iA\zeta_{\alpha}\Psi_{\zeta}\circ \zeta=iA+(D_t^2\zeta+i)\Psi_{\zeta}\circ \zeta.
\end{equation}
So we have 
\begin{equation}\label{aaaa}
iA=i-D_t\zeta \frac{\partial_{\alpha} \mathfrak{F}}{\zeta_{\alpha}}-\frac{i}{2\pi}\sum_{j=1}^2 \frac{\lambda_j(D_t\zeta(\alpha,t)-\dot{z}_j(t))}{(\zeta(\alpha,t)-z_j(t))^2}-(D_t^2\zeta+i)\Psi_{\zeta}\circ \zeta.
\end{equation}
Apply $I-\mathcal{H}$ on both sides of (\ref{aaaa}), use the fact that $(I-\mathcal{H})\frac{\mathfrak{F}_{\alpha}}{\zeta_{\alpha}}=0$, $(I-\mathcal{H})\Psi_{\zeta}\circ \zeta=0$, we obtain
\begin{equation}
\begin{split}
&i(I-\mathcal{H})A=i-[D_t\zeta,\mathcal{H}]\frac{\partial_{\alpha}\mathfrak{F}}{\zeta_{\alpha}}-[D_t^2\zeta,\mathcal{H}]\frac{\bar{\zeta}_{\alpha}-1}{\zeta_{\alpha}}-(I-\mathcal{H})\frac{i}{2\pi}\sum_{j=1}^2 \frac{\lambda_j(D_t\zeta(\alpha,t)-\dot{z}_j(t))}{(\zeta(\alpha,t)-z_j(t))^2}\\
\end{split}
\end{equation}
So we obtain
\begin{equation}
\begin{split}
(I-\mathcal{H})A=&1+i[D_t\zeta,\mathcal{H}]\frac{\partial_{\alpha}\mathfrak{F}}{\zeta_{\alpha}}+i[D_t^2\zeta,\mathcal{H}]\frac{\bar{\zeta}_{\alpha}-1}{\zeta_{\alpha}}-(I-\mathcal{H})\frac{1}{2\pi}\sum_{j=1}^2 \frac{\lambda_j(D_t\zeta(\alpha,t)-\dot{z}_j(t))}{(\zeta(\alpha,t)-z_j(t))^2}.
\end{split}
\end{equation}
So $A-1$ is quadratic plus terms with rapid time decay, as long as the point vortices move away from the interface with a speed that has a positive lower bound.

\subsection{The quantity $\frac{a_t}{a}\circ\kappa^{-1}$} We need a formula for $\frac{a_t}{a}\circ\kappa^{-1}$ as well.  Use (\ref{goodgood}) and (\ref{GG1}), (\ref{GG2}), then change of variables, we obtain
\begin{equation}\label{atanew}
\begin{split}
&(I-\mathcal{H})\frac{a_t}{a}\circ\kappa^{-1}A\bar{\zeta}_{\alpha}\\=&2i[D_t^2\zeta, \mathcal{H}]\frac{\partial_{\alpha}D_t\bar{\zeta}}{\zeta_{\alpha}}+2i[D_t\zeta, \mathcal{H}] \frac{\partial_{\alpha}D_t^2\bar{\zeta}}{\zeta_{\alpha}}-\frac{1}{\pi }\int \Big(\frac{D_t\zeta(\alpha,t)-D_t\zeta(\beta,t)}{\zeta(\alpha,t)-\zeta(\beta,t)}\Big)^2 (D_t\bar{\zeta})_{\beta}d\beta\\
&-\frac{1}{\pi}\sum_{j=1}^2 \lambda_j\Big(\frac{2D_t^2\zeta+i-\partial_t^2 z_j}{(\zeta(\alpha,t)-z_j(t))^2}-2\frac{(D_t\zeta-\dot{z}_j(t))^2}{(\zeta(\alpha,t)-z_j(t))^3}\Big)
\end{split}
\end{equation}
So $\frac{a_t}{a}\circ\kappa^{-1}$ is quadratic plus terms with rapid time decay, as long as the point vortices move away from the interface with a speed that has a positive lower bound.

\subsection{Cubic structure in new variables} 
Denote 
\begin{equation}
\tilde{\theta}:=(I-\mathcal{H})(\zeta-\bar{\zeta}),\quad \quad \tilde{\sigma}:=(I-\mathcal{H})D_t\tilde{\theta}.
\end{equation}
We sum up the calculations above, which show that $(D_t^2-iA\partial_{\alpha})\tilde{\theta}$ and $(D_t^2-iA\partial_{\alpha})\tilde{\sigma}$ consist of cubic terms and terms with rapid time decay, as long as the point vortices move away from the interface rapidly.
Recall that $q=p\circ\kappa^{-1}$, so $q$ is given by
\begin{equation}
q=-\sum_{j=1}^2\frac{\lambda_j i}{2\pi}\frac{1}{\zeta(\alpha,t)-z_j(t)}.
\end{equation}
We have
\begin{equation}
\begin{cases}
(D_t^2-iA\partial_{\alpha})\tilde{\theta}=G\\
(D_t^2-iA\partial_{\alpha})\tilde{\sigma}=\tilde{G}\\
\end{cases}
\end{equation}
where $G=G_c+G_d$, with
\begin{equation}
G_c:=-2[\bar{\mathfrak{F}}, \mathcal{H}\frac{1}{\zeta_{\alpha}}+\bar{\mathcal{H}}\frac{1}{\bar{\zeta}_{\alpha}}]\bar{\mathfrak{F}}_{\alpha}+\frac{1}{\pi i}\int \Big(\frac{D_t\zeta(\alpha,t)-D_t\zeta(\beta,t)}{\zeta(\alpha,t)-\zeta(\beta,t)}\Big)^2(\zeta-\bar{\zeta})_{\beta}d\beta.
\end{equation}
\begin{equation}
G_d:=-2[\bar{q}, \mathcal{H}]\frac{\partial_{\alpha}\bar{\mathfrak{F}}}{\zeta_{\alpha}}-2[\bar{\mathfrak{F}},\mathcal{H}]\frac{\partial_{\alpha}\bar{q}}{\zeta_{\alpha}}-2[\bar{q},\mathcal{H}]\frac{\partial_{\alpha}\bar{q}}{\zeta_{\alpha}}-4D_t q,
\end{equation}

\noindent and
\begin{equation}\label{systemnew}
\begin{split}
\tilde{G}=& (I-\mathcal{H})(D_t G+i\frac{a_t}{a}\circ\kappa^{-1}A((I-\mathcal{H})(\zeta-\bar{\zeta}))_{\alpha})-2[D_t\zeta,\mathcal{H}]\frac{\partial_{\alpha}D_t^2(I-\mathcal{H})(\zeta-\bar{\zeta})}{\zeta_{\alpha}}\\
&+\frac{1}{\pi i}\int \Big(\frac{D_t\zeta(\alpha,t)-D_t\zeta(\beta,t)}{\zeta(\alpha,t)-\zeta(\beta,t)}\Big)^2 \partial_{\beta} D_t(I-\mathcal{H})(\zeta-\bar{\zeta})d\beta\\
\end{split}
\end{equation}
\subsubsection{Evolution equation for higher order derivatives} Apply $\partial_{\alpha}^k$ on both sides of (\ref{systemnew}), we have
\begin{equation}\label{systemnewk}
\begin{split}
(D_t^2-iA\partial_{\alpha})\theta_k=G_k^{\theta}\\
(D_t^2-iA\partial_{\alpha})\sigma_k=G_k^{\sigma},\\
\end{split}
\end{equation}
where for $0\leq k\leq s$,
\begin{equation}
\theta_k=(I-\mathcal{H})\partial_{\alpha}^k\tilde{\theta},\quad \quad \sigma_k=(I-\mathcal{H})\partial_{\alpha}^k\tilde{\sigma}.
\end{equation}
\begin{equation}
G_k^{\theta}=(I-\mathcal{H})(\partial_{\alpha}^kG+[D_t^2-iA\partial_{\alpha}, \partial_{\alpha}^k]\tilde{\theta})-[D_t^2-iA\partial_{\alpha},\mathcal{H}]\partial_{\alpha}^k\tilde{\theta},
\end{equation}
and
\begin{equation}
G_k^{\sigma}=(I-\mathcal{H})(\partial_{\alpha}^k\tilde{G}+[D_t^2-iA\partial_{\alpha}, \partial_{\alpha}^k]\tilde{\sigma})-[D_t^2-iA\partial_{\alpha},\mathcal{H}]\partial_{\alpha}^k\tilde{\sigma}.
\end{equation}

\subsection{Energy functional} Define
\begin{equation}
E_k^{\theta}:=\int \frac{1}{A}|D_t\theta_k|^2+i\theta_k \overline{\partial_{\alpha}\theta_k}d\alpha.
\end{equation}
By Wu's basic energy lemma (lemma 4.1, \cite{Wu2009}), we have
\begin{equation}\label{energy1}
\frac{d}{dt}E_k^{\theta}=\int\frac{2}{A}Re D_t\theta_k \bar{G}_k-\int \frac{1}{A}\frac{a_t}{a}\circ\kappa^{-1}|D_t\theta_k|^2 
\end{equation}

\noindent Define
\begin{equation}
E_k^{\sigma}:=\int \frac{1}{A}|D_t\sigma_k|^2+i\sigma_k \overline{\partial_{\alpha}\sigma_k}d\alpha.
\end{equation}
Then we have 
\begin{equation}\label{energy2}
\frac{d}{dt}E_k^{\sigma}=\int\frac{2}{A}Re D_t\sigma_k \overline{\tilde{G}}_k-\int \frac{1}{A}\frac{a_t}{a}\circ\kappa^{-1}|D_t\sigma_k|^2 
\end{equation}
Define
\begin{equation}
\mathcal{E}_s:=\sum_{k=0}^s(E_k^{\theta}+E_k^{\sigma}).
\end{equation}

\subsection{The bootstrap assumption and some preliminary estimates}
 To obtain a priori energy estimates, we make the following bootstrap assumption:  Let $T_0\geq 0$, we assume 
\begin{equation}\label{longtimeapriori}
\|\zeta_{\alpha}-1\|_{H^s}\leq 5\epsilon,\quad\quad \|\mathfrak{F}\|_{H^{s+1/2}}\leq 5\epsilon,\quad \quad  \|D_t \mathfrak{F}\|_{H^s}\leq 5\epsilon,\quad \quad \forall~~t\in [0,T_0].
\end{equation}

\begin{remark}
The assumptions of Theorem \ref{longtime} imply that the bootstrap assumption holds at $T_0=0$. 
\end{remark}

As a consequence of (\ref{longtimeapriori}), we have 
\begin{lemma}[Chord-arc condition]\label{chordarclongtime}
Assume the assumptions of Theorem \ref{longtime} holds. Assume also the bootstrap assumption (\ref{longtimeapriori}), we have 
\begin{equation}
(1-5\epsilon)|\alpha-\beta|\leq |\zeta(\alpha,t)-\zeta(\beta,t)|\leq (1+5\epsilon)|\alpha-\beta|,\quad \quad \forall~t\in [0,T_0].
\end{equation}
\end{lemma}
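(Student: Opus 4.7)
The plan is to reduce the chord-arc bounds to a pointwise estimate on $\zeta_\alpha - 1$ via the fundamental theorem of calculus, and then to obtain that pointwise bound from the $H^s$ bootstrap assumption using Sobolev embedding.

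First I would write, for any $\alpha \neq \beta$,
\begin{equation*}
\zeta(\alpha,t) - \zeta(\beta,t) = (\alpha-\beta) + \int_\beta^\alpha \bigl(\zeta_\gamma(\gamma,t) - 1\bigr)\, d\gamma,
\end{equation*}
so that by the triangle inequality
\begin{equation*}
\bigl|\, |\zeta(\alpha,t) - \zeta(\beta,t)| - |\alpha-\beta|\,\bigr| \;\leq\; \|\zeta_\alpha - 1\|_{L^\infty}\,|\alpha-\beta|.
\end{equation*}

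Next I would invoke the Sobolev embedding (Lemma \ref{sobolevembedding}), using $s\geq 4\geq 1$, to get $\|\zeta_\alpha - 1\|_{L^\infty} \leq \|\zeta_\alpha - 1\|_{H^s}$; in fact the case $s=1$ of the lemma with constant $1$ already gives $\|\zeta_\alpha - 1\|_{L^\infty} \leq \|\zeta_\alpha - 1\|_{H^1} \leq \|\zeta_\alpha - 1\|_{H^s}$. Combining this with the bootstrap assumption (\ref{longtimeapriori}) yields $\|\zeta_\alpha - 1\|_{L^\infty} \leq 5\epsilon$.

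Plugging back into the displayed inequality gives both sides of the desired chord-arc estimate simultaneously. There is essentially no obstacle here; the only point to be careful about is that the chosen Sobolev embedding constant is $1$ so that the numerical bound $5\epsilon$ propagates without loss, which is exactly the content of the $s=1$ case recorded in Lemma \ref{sobolevembedding}.
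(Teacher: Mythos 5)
Your proof is correct and follows essentially the same route as the paper's own proof: both write $\zeta(\alpha,t)-\zeta(\beta,t)$ as $(\alpha-\beta)$ plus a remainder controlled by $\|\zeta_\alpha-1\|_{L^\infty}|\alpha-\beta|$, and then bound $\|\zeta_\alpha-1\|_{L^\infty}\leq\|\zeta_\alpha-1\|_{H^s}\leq 5\epsilon$ via Sobolev embedding and the bootstrap assumption. You are slightly more explicit than the paper about the integral representation of the remainder and about the fact that the Sobolev constant must not exceed $1$ for the numerical bound $5\epsilon$ to survive, but this is exactly the content of Lemma~\ref{sobolevembedding} as the paper states it.
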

\begin{proof}
\begin{equation}
|\zeta(\alpha,t)-\zeta(\beta,t)|=|\alpha-\beta+(\zeta(\alpha,t)-\alpha)-(\zeta(\beta,t)-\beta)|.
\end{equation}
Note that 
\begin{equation}
|\zeta(\alpha,t)-\alpha-(\zeta(\beta,t)-\beta)|\leq \|
\zeta_{\alpha}-1\|_{\infty}|\alpha-\beta|\leq 5\epsilon |\alpha-\beta|.
\end{equation}
So the conclusion follows by Triangle inequality.

\end{proof}

\begin{lemma}\label{maximum_principle}
Assume the assumptions of Theorem \ref{longtime} hold. Assume also the bootstrap assumption (\ref{longtimeapriori}), we have for $\epsilon$ sufficiently small,
\begin{equation}\label{max_F}
    \|F(\cdot,t)\|_{L^{\infty}(\Omega(t))}\leq 5\epsilon,
\end{equation}
\begin{equation}
    \|F_{\zeta}(\cdot,t)\|_{L^{\infty}(\Omega(t))}\leq 6\epsilon,
\end{equation}
\begin{equation}\label{max_real}
    |Re F(x+iy,t)|\leq 6\epsilon |x|,
\end{equation}
\begin{equation}
    \|F_t\|_{L^{\infty}(\Omega(t))}\leq 6\epsilon,
\end{equation}
\begin{equation}
    \|F_{\zeta\zeta}\|_{L^\infty(\Omega(t))}\leq 10\epsilon,
\end{equation}
\begin{equation}
    \|F_{t\zeta}\|_{L^{\infty}(\Omega(t))}\leq 10\epsilon.
\end{equation}
\end{lemma}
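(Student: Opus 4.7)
The plan is to reduce each estimate to a bound of the boundary value on $\Sigma(t)=\{\zeta(\alpha,t):\alpha\in\mathbb{R}\}$ via the maximum modulus principle applied to $F$ and its derivatives (which decay at infinity since $v\to 0$ and the vortex singularities lie strictly inside $\Omega(t)$), and then to estimate that boundary value in $L^\infty(\mathbb{R})$ using chain-rule identities, the bootstrap assumption \eqref{longtimeapriori}, the chord-arc bound of Lemma \ref{chordarclongtime}, and the Sobolev embedding $H^1\hookrightarrow L^\infty$ with constant one (valid since $s\geq 4$).

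The bounds \eqref{max_F} and those on $F_\zeta$, $F_{\zeta\zeta}$ are direct. First, $F|_{\Sigma(t)}=\mathfrak F$, so $\|F\|_\infty\leq \|\mathfrak F\|_{H^{s+1/2}}\leq 5\epsilon$. The chain rule gives $F_\zeta|_\Sigma=\mathfrak F_\alpha/\zeta_\alpha$, which is bounded by $5\epsilon/(1-5\epsilon)\leq 6\epsilon$ using Lemma \ref{chordarclongtime}. Differentiating once more and dividing by $\zeta_\alpha$ yields
\[
F_{\zeta\zeta}|_\Sigma=\frac{\mathfrak F_{\alpha\alpha}\zeta_\alpha-\mathfrak F_\alpha\zeta_{\alpha\alpha}}{\zeta_\alpha^{3}},
\]
whose $L^\infty$ norm is estimated by $\|\mathfrak F\|_{H^{5/2}}$, $\|\zeta_\alpha-1\|_{H^2}\leq 5\epsilon$, and the chord-arc bound, giving the desired $10\epsilon$.

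The bound \eqref{max_real} is where the symmetry (H4) enters. Since $v_1$ is odd in $x$ by (H4), $\operatorname{Re}\bar v$ is odd in $x$ on $\Omega(t)$; and a short calculation using $\lambda_1=-\lambda_2$ together with $z_2(t)=-\overline{z_1(t)}$ shows that $Q(z):=\sum_j \lambda_j i/(2\pi(z-z_j(t)))$ satisfies $Q(-\bar z)=-\overline{Q(z)}$, so $\operatorname{Re} Q$ is also odd in $x$. Hence $\operatorname{Re} F(\cdot,t)$ is odd in $x$ on $\Omega(t)$ and vanishes on $\{x=0\}\cap\Omega(t)$. The Cauchy--Riemann relation $\partial_x\operatorname{Re} F=\operatorname{Re} F_\zeta$ combined with the already-proved bound $\|F_\zeta\|_\infty\leq 6\epsilon$ then yields $|\operatorname{Re} F(x+iy,t)|\leq 6\epsilon|x|$ by integrating along the horizontal segment from $(0,y)$ to $(x,y)$. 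This segment lies in $\Omega(t)$ because under the bootstrap $\Sigma(t)$ is a graph with Lipschitz constant at most $5\epsilon$, symmetric about the $y$-axis.

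For the remaining bounds on $F_t$ and $F_{t\zeta}$, I use the chain-rule identity $D_t\mathfrak F=F_t\circ\zeta+(F_\zeta\circ\zeta)\,D_t\zeta$, which gives $F_t|_\Sigma=D_t\mathfrak F-F_\zeta\,D_t\zeta$. The bootstrap gives $\|D_t\mathfrak F\|_\infty\leq 5\epsilon$, and since $D_t\bar\zeta=\mathfrak F+q$ with $q=-\sum\lambda_j i/(2\pi(\zeta-z_j))$, assumption (H5) (which forces $|\lambda|\leq\sqrt{c_0\epsilon}$) together with (H7) propagated so that $d_I(t)$ remains of order one (Step 4 of the strategy) yield $\|q\|_\infty=O(\sqrt\epsilon)$ and hence $\|D_t\zeta\|_\infty=O(\sqrt\epsilon)$; therefore $\|F_\zeta\,D_t\zeta\|_\infty=O(\epsilon^{3/2})\leq\epsilon$ for $\epsilon$ small, which closes $\|F_t\|_\infty\leq 6\epsilon$. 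The bound on $F_{t\zeta}$ follows by applying $\partial_\alpha$ to this identity and dividing by $\zeta_\alpha$, bounding each term by the estimates already established. The main subtlety lies precisely in this last estimate: one must use the smallness $|\lambda|=O(\sqrt\epsilon)$ from (H5) to absorb the correction $F_\zeta\,D_t\zeta$ into a power of $\epsilon$ strictly higher than the main term, since the bootstrap alone only controls $D_t\zeta$ up to the vortex contribution, which scales like $|\lambda|/d_I$ rather than like $\epsilon$.
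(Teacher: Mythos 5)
Your proposal follows essentially the same route as the paper: reduce to boundary values via the maximum modulus principle, apply the chain-rule identities $F_\zeta\circ\zeta=\mathfrak F_\alpha/\zeta_\alpha$, $F_{\zeta\zeta}\circ\zeta=(\partial_\alpha/\zeta_\alpha)^2\mathfrak F$, $D_t\mathfrak F=F_t\circ\zeta+(F_\zeta\circ\zeta)D_t\zeta$, invoke Sobolev embedding and the bootstrap, and for \eqref{max_real} use oddness of $\operatorname{Re}F$ together with the bound on $F_\zeta$. So the overall structure is correct and matches the paper.

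Two differences are worth noting. First, for the $F_t$ and $F_{t\zeta}$ bounds, the paper silently uses $\|D_t\zeta\|_{H^s}\leq 6\epsilon$, which it only establishes afterward (in the corollary to Lemma \ref{smallestimate}) and which in turn relies on the vortex cancellation $\lambda_1=-\lambda_2$, $|\lambda\,x(0)|\leq c_0\epsilon$ to get $\|q\|=O(\epsilon)$. You instead use only $|\lambda|\leq\sqrt{c_0\epsilon}$ from (H5) and $d_I(t)\geq 1$ to get $\|q\|_\infty=O(\sqrt\epsilon)$, hence $\|F_\zeta\,D_t\zeta\|_\infty=O(\epsilon^{3/2})$. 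This is cruder but self-contained and still closes $\|F_t\|_\infty\leq 6\epsilon$, and it avoids the mild forward reference present in the paper. Your remark that the cancellation is needed to close this step is therefore slightly misplaced: the cancellation is essential elsewhere (e.g.\ for $\|q\|_{H^s}\lesssim K_s^{-1}\epsilon\, d_I^{-3/2}$ in the energy estimates), but not here. Second, your claim that the horizontal segment from $iy$ to $x+iy$ lies in $\Omega(t)$ merely because $\Sigma(t)$ is a symmetric graph with Lipschitz constant $\leq 5\epsilon$ is not quite accurate: a point $(x,y)\in\Omega(t)$ within distance less than $5\epsilon|x|$ of the boundary may have its horizontal segment to the axis exit $\Omega(t)$. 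The inequality \eqref{max_real} as used (always at the vortex positions, with $\hat{d}_I\geq 1\gg\epsilon$) is unaffected, and the paper glosses over the same point, but the justification you offer should be weakened to ``for points at distance $\gtrsim\epsilon|x|$ from $\Sigma(t)$.''
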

\begin{proof}
For (\ref{max_F}), by maximum principle, we have 
\begin{equation}
    \|F\|_{L^{\infty}(\Omega(t))}= \|F\|_{L^{\infty}(\Sigma(t))}=\|\mathfrak{F}(t)\|_{\infty}\leq 5\epsilon.
\end{equation}
By bootstrap assumption (\ref{longtimeapriori}), for $\epsilon$ sufficiently small, we have
\begin{equation}\label{Fzeta}
\|F_{\zeta}(\zeta(\alpha,t),t)\|_{\infty}=\norm{\frac{\partial_{\alpha}
\mathfrak{F}(\alpha,t)}{\zeta_{\alpha}}}_{\infty}\leq \frac{\|\mathfrak{F}\|_{H^s}}{\|\zeta_{\alpha}\|_{\infty}}\leq \frac{5\epsilon}{1-5\epsilon}\leq 6\epsilon.
\end{equation}
By maximum principle, we have 
\begin{equation}
\|F_{\zeta}(\cdot,t)\|_{L^{\infty}(\Omega(t))}\leq \|F_{\zeta}(\zeta(\alpha,t),t)\|_{\infty}\leq 6\epsilon.
\end{equation}
Note that 
\begin{equation}\label{FtHs}
    D_t\mathfrak{F}(\alpha,t)=D_t F(\zeta(\alpha,t),t)=F_t\circ\zeta+D_t\zeta F_{\zeta}\circ \zeta.
\end{equation}
So we have for $\epsilon$ sufficiently small (say, $\epsilon<1/36$),
\begin{equation}
    \|F_t(\zeta(\cdot,t),t)\|_{H^s}\leq \|D_t\mathfrak{F}\|_{H^s}+\|D_t\zeta F_{\zeta}\circ\zeta\|_{H^s}\leq 5\epsilon+6\epsilon (6\epsilon)\leq 6\epsilon.
\end{equation}
By Sobolev embedding and maximum principle, we have 
\begin{equation}
\|F_{t}(\cdot,t)\|_{L^{\infty}(\Omega(t))}\leq \|F_{t}(\zeta(\alpha,t),t)\|_{L^{\infty}}\leq \|F_t\circ\zeta\|_{H^1}\leq 6\epsilon.
\end{equation}

\noindent Use the fact that $Re F$ is odd, and the estimate $\|F_{\zeta}\|_{H^1}\leq 6\epsilon$, we have 
\begin{align*}
|Re F(x+iy,t)|=|Re F(x+iy,t)-Re F(0+iy,t)|\leq \|F_{\zeta}\|_{\infty}|x|\leq 6\epsilon |x|.
\end{align*}

Note that 
\begin{equation}
F_{\zeta\zeta}(\zeta(\alpha,t),t)=(\frac{\partial_{\alpha}}{\zeta_{\alpha}})^2F(\zeta(\alpha,t),t)=\frac{1}{\zeta_{\alpha}^2}\mathfrak{F}_{\alpha\alpha}-\frac{\zeta_{\alpha\alpha}}{\zeta_{\alpha}^3}\mathfrak{F}_{\alpha}.
\end{equation}
For $\epsilon$ sufficiently small, by maximum principle, we have
\begin{equation}
\|F_{\zeta\zeta}(\cdot,t)\|_{L^\infty(\Omega(t))}\leq \frac{1}{\inf_{\alpha\in \mathbb{R}}\zeta_{\alpha}^2}\|\mathfrak{F}\|_{H^3}+\frac{\|\zeta_{\alpha\alpha}\|_{\infty}}{\inf_{\alpha\in \mathbb{R}}\zeta_{\alpha}^3}\|\mathfrak{F}\|_{H^1} \leq \frac{1}{(1-5\epsilon)^2}5\epsilon+\frac{5\epsilon}{(1-5\epsilon)^3}5\epsilon\leq 10\epsilon.
\end{equation}

Maximum principle implies $\|F_{t\zeta}\|_{L^{\infty}(\Omega(t))}\leq \|F_{t\zeta}\|_{L^{\infty}(\partial \Omega(t))}$. Since $F_{t\zeta}(\zeta(\alpha,t),t)=\frac{\partial_{\alpha}F_t(\zeta(\alpha,t),t)}{\zeta_{\alpha}}$, by (\ref{FtHs}), we have 
\begin{align*}
\norm{F_{t\zeta}\circ\zeta}_{L^{\infty}}=&\norm{\frac{\partial_{\alpha}F_t(\zeta(\alpha,t),t)}{\zeta_{\alpha}}}_{L^{\infty}}\leq \norm{\partial_{\alpha}F_t(\zeta(\cdot,t),t)}_{\infty}\norm{\frac{1}{\zeta_{\alpha}}}_{\infty}\\
= & \norm{D_t\mathfrak{F}(\alpha,t)-D_t\zeta F_{\zeta}\circ \zeta}_{H^s}\norm{\frac{1}{\zeta_{\alpha}}}_{\infty}\leq 10\epsilon,\\
\end{align*}
\end{proof}

Assume the bootstrap assumption (\ref{longtimeapriori}), we can obtain control of various characteristics of the point vortices. 

\vspace*{2ex}

\noindent \textbf{\underline{Convention.}} We use $K_s$ to denote a constant that depends on $s$. We'll use $K_s\sim \frac{((s+12)!)^2}{((s+7)!)^2}$. $K_s$ can be different at different places, up to an absolute multiplicity constant. We also use $C$ to represent an absolute constant.


\vspace*{2ex}

We'll need the following lemma. Similar versions of this lemma have been appeared in \cite{Wu2009}.
\begin{lemma}\label{realinverse}
Assume the bootstrap assumption (\ref{longtimeapriori}), let $f, h$ be real functions. Assume 
$$(I-\mathcal{H})h\bar{\zeta}_{\alpha}=g\quad \quad or\quad \quad (I-\mathcal{H})h=g.$$
Then we have for any $t\in [0,T_0]$,
\begin{equation}
\|h\|_{H^s}\leq 2\|g\|_{H^s}.
\end{equation}
\end{lemma}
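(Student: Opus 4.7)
The plan is to reduce both claimed inequalities to the invertibility of $I-\mathcal{K}$, where $\mathcal{K}$ denotes the double-layer potential adapted to the curve $\zeta(\cdot,t)$, and to show that $\mathcal{K}$ has operator norm $O(\epsilon)$ on $H^s$ under the bootstrap (\ref{longtimeapriori}). I first treat the case $(I-\mathcal{H})h=g$ with $h$ real. The key observation is that
\[
\Re\Bigl(\frac{1}{\pi i}\frac{\zeta_\beta}{\zeta(\alpha,t)-\zeta(\beta,t)}\Bigr)=\frac{1}{\pi}\Im\Bigl(\frac{\zeta_\beta}{\zeta(\alpha,t)-\zeta(\beta,t)}\Bigr),
\]
so for real $h$ one has $\Re(\mathcal{H}h)=\mathcal{K}h$, where $\mathcal{K}$ is the operator $\mathfrak{K}$ of Section \ref{prelim} with $z$ replaced by $\zeta$. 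Taking real parts of $(I-\mathcal{H})h=g$ yields $(I-\mathcal{K})h=\Re(g)$, reducing the problem to inverting $I-\mathcal{K}$ on $H^s$.

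When $\zeta(\alpha)=\alpha$ the kernel $1/(\alpha-\beta)$ is real and $\mathcal{K}\equiv 0$, so the smallness of $\zeta-\alpha$ should transfer to smallness of $\mathcal{K}$. Writing $\xi:=\zeta-\alpha$,
\[
\frac{\zeta_\beta}{\zeta(\alpha)-\zeta(\beta)}-\frac{1}{\alpha-\beta}=\frac{\xi_\beta(\alpha-\beta)-(\xi(\alpha)-\xi(\beta))(1+\xi_\beta)}{(\alpha-\beta)(\zeta(\alpha)-\zeta(\beta))},
\]
the bootstrap gives $\|\xi_\alpha\|_{H^s}\leq 5\epsilon$, while Lemma \ref{chordarclongtime} supplies a uniform chord-arc bound. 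Applying the $H^s$-version of Lemma \ref{singular}, together with Lemma \ref{lemmacommutator1}, to the above difference produces an absolute constant $C$ with $\|\mathcal{K}h\|_{H^s}\leq C\epsilon\|h\|_{H^s}$. For $\epsilon$ small enough that $C\epsilon\leq 1/4$, $I-\mathcal{K}$ is invertible by Neumann series with $\|(I-\mathcal{K})^{-1}\|_{H^s\to H^s}\leq 4/3$, giving $\|h\|_{H^s}\leq \tfrac{4}{3}\|g\|_{H^s}$.

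For the first form $(I-\mathcal{H})(h\bar{\zeta}_\alpha)=g$ I would split $\bar{\zeta}_\alpha=1+(\bar{\zeta}_\alpha-1)$ and rewrite as
\[
(I-\mathcal{H})h=g-(I-\mathcal{H})(h(\bar{\zeta}_\alpha-1)).
\]
Since $\|\bar{\zeta}_\alpha-1\|_{H^s}\leq 5\epsilon$, $H^s$ is a multiplication algebra for $s\geq 4$, and $\mathcal{H}$ is bounded on $H^s$ uniformly under chord-arc (Lemma \ref{lemmacommutator1}), the perturbation satisfies $\|(I-\mathcal{H})(h(\bar{\zeta}_\alpha-1))\|_{H^s}\leq C'\epsilon\|h\|_{H^s}$. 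The case just handled then gives $\|h\|_{H^s}\leq\tfrac{4}{3}\|g\|_{H^s}+\tfrac{4}{3}C'\epsilon\|h\|_{H^s}$; absorbing the last term into the left side for $\epsilon$ small enough yields $\|h\|_{H^s}\leq 2\|g\|_{H^s}$, as claimed.

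The main technical step is verifying cleanly that $\|\mathcal{K}\|_{H^s\to H^s}$ scales linearly in $\|\xi_\alpha\|_{H^s}$, with a constant independent of the bootstrap horizon $T_0$. This is done by expressing $\mathcal{K}h$ as a sum of singular integrals of the $S_1$- and $S_2$-types of Lemma \ref{singular} in which one factor is a difference quotient of $\xi$ or $\xi_\beta$ itself; the remaining factors are controlled by the uniform chord-arc and Sobolev bounds provided by the bootstrap. This is technically routine but bookkeeping-heavy, and is where the small-data smallness of the solution enters quantitatively into the norm of the inverse.
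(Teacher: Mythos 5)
Your reduction $(I-\mathcal{H})h=g \implies (I-\mathcal{K})h=\Re(g)$ for real $h$ is exactly the right idea, and the subsequent Neumann-series argument once $\|\mathcal{K}\|_{H^s\to H^s}=O(\epsilon)$ is established is correct. The paper gives no proof of this lemma (it only points to \cite{Wu2009}), so this has to be judged on its own merits, and the argument is sound. A few remarks.

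Your identity contains a small algebraic slip: cross-multiplying directly gives
\[
\frac{\zeta_\beta}{\zeta(\alpha,t)-\zeta(\beta,t)}-\frac{1}{\alpha-\beta}
=\frac{\xi_\beta(\alpha-\beta)-\bigl(\xi(\alpha)-\xi(\beta)\bigr)}{(\alpha-\beta)\bigl(\zeta(\alpha,t)-\zeta(\beta,t)\bigr)},
\]
without the extraneous $(1+\xi_\beta)$ factor on the second term of the numerator. The error you introduced is $O(\epsilon^2)$ and does not affect the smallness conclusion, but it is worth fixing. An equivalently clean route to the same estimate is to observe that for real $h$, $\mathcal{K}h=\tfrac12(\mathcal{H}+\bar{\mathcal{H}})h$, that $\mathbb{H}+\bar{\mathbb{H}}$ annihilates real functions, and hence $\mathcal{K}=\tfrac12\bigl((\mathcal{H}-\mathbb{H})+(\bar{\mathcal{H}}-\bar{\mathbb{H}})\bigr)$; each summand is of $S_1$/$S_2$ type with the $\xi$-difference-quotient as the small factor, giving $\|\mathcal{K}\|_{H^s\to H^s}\lesssim\|\zeta_\alpha-1\|_{H^s}\leq 5\epsilon$ directly from Lemma \ref{singular}.

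As for the relationship to the paper: in \S\ref{sectionlocal} the author handles equations of the first form $(I-\mathfrak{H})(a_t\bar z_\alpha)=g$ by multiplying by $i z_\alpha/|z_\alpha|$ and taking real parts, landing on $(I+\mathfrak{K}^\ast)(a_t|z_\alpha|)=\Re(\cdots)$ and invoking the invertibility of $I\pm\mathfrak{K}^\ast$ from Lemma \ref{layer}. That route is uniform in chord-arc constants but yields a constant depending on $C_1,C_2$, not the small constant $2$; the explicit $2$ in Lemma \ref{realinverse} necessarily invokes the smallness of the bootstrap, exactly as you do via Neumann series. You instead treat the first form by peeling off $\bar\zeta_\alpha=1+(\bar\zeta_\alpha-1)$ and reducing to the second form. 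Both are valid; yours is slightly more economical because it avoids ever writing the adjoint $\mathcal{K}^\ast$, at the cost of a fixed-point/absorption step at the end, and is likely closer to what Wu's lemma actually does.
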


We'll use the following  estimate a lot.
\begin{lemma}\label{smallestimate}
Assume the assumptions of Theorem \ref{longtime} hold. Assume also the bootstrap assumption (\ref{longtimeapriori}), and assume a priori that $d_I(t)\geq 1$, $\frac{1}{2}\leq \frac{x(t)}{x(0)}\leq 2$, $\forall~t\in [0,T_0]$. Then we have $\forall~t\in [0,T_0]$,
\begin{equation}\label{qone}
\|q\|_{H^s}\leq K_s^{-1}\epsilon d_I(t)^{-3/2}.
\end{equation}
\begin{equation}\label{qtwo}
\norm{\sum_{j=1}^2\frac{\lambda_j i}{2\pi}\frac{1}{(\zeta(\alpha,t)-z_j(t))^2}}_{H^s}\leq K_s^{-1}\epsilon d_I(t)^{-5/2}.
\end{equation}
\end{lemma}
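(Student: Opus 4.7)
The plan is to exploit the symmetry hypothesis (H3) ($\lambda_1 = -\lambda_2 = \lambda$ and $z_1 = -x + iy$, $z_2 = x + iy$) to extract an extra factor of $\lambda x(t)$ from $q$ and from $\sum_j \frac{\lambda_j i}{2\pi(\zeta-z_j)^2}$, and then to combine this with the smallness assumption (H5) $\lambda^2 + |\lambda x(0)| \leq c_0\epsilon$ and the decay estimate from Corollary \ref{integral2}. First, a direct computation using $z_1 - z_2 = -2x$ gives
\begin{equation*}
q = -\frac{\lambda i}{2\pi}\left(\frac{1}{\zeta-z_1} - \frac{1}{\zeta-z_2}\right) = \frac{\lambda i x(t)}{\pi(\zeta-z_1)(\zeta-z_2)}.
\end{equation*}
The a priori bound $x(t)/x(0) \leq 2$ combined with (H5) yields $|\lambda x(t)| \leq 2|\lambda x(0)| \leq 2c_0\epsilon$, converting a first-order factor into a quantity of size $\epsilon$.

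Next, I would estimate $\|(\zeta-z_1)^{-1}(\zeta-z_2)^{-1}\|_{H^s}$ by the standard Sobolev product estimate $\|fg\|_{H^s} \leq C_s(\|f\|_\infty \|g\|_{H^s} + \|g\|_\infty \|f\|_{H^s})$, together with the decay bounds
\begin{equation*}
\|(\zeta-z_j)^{-1}\|_{L^\infty} \leq d_I(t)^{-1}, \qquad \|(\zeta-z_j)^{-1}\|_{H^s} \leq C_s d_I(t)^{-1/2},
\end{equation*}
where the second follows from Corollary \ref{integral2} under the standing assumption $d_I(t)\geq 1$, together with the chord-arc condition from Lemma \ref{chordarclongtime}. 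This produces $\|(\zeta-z_1)^{-1}(\zeta-z_2)^{-1}\|_{H^s} \leq C_s d_I(t)^{-3/2}$, and multiplying by $|\lambda x(t)|/\pi$ gives the first estimate, provided the constants $C_s$ generated (polynomial times $((s+7)!)^2$-ish in size, coming from Corollary \ref{integral2}) combine with $c_0 = 1/((s+12)!)^2$ to yield the stated $K_s^{-1}$.

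For the second estimate, by the analogous algebraic manipulation,
\begin{equation*}
\sum_{j=1}^2\frac{\lambda_j i}{2\pi}\frac{1}{(\zeta-z_j)^2} = \frac{-2\lambda x(t) i(\zeta - iy(t))}{\pi(\zeta-z_1)^2(\zeta-z_2)^2}.
\end{equation*}
I would then eliminate the unbounded factor $\zeta - iy$ by the decomposition $\zeta - iy = (\zeta - z_1) - x$ (or alternatively $(\zeta - z_2) + x$), which splits the sum into two pieces: $\frac{-2\lambda x i}{\pi(\zeta-z_1)(\zeta-z_2)^2}$ and $\frac{2\lambda x^2 i}{\pi(\zeta-z_1)^2(\zeta-z_2)^2}$. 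Applying Sobolev products together with Corollary \ref{integral2} bounds these in $H^s$ by $C_s|\lambda x| d_I^{-5/2}$ and $C_s|\lambda|x^2 d_I^{-7/2}$ respectively. Using $|\lambda x| \leq 2c_0\epsilon$ and $|\lambda|x^2 = |\lambda x|\cdot x \leq 2c_0\epsilon \cdot x(t) \leq 4c_0\epsilon \leq 4c_0\epsilon \cdot d_I(t)$ (from (H7) and $d_I(t)\geq 1$), both pieces are controlled by $K_s^{-1}\epsilon d_I(t)^{-5/2}$.

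The main obstacle is bookkeeping the $s$-dependence of the various Sobolev product constants so that they are absorbed into the prescribed $K_s \sim ((s+12)!)^2/((s+7)!)^2$ by the smallness constant $c_0 = 1/((s+12)!)^2$ from (H5); the analytic content itself is entirely contained in the factorization observation, the bootstrap control on $x(t)$, and the decay estimates already established in Corollary \ref{integral2}.
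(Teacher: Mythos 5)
Your proof is correct, and it takes a somewhat different route from the paper's. Both arguments hinge on the same structural cancellation: $\lambda_1 = -\lambda_2$ and $|z_1-z_2| = 2x(t)$ give an extra small factor $|\lambda x(t)| \leq 2c_0\epsilon$, and both rely on Corollary \ref{integral2} (together with the chord-arc condition from Lemma \ref{chordarclongtime}) for decay in $d_I(t)$. The paper, however, extracts the factor \emph{after} differentiating: it applies the Fa\`a di Bruno formula to $\partial_\alpha^n q$, collects $\sum_j \partial_\alpha^k f_j\circ g$, and only then performs the partial fraction split $\sum_{m=0}^k \frac{z_1-z_2}{(\zeta-z_1)^{k+1-m}(\zeta-z_2)^{m+1}}$, so the $(s+1)!$-type constant from the chain rule and the Bell-number bookkeeping appear explicitly. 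You instead factor \emph{before} differentiating, writing $q = \frac{\lambda ix(t)}{\pi(\zeta-z_1)(\zeta-z_2)}$ (and the analogous expression with the $(\zeta-iy)$ numerator for the second estimate, split via $\zeta - iy = (\zeta-z_1) - x$ to kill the unbounded factor), and then invoke Kato--Ponce/Moser $H^s$-algebra estimates plus Corollary \ref{integral2} on the individual factors. This is cleaner and hides the Fa\`a di Bruno machinery inside Corollary \ref{integral2}, but it buys nothing additional in strength: the factorial constants still appear, only now through Corollary \ref{integral2} rather than through your own combinatorics, and —as you correctly flag— the final step is still to verify that these constants are dominated by $c_0 K_s^{-1}$. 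That verification is routine since the Moser constant is at most of order $2^s$ and Corollary \ref{integral2} contributes a factor of $(s+1)!$, together far below $((s+7)!)^2$. The one place to be careful is that Corollary \ref{integral2} is stated for the curve $z(\alpha,t)$, so one must first invoke Lemma \ref{chordarclongtime} to guarantee the chord-arc hypothesis for $\zeta$; you cite this, so the argument is complete.
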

\begin{proof}
We prove (\ref{qone}). The proof of (\ref{qtwo}) is similar . Let $s$  be a positive integer, we have 
\begin{align*}
\|q\|_{H^s}^2\leq \sum_{n=0}^{s}\int_{-\infty}^{\infty}\Big| \partial_{\alpha}^n \sum_{j=1}^2\frac{\lambda_j i}{2\pi}\frac{1}{\zeta(\alpha,t)-z_j(t)}\Big|^2 d\alpha
\end{align*}
Denote $f_j(\alpha,t):=\frac{\lambda_j i}{2\pi}\frac{1}{\alpha-z_j(t)}$, $g:=\zeta(\alpha,t)$. Then 
$\frac{\lambda_j i}{2\pi}\frac{1}{\zeta(\alpha,t)-z_j(t)}=f_j(g(\alpha,t),t)$. By chain rule for composite functions, we have 
\begin{align}
    \partial_{\alpha}^n f_j(g)=\sum_{k=1}^n\sum \frac{n!}{(k_1)!...(k_n)!}\partial_{\alpha}^k f_j(\cdot,t)\circ g\prod_{l=1}^n \Big(\frac{\partial_{\alpha}^l g}{l!}\Big)^{k_l},
\end{align}
where the second summation is over all non-negative integers $(k_1,...,k_n)$ such that 
\begin{equation}
\begin{cases}
    \sum_{l=1}^n k_l=k\\
    \sum_{l=1}^n lk_l=n.
    \end{cases}
\end{equation}
So we have 
\begin{align}
    \partial_{\alpha}^n q=\sum_{k=1}^n\sum \frac{n!}{(k_1)!...(k_n)!}\Big(\sum_{j=1}^2\partial_{\alpha}^k f_j(\cdot,t)\circ g\Big)\prod_{l=1}^n \Big(\frac{\partial_{\alpha}^l g}{l!}\Big)^{k_l}
\end{align}
Note that 
\begin{align}
    (\sum_{j=1}^2\partial_{\alpha}^k f_j(\cdot,t))\circ g=& \sum_{j=1}^2 \frac{\lambda_j i}{2\pi}\frac{(-1)^k k!}{(\zeta(\alpha,t)-z_j(t))^{k+1}}\\
    =& \frac{\lambda i(-1)^k k!}{2\pi}\sum_{m=0}^k \frac{z_1-z_2}{(\zeta(\alpha,t)-z_1(t))^{k+1-m}(\zeta(\alpha,t)-z_2(t))^{m+1}}
\end{align}

use $z_1-z_2=2x(t)$, 
similar to the proof of lemma \ref{integral}, we have 
\begin{equation}
\norm{\sum_{j=1}^2\partial_{\alpha}^k f_j(\cdot,t)\circ g}_{L^2}\leq 100 (k+1)!|\lambda x(t)|d_I(t)^{-3/2}.
\end{equation}

Therefore, 
\begin{align*}
\|\partial_{\alpha}^n q\|_{L^2}=& \norm{\sum_{k=1}^n\sum \frac{n!}{(k_1)!...(k_n)!}\Big(\sum_{j=1}^2\partial_{\alpha}^k f_j(\cdot,t)\circ g\Big)\prod_{l=1}^n \Big(\frac{\partial_{\alpha}^l g}{l!}\Big)^{k_l}}_{L^2}\\
\leq & \sum_{k=1}^n \sum \frac{n!}{(k_1)!...(k_n)!}\prod_{l=1}^n \frac{\|\partial_{\alpha}^l g\|_{\infty}^{k_l}}{(l!)^{k_l}}\norm{\sum_{j=1}^2\partial_{\alpha}^k f_j(\cdot,t)\circ g}_{L^2}
\end{align*}
For $l=1$, we bound $\partial_{\alpha}^lg$ by $1+5\epsilon$. For $l\geq 2$, we bound $\|\partial_{\alpha}^lg\|_{\infty}$ by $5\epsilon$. We choose $\epsilon$ small so that $(1+5\epsilon)^s\leq 2$. We bound $(k+1)!$ by $(n+1)!$. Use the assumption $x(t)\leq 2x(0)$. Use 
\begin{equation}
    \prod_{l=1}^n\|\partial_{\alpha}^lg\|_{\infty}^{k_l}\leq \prod_{j=1}^n (1+5\epsilon)^{k_l}\leq (1+5\epsilon)^s,
\end{equation}
we obtain
\begin{align}
    \|\partial_{\alpha}^n q\|_{L^2}\leq &\sum_{k=1}^n \sum \frac{n!}{(k_1)!...(k_n)!}\prod_{l=1}^n \frac{(1+5\epsilon)^{k_l}}{(l!)^{k_l}}\times (100 (k+1)!|\lambda x(t)|d_I(t)^{-3/2})\\
    \leq & 400S(n)|\lambda| x(0) (n+1)! d_I(t)^{-3/2},
\end{align}
where 
\begin{equation}
    S(n)=\sum_{k=1}^n \sum \frac{n!}{(k_1)!...(k_n)!}\prod_{l=1}^n \frac{1}{(l!)^{k_l}}
\end{equation}
is called the bell number. We can bound $S(n)$ by 
\begin{equation}
    S(n)\leq n!.
\end{equation}
So we have 
\begin{equation}
    \|\partial_{\alpha}^nq\|_{L^2}\leq 400|\lambda x(0)| n! (n+1)! d_I(t)^{-3/2}.
\end{equation}
Therefore, 
\begin{align}
    \|q\|_{H^s}\leq &\Big(\sum_{n=0}^s \|\partial_{\alpha}^n q\|_{L^2}^2\Big)^{1/2}\\
    \leq & \Big(\sum_{n=0}^s (400|\lambda x(0)| n! (n+1)! d_I(t)^{-3/2})^2\Big)
    ^{1/2}\\
    \leq & 400((s+2)!)^2 |\lambda x(0)| d_I(t)^{-3/2}\\
    \leq & K_s^{-1}\epsilon d_I(t)^{-3/2}.
\end{align}
\end{proof}
\begin{corollary}
Assume the assumptions of Theorem \ref{longtime} hold and assume the bootstrap assumption (\ref{longtimeapriori}), and assume a priori that $d_I(t)\geq 1$, $\frac{1}{2}\leq \frac{x(t)}{x(0)}\leq 2$, $\forall~t\in [0,T_0]$. Then we have 
\begin{equation}
\sup_{t\in [0,T]}\|D_t\zeta\|_{H^s}\leq 6\epsilon,\quad \quad \forall~t\in [0,T_0].
\end{equation}
\end{corollary}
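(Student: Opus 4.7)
The proof will be essentially a direct consequence of the decomposition of $\bar z_t$ introduced in (\ref{decomposition}) together with the preceding Lemma \ref{smallestimate}, so I expect no real obstacle here. The plan is to transport the decomposition $\bar z_t = f + p$ through the change of variables $\kappa^{-1}$ to obtain
\begin{equation*}
\overline{D_t\zeta} = \mathfrak{F} + q,
\end{equation*}
where $\mathfrak{F}:= f\circ\kappa^{-1}$ and $q := p\circ\kappa^{-1}$, and then bound each piece in $H^s$ separately.

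First I would use the bootstrap assumption (\ref{longtimeapriori}) to get $\|\mathfrak{F}\|_{H^s}\le \|\mathfrak{F}\|_{H^{s+1/2}}\le 5\epsilon$. Next, invoking Lemma \ref{smallestimate} together with the standing a priori bound $d_I(t)\ge 1$ yields
\begin{equation*}
\|q\|_{H^s}\le K_s^{-1}\epsilon\, d_I(t)^{-3/2}\le K_s^{-1}\epsilon.
\end{equation*}
Because the Sobolev norm on $\mathbb{R}$ is invariant under complex conjugation, $\|\bar{\mathfrak{F}}\|_{H^s}=\|\mathfrak{F}\|_{H^s}$ and $\|\bar q\|_{H^s}=\|q\|_{H^s}$. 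The triangle inequality then gives
\begin{equation*}
\|D_t\zeta\|_{H^s}\le \|\bar{\mathfrak{F}}\|_{H^s}+\|\bar q\|_{H^s}\le 5\epsilon+K_s^{-1}\epsilon\le 6\epsilon,
\end{equation*}
provided $K_s\ge 1$ (which is true by the convention fixed just before Lemma \ref{realinverse}, since $K_s\sim ((s+12)!)^2/((s+7)!)^2$). Taking the supremum over $t\in[0,T_0]$ completes the argument.

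The only place requiring mild care is ensuring that the transported decomposition really reads $\overline{D_t\zeta}=\mathfrak{F}+q$; this is automatic from $D_t\zeta=z_t\circ\kappa^{-1}$ and conjugation. No further estimate is needed beyond those already recorded; in particular, the factor $K_s^{-1}$ in Lemma \ref{smallestimate} was set up precisely so that the contribution of $q$ is absorbed into the difference between $5\epsilon$ and $6\epsilon$.
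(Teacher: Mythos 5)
Your proof is correct and follows the only natural route, which is also what the paper intends by leaving this as an unproved corollary of Lemma \ref{smallestimate}. The decomposition $D_t\zeta = \bar{\mathfrak{F}}+\bar{q}$ (stated explicitly in the paper in the proof of Lemma \ref{comparetransform}), the bootstrap bound $\|\mathfrak{F}\|_{H^{s+1/2}}\leq 5\epsilon$, the bound $\|q\|_{H^s}\leq K_s^{-1}\epsilon d_I(t)^{-3/2}\leq K_s^{-1}\epsilon$ from Lemma \ref{smallestimate} with $d_I(t)\geq 1$, and the triangle inequality combined with $K_s\geq 1$ give the claimed $6\epsilon$ bound.
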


\begin{corollary}\label{estimateforbb}
Assume the assumptions of Theorem \ref{longtime} hold and assume the bootstrap assumption (\ref{longtimeapriori}), and assume a priori that $d_I(t)\geq 1$, $\frac{1}{2}\leq \frac{x(t)}{x(0)}\leq 2$, $\forall~t\in [0,T_0]$. Then we have 
\begin{equation}
    \|b\|_{H^s}\leq C\epsilon^2+K_s^{-1}\epsilon d_I(t)^{-3/2},\quad \quad \forall~t\in [0,T_0].
\end{equation}
for some absolute constant $C>0$.
\end{corollary}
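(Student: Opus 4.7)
The plan is to invoke the formula already derived in the excerpt,
\begin{equation*}
(I-\mathcal{H})b = -[D_t\zeta, \mathcal{H}]\frac{\bar{\zeta}_\alpha-1}{\zeta_\alpha} - \frac{i}{\pi}\sum_{j=1}^2\frac{\lambda_j}{\zeta(\alpha,t)-z_j(t)},
\end{equation*}
together with Lemma \ref{realinverse} to turn an $H^s$ estimate of the right-hand side into one for $b$ itself. Since $b = \kappa_t\circ\kappa^{-1}$ is real-valued, Lemma \ref{realinverse} gives $\|b\|_{H^s} \leq 2\|(I-\mathcal{H})b\|_{H^s}$, so it suffices to control the two terms on the right in $H^s$.

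The point-vortex term is the easier of the two. By the definition $q = -\sum_{j=1}^2\frac{\lambda_j i}{2\pi}\frac{1}{\zeta(\alpha,t)-z_j(t)}$, we have $-\frac{i}{\pi}\sum_{j=1}^2\frac{\lambda_j}{\zeta(\alpha,t)-z_j(t)} = 2q$, so Lemma \ref{smallestimate} gives this piece an $H^s$ norm bounded by $2K_s^{-1}\epsilon\, d_I(t)^{-3/2}$, which (after adjusting $K_s$ by an absolute constant) is the desired second term on the right of the corollary.

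For the commutator term, I rewrite
\begin{equation*}
[D_t\zeta, \mathcal{H}]\frac{\bar{\zeta}_\alpha-1}{\zeta_\alpha}(\alpha) = \frac{1}{\pi i}\int \frac{D_t\zeta(\alpha)-D_t\zeta(\beta)}{\zeta(\alpha)-\zeta(\beta)}(\bar{\zeta}_\beta(\beta,t)-1)\, d\beta,
\end{equation*}
which is exactly an $S_2$-type singular integral with $A_1 = D_t\zeta$, $\gamma_1 = \zeta$, and $f = \bar{\zeta}-\alpha$. The chord-arc hypothesis needed to apply Lemma \ref{singular} is supplied by Lemma \ref{chordarclongtime}, so part (2) of Lemma \ref{singular} yields
\begin{equation*}
\left\|[D_t\zeta, \mathcal{H}]\tfrac{\bar{\zeta}_\alpha-1}{\zeta_\alpha}\right\|_{H^s} \leq C\|\partial_\alpha D_t\zeta\|_{H^{s-1}}\|\bar{\zeta}-\alpha\|_{W^{s-1,\infty}}.
\end{equation*}
The first factor is bounded by $\|D_t\zeta\|_{H^s}\leq 6\epsilon$ (the corollary just proved), and the second by Sobolev embedding applied to $\|\bar{\zeta}_\alpha - 1\|_{H^{s-1}} \leq 5\epsilon$ (from the bootstrap assumption, together with the decay hypothesis $\zeta-\alpha\to 0$ at infinity to control the $L^\infty$ piece of the $W^{s-1,\infty}$ norm). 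This gives the commutator a bound $\leq C\epsilon^2$.

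Combining the two pieces and absorbing constants yields
\begin{equation*}
\|b\|_{H^s} \leq 2(C\epsilon^2 + 2K_s^{-1}\epsilon\, d_I(t)^{-3/2}) \leq C\epsilon^2 + K_s^{-1}\epsilon\, d_I(t)^{-3/2},
\end{equation*}
as claimed. There is no serious obstacle: the essential work has already been done in deriving the formula for $(I-\mathcal{H})b$, establishing the chord-arc bound for $\zeta$, and proving Lemma \ref{smallestimate}; the remaining task is simply a clean application of the commutator estimate of Lemma \ref{singular} combined with the real-inverse Lemma \ref{realinverse}. The only point demanding attention is bookkeeping of the $s$-dependent constant $K_s$, which must dominate the absolute constant coming out of Lemma \ref{singular} when bounding the vortex term so that a single $K_s^{-1}$ factor appears in the final estimate.
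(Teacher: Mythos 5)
Your proof is correct and follows essentially the same route as the paper: the paper's proof likewise starts from the formula for $(I-\mathcal{H})b$, bounds the commutator term by a singular-integral estimate (the paper cites Lemma~\ref{lemmacommutator1}, but as you observe, the underlying tool for $[D_t\zeta,\mathcal{H}]$ is really Lemma~\ref{singular}), bounds the point-vortex term by Lemma~\ref{smallestimate}, and implicitly invokes Lemma~\ref{realinverse} to pass from $(I-\mathcal{H})b$ back to $b$. Your exposition merely makes a few steps the paper leaves tacit more explicit (identifying the vortex term with $2q$, naming the $S_2$ structure, and discussing the $L^\infty$ piece of the $W^{s-1,\infty}$ norm of $\bar{\zeta}-\alpha$), which is fine given the paper's convention that $K_s$ may be adjusted by absolute constants.
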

\begin{proof}
$$(I-\mathcal{H})b=-[D_t\zeta, \mathcal{H}]\frac{\bar{\zeta}_{\alpha}-1}{\zeta_{\alpha}}-\frac{i}{\pi} \sum_{j=1}^2 \frac{\lambda_j}{\zeta(\alpha,t)-z_j(t)}.$$
By lemma \ref{lemmacommutator1} and lemma \ref{smallestimate}, we have 
\begin{align*}
\norm{(I-\mathcal{H})b}_{H^s}\leq &\norm{[D_t\zeta, \mathcal{H}]\frac{\bar{\zeta}_{\alpha}-1}{\zeta_{\alpha}}}_{H^s}+\norm{\frac{i}{\pi} \sum_{j=1}^2 \frac{\lambda_j}{\zeta(\alpha,t)-z_j(t)}}_{H^s}\\
\leq & C\epsilon^2+K_s^{-1}\epsilon d_I(t)^{-3/2}.
\end{align*}
So we have 
\begin{equation}\label{estimateforb}
\|b\|_{H^s}\leq C\epsilon^2+K_s^{-1}\epsilon d_I(t)^{-3/2}.
\end{equation}
\end{proof}

\begin{remark}
Again, the a priori assumption $d_I(t)\geq 1, \frac{1}{2}\leq \frac{x(t)}{x(0)}\leq 2$,  $\forall~t\in [0,T_0]$ will be justified by a bootstrap argument.
\end{remark}

We need to estimate $\dot{z}_j$ and $\ddot{z}_j$ in a more precise way rather than using the rough estimates in lemma \ref{velocity}. Let's first derive the estimate for $\dot{z}_j$, then we use this estimate to control $x(t)$ over time. We use the control of $x(t)$ to estimate $\ddot{z}_j$.

\begin{lemma}\label{refinederivative}
Assume the assumptions of Theorem \ref{longtime} and the bootstrap assumption (\ref{longtimeapriori}), and assume a priori that $d_I(t)\geq 1$, $\frac{1}{2}\leq \frac{x(t)}{x(0)}\leq 2$, $\forall~t\in [0,T_0]$. Then we have 
\begin{equation}
\sup_{t\in [0,T_0]}|\dot{z}_1(t)-\dot{z}_2(t)|\leq 10 \epsilon.
\end{equation}
\begin{equation}
\sup_{t\in [0,T_0]}|\dot{z_j}(t)-\frac{\lambda i}{4\pi x(t)}|\leq 5\epsilon.
\end{equation}
\begin{equation}
\sup_{t\in [0,T_0]}|\dot{z}_1^2-\dot{z}_2^2|\leq 6|\lambda| \epsilon+120\epsilon^2 x(t).
\end{equation}
\end{lemma}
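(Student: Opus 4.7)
The plan is to start from the explicit motion equation for the vortices. Recall that $\dot{z}_j = v(z_j,t) - \frac{\lambda_j i}{2\pi\overline{(z-z_j(t))}}|_{z=z_j}$, which, upon expanding $v$ in terms of $F$ (as in the derivation of Lemma \ref{velocity}), yields
\[
\dot{z}_j(t) = \bar{F}(z_j(t),t) + \sum_{k\neq j} \frac{\lambda_k i}{2\pi\, \overline{z_j(t)-z_k(t)}}.
\]
Under (H3), $z_1-z_2 = -2x(t)$ is real, and $\lambda_1 = -\lambda_2 = \lambda$, so the vortex-vortex interaction term simplifies and one checks directly that
\[
\dot{z}_j(t) - \frac{\lambda i}{4\pi x(t)} = \bar{F}(z_j(t),t), \qquad j=1,2.
\]
This identity is the heart of the lemma: it isolates the singular Biot–Savart self-interaction of the pair from the regular contribution $F$ of the rest of the flow.

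With this, the second inequality is immediate from the maximum principle bound $\|F\|_{L^\infty(\Omega(t))} \leq 5\epsilon$ proven in Lemma \ref{maximum_principle}. The first inequality then follows from the triangle inequality applied to $\dot{z}_1 - \dot{z}_2 = \bar{F}(z_1,t) - \bar{F}(z_2,t)$, again using $|\bar{F}(z_j,t)| \leq 5\epsilon$.

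For the third inequality, factoring $\dot{z}_1^2 - \dot{z}_2^2 = (\dot{z}_1-\dot{z}_2)(\dot{z}_1+\dot{z}_2)$ and using the crude bound $|\dot{z}_1-\dot{z}_2|\leq 10\epsilon$ is not enough, because $|\dot{z}_1+\dot{z}_2|$ contains the vortex pair contribution $\frac{\lambda i}{2\pi x(t)}$, and $x(t)$ may be very small (H6 allows $|\lambda|/x(0)$ to be arbitrarily large). The key refinement is to exploit the symmetry assumption (H4): the relation $z_1 = -\bar{z}_2$ together with the oddness of $\mathrm{Re}\, v$ and evenness of $\mathrm{Im}\, v$ in $x$ translates, after a short calculation on the Biot–Savart kernel, into the identity $\bar{F}(z_1,t) = -F(z_2,t)$. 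Consequently $\bar{F}(z_1)-\bar{F}(z_2) = -2\,\mathrm{Re}\, F(z_2,t)$, and invoking the sharper bound $|\mathrm{Re}\, F(x+iy,t)| \leq 6\epsilon|x|$ from Lemma \ref{maximum_principle} yields $|\dot{z}_1-\dot{z}_2| \leq 12\epsilon x(t)$. Multiplying this with the straightforward bound $|\dot{z}_1+\dot{z}_2| \leq 10\epsilon + \frac{|\lambda|}{2\pi x(t)}$ gives exactly the required estimate, the factor $x(t)$ cancelling the $1/x(t)$ from the vortex pair term.

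The only delicate point is verifying the symmetry identity $\bar{F}(z_1,t) = -F(z_2,t)$; this is a short direct computation using that $v$ satisfies the symmetry in (H4) (which is propagated by Lemma \ref{symmetry}) and that the singular sum $\sum_j \frac{\lambda_j i}{2\pi\,\overline{z-z_j}}$ enjoys the same symmetry under $z \mapsto -\bar z$ thanks to $z_1 = -\bar z_2$ and $\lambda_1 = -\lambda_2$. Everything else is an application of Lemma \ref{maximum_principle} and elementary algebra.
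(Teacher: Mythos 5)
Your proof is correct and follows essentially the same path as the paper: the same identity $\dot z_j = \frac{\lambda i}{4\pi x(t)}+\bar F(z_j(t),t)$, the same symmetry consequence $F(z_1)-F(z_2)=2\,\mathrm{Re}\,F(z_1)=-2\,\mathrm{Re}\,F(z_2)$ coming from (H4), and the same bounds $\|F\|_\infty\leq 5\epsilon$, $|\mathrm{Re}\,F(x+iy,t)|\leq 6\epsilon|x|$ from Lemma~\ref{maximum_principle}. The only cosmetic difference is that you factor $\dot z_1^2-\dot z_2^2=(\dot z_1-\dot z_2)(\dot z_1+\dot z_2)$ at the outset, while the paper expands $\dot z_j^2$ and isolates the cross and square terms before applying the same two estimates; the algebra is a reorganization of the same computation and your constants are in fact slightly tighter.
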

\begin{proof}
Note that 
\begin{equation}\label{firstderivative1}
\dot{z}_1=\frac{\lambda_2 i}{2\pi(z_1-z_2)}+\bar{F}(z_1,t)=\frac{\lambda i}{4\pi x(t)}+\bar{F}(z_1,t).
\end{equation}
Similarly,
\begin{align*}
\dot{z}_2=\frac{\lambda i}{4\pi x(t)}+\bar{F}(z_2,t).
\end{align*}
By lemma \ref{maximum_principle}, we have  
\begin{align*}
|\dot{z}_1(t)-\dot{z}_2(t)|=|F(z_1,t)-F(z_2,t)|\leq 2\|F\|_{L^{\infty}(\partial \Omega(t))}\leq 10\epsilon,
\end{align*}
and
\begin{equation}
|\dot{z}_j(t)-\frac{\lambda i}{4\pi x(t)}|=|\bar{F}(z_1(t),t)|\leq 5\epsilon.
\end{equation}
We have 
$$\dot{z}_j^2= (\frac{\lambda i}{4\pi x(t)})^2+2\frac{\lambda i}{4\pi x(t)}\bar{F}(z_j(t),t)+(\bar{F}(z_j(t),t))^2,$$
By mean value theorem, bootstrap assumption (\ref{longtimeapriori}), lemma \ref{maximum_principle}, we have 
\begin{equation}\label{meanmean}
\begin{split}
    |F(z_1(t),t)^2-F(z_2(t),t)^2|=&|(F(z_1(t),t)+F(z_2(t),t))F_{\zeta}(\tilde{x}+iy(t),t)(z_1(t)-z_2(t))|\\
    \leq & 120\epsilon^2 x(t).
    \end{split}
\end{equation}
Here, $\tilde{x}\in (0,x(t))$.

By (\ref{max_real}) of  lemma \ref{maximum_principle}, we have 
\begin{align*}
|\dot{z}_1^2-\dot{z}_2^2|=&|\frac{2\lambda i}{4\pi x(t)}(F(z_1(t),t)-F(z_2(t),t))+F(z_1(t),t)^2-F(z_2(t),t)^2|\\
=&|\frac{\lambda i}{\pi x(t)}Re F(z_1(t),t)+F(z_1(t),t)^2-F(z_2(t),t)^2|\\
\leq & \frac{|\lambda|}{\pi x(t)}\|F_{\zeta}\|_{\infty}x(t)+|F(z_1(t),t)^2-F(z_2(t),t)^2|\\
\leq &6|\lambda| \epsilon+120\epsilon^2x(t).
\end{align*}
\end{proof}

Another consequence of the bootstrap assumption (\ref{longtimeapriori}) is the following description of the motion of the point vortices, which is the key control of this paper.
\begin{proposition}[key control]\label{parallel}
Assume the assumptions of Theorem \ref{longtime} and assume the bootstrap assumption (\ref{longtimeapriori}), we have
\begin{equation}\label{goodbound}
\frac{1}{2}\leq \frac{x(t)}{x(0)}\leq 2,\quad \quad 0\leq t\leq T_0.
\end{equation}
\end{proposition}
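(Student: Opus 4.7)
The plan is a continuity (bootstrap) argument on $x(t)$: let $T_1\in[0,T_0]$ be the supremum of times $T'$ for which $x(t)/x(0)\in[1/2,2]$ for all $t\in[0,T']$. By continuity $T_1>0$, and it suffices to establish the strictly stronger estimate $|\log(x(t)/x(0))|<\log 2$ on $[0,T_1]$, as this rules out $T_1<T_0$.

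The starting identity is $\dot z_1=\bar F(z_1,t)+\lambda i/(4\pi x(t))$, which follows from the ODE for $\dot z_j$ together with $z_1-z_2=-2x(t)\in\mathbb{R}$ and $\lambda_1=-\lambda_2=\lambda$. Its real part gives $\dot x(t)=-\Re F(z_1(t),t)$, while the imaginary part, combined with $\|F\|_{L^\infty(\Omega(t))}\leq 5\epsilon$ (from Lemma \ref{maximum_principle} under the bootstrap (\ref{longtimeapriori})) and (H6), yields $\dot y(t)\leq -|\lambda|/(20\pi x(0))$ on $[0,T_1]$; together with (H7) and the smallness of $\xi_0$ (so that $|y(0)|\geq 1/2$), this gives $|y(t)|\geq 1/2+|\lambda|t/(20\pi x(0))$.

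For the horizontal motion I would invoke Lemma \ref{symmetry}: by (H3)--(H4), $\Re F$ is odd and $\Im F$ even in the horizontal coordinate, so in particular $\Re F(iy,t)\equiv 0$ on the symmetry axis. Using $\partial_{x_0}\Re F=\Re F_\zeta$ (Cauchy--Riemann) and the fundamental theorem of calculus,
\[
|\dot x(t)|=|\Re F(-x(t)+iy(t),t)|\leq x(t)\sup_{|s|\leq x(t)}|F_\zeta(s+iy(t),t)|.
\]
The key point estimate is for $F_\zeta$ at this deep interior point. I would apply the Cauchy integral along $\Sigma(t)=\zeta(\cdot,t)$, together with Lemma \ref{chordarclongtime} and the $L^2$ bound $\|\mathfrak F\|_{L^2}\leq 5\epsilon$ from the bootstrap, to obtain
\[
|F_\zeta(s+iy(t),t)|\leq \frac{1}{2\pi}\|\mathfrak F\|_{L^2}\left\|\frac{\zeta_\alpha}{(\zeta-(s+iy(t)))^2}\right\|_{L^2}\leq \frac{C\epsilon}{|y(t)|^{3/2}},
\]
uniformly for $|s|\leq x(t)$, and hence $|\dot x(t)|/x(t)\leq C\epsilon/|y(t)|^{3/2}$.

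Integrating this and inserting the lower bound on $|y(t)|$ gives
\[
|\log(x(t)/x(0))|\leq C\epsilon\int_0^\infty\frac{ds}{(1/2+|\lambda|s/(20\pi x(0)))^{3/2}}\leq C'\epsilon\,\frac{x(0)}{|\lambda|},
\]
which by (H6) is at most $C'/M$, an absolute constant that can be made much smaller than $\log 2$ (recall $M=200\pi$). This strict improvement closes the bootstrap and forces $T_1=T_0$. The principal technical obstacle is the sharp $|y|^{-3/2}$ decay of $F_\zeta$ at the vortex: the $L^\infty$ bound on $F$ alone, via Cauchy's formula on a disk of radius $\sim|y|$, would give only $|F_\zeta|\lesssim \epsilon/|y|$, whose time integral is not small enough on the target $O(\epsilon^{-2})$ scale; the crucial extra $|y|^{-1/2}$ comes from using $L^2$ (not $L^\infty$) boundary data together with the approximate flatness of $\Sigma(t)$ furnished by the chord-arc lemma.
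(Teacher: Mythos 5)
Your overall strategy matches the paper's: a continuity argument, the odd symmetry of $\Re F$ plus the mean value theorem to produce the factor $x(t)$, the Cauchy integral with Cauchy--Schwarz against $L^2$ boundary data to get the sharp $d^{-3/2}$ decay of $F_\zeta$ (you correctly identify this as the crux --- $L^\infty$ boundary data would only give $d^{-1}$, which does not integrate), and integration of $\frac{d}{dt}\log(x/x(0))$ to close the bootstrap using (H6). The paper's proof is the same calculation up to cosmetic differences (it evaluates at $\tilde{x}+iy$ via MVT rather than taking $\sup_{|s|\le x}$, and it bootstraps on the triple $(\dot y, x/x(0), \hat d_I)$ rather than just $x/x(0)$).

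However, there is one genuine gap. In the Cauchy--Schwarz step you write $\left\|\zeta_\alpha/(\zeta-(s+iy(t)))^2\right\|_{L^2}\lesssim |y(t)|^{-3/2}$, but Lemma~\ref{integral} gives a bound in terms of $d(s+iy(t),\Sigma(t))$, the distance from the evaluation point to the interface, \emph{not} $|y(t)|$. These agree only if $\inf_\alpha \Im\zeta(\alpha,t)$ stays bounded below (say $\geq -O(\epsilon)$), and neither the chord-arc Lemma~\ref{chordarclongtime} nor the bootstrap $\|\zeta_\alpha-1\|_{H^s}\leq 5\epsilon$ directly controls $\|\Im\zeta\|_\infty$ (a bound on the derivative in $H^s$ does not give a pointwise height bound, and $\||D|^{1/2}\xi_0\|_{H^s}$ fails to control $\|\xi_0\|_\infty$ at low frequency). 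The paper resolves this by never bounding $\Im\zeta$ itself: it tracks the \emph{difference} $\Im\{\zeta(\alpha,t)-z_j(t)\}$ via the fundamental theorem of calculus, splitting $\partial_t$ into the interface velocity (bounded by $\|D_t\zeta\|_\infty+\|b\zeta_\alpha\|_\infty\lesssim\epsilon$) and the vortex velocity ($\geq |\lambda|/(10\pi x(0))\gg\epsilon$ by (H6)), and uses (H7) $\hat d_I(0)\geq 1$ for the initial value; this yields $\hat d_I(t)\geq 1+\frac{|\lambda|}{18\pi x(0)}t$ directly. Your argument would be repaired by replacing $|y(t)|$ with $\hat d_I(t)$ throughout and inserting this FTC bookkeeping (or, equivalently, adding $\hat d_I(t)\geq 1+\frac{|\lambda|}{20\pi x(0)}t$ to your bootstrap hypothesis, as the paper does).
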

\begin{proof}
It suffices to prove the case that $x(t)$ is increasing on $0\leq t\leq T_0$. The case that $x(t)$ is decreasing follows in a similar way, and other cases are controlled by these two cases. 

Denote 
\begin{equation}\label{bootstrap}
\mathcal{T}:=\Big\{T\in [0,T_0] | \quad \dot{y}(t)\leq -\frac{|\lambda|}{20\pi x(0)}, \quad \frac{1}{2}\leq \frac{x(t)}{x(0)}\leq 2,\quad \hat{d}_I(t)\geq 1+\frac{|\lambda|}{20\pi x(0)}t, \quad \forall~t\in [0,T] \Big \}.
\end{equation}
Let's assume $F=F_1+iF_2$, where $F_1, F_2$ are real (we remind the readers that $F$ is the holomorphic extension of $f$.  Recall also the notations that $z_1(t)=-x(t)+iy(t)$, $z_2(t)=x(t)+iy(t)$, $x(t)>0, y(t)<0$). From the proof of lemma \ref{refinederivative}, we have  
\begin{equation}\label{velocityvortex}
\begin{cases}
\dot{z}_1(t)=\bar{F}(z_1(t),t)+\frac{\lambda_2 i}{2\pi}\frac{1}{\overline{z_1(t)-z_2(t)}}=\bar{F}(z_1(t))-\frac{|\lambda|i}{4\pi x(t)}\\
\dot{z}_2(t)=\bar{F}(z_2(t),t)+\frac{\lambda_1 i}{2\pi}\frac{1}{\overline{z_2(t)-z_1(t)}}=\bar{F}(z_2(t))-\frac{|\lambda|i}{4\pi x(t)}.
\end{cases}
\end{equation}
So we have 
\begin{equation}
\dot{y}(t)=-F_2(z_2(t),t)-\frac{|\lambda| }{4\pi x(t)}.
\end{equation}
By maximum principle and the bootstrap assumption (\ref{longtimeapriori}), we have 
\begin{equation}
|F_2(z_2(t),t)|\leq |F(z_2(t),t)|\leq \|F(\cdot,t)\|_{L^{\infty}(\Omega(t))}=\|\mathfrak{F}(\cdot,t)\|_{L^{\infty}(\mathbb{R})}\leq 5\epsilon.
\end{equation}
For $M$ relatively large (we take $M=200\pi$), at $t=0$, we have $\frac{|\lambda|}{4\pi x(0)}\geq \frac{200\pi\epsilon}{4\pi}=50\epsilon$. So we have 
\begin{equation}
\dot{y}(0)=-F_2(z_2(0), 0)-\frac{|\lambda|}{4\pi x(0)}\leq -\frac{9|\lambda|}{40\pi x(0)}.
\end{equation}
So $0\in\mathcal{T}$ and therefore $\mathcal{T}\neq \emptyset$. Clearly, by the definition of $\mathcal{T}$, since $\dot{y}(t), x(t)$ and $\hat{d}_I(t)$ are continuous, so $\mathcal{T}$ is closed in $[0,T_0]$. To prove $\mathcal{T}=[0,T_0]$, it suffices to prove that if (\ref{bootstrap}) holds on $[0,T]$ with $T<T_0$, then there exists $\delta>0$ such that (\ref{bootstrap}) holds on $[T,T+\delta)$.

Let $T\in \mathcal{T}$.

By (\ref{velocityvortex}), we have $\dot{x}(t)=ReF(z_2(t),t)$. Use (\ref{max_real}) of lemma \ref{maximum_principle}, use the fact that $Re F$ is odd, by mean value theorem, we have 
\begin{equation}\label{oddcancel}
\dot{x}(t)=ReF(z_2(t),t)-ReF(0+iy(t),t)=Re~F_x(\tilde{x}+iy(t),t)x(t),
\end{equation}
for some $\tilde{x}\in (0,x(t))$.

Since
\begin{equation}
F(z,t)=\frac{1}{2\pi i}\int \frac{\zeta_{\beta}}{z-\zeta(\beta,t)}\mathfrak{F}(\beta,t)d\beta.
\end{equation}
So we have $\forall~t\in [0,T_0]$,
\begin{equation}
\partial_x F(z,t)=-\frac{1}{2\pi i}\int \frac{\zeta_{\beta}}{(z-\zeta(\beta,t))^2}\mathfrak{F}(\beta,t)d\beta.
\end{equation}
By Cauchy-Schwartz inequality and lemma \ref{integral}, we have 
\begin{equation}\label{estimatetwo}
\begin{split}
|\partial_x F(\tilde{x}+iy(t),t)|\leq &\frac{1}{2\pi}\Big(\int \frac{1}{|\tilde{x}+iy(t)-\zeta(\beta,t)|^4}d\beta\Big)^{1/2}\|\zeta_{\beta}\|_{L^{\infty}}\|\mathfrak{F}\|_{L^2}\\
\leq & C\epsilon \hat{d}_I(t)^{-3/2}, \quad \quad \forall~t\in [0,T_0],
\end{split}
\end{equation}
for some absolute constant $C>0$. By direct calculation, we can see that $C\leq 2$.
So we obtain
\begin{equation}
\dot{x}(t)\leq 2\epsilon \hat{d}_I(t)^{-3/2}x(t). 
\end{equation}
So we have 
\begin{equation}
\frac{d}{dt}\ln\frac{x(t)}{x(0)}\leq 2\hat{d}_I(t)^{-3/2}\epsilon\leq 2(1+\frac{|\lambda|}{20\pi  x(0)}t)^{-3/2}\epsilon, \quad \quad \forall~ t\in [0, T].
\end{equation}
Then we have for all $t\in [0, T]$, 
\begin{equation}
\begin{split}
    x(t)\leq & x(0)exp\Big\{2\epsilon\int_0^t (1+\frac{|\lambda|}{20\pi x(0)}\tau)^{-3/2}d\tau\Big\}\\
    \leq & x(0)exp\Big\{   4\epsilon \frac{20\pi x(0)}{|\lambda|}\Big\}\\
    \leq & x(0)e^{2\epsilon  \frac{40\pi}{200\pi\epsilon}}=e^{\frac{2}{5}}x(0)\leq \frac{3}{2}x(0).
    \end{split}
\end{equation}
By the continuity of $x(t)$,  there exists $\delta>0$ such that 
\begin{equation}
    \sup_{t\in [0, T+\delta)} \frac{x(t)}{x(0)} \leq 2.
\end{equation}

\noindent Next we show that by choosing $\delta>0$ smaller if necessary, we have 
\begin{equation}\label{bootboot}
    \dot{y}(t)\leq -\frac{|\lambda|}{20\pi x(0)}, \quad \quad \hat{d}_I(t)\geq 1+\frac{|\lambda|}{20\pi x(0)}t, \quad \quad \forall~t\in [0, T+\delta).
\end{equation}
\noindent \textbf{Proof of (\ref{bootboot}):}
By the definition of $\mathcal{T}$, the bootstrap assumption (\ref{longtimeapriori}), and the fact that $\frac{|\lambda|}{x(0)}\geq 200\pi\epsilon$, we have 
\begin{equation}\label{dotyt}
\dot{y}(t)\leq 5\epsilon-\frac{|\lambda|}{8\pi x(0)}\leq -\frac{|\lambda|}{10\pi x(0)},\quad \quad \forall~ t\in [0,T].
\end{equation}
By Fundamental theorem of calculus, we have 
\begin{equation}
y(t)=y(0)+\int_0^t \dot{y}(\tau)d\tau.
\end{equation}
We have 
\begin{equation}
    \begin{split}
        \zeta(\alpha,t)-z_j(t)=&\zeta(\alpha,0)-z_j(0)+\int_0^t \partial_{\tau}(\zeta(\alpha,\tau)-z_j(\tau))d\tau\\
        =&\zeta(\alpha,0)-z_j(0)+\int_0^t D_{\tau}\zeta(\alpha,\tau)d\tau-\int_0^t b(\alpha,\tau)\partial_{\alpha}\zeta(\alpha,\tau)d\tau-\int_0^t \dot{z}_j(\tau)d\tau
    \end{split}
\end{equation}
So we have 
\begin{equation}
\begin{split}
    Im\{\zeta(\alpha,t)-z_j(t)\}=& Im\Big\{\zeta(\alpha,0)-z_j(0)-\int_0^t \dot{z}_j(\tau)d\tau\Big\}+Im\int_0^t D_{\tau}\zeta(\alpha,\tau)d\tau\\
    &-\int_0^t b(\alpha,\tau)Im\{\partial_{\alpha}\zeta(\alpha,\tau)\}d\tau.
    \end{split}
\end{equation}
By Sobolev embedding lemma \ref{sobolevembedding} and the bootstrap assumption, we have 
\begin{equation}
    \Big|\int_0^t D_{\tau}\zeta(\alpha,\tau)d\tau\Big|\leq 6\epsilon t,\quad \quad \forall ~t\in [0,T].
\end{equation}
By Corollary \ref{estimateforbb} and Sobolev embedding, we have 
\begin{equation}
 \Big| \int_0^t b(\alpha,\tau)\partial_{\alpha}\zeta(\alpha,\tau)d\tau \Big|\leq    (C\epsilon^2+K_s^{-1}\epsilon d_I(t)^{-3/2})(1+5\epsilon)t\leq (C\epsilon^2+K_s^{-1}\epsilon)t.
\end{equation}
Note that $Im\{\zeta(\alpha,0)-z_j(0)\}\geq \hat{d}_I(0)\geq 1$, $-Im\{\dot{z}_j(\tau)\}\geq \frac{|\lambda|}{10\pi x(0)}>0$, so we have for all $t\in [0,T]$,
\begin{equation}
    \begin{split}
        Im\{\zeta(\alpha,t)-z_j(t)\}
        \geq & \inf_{\alpha\in\mathbb{R}}Im\{\zeta(\alpha,0)-z_j(0)\}+(\frac{|\lambda|}{10\pi x(0)}-6\epsilon-(C\epsilon^2+K_s^{-1}\epsilon))t\\
        \geq &  1+\frac{|\lambda|}{18\pi x(0)}t.
    \end{split}
\end{equation}
So we have 
\begin{equation}\label{dit}
    \hat{d}_I(t)=\min_{j=1,2}\inf_{\alpha\in \mathbb{R}}Im \{\zeta(\alpha,t)-z_j(t)\}\geq  1+\frac{|\lambda|}{18\pi x(0)}t,\quad \quad\forall~t\in [0,T].
\end{equation}
By (\ref{dotyt}), (\ref{dit}), the continuity of $\hat{d}_I(t)$, and the continuity of $y'(t)$, choosing $\delta>0$ smaller if necessary, we have (\ref{bootboot}). 

\vspace*{1ex}

Since $\mathcal{T}$ is both closed and open as a subspace of $[0,T_0]$,  so we must have 
\begin{equation}
    \mathcal{T}=[0,T_0],
\end{equation}
which concludes the proof of the lemma.
\end{proof}

\noindent Because 
$$d_I(t)=\min_{j=1,2}\inf_{\alpha\in \mathbb{R}}|\zeta(\alpha,t)-z_j(t)|\geq \hat{d}_I(t),$$ we have the following estimate.
\begin{corollary}[Decay estimate]\label{decayestimate}
Assume the assumptions of Theorem \ref{longtime} and assume the bootstrap assumption (\ref{longtimeapriori}), we have $\forall~t\in [0,T_0]$,
\begin{equation}
d_I(t)^{-1}\leq (1+\frac{|\lambda|}{20\pi x(0)} t)^{-1}.
\end{equation}
\end{corollary}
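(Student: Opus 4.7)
The plan is to observe that Corollary \ref{decayestimate} is an essentially immediate consequence of the key control established inside the proof of Proposition \ref{parallel}, combined with the elementary inequality between modulus and imaginary part.

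First, I would recall the definitions:
\begin{equation*}
d_I(t)=\min_{j=1,2}\inf_{\alpha\in\mathbb{R}}|\zeta(\alpha,t)-z_j(t)|,\qquad \hat{d}_I(t)=\min_{j=1,2}\inf_{\alpha\in\mathbb{R}}\operatorname{Im}\{\zeta(\alpha,t)-z_j(t)\}.
\end{equation*}
Since for any complex number $w$ we have $|w|\geq |\operatorname{Im} w|\geq \operatorname{Im} w$, and since under assumption (H3)--(H4) the fluid region lies above the vortex pair so that $\operatorname{Im}\{\zeta(\alpha,t)-z_j(t)\}>0$, we obtain $d_I(t)\geq \hat{d}_I(t)$ pointwise in $t$.

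Next I would invoke the lower bound on $\hat{d}_I(t)$ proved as part of the bootstrap set $\mathcal{T}$ in Proposition \ref{parallel}: under the bootstrap assumption (\ref{longtimeapriori}) one has $\mathcal{T}=[0,T_0]$, which in particular asserts
\begin{equation*}
\hat{d}_I(t)\geq 1+\frac{|\lambda|}{20\pi x(0)}t,\qquad \forall\, t\in [0,T_0].
\end{equation*}
Combining this with $d_I(t)\geq \hat{d}_I(t)$ and taking reciprocals gives the claimed inequality $d_I(t)^{-1}\leq (1+\frac{|\lambda|}{20\pi x(0)}t)^{-1}$.

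There is no real obstacle here since the heavy lifting was done in Proposition \ref{parallel}; the corollary merely records the geometric observation that the horizontal distance from the interface to a vortex can only exceed the vertical distance, so that the downward escape of the pair, established via the dominance of the self-induced velocity $-\frac{|\lambda|i}{4\pi x(t)}$ over the irrotational correction $\bar{F}(z_j(t),t)$, automatically produces linear-in-$t$ decay of $d_I(t)^{-1}$. The only point worth double-checking is that the particular constant $\tfrac{1}{20\pi}$ appearing in the statement matches the one used in the definition of $\mathcal{T}$; the proof of Proposition \ref{parallel} is organised around exactly this constant, so the bound transfers without loss.
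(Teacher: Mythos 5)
Your proof is correct and follows essentially the same route as the paper, which likewise notes $d_I(t)=\min_{j}\inf_{\alpha}|\zeta(\alpha,t)-z_j(t)|\geq \hat{d}_I(t)$ and then reads off the lower bound $\hat{d}_I(t)\geq 1+\tfrac{|\lambda|}{20\pi x(0)}t$ from the bootstrap set $\mathcal{T}$ in the proof of Proposition~\ref{parallel}. The constant $\tfrac{1}{20\pi}$ does indeed match the one in that bootstrap definition, so no loss occurs in the transfer.
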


We need to estimate $\ddot{z}_j$ and $\ddot{z}_1-\ddot{z}_2$ as well.

\vspace*{2ex}

\noindent \textbf{Convention:} From now on, if the domain of $t$ is not specified, we assume $t\in [0, T_0]$ by default. 

\vspace*{2ex}

\begin{lemma}\label{refinesecondderivative}
Assume the assumptions of Theorem \ref{longtime} and assume the bootstrap assumption (\ref{longtimeapriori}), we have $\forall~t\in [0,T_0]$,
\begin{equation}\label{zjddot}
|\ddot{z}_j(t)|\leq 10\epsilon+\frac{6|\lambda|}{x(t)}\epsilon.
\end{equation}
\begin{equation}
|\ddot{z}_1(t)-\ddot{z}_2(t)|\leq 220\epsilon^2x(t)+\epsilon(20x(t)+\frac{5|\lambda|}{\pi})
\end{equation}
\end{lemma}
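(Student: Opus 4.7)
The plan is to differentiate the identity $\dot{z}_j(t) = \bar{F}(z_j(t),t) - \frac{|\lambda|i}{4\pi x(t)}$ (derived in the proof of Lemma \ref{refinederivative}) in $t$, and then estimate each piece using the $L^\infty$ bounds on $F$ and its derivatives from Lemma \ref{maximum_principle}, together with the already-established control on $\dot{x}(t)$ from the proof of Proposition \ref{parallel}.

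First I would apply the chain rule. Since $\bar{F}$ depends on $\bar{z}$ only, I obtain
$$\ddot{z}_j(t) = \overline{F_z(z_j(t),t)}\,\dot{\bar z}_j(t) + \bar F_t(z_j(t),t) + \frac{|\lambda|i\,\dot x(t)}{4\pi x(t)^2}.$$
For the first bound (\ref{zjddot}), I invoke $\|F_z\|_{L^\infty(\Omega(t))}\le 6\epsilon$ and $\|F_t\|_{L^\infty(\Omega(t))}\le 6\epsilon$ from Lemma \ref{maximum_principle}, together with $|\dot z_j|\le 5\epsilon+\frac{|\lambda|}{4\pi x(t)}$ from Lemma \ref{refinederivative}. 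The term $|\dot x(t)|/x(t)$ is controlled by $2\epsilon$ thanks to (\ref{oddcancel})--(\ref{estimatetwo}) in the proof of Proposition \ref{parallel}. Summing these three contributions and collecting constants yields $|\ddot z_j|\le 10\epsilon + \frac{6|\lambda|\epsilon}{x(t)}$.

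For the difference estimate, the key observation is that the third term in the chain-rule expansion is \emph{identical} for $j=1$ and $j=2$ (both $\dot z_1$ and $\dot z_2$ contain the same summand $-\frac{|\lambda|i}{4\pi x(t)}$), so it drops out of $\ddot z_1-\ddot z_2$, leaving
$$\ddot z_1-\ddot z_2 = \overline{F_z(z_1)-F_z(z_2)}\,\dot{\bar z}_1 + \overline{F_z(z_2)}\,(\dot{\bar z}_1-\dot{\bar z}_2) + \bar F_t(z_1)-\bar F_t(z_2).$$
Because $z_1-z_2=-2x(t)$ is real, the mean value theorem applied to $F_z$ (using $\|F_{\zeta\zeta}\|_\infty\le 10\epsilon$) and to $F_t$ (using $\|F_{t\zeta}\|_\infty\le 10\epsilon$) from Lemma \ref{maximum_principle} gives both $|F_z(z_1)-F_z(z_2)|\le 20\epsilon\, x(t)$ and $|\bar F_t(z_1)-\bar F_t(z_2)|\le 20\epsilon\, x(t)$. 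For the middle term I use the \emph{sharpened} estimate $|\dot z_1-\dot z_2| = |F(z_1)-F(z_2)| \le \|F_\zeta\|_\infty\cdot 2x(t)\le 12\epsilon\, x(t)$ instead of the cruder $10\epsilon$ bound from Lemma \ref{refinederivative}. Combining,
$$|\ddot z_1-\ddot z_2|\le 20\epsilon x(t)\cdot\bigl(5\epsilon+\tfrac{|\lambda|}{4\pi x(t)}\bigr) + 6\epsilon\cdot 12\epsilon x(t) + 20\epsilon x(t),$$
which rearranges to the stated bound $220\epsilon^2 x(t)+\epsilon\bigl(20x(t)+\frac{5|\lambda|}{\pi}\bigr)$ after absorbing constants.

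The calculation is elementary, so the real subtlety is one of constants rather than of analysis. The crucial point to watch is that $|\dot z_1 - \dot z_2|$ must be estimated by the Lipschitz bound $12\epsilon\, x(t)$ rather than the $L^\infty$ bound $10\epsilon$, since otherwise the second term would contribute $O(\epsilon^2)$ rather than $O(\epsilon^2 x(t))$ and one would lose the linear-in-$x(t)$ scaling. Equally important is the cancellation of the $\dot x/x^2$ term between $\ddot z_1$ and $\ddot z_2$: without it, the difference would blow up like $|\lambda|/x(t)^2\cdot \dot x$, which is incompatible with the regime where $x(0)$ is allowed to be arbitrarily small (see Remark \ref{largeinteraction}).
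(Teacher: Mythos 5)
Your argument is correct and follows essentially the same route as the paper: differentiate the explicit formula $\dot z_j=\bar F(z_j(t),t)-\tfrac{|\lambda|i}{4\pi x(t)}$ in time, observe that the common $\tfrac{|\lambda|i}{4\pi x(t)}$-derivative term cancels in $\ddot z_1-\ddot z_2$, and then close all estimates with the maximum-principle bounds on $F_\zeta, F_t, F_{\zeta\zeta}, F_{t\zeta}$ from Lemma \ref{maximum_principle} and the Lipschitz-in-$x$ improvement for differences across the vortex pair. (A minor point in your favor: you correctly write $\overline{F_\zeta}\,\dot{\bar z}_j$ in the chain rule, where the paper's display has a small typo $\bar F_\zeta\,\dot z_j$; this is immaterial once absolute values are taken.)
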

\begin{proof}
Take time derivative of (\ref{firstderivative1}), we have 
\begin{equation}
\ddot{z}_j(t)=-\frac{\lambda i x'(t)}{4\pi x(t)^2}+\bar{F}_\zeta(z_j(t),t)\dot{z}_j(t)+\bar{F}_t(z_j,t).
\end{equation}
We have $x'(t)=Re F(z_2(t),t)$.
By lemma \ref{maximum_principle}, we have 
\begin{equation}
|F_{\zeta}(z_j(t),t)|\leq \|F_{\zeta}(\cdot,t)\|_{L^{\infty}(\Omega(t))}\leq \|F_{\zeta}(\zeta(\alpha,t),t)\|_{\infty}\leq 6\epsilon.
\end{equation}
By Sobolev embedding and lemma \ref{maximum_principle}, we have 
\begin{equation}
\|F_{t}(\cdot,t)\|_{L^{\infty}(\Omega(t))}\leq \|F_{t}(\zeta(\alpha,t),t)\|_{L^{\infty}}\leq \|F_t\circ\zeta\|_{H^1}\leq 6\epsilon.
\end{equation}
Apply lemma \ref{maximum_principle} again, we have 
\begin{align*}
|Re F(z_j(t),t)|\leq 6\epsilon x(t).
\end{align*}
So we obtain
\begin{align*}
|\ddot{z}_j(t)|\leq &\frac{|\lambda| |Re F(z_j,t)|}{4\pi x(t)^2}+|F_{\zeta}(z_j,t)| |\dot{z}_j(t)|+|F_t(z_j,t)|\\
\leq &\frac{6|\lambda|\epsilon}{4\pi x(t)}+6\epsilon (\frac{|\lambda|}{4\pi x(t)}+6\epsilon)+6\epsilon\\
\leq & \frac{6 |\lambda| \epsilon}{\pi x(t)}+10\epsilon.
\end{align*}
Here, we assume $\epsilon^2$ sufficiently small such that $36\epsilon^2\leq 4\epsilon$. We have 
\begin{align*}
&|\ddot{z}_1(t)-\ddot{z}_2(t)|\\
=&|\bar{F}_{\zeta}(z_1(t),t)\dot{z}_1(t)+\bar{F}_t(z_1(t),t)-\bar{F}_{\zeta}(z_2(t),t)\dot{z}_2(t)-\bar{F}_t(z_2(t),t)|\\
\leq & |F_{\zeta}(z_1(t),t)-F_{\zeta}(z_2(t),t)||\dot{z}_1(t)|+|F_{\zeta}(z_2(t),t)||\dot{z}_1-\dot{z}_2|+|F_t(z_1,t)-F_t(z_2,t)|\\
\leq & \|F_{\zeta\zeta}\|_{\infty}|z_1-z_2| |\dot{z}_1(t)|+|F_{\zeta}(z_2(t),t)||F(z_1(t),t)-F(z_2(t),t)|+\|(Re F)_{t\zeta}\|_{L^\infty(\Omega(t))}|z_1-z_2|.
\end{align*} 
By lemma \ref{maximum_principle},
\begin{equation}
\|F_{\zeta\zeta}\|_{\infty}\leq 10\epsilon,
\end{equation}
and
\begin{equation}
    \|F_{t\zeta}\|_{L^{\infty}(\Omega(t)}\leq 10\epsilon.
\end{equation}
Since $Re F_t$ is odd in $x$ and $Im F_t$ is even in $x$, by mean value theorem, we have 
\begin{align}
|F_t(z_1,t)-F_t(z_2,t)|=&2|Re F_t(z_2,t)-Re F_t(0,y,t)|x=2|Re F_{tx}(\tilde{x},y,t)|x(t)\\
\leq &2\|F_{t\zeta}\|_{L^{\infty}(\Omega(t))} x(t)\leq 20\epsilon x(t).
\end{align}
for some $\tilde{x}\in (0,x(t))$.

So we obtain
\begin{equation}
\begin{split}
&|\ddot{z}_1(t)-\ddot{z}_2(t)|\\
\leq & 10\epsilon(2x(t))(\frac{|\lambda|}{4\pi x(t)}+5\epsilon)+(6\epsilon) 20\epsilon x(t)+(10\epsilon)2x(t)\\
=& 220\epsilon^2x(t)+\epsilon(20x(t)+\frac{5|\lambda|}{\pi}).
\end{split}
\end{equation}
\end{proof}

Next, we estimate the quantity $\norm{\sum_{j=1}^2 \frac{\lambda_j i}{2\pi}\frac{\dot{z}_j}{(\zeta(\alpha,t)-z_j(t))^2}}_{H^s}$, the quantity $\norm{\sum_{j=1}^2 \frac{\lambda_j i}{2\pi}\frac{\ddot{z}_j}{(\zeta(\alpha,t)-z_j(t))^2}}_{H^s}$, and the quantity $\norm{\sum_{j=1}^2 \frac{\lambda_j i}{2\pi}\frac{(\dot{z}_j)^2}{(\zeta(\alpha,t)-z_j(t))^2}}_{H^s}$. These quantities arise from the energy estimates.
\begin{lemma}\label{dotzjdotzj}
Assume the assumptions of Theorem \ref{longtime} and assume the bootstrap assumption (\ref{longtimeapriori}). Then we have
\begin{equation}\label{veryrough}
\norm{\sum_{j=1}^2 \frac{\lambda_j i}{2\pi}\frac{\dot{z}_j}{(\zeta(\alpha,t)-z_j(t))^2}}_{H^s}\leq K_s^{-1}\epsilon \frac{|\lambda|}{x(0)}d_I(t)^{-5/2}+C\epsilon^2.
\end{equation}
\end{lemma}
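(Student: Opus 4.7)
The plan is to exploit the sign cancellation $\lambda_1=-\lambda_2=\lambda$ by rewriting the sum as a difference, and then to split each velocity $\dot z_j$ into its ``singular'' rotational contribution, which is the same for both vortices up to a sign, and a ``holomorphic'' contribution of size $O(\epsilon)$. Concretely, using (\ref{firstderivative1}) I would write $\dot z_j=\frac{\lambda i}{4\pi x(t)}+\bar F(z_j,t)$, so that
\begin{equation*}
\sum_{j=1}^2\frac{\lambda_j i}{2\pi}\frac{\dot z_j}{(\zeta-z_j)^2}
=\frac{\lambda i}{2\pi}\cdot\frac{\lambda i}{4\pi x(t)}\Big[\tfrac{1}{(\zeta-z_1)^2}-\tfrac{1}{(\zeta-z_2)^2}\Big]
+\frac{\lambda i}{2\pi}\Big[\tfrac{\bar F(z_1,t)}{(\zeta-z_1)^2}-\tfrac{\bar F(z_2,t)}{(\zeta-z_2)^2}\Big].
\end{equation*}

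For the first (``coherent'') piece I would use the algebraic identity
\begin{equation*}
\frac{1}{(\zeta-z_1)^2}-\frac{1}{(\zeta-z_2)^2}
=\frac{(z_1-z_2)(2\zeta-z_1-z_2)}{(\zeta-z_1)^2(\zeta-z_2)^2}
=-2x(t)\Big[\tfrac{1}{(\zeta-z_1)(\zeta-z_2)^2}+\tfrac{1}{(\zeta-z_1)^2(\zeta-z_2)}\Big],
\end{equation*}
so that the factor $x(t)$ pulled out of the bracket cancels exactly the $1/x(t)$ in front, leaving a term whose $H^s$ norm is $\leq K_s^{-1}\lambda^2 d_I(t)^{-5/2}$. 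The $H^s$ bound $\|(\zeta-z_1)^{-1}(\zeta-z_2)^{-2}\|_{H^s}\leq K_s^{-1}d_I(t)^{-5/2}$ is obtained by adapting the Fa\`a di Bruno argument from the proof of Lemma \ref{smallestimate}, writing the expression as $h\circ\zeta$ with $h(w)=(w-z_1)^{-1}(w-z_2)^{-2}$, using the bootstrap assumption (\ref{longtimeapriori}) to control $\zeta_\alpha$ and its higher derivatives, and bounding the remaining $L^2$ integrals of the derivatives of $h\circ\zeta$ by Cauchy--Schwarz and Lemma \ref{integral}, e.g.\ $\int|w_1|^{-2}|w_2|^{-4}\,d\alpha\leq\bigl(\int|w_1|^{-4}\bigr)^{1/2}\bigl(\int|w_2|^{-8}\bigr)^{1/2}\lesssim d_I(t)^{-5}$. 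Then I invoke the smallness hypothesis (H5), in the form $\lambda^2\leq |\lambda|\cdot|\lambda x(0)|/x(0)\leq c_0\epsilon|\lambda|/x(0)$, to upgrade $K_s^{-1}\lambda^2 d_I(t)^{-5/2}$ to the desired $K_s^{-1}\epsilon\frac{|\lambda|}{x(0)}d_I(t)^{-5/2}$.

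For the second (``residual'') piece I would decompose
\begin{equation*}
\tfrac{\bar F(z_1,t)}{(\zeta-z_1)^2}-\tfrac{\bar F(z_2,t)}{(\zeta-z_2)^2}
=\bigl(\bar F(z_1,t)-\bar F(z_2,t)\bigr)\tfrac{1}{(\zeta-z_1)^2}
+\bar F(z_2,t)\Big[\tfrac{1}{(\zeta-z_1)^2}-\tfrac{1}{(\zeta-z_2)^2}\Big],
\end{equation*}
and bound the pointwise factors using Lemma \ref{maximum_principle}: $|\bar F(z_2,t)|\leq 5\epsilon$ and $|\bar F(z_1,t)-\bar F(z_2,t)|\leq \|F_\zeta\|_{L^\infty(\Omega(t))}|z_1-z_2|\leq 12\epsilon\,x(t)$. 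The $H^s$ norms $\|(\zeta-z_1)^{-2}\|_{H^s}\leq K_s^{-1}d_I(t)^{-3/2}$ (Corollary \ref{integral2}) and $\|(\zeta-z_1)^{-2}-(\zeta-z_2)^{-2}\|_{H^s}\leq K_s^{-1}x(t)d_I(t)^{-5/2}$ (from the same algebraic identity) then yield, after multiplying by $|\lambda|/(2\pi)$, a contribution controlled by $K_s^{-1}|\lambda|\epsilon\,x(t)d_I(t)^{-3/2}$. Using (H5) in the form $|\lambda|x(t)\leq 2|\lambda x(0)|\leq 2c_0\epsilon$ and $d_I(t)\geq 1$, this falls within the $C\epsilon^2$ term of the stated inequality.

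The main obstacle I anticipate is the book-keeping of the constants $K_s$ and the careful derivation of the mixed-pole $H^s$ estimate $\|(\zeta-z_1)^{-m}(\zeta-z_2)^{-n}\|_{H^s}\leq K_s^{-1}d_I(t)^{-(m+n)+1/2}$, which is not stated as an isolated lemma earlier but follows in the same style as Lemma \ref{smallestimate} once one treats the product of two poles as a single function of $\zeta$ and uses Cauchy--Schwarz to split the resulting integrals near the two distinct singularities.
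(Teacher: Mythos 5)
Your proposal is essentially the same argument as the paper's: substitute $\dot z_j=\frac{\lambda i}{4\pi x(t)}+\bar F(z_j,t)$, split the sum into a ``singular'' piece and an $F$-residual, exploit the $\lambda_1=-\lambda_2$ cancellation by pulling out the factor $z_1-z_2=-2x(t)$, and finally invoke (H5) to convert $\lambda^2$ and $|\lambda x(0)|$ into powers of $\epsilon$. The paper organizes the same algebra as $\bar F(z_1)\sum_j\frac{\lambda_j i}{2\pi(\zeta-z_j)^2}+\frac{\lambda i(\bar F(z_1)-\bar F(z_2))}{2\pi(\zeta-z_2)^2}$ and invokes (\ref{qtwo}) of Lemma~\ref{smallestimate} wholesale for the coherent piece, whereas you spell out the partial-fraction identity by hand; these are equivalent.

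One small but repeated slip: you assert intermediate bounds such as $\|(\zeta-z_1)^{-m}(\zeta-z_2)^{-n}\|_{H^s}\leq K_s^{-1}d_I(t)^{-(m+n)+1/2}$ and $\|(\zeta-z_1)^{-2}\|_{H^s}\leq K_s^{-1}d_I(t)^{-3/2}$, but in the paper's convention $K_s^{-1}$ is \emph{small} (of order $((s+7)!)^2/((s+12)!)^2$), while those $H^s$ norms carry a \emph{large} factorial constant, e.g.\ $\leq ((s+6)!)^2 d_I(t)^{-(m+n)+1/2}$ as in the proof of Lemma~\ref{dotzjsquare}. This does not break your argument --- the $c_0=1/((s+12)!)^2$ hidden in (H5), which you correctly invoke, is precisely what absorbs that large constant and produces the final $K_s^{-1}$ --- but as written the intermediate inequalities are false under the paper's convention, so the book-keeping step you anticipate as an ``obstacle'' needs to be done with the right sign of the $s$-dependence: large constants in the raw mixed-pole estimates, smallness only after applying (H5).
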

\begin{proof}
Replace $\dot{z}_j$ by
$$\dot{z}_j(t)=\frac{\lambda i}{4\pi x(t)}+\bar{F}(z_j(t),t).$$
We have 
\begin{align*}
\sum_{j=1}^2 \frac{\lambda_j i}{2\pi}&\frac{\dot{z}_j}{(\zeta(\alpha,t)-z_j(t))^2}=\sum_{j=1}^2 \frac{\lambda_j i}{2\pi}\frac{\frac{\lambda i}{4\pi x(t)}}{(\zeta(\alpha,t)-z_j(t))^2}+\bar{F}(z_1(t),t)\sum_{j=1}^2 \frac{\lambda_j i}{2\pi}\frac{1}{(\zeta(\alpha,t)-z_j(t))^2}\\
&+\frac{\lambda i (\bar{F}(z_1(t),t)-\bar{F}(z_2(t),t))}{2\pi}\frac{1}{(\zeta(\alpha,t)-z_2(t))^2}
\end{align*}

By lemma \ref{smallestimate} (and use the proof of lemma \ref{smallestimate} to estimate the term $\norm{\frac{1}{(\zeta(\alpha,t)-z_2(t))^2}}_{H^s}$), we have 
\begin{align*}
&\norm{\sum_{j=1}^2 \frac{\lambda_j i}{2\pi}\frac{\dot{z}_j}{(\zeta(\alpha,t)-z_j(t))^2}}_{H^s}\\
\leq & \frac{|\lambda|}{4\pi x(t)}\norm{\sum_{j=1}^2\frac{\lambda_j i}{2\pi} \frac{1}{(\zeta(\alpha,t)-z_j(t))^2}}_{H^s}+\norm{\sum_{j=1}^2 \frac{\lambda_j i}{2\pi}\frac{1}{(\zeta(\alpha,t)-z_j(t))^2}}_{H^s}\|F\|_{L^\infty(\Omega(t))}\\
&+\norm{\frac{\lambda i (\bar{F}(z_1(t),t)-\bar{F}(z_2(t),t))}{2\pi}\frac{1}{(\zeta(\alpha,t)-z_2(t))^2}}_{H^s}\\
\leq &\frac{|\lambda|}{4\pi x(t)}K_s^{-1}\epsilon d_I(t)^{-5/2}+K_s^{-1}\epsilon d_I(t)^{-5/2}(5\epsilon)+\Big|\frac{F(z_1(t),t)-F(z_2(t),t)}{x(t)}\Big| \norm{\frac{\lambda x(t)}{(\zeta(\alpha,t)-z_2(t))^2}}_{H^s}\\
\leq & K_s^{-1}\epsilon \frac{|\lambda|}{x(0)}d_I(t)^{-5/2}+K_s^{-1}\epsilon^2 d_I(t)^{-5/2}+K_s^{-1}\epsilon^2 d_I(t)^{-3/2}.
\end{align*}
Here, we use lemma \ref{maximum_principle} to estimate 
\begin{equation}
    \Big|\frac{F(z_1(t),t)-F(z_2(t),t)}{x(t)}\Big|\leq 12\epsilon,
\end{equation}
and we use the proof of lemma \ref{smallestimate} to estimate 
\begin{equation}
    \norm{\frac{\lambda x(t)}{(\zeta(\alpha,t)-z_2(t))^2}}_{H^s}\leq K_s^{-1}\epsilon d_I(t)^{-3/2}.
\end{equation}
Since $d_I(t)\geq 1$, we simply estimate $\norm{\sum_{j=1}^2 \frac{\lambda_j i}{2\pi}\frac{\dot{z}_j}{(\zeta(\alpha,t)-z_j(t))^2}}_{H^s}$ by (\ref{veryrough}).
\end{proof}

\begin{lemma}\label{dotzjsquare}
Assume the assumptions of Theorem \ref{longtime} and assume the bootstrap assumption (\ref{longtimeapriori}). Then we have
\begin{equation}
\norm{\sum_{j=1}^2 \frac{\lambda_j i}{2\pi}\frac{(\dot{z}_j)^2}{(\zeta(\alpha,t)-z_j(t))^3}}_{H^s}\leq K_s^{-1}\epsilon^2+K_s^{-1}\epsilon\frac{|\lambda|}{x(0)}d_I(t)^{-5/2}.
\end{equation}
\end{lemma}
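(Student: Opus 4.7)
The approach follows closely that of Lemma~\ref{dotzjdotzj}. The key first step is to use Lemma~\ref{refinederivative} to substitute $\dot z_j(t)=\tfrac{\lambda i}{4\pi x(t)}+\bar F(z_j(t),t)$, so that expanding the square decomposes the sum as
\begin{equation*}
\sum_{j=1}^{2}\frac{\lambda_j i}{2\pi}\frac{(\dot z_j)^2}{(\zeta-z_j)^3}=A_1+A_2+A_3,
\end{equation*}
where $A_1$ collects the $\bigl(\tfrac{\lambda i}{4\pi x(t)}\bigr)^2$ prefactor, $A_2$ the cross term $\tfrac{\lambda i}{2\pi x(t)}\bar F(z_j)$, and $A_3$ the square $\bar F(z_j)^2$. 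In each case the remaining $j$-dependence is handled by exploiting the antisymmetry $\lambda_1=-\lambda_2$.

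For $A_1$, the prefactor $\bigl(\tfrac{\lambda}{4\pi x(t)}\bigr)^2$ pulls out and the task reduces to estimating $\bigl\|\sum_j\tfrac{\lambda_j i}{2\pi}(\zeta-z_j)^{-3}\bigr\|_{H^s}$. This is exactly the type of bound treated in Lemma~\ref{smallestimate}: I would apply the Fa\`a di Bruno formula to $f_j(\alpha,t)=\tfrac{\lambda_j i}{2\pi}(\alpha-z_j)^{-3}$ and write the difference $(\zeta-z_1)^{-k-3}-(\zeta-z_2)^{-k-3}$ as a sum over common denominators with numerator $z_1-z_2=-2x(t)$, yielding
\begin{equation*}
\Bigl\|\sum_{j=1}^{2}\frac{\lambda_j i}{2\pi}\frac{1}{(\zeta-z_j)^3}\Bigr\|_{H^s}\leq K_s^{-1}|\lambda x(t)|\,d_I(t)^{-7/2}.
\end{equation*}
Combining with the prefactor, using $\lambda^2\leq c_0\epsilon$ from (H5), $x(t)\geq x(0)/2$ from Proposition~\ref{parallel}, and $d_I\geq 1$ to trade $d_I^{-7/2}$ for $d_I^{-5/2}$, gives $\|A_1\|_{H^s}\leq K_s^{-1}\epsilon\,\tfrac{|\lambda|}{x(0)}\,d_I(t)^{-5/2}$.

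For $A_2$ and $A_3$ the $j$-dependence of the $\bar F(z_j)$ factors blocks direct factoring, so I would use the algebraic identity
\begin{equation*}
\sum_{j=1}^{2}\lambda_j\frac{G(z_j)}{(\zeta-z_j)^3}=\lambda\bigl(G(z_1)-G(z_2)\bigr)\frac{1}{(\zeta-z_1)^3}+\lambda G(z_2)\Bigl[\frac{1}{(\zeta-z_1)^3}-\frac{1}{(\zeta-z_2)^3}\Bigr]
\end{equation*}
with $G=\bar F$ for $A_2$ and $G=\bar F^2$ for $A_3$. The mean value theorem together with Lemma~\ref{maximum_principle} gives $|F(z_1)-F(z_2)|\leq 12\epsilon\,x(t)$, while (\ref{meanmean}) gives $|F(z_1)^2-F(z_2)^2|\leq 120\epsilon^2 x(t)$. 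These, combined with $\|1/(\zeta-z_1)^3\|_{H^s}\leq K_s^{-1}d_I^{-5/2}$ and the antisymmetric-cancellation estimate $\|(\zeta-z_1)^{-3}-(\zeta-z_2)^{-3}\|_{H^s}\leq K_s^{-1}x(t)\,d_I^{-7/2}$, show that every piece of $A_2$ and $A_3$ carries at least an extra factor of $\lambda^2$ or $|\lambda x(t)|\cdot\epsilon$. Invoking $\lambda^2\leq c_0\epsilon$ and $|\lambda x(t)|\leq 2\epsilon$ collapses them all to $\|A_2\|_{H^s}+\|A_3\|_{H^s}\leq K_s^{-1}\epsilon^2$.

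The main obstacle is the estimate for $A_1$: since Remark~\ref{largeinteraction} allows $|\lambda|/x(0)$ to be arbitrarily large, a naive bound $\lambda^2/x(t)^2\cdot K_s^{-1}d_I^{-5/2}$ would have no decay and no $\epsilon$-smallness to spare. The cancellation $\lambda_1=-\lambda_2$ is essential: it replaces $\lambda^2/x(t)^2$ by $|\lambda|^3/x(t)$, which then splits cleanly as $\lambda^2\cdot|\lambda|/x(t)$, with one $\lambda^2\leq c_0\epsilon$ absorbed into $\epsilon$ and the other $|\lambda|/x(t)\leq 2|\lambda|/x(0)$ yielding the stated form.
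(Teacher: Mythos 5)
Your approach is essentially the same as the paper's, just reorganized. The paper performs the Abel summation (your algebraic identity) at the top level on the full quantity, writing
\begin{align*}
\sum_{j=1}^2 \frac{\lambda_j i}{2\pi}\frac{(\dot z_j)^2}{(\zeta-z_j)^3}
=\frac{\lambda i (\dot z_1)^2}{2\pi}\Bigl(\frac{1}{(\zeta-z_1)^3}-\frac{1}{(\zeta-z_2)^3}\Bigr)
+\frac{\lambda i \bigl((\dot z_1)^2-(\dot z_2)^2\bigr)}{2\pi}\frac{1}{(\zeta-z_2)^3}=:I+II,
\end{align*}
then expands $\dot z_1^2$ in $I$ and invokes Lemma \ref{refinederivative} for $|\dot z_1^2-\dot z_2^2|$ in $II$; you instead expand $\dot z_j^2$ first and then Abel-sum each of the three resulting pieces. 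The two decompositions produce the same collection of sub-terms up to relabelling, and both rely on the same ingredients: the antisymmetry $\lambda_1=-\lambda_2$, the cancellation $z_1-z_2=-2x(t)$, the maximum-principle bounds of Lemma \ref{maximum_principle}, the estimate $|\dot z_1^2-\dot z_2^2|\leq 6|\lambda|\epsilon+120\epsilon^2 x(t)$, and the smallness $\lambda^2+|\lambda x(0)|\leq c_0\epsilon$ with $c_0=((s+12)!)^{-2}$ to beat the factorial growth.

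One imprecision worth flagging: you write intermediate bounds such as $\|1/(\zeta-z_1)^3\|_{H^s}\leq K_s^{-1}d_I^{-5/2}$ and $\bigl\|\sum_j\tfrac{\lambda_j i}{2\pi}(\zeta-z_j)^{-3}\bigr\|_{H^s}\leq K_s^{-1}|\lambda x(t)|d_I^{-7/2}$. These are not correct as stated: $K_s^{-1}\sim ((s+7)!)^2/((s+12)!)^2\ll 1$, whereas $\|1/(\zeta-z_1)^3\|_{H^s}$ grows factorially in $s$ (cf.\ Corollary \ref{integral2}), and the Abel-summed quantity is only $\lesssim ((s+6)!)^2\,|\lambda x(t)|\,d_I^{-7/2}$. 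The $K_s^{-1}$ factor only emerges after you invoke $|\lambda x(0)|\leq c_0\epsilon$ or $\lambda^2\leq c_0\epsilon$, and you do so at the end, so the final bound is fine — but the $K_s^{-1}$ in the intermediate steps is a placeholder for ``controlled factorial constant,'' not an actual smallness, and should not be written as $K_s^{-1}$ before that absorption takes place.
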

\begin{proof}
\begin{align*}
\sum_{j=1}^2 \frac{\lambda_j i}{2\pi}\frac{(\dot{z}_j)^2}{(\zeta(\alpha,t)-z_j(t))^3}
=&\frac{\lambda i (\dot{z}_1)^2}{2\pi}(\frac{1}{(\zeta(\alpha,t)-z_1(t))^3}-\frac{1}{(\zeta(\alpha,t)-z_2(t))^3})\\
&+\frac{\lambda i ((\dot{z}_1)^2-(\dot{z}_2)^2)}{2\pi}\frac{1}{(\zeta(\alpha,t)-z_2(t))^3}:=I+\it{II}.
\end{align*}
We have 
\begin{align*}
|I|=\Big|\frac{\lambda i(\dot{z}_1)^2}{2\pi}2x(t)\Big\{&\frac{1}{(\zeta(\alpha,t)-z_1(t))^3(\zeta(\alpha,t)-z_2(t))}+\frac{1}{(\zeta(\alpha,t)-z_1(t))^2(\zeta(\alpha,t)-z_2(t)^2)}\\
&+\frac{1}{(\zeta(\alpha,t)-z_1(t))(\zeta(\alpha,t)-z_2(t))^3}\Big\}\Big|,
\end{align*}
and
$$\dot{z}_1^2=-\frac{\lambda^2}{16\pi^2 x(t)^2}+\frac{\lambda i}{2\pi x(t)}\bar{F}(z_1(t),t)+(\bar{F}(z_1(t),t))^2.$$
Use the proof of lemma \ref{smallestimate}, it's easy to see that 
\begin{equation}
\norm{\frac{1}{(\zeta(\alpha,t)-z_1(t))^3(\zeta(\alpha,t)-z_2(t))}}_{H^s}\leq ((s+6)!)^2 d_I(t)^{-7/2}
\end{equation}
\begin{equation}
\norm{\frac{1}{(\zeta(\alpha,t)-z_1(t))^2(\zeta(\alpha,t)-z_2(t)^2)}}_{H^s}\leq ((s+6)!)^2 d_I(t)^{-7/2}
\end{equation}
\begin{equation}
\norm{\frac{1}{(\zeta(\alpha,t)-z_1(t))(\zeta(\alpha,t)-z_2(t)^3)}}_{H^s}\leq ((s+6)!)^2 d_I(t)^{-7/2}
\end{equation}
Use the assumption that $\lambda^2+|\lambda x(0)|\leq \frac{1}{((s+12)!)^2}\epsilon$ and the fact that $\frac{1}{2}x(0)\leq x(t)\leq 2x(0)$, we have 

\begin{align*}
\|I\|_{H^s}\leq & \frac{|\lambda | x(t)}{\pi}(\frac{\lambda^2}{16\pi^2 x(t)^2}+\frac{|\lambda|}{2\pi x(t)}\times 5\epsilon+25\epsilon^2) ((s+6)!)^2 d_I(t)^{-7/2}\\
\leq &K_s^{-1}\epsilon^2d_I(t)^{-7/2}+K_s^{-1}\epsilon\frac{|\lambda|}{x(0)}d_I(t)^{-7/2}.
\end{align*}
By lemma \ref{refinederivative}, we have 
\begin{align*}
\|\it{II}\|_{H^s}\leq & \frac{|\lambda| |\dot{z}_1^2-\dot{z}_2^2|}{2\pi}\norm{\frac{1}{(\zeta(\alpha,t)-z_2(t))^3}}_{H^s}\\
\leq & \frac{|\lambda| (6|\lambda| \epsilon+120\epsilon^2x(t))}{2\pi}((s+6)!)^2 d_I(t)^{-5/2}\\
\leq & K_s^{-1} \epsilon^2 d_I(t)^{-5/2}.
\end{align*}
Here, we use the assumption 
\begin{equation}
\lambda^2+|\lambda x(0)|\leq c_0\epsilon,\quad \quad c_0=\frac{1}{((s+12)!)^2}.
\end{equation}
So we we have 
\begin{align*}
\norm{\sum_{j=1}^2 \frac{\lambda_j i}{2\pi}\frac{(\dot{z}_j)^2}{(\zeta(\alpha,t)-z_j(t))^3}}_{H^s}\leq  K_s^{-1}\epsilon^2+K_s^{-1}\epsilon\frac{|\lambda|}{x(0)}d_I(t)^{-5/2}.
\end{align*}
\end{proof}

\begin{lemma}\label{ddotzjnorm}
Assume the assumptions of Theorem \ref{longtime} and assume the bootstrap assumption (\ref{longtimeapriori}). Then we have 
\begin{equation}
\norm{\sum_{j=1}^2 \frac{\lambda_j i}{2\pi}\frac{\ddot{z}_j}{(\zeta(\alpha,t)-z_j(t))^2}}_{H^s}\leq K_s^{-1}\epsilon^2d_I(t)^{-5/2}.
\end{equation}
\end{lemma}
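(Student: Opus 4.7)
The proof will follow the same structural pattern as Lemma \ref{dotzjdotzj} and Lemma \ref{dotzjsquare}: exploit the cancellation $\lambda_1=-\lambda_2=\lambda$ to split the sum into a ``symmetric difference'' piece and a ``mean'' piece, in each of which the dangerous factor $\lambda$ is multiplied by something small (either $x(t)$ from a telescoping, or $|\ddot z_1-\ddot z_2|$ which is small by Lemma \ref{refinesecondderivative}).

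Concretely, using $\lambda_1 = -\lambda_2 = \lambda$, I would write
\begin{equation*}
\sum_{j=1}^2 \frac{\lambda_j i}{2\pi}\frac{\ddot{z}_j}{(\zeta(\alpha,t)-z_j(t))^2}
= \frac{\lambda i\,\ddot z_1}{2\pi}\Big(\frac{1}{(\zeta-z_1)^2}-\frac{1}{(\zeta-z_2)^2}\Big)
+ \frac{\lambda i(\ddot z_1-\ddot z_2)}{2\pi}\frac{1}{(\zeta-z_2)^2}
:= I + II .
\end{equation*}
For $I$, the telescoping identity
\begin{equation*}
\frac{1}{(\zeta-z_1)^2}-\frac{1}{(\zeta-z_2)^2}
=\frac{(z_1-z_2)(2\zeta-z_1-z_2)}{(\zeta-z_1)^2(\zeta-z_2)^2}
\end{equation*}
together with $z_1-z_2=-2x(t)$ extracts a free factor of $x(t)$. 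By the same direct calculation used in the proof of Lemma \ref{smallestimate} (estimating derivatives of rational functions with poles at $z_1,z_2$ and using the chord-arc/Sobolev bounds from the bootstrap assumption~(\ref{longtimeapriori})), the $H^s$ norm of $(2\zeta-z_1-z_2)/((\zeta-z_1)^2(\zeta-z_2)^2)$ is at most $C_s\, d_I(t)^{-5/2}$ with $C_s\sim((s+6)!)^2$. Combining this with the bound $|\ddot z_1|\le 10\epsilon + 6|\lambda|\epsilon/x(t)$ from Lemma \ref{refinesecondderivative} gives
\begin{equation*}
\|I\|_{H^s}\le \frac{|\lambda|\,x(t)}{\pi}\,|\ddot z_1|\,C_s\,d_I(t)^{-5/2}
\le K_s^{-1}\big(|\lambda|\,x(t)\,\epsilon+\lambda^2\,\epsilon\big)\,d_I(t)^{-5/2},
\end{equation*}
and then the smallness hypothesis (H5), $\lambda^2+|\lambda x(0)|\le c_0\epsilon$ with $c_0=1/((s+12)!)^2$, combined with $x(t)\le 2x(0)$ from Proposition \ref{parallel}, yields $\|I\|_{H^s}\le K_s^{-1}\epsilon^2\,d_I(t)^{-5/2}$.

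For $II$ I would apply the second bound in Lemma \ref{refinesecondderivative}, namely $|\ddot z_1-\ddot z_2|\le 220\epsilon^2 x(t)+\epsilon(20x(t)+5|\lambda|/\pi)$, so that $|\lambda(\ddot z_1-\ddot z_2)|\lesssim |\lambda|\epsilon\,x(t)+\lambda^2\epsilon\lesssim c_0\epsilon^2$. Using once more the $H^s$ estimate $\|(\zeta-z_2)^{-2}\|_{H^s}\le C_s\,d_I(t)^{-3/2}$ from the proof of Lemma \ref{smallestimate}, one obtains $\|II\|_{H^s}\le K_s^{-1}\epsilon^2\,d_I(t)^{-3/2}\le K_s^{-1}\epsilon^2\,d_I(t)^{-5/2}$ (using $d_I(t)\ge 1$).

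The only technical point, which I do not expect to be a real obstacle but which must be handled carefully, is keeping track of the combinatorial constants in the $H^s$-norm estimates for the rational functions with poles at $z_1,z_2$; this is exactly the bookkeeping done with Faà di Bruno / the Bell numbers in the proof of Lemma~\ref{smallestimate}, and it forces the factor $((s+6)!)^2$ which is then absorbed into $K_s^{-1}$ via the definition $c_0=1/((s+12)!)^2$ in hypothesis (H5). Adding the two estimates gives the claimed bound.
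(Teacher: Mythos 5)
Your decomposition, use of the symmetry $\lambda_1=-\lambda_2$, and the two auxiliary inputs (Lemma~\ref{smallestimate}-style rational-function bounds and Lemma~\ref{refinesecondderivative}) are exactly what the paper does, so the overall approach matches.

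There is one genuine slip at the end of your estimate for $II$: you write
\[
\|II\|_{H^s}\le K_s^{-1}\epsilon^2\,d_I(t)^{-3/2}\le K_s^{-1}\epsilon^2\,d_I(t)^{-5/2}\quad\text{``using }d_I(t)\ge 1\text{''},
\]
but the inequality points the wrong way: if $d_I(t)\ge 1$ then $d_I(t)^{-3/2}\ge d_I(t)^{-5/2}$, so you cannot upgrade a $d_I^{-3/2}$ decay to $d_I^{-5/2}$. What your argument (and in fact the paper's) honestly establishes for $II$ is $K_s^{-1}\epsilon^2 d_I(t)^{-3/2}$, since $\bigl\|(\zeta-z_2)^{-2}\bigr\|_{H^s}\lesssim ((s+6)!)^2 d_I(t)^{-3/2}$ by Corollary~\ref{integral2} (no telescoping cancellation occurs in the single-pole factor of $II$). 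The paper's printed line asserting $d_I(t)^{-5/2}$ for that norm is itself imprecise. This is harmless downstream: every application of this lemma (the estimates for $\frac{a_t}{a}\circ\kappa^{-1}$ and for $D_t^2 q$) only uses the fact that the bound is $\lesssim \epsilon^2$, and in the $D_t^2q$ estimate the paper already weakens it to $d_I(t)^{-3/2}$. So the correct statement you should record is $K_s^{-1}\epsilon^2 d_I(t)^{-3/2}$ (or simply $K_s^{-1}\epsilon^2$), not claim $d_I(t)^{-5/2}$ via a false monotonicity. Everything else in your argument is sound.
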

\begin{proof}
We have
\begin{align*}
&\norm{\sum_{j=1}^2 \frac{\lambda_j i}{2\pi}\frac{\ddot{z}_j}{(\zeta(\alpha,t)-z_j(t))^2}}_{H^s}\\
\leq & \norm{\sum_{j=1}^2\frac{\lambda_j i \ddot{z}_1(t)}{2\pi}\frac{1}{(\zeta(\alpha,t)-z_j(t))^2}}_{H^s}
+\norm{\frac{\lambda i (\ddot{z}_1(t)-\ddot{z}_2(t))}{2\pi}\frac{1}{(\zeta(\alpha,t)-z_2(t))^2}}_{H^s}:=I+\it{II}.
\end{align*}
By the proof of lemma \ref{smallestimate} and by lemma \ref{refinesecondderivative}, we have 
\begin{align*}
I\leq &|\ddot{z}_1(t)|\|\sum_{j=1}^2\frac{\lambda_j i }{2\pi}\frac{1}{(\zeta(\alpha,t)-z_j(t))^2}\|_{H^s}\\
\leq & (10\epsilon+\frac{6|\lambda|}{x(t)}\epsilon) |\lambda x(0)|((s+6)!)^2 d_I(t)^{-5/2}\\
\leq & K_s^{-1}\epsilon^2 d_I(t)^{-5/2}.
\end{align*}
By lemma \ref{smallestimate} and lemma \ref{refinesecondderivative}, we have 
\begin{align*}
\it{II}\leq &|\ddot{z}_1(t)-\ddot{z}_2(t)|\norm{\frac{\lambda i }{2\pi}\frac{1}{(\zeta(\alpha,t)-z_2(t))^2}}_{H^s}\\
\leq &\Big(220\epsilon^2x(t)+\epsilon(20x(t)+\frac{5|\lambda|}{\pi})\Big)|\lambda| ((s+6)!)^2 d_I(t)^{-5/2}\\
\leq & K_s^{-1}\epsilon^2 d_I(t)^{-5/2}.
\end{align*}
Here, we've used the fact that $\lambda^2+|\lambda x(0)|\leq \frac{1}{((s+12)!)^2}\epsilon$.
So we obtain
\begin{equation}
\norm{\sum_{j=1}^2 \frac{\lambda_j i}{2\pi}\frac{\ddot{z}_j}{(\zeta(\alpha,t)-z_j(t))^2}}_{H^s}\leq K_s^{-1}\epsilon^2 d_I(t)^{-5/2}.
\end{equation}
\end{proof}

\subsection{Estimates for quantities involved in the energy estimates.}\label{estimateinvolve} In this subsection, we derive estimates for various quantities that show up in energy estimates. 

\vspace*{2ex}

\subsubsection{Control $\|\partial_{\alpha}\tilde{\theta}\|_{H^s}$ by $\|\zeta_{\alpha}-1\|_{H^s}$.}
\begin{lemma}\label{equivalent_1}
Assume the assumptions of Theorem \ref{longtime} and assume the assumption (\ref{longtimeapriori}), for $1\leq k\leq s+1$, we have 
\begin{equation}\label{okokokok}
\|\partial_{\alpha}^k \tilde{\theta}- 2\partial_{\alpha}^{k-1}(\zeta_{\alpha}-1)\|_{L^2}\leq C\epsilon^2.
\end{equation}
\end{lemma}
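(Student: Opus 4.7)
The starting point is the observation that the defining identity of $\kappa$, $(I-\mathcal{H})(\bar\zeta-\alpha)=0$, lets one replace $\bar\zeta$ by $\alpha$ in the argument of $(I-\mathcal{H})$:
\[
\tilde\theta=(I-\mathcal{H})(\zeta-\bar\zeta)=(I-\mathcal{H})(\zeta-\alpha).
\]
Differentiating $k$ times and using the identity $\partial_\alpha\mathcal{H}f=\zeta_\alpha\,\mathcal{H}(f_\alpha/\zeta_\alpha)$ from Lemma~\ref{basic1} yields
\[
\partial_\alpha^k\tilde\theta-2\partial_\alpha^{k-1}(\zeta_\alpha-1)=-(I+\mathcal{H})\partial_\alpha^{k-1}(\zeta_\alpha-1)-[\partial_\alpha^k,\mathcal{H}](\zeta-\alpha).
\]
The task then reduces to showing that both terms on the right-hand side have $L^2$ norm bounded by $C\epsilon^2$.

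The commutator term is the easier of the two. Expanding $[\partial_\alpha^k,\mathcal{H}]=\sum_{l=0}^{k-1}\partial_\alpha^l[\partial_\alpha,\mathcal{H}]\partial_\alpha^{k-1-l}$ and using the key identity $[\partial_\alpha,\mathcal{H}]g=[\zeta_\alpha-1,\mathcal{H}](g_\alpha/\zeta_\alpha)$, every summand becomes $\partial_\alpha^l$ of a commutator with two explicit factors extracted from $\zeta_\alpha-1$. Lemma~\ref{singular}, together with the bootstrap bound $\|\zeta_\alpha-1\|_{H^s}\le 5\epsilon$ and the chord-arc control provided by Lemma~\ref{chordarclongtime}, then yields the required $O(\epsilon^2)$ bound for all $1\le k\le s+1$.

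For the main term the plan is to transport $(I+\mathcal{H})\partial_\alpha^{k-1}(\zeta_\alpha-1)$ to a statement about $(I-\mathcal{H})\partial_\alpha^{k-1}(\bar\zeta_\alpha-1)$ via complex conjugation. Applying $\partial_\alpha^k$ to $(I-\mathcal{H})(\bar\zeta-\alpha)=0$ gives
\[
(I-\mathcal{H})\partial_\alpha^{k-1}(\bar\zeta_\alpha-1)=[\partial_\alpha^k,\mathcal{H}](\bar\zeta-\alpha),
\]
whose right-hand side is $O(\epsilon^2)$ in $L^2$ by the same commutator analysis. Taking complex conjugate and using the algebraic identity $\overline{\mathcal{H}f}=-\bar{\mathcal{H}}\bar f$ (a direct computation from the kernel), where $\bar{\mathcal{H}}$ denotes the Hilbert transform associated with the conjugate curve $\bar\zeta$, one obtains $\|(I+\bar{\mathcal{H}})\partial_\alpha^{k-1}(\zeta_\alpha-1)\|_{L^2}\le C\epsilon^2$. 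To replace $\bar{\mathcal{H}}$ by $\mathcal{H}$, I would split the kernel of $\mathcal{H}-\bar{\mathcal{H}}$ into two $S_1$-type pieces (in the notation of Lemma~\ref{singular}) each carrying an extra factor $(\bar\zeta-\zeta)_\alpha=-2i\,\Im\zeta_\alpha$, which is $O(\epsilon)$ in $L^\infty$; Lemma~\ref{singular} then gives $\|(\mathcal{H}-\bar{\mathcal{H}})g\|_{L^2}\le C\epsilon\|g\|_{L^2}$ for any $g\in L^2$, supplying the remaining $O(\epsilon^2)$ contribution when applied with $g=\partial_\alpha^{k-1}(\zeta_\alpha-1)$.

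The main technical subtlety is the conjugation argument relating $\mathcal{H}$ to $\bar{\mathcal{H}}$ together with the small operator-norm bound $\|\mathcal{H}-\bar{\mathcal{H}}\|_{L^2\to L^2}\lesssim\epsilon$; once that is in place, everything else reduces to careful applications of Lemmas~\ref{singular} and~\ref{lemmacommutator1}, in the spirit of the analogous irrotational calculations in \cite{Wu2009} and \cite{Totz2012}.
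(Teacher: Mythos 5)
Your proof is correct and is essentially the same argument as the paper's, just with the bookkeeping rearranged: the paper writes $(I-\mathcal{H})(\zeta-\alpha)=(I+\bar{\mathcal H})(\zeta-\alpha)-(\mathcal{H}+\bar{\mathcal H})(\zeta-\alpha)$ and, using $(I+\bar{\mathcal H})(\zeta-\alpha)=2(\zeta-\alpha)$ (the conjugate of the defining identity for $\kappa$), reduces everything to the single quadratic estimate $\|\partial_\alpha^k(\mathcal{H}+\bar{\mathcal H})(\zeta-\alpha)\|_{L^2}\lesssim\|\zeta_\alpha-1\|_{H^s}^2$, which it asserts as ``easy to obtain.'' Your version unpacks that one estimate into the commutator bound $\|[\partial_\alpha^k,\mathcal{H}](\zeta-\alpha)\|_{L^2}\lesssim\epsilon^2$ and the separate bound on $(I+\mathcal{H})\partial_\alpha^{k-1}(\zeta_\alpha-1)$ obtained by conjugating the $\kappa$-identity and then using $\|\mathcal{H}-\bar{\mathcal H}_{\text{curve}}\|_{L^2\to L^2}\lesssim\epsilon$; this is logically equivalent (your two error terms sum to exactly the paper's $\partial_\alpha^k(\mathcal{H}+\bar{\mathcal H})(\zeta-\alpha)$) and, if anything, supplies the detail the paper omits. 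The only stylistic cost is the extra step of converting $\bar{\mathcal H}$ to $\mathcal{H}$ via $\|\mathcal{H}-\bar{\mathcal H}\|_{L^2\to L^2}\lesssim\epsilon$, which the paper sidesteps by performing the $(I+\bar{\mathcal H})$ cancellation before differentiating.
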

\begin{proof}
Since $(I-\mathcal{H})(\bar{\zeta}-\alpha)=0$, we have $(I+\bar{\mathcal{H}})(\zeta-\alpha)=2(\zeta-\alpha)$. Therefore,
\begin{align*}
\partial_{\alpha}^k \tilde{\theta}=&\partial_{\alpha}^k (I-\mathcal{H})(\zeta-\bar{\zeta})=\partial_{\alpha}^k (I-\mathcal{H})(\zeta-\alpha)\\
=& \partial_{\alpha}^k(I+\bar{\mathcal{H}}-(\bar{\mathcal{H}}+\mathcal{H}))(\zeta-\alpha)\\
=& 2\partial_{\alpha}^{k-1}(\zeta_{\alpha}-1)-\partial_{\alpha}^k(\bar{\mathcal{H}}+\mathcal{H}))(\zeta-\alpha).
\end{align*}
It's easy to obtain that for $1\leq k\leq s+1$,
\begin{equation}
\norm{\partial_{\alpha}^k(\bar{\mathcal{H}}+\mathcal{H})(\zeta-\alpha)}_{L^2}\leq C\norm{\zeta_{\alpha}-1}_{H^s}^2\leq C\epsilon^2.
\end{equation}

\noindent So we have 
\begin{equation}
\begin{split}
\norm{\partial_{\alpha}^k \tilde{\theta}-2\partial_{\alpha}^{k-1}(\zeta_{\alpha}-1)}_{L^2}\leq C\epsilon^2.
\end{split}
\end{equation}
So we obtain (\ref{okokokok}).
\end{proof}
\begin{corollary}\label{onetotwo}
Assume the assumptions of Theorem \ref{longtime} and the bootstrap assumption (\ref{longtimeapriori}), we have 
\begin{equation}
\norm{\partial_{\alpha} \tilde{\theta}}_{H^s}\leq 11\epsilon.
\end{equation}
\end{corollary}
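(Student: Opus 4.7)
The plan is to derive this corollary as a direct consequence of Lemma \ref{equivalent_1} combined with the bootstrap assumption (\ref{longtimeapriori}), with nothing beyond the triangle inequality needed. Since $\tilde{\theta}$ has one derivative built in via $\partial_\alpha$, I would first rewrite the $H^s$ norm of $\partial_\alpha \tilde\theta$ as
\begin{equation}
\norm{\partial_\alpha \tilde\theta}_{H^s}^2 \;\le\; \sum_{k=0}^{s} \norm{\partial_\alpha^{k+1}\tilde\theta}_{L^2}^2 \;=\; \sum_{k=1}^{s+1} \norm{\partial_\alpha^{k}\tilde\theta}_{L^2}^2,
\end{equation}
so that each term falls within the range $1\le k\le s+1$ covered by Lemma \ref{equivalent_1}.

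Next, I would invoke Lemma \ref{equivalent_1} term-by-term to replace $\partial_\alpha^k\tilde\theta$ by $2\partial_\alpha^{k-1}(\zeta_\alpha-1)$ up to an $O(\epsilon^2)$ remainder, giving
\begin{equation}
\norm{\partial_\alpha^k\tilde\theta}_{L^2} \;\le\; 2\norm{\partial_\alpha^{k-1}(\zeta_\alpha-1)}_{L^2} + C\epsilon^2
\end{equation}
for each $1\le k\le s+1$. Summing the squares in the $\ell^2$ sense and using the triangle inequality in $H^s$ then yields
\begin{equation}
\norm{\partial_\alpha \tilde\theta}_{H^s} \;\le\; 2\norm{\zeta_\alpha-1}_{H^s} + C_s\epsilon^2,
\end{equation}
where $C_s$ absorbs an $s$-dependent factor from collecting the $(s+1)$ remainder terms.

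Finally, I would apply the bootstrap assumption $\norm{\zeta_\alpha-1}_{H^s}\le 5\epsilon$ to bound the main term by $10\epsilon$, so that
\begin{equation}
\norm{\partial_\alpha \tilde\theta}_{H^s} \;\le\; 10\epsilon + C_s\epsilon^2 \;\le\; 11\epsilon,
\end{equation}
provided $\epsilon\le \epsilon_0$ is taken small enough that $C_s\epsilon\le 1$. Since the analogous smallness of $\epsilon$ has already been imposed throughout the section (in particular in Lemma \ref{chordarclongtime} and Lemma \ref{maximum_principle}), no additional smallness assumption is required, and there is no real obstacle to the argument — the only point requiring mild care is keeping track of the $s$-dependent constant in the quadratic remainder, which is harmless because it multiplies $\epsilon^2$.
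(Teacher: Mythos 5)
Your route — combine Lemma \ref{equivalent_1} with the bootstrap assumption $\norm{\zeta_{\alpha}-1}_{H^s}\leq 5\epsilon$ and a triangle inequality — is exactly what Corollary \ref{onetotwo} is meant to follow from, and your final conclusion is correct. The paper does not spell out a separate proof, so your strategy is the intended one.

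There is, however, one step that does not quite work as written. You pass from the term-by-term bound $\norm{\partial_\alpha^k\tilde\theta}_{L^2}\leq 2\norm{\partial_\alpha^{k-1}(\zeta_\alpha-1)}_{L^2}+C\epsilon^2$ to $\norm{\partial_\alpha\tilde\theta}_{H^s}\leq 2\norm{\zeta_\alpha-1}_{H^s}+C_s\epsilon^2$ by ``summing the squares.'' This is not a valid deduction with the paper's definition of the $H^s$ norm: the paper defines $\norm{f}_{H^s}$ by the Fourier integral and records only the one-sided inequality $\norm{f}_{H^s}^2\leq\sum_{k=0}^s\norm{\partial_\alpha^k f}_{L^2}^2$. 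The reverse inequality, $\sum_{k=0}^s\norm{\partial_\alpha^k f}_{L^2}^2\leq (s+1)\norm{f}_{H^s}^2$, costs a factor $(s+1)$, which your naive summation would put in front of the \emph{leading} term $\norm{\zeta_\alpha-1}_{H^s}$, not just the remainder. Since the target constant is the explicit $11\epsilon$ and $s\geq 4$, the spurious $\sqrt{s+1}$ would defeat the bound. The clean fix is to work with the error as a single function: set $E:=\partial_\alpha\tilde\theta-2(\zeta_\alpha-1)$ and note that the quantity controlled in Lemma \ref{equivalent_1} at level $k$ is exactly $\partial_\alpha^{k-1}E$, so the lemma gives $\norm{\partial_\alpha^j E}_{L^2}\leq C\epsilon^2$ for $0\leq j\leq s$ and therefore $\norm{E}_{H^s}\leq C\sqrt{s+1}\,\epsilon^2$. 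Now apply the triangle inequality in $H^s$ to the decomposition $\partial_\alpha\tilde\theta=2(\zeta_\alpha-1)+E$, which gives $\norm{\partial_\alpha\tilde\theta}_{H^s}\leq 2\norm{\zeta_\alpha-1}_{H^s}+\norm{E}_{H^s}\leq 10\epsilon+C_s\epsilon^2\leq 11\epsilon$ for $\epsilon$ small; the $s$-dependent constant now multiplies only $\epsilon^2$, as you intended.
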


\subsubsection{Compare $\norm{D_t\tilde{\theta}}_{H^s}$ with $\norm{D_t\zeta}_{H^s}$ and $\norm{D_t\tilde{\sigma}}_{H^s}$ with $\norm{D_t^2\zeta}_{H^s}$} We need to show that $D_t\tilde{\theta}$ and $D_t\zeta$ are equivalent in certain sense. 
We have the following:
\begin{lemma}\label{comparetransform}
Assume the assumptions of Theorem \ref{longtime} and a priori assumption (\ref{longtimeapriori}), we have 
\begin{equation}\label{equivtheta}
\norm{D_t\tilde{\theta}-2(\bar{\mathfrak{F}}-q)}_{H^{s+1/2}}\leq C\epsilon^2.
\end{equation}
\begin{equation}\label{equivsigma}
\norm{D_t\tilde{\sigma}-4(D_t \bar{\mathfrak{F}}-D_tq)}_{H^s}\leq C\epsilon^2.
\end{equation}
\end{lemma}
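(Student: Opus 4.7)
The strategy is to algebraically reduce each difference to a sum of ``smoothing'' commutators --- of the form $(\mathcal{H}+\bar{\mathcal{H}})g$ and $[\mathcal{H},D_t]g$ --- each of which vanishes when $\zeta(\alpha)\equiv\alpha$, and is therefore controlled by Lemma \ref{singular} combined with the bootstrap hypothesis (\ref{longtimeapriori}), Corollary \ref{estimateforbb}, and Lemma \ref{smallestimate}. Here $\bar{\mathcal{H}}g:=\overline{\mathcal{H}\bar g}$ denotes the Hilbert transform associated with the reflected curve $\bar\zeta$.

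For the first bound: since $D_t\bar\zeta=\mathfrak{F}+q$ and $D_t\zeta=\bar{\mathfrak{F}}+\bar q$, commuting $D_t$ past $(I-\mathcal{H})$ in $\tilde\theta=(I-\mathcal{H})(\zeta-\bar\zeta)$ gives
\[
D_t\tilde\theta=(I-\mathcal{H})(\bar{\mathfrak{F}}+\bar q-\mathfrak{F}-q)+[\mathcal{H},D_t](\zeta-\bar\zeta).
\]
The two ``polar'' identities $(I-\mathcal{H})\mathfrak{F}=0$ (since $F$ is holomorphic in $\Omega(t)$) and $(I-\mathcal{H})q=2q$ (since $q$ is the boundary value of a function holomorphic in $\Omega(t)^c$ that decays at infinity) reduce this to
\[
D_t\tilde\theta-2(\bar{\mathfrak{F}}-q)=-(I+\mathcal{H})\bar{\mathfrak{F}}+(I-\mathcal{H})\bar q+[\mathcal{H},D_t](\zeta-\bar\zeta).
\]
Conjugating the polar identities gives $\bar{\mathcal{H}}\bar{\mathfrak{F}}=\bar{\mathfrak{F}}$ and $\bar{\mathcal{H}}\bar q=-\bar q$, whence $(I+\mathcal{H})\bar{\mathfrak{F}}=(\mathcal{H}+\bar{\mathcal{H}})\bar{\mathfrak{F}}$ and $(I-\mathcal{H})\bar q=-(\mathcal{H}+\bar{\mathcal{H}})\bar q$. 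Substituting collapses the target to
\[
D_t\tilde\theta-2(\bar{\mathfrak{F}}-q)=-(\mathcal{H}+\bar{\mathcal{H}})D_t\zeta+[\mathcal{H},D_t](\zeta-\bar\zeta).
\]
Since $\mathbb{H}+\bar{\mathbb{H}}=0$, the operator $\mathcal{H}+\bar{\mathcal{H}}$ has kernel carrying at least one factor of $\zeta_\alpha-1$ or $\bar\zeta_\alpha-1$, so Lemma \ref{singular} gives $\|(\mathcal{H}+\bar{\mathcal{H}})D_t\zeta\|_{H^{s+1/2}}\leq C\|\zeta_\alpha-1\|_{H^s}\|D_t\zeta\|_{H^{s+1/2}}\leq C\epsilon^2$. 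For the commutator, the Riemann-variable analog of Lemma \ref{basic1} gives $[\mathcal{H},D_t]f=-[D_t\zeta-b,\mathcal{H}]\tfrac{f_\alpha}{\zeta_\alpha}$; Lemma \ref{singular} together with $\|b\|_{H^{s+1/2}}\leq C\epsilon^2$ (Corollary \ref{estimateforbb}, using $d_I(t)\geq 1$) and $\|\partial_\alpha(\zeta-\bar\zeta)\|_{H^s}\leq 11\epsilon$ (Corollary \ref{onetotwo}) yield the $C\epsilon^2$ bound.

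For the second inequality, substitute the first identity into $\tilde\sigma=(I-\mathcal{H})D_t\tilde\theta$ to obtain
\[
\tilde\sigma=4(\bar{\mathfrak{F}}-q)-2(\mathcal{H}+\bar{\mathcal{H}})\bar{\mathfrak{F}}+(I-\mathcal{H})R_1,
\]
where $R_1:=-(\mathcal{H}+\bar{\mathcal{H}})D_t\zeta+[\mathcal{H},D_t](\zeta-\bar\zeta)$ is the remainder bounded above. Applying $D_t$ and using $D_t(I-\mathcal{H})g=(I-\mathcal{H})D_tg+[\mathcal{H},D_t]g$, the leading $4D_t(\bar{\mathfrak{F}}-q)$ cancels out of the target difference, leaving
\[
D_t\tilde\sigma-4(D_t\bar{\mathfrak{F}}-D_tq)=-2D_t[(\mathcal{H}+\bar{\mathcal{H}})\bar{\mathfrak{F}}]+(I-\mathcal{H})D_tR_1+[\mathcal{H},D_t]R_1.
\]
Each piece carries the same smoothing-commutator structure and is bounded in $H^s$ by Lemma \ref{singular}, provided $\|D_tR_1\|_{H^s}$ and $\|D_t^2\zeta\|_{H^s}$ are controlled. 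These are obtained by differentiating the identity $D_t\zeta=\bar{\mathfrak{F}}+\bar q$ and the expression for $R_1$ once more in $D_t$, and invoking Lemmas \ref{smallestimate}, \ref{dotzjdotzj}, and \ref{ddotzjnorm} to dominate the vortex-generated pieces of $D_tq$ and $D_tb$ by $C\epsilon^2$ via the decay $d_I(t)^{-k}$ from Proposition \ref{parallel}.

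The main difficulty lies in the second identity: each additional $D_t$ produces quadratic terms involving $D_t^2\zeta$, $\ddot z_j$, or $\dot z_j^2$, and one must exploit the vortex-pair cancellation $\lambda_1=-\lambda_2$ together with the $d_I(t)^{-k}$ decay to keep every remainder at $O(\epsilon^2)$ uniformly on $[0,T_0]$, independently of the possibly large initial interaction $|\lambda|/x(0)$.
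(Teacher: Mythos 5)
Your proof follows the paper's argument exactly: the same decomposition via the polar identities $(I-\mathcal{H})\mathfrak{F}=0$, $(I-\mathcal{H})q=2q$, $(I+\bar{\mathcal{H}})\bar{\mathfrak{F}}=2\bar{\mathfrak{F}}$, $(I+\bar{\mathcal{H}})\bar q=0$, the same collapse of the remainder to $-(\mathcal{H}+\bar{\mathcal{H}})D_t\zeta+[\mathcal{H},D_t](\zeta-\bar\zeta)$, and the same substitution into $\tilde\sigma=(I-\mathcal{H})D_t\tilde\theta$; your coefficient $-2$ on $D_t(\mathcal{H}+\bar{\mathcal{H}})\bar{\mathfrak{F}}$ is in fact the correct one (the paper's displayed coefficient $-1$ is a typo). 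One small slip: the Riemann-variable commutator identity is $[\mathcal{H},D_t]f=-[D_t\zeta,\mathcal{H}]\tfrac{f_\alpha}{\zeta_\alpha}$, not $-[D_t\zeta-b,\mathcal{H}]\tfrac{f_\alpha}{\zeta_\alpha}$ — the $b\partial_\alpha$ contribution combines with $[\partial_t,\mathcal{H}]f=[\zeta_t,\mathcal{H}]\tfrac{f_\alpha}{\zeta_\alpha}$ to yield the full $D_t\zeta$, so no extra $b$ remains — though since $\|b\|_{H^s}=O(\epsilon)$ this does not affect the $O(\epsilon^2)$ bound.
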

\begin{proof}
Recall that $D_t\zeta=\bar{\mathfrak{F}}+\bar{q}$, where $(I-\mathcal{H})\mathfrak{F}=0$, $(I+\mathcal{H})q=0$. So we have
\begin{align*}
(I+\bar{\mathcal{H}}))\bar{\mathfrak{F}}=2\bar{\mathfrak{F}}, \quad \quad (I+\bar{\mathcal{H}})\bar{q}=0.
\end{align*}
We have 
\begin{equation}\label{comparetheta}
\begin{split}
D_t\tilde{\theta}=&D_t (I-\mathcal{H})(\zeta-\bar{\zeta})=(I-\mathcal{H})(D_t\zeta-D_t\bar{\zeta})-[D_t\zeta,\mathcal{H}]\frac{\partial_{\alpha}(\zeta-\bar{\zeta})}{\zeta_{\alpha}}\\
=&(I-\mathcal{H})(\bar{\mathfrak{F}}+\bar{q}-\mathfrak{F}-q)-[D_t\zeta,\mathcal{H}]\frac{\partial_{\alpha}(\zeta-\bar{\zeta})}{\zeta_{\alpha}}\\
=&(I+\bar{\mathcal{H}})\bar{\mathfrak{F}}+(I+\bar{\mathcal{H}})\bar{q}-(\mathcal{H}+\bar{\mathcal{H}})D_t\zeta-2q-[D_t\zeta,\mathcal{H}]\frac{\partial_{\alpha}(\zeta-\bar{\zeta})}{\zeta_{\alpha}}\\
=&2\bar{\mathfrak{F}}-2q-(\mathcal{H}+\bar{\mathcal{H}})D_t\zeta-[D_t\zeta,\mathcal{H}]\frac{\partial_{\alpha}(\zeta-\bar{\zeta})}{\zeta_{\alpha}}.
\end{split}
\end{equation}
It's easy to obtain that under a priori assumption (\ref{longtimeapriori}),
\begin{equation}
\norm{-(\mathcal{H}+\bar{\mathcal{H}})D_t\zeta-[D_t\zeta,\mathcal{H}]\frac{\partial_{\alpha}(\zeta-\bar{\zeta})}{\zeta_{\alpha}}}_{H^{s+1/2}}\leq C\epsilon\norm{D_t\zeta}_{H^{s+1/2}}\leq C\epsilon^2,
\end{equation}
for some absolute constant $C>0$.

By triangle inequality, 
\begin{equation}
\begin{split}
\norm{D_t\tilde{\theta}-2(\bar{\mathfrak{F}}-q)}_{H^{s+1/2}}\leq &  \norm{-(\mathcal{H}+\bar{\mathcal{H}})D_t\zeta-[D_t\zeta,\mathcal{H}]\frac{\partial_{\alpha}(\zeta-\bar{\zeta})}{\zeta_{\alpha}}}_{H^{s+1/2}}\\
\leq & C\epsilon^2.
\end{split}
\end{equation}
So we obtain (\ref{equivtheta}).

By (\ref{comparetheta}), use
\begin{equation}
    (I-\mathcal{H})\bar{\mathfrak{F}}=2\bar{\mathfrak{F}}-(\bar{\mathcal{H}}+\mathcal{H})\bar{\mathfrak{F}},\quad \quad (I-\mathcal{H})q=2q,
\end{equation}
we have 
\begin{equation}
\begin{split}
    D_t\tilde{\sigma}=&D_t(I-\mathcal{H})D_t\tilde{\theta}=D_t(I-\mathcal{H})\Big\{2\bar{\mathfrak{F}}-2q-(\mathcal{H}+\bar{\mathcal{H}})D_t\zeta-[D_t\zeta,\mathcal{H}]\frac{\partial_{\alpha}(\zeta-\bar{\zeta})}{\zeta_{\alpha}}\Big\}
    \\
    =&4D_t\bar{\mathfrak{F}}-4D_tq+D_t(I-\mathcal{H})\Big\{-(\mathcal{H}+\bar{\mathcal{H}})D_t\zeta-[D_t\zeta,\mathcal{H}]\frac{\partial_{\alpha}(\zeta-\bar{\zeta})}{\zeta_{\alpha}}\Big\}-D_t(\bar{\mathcal{H}}+\mathcal{H})\bar{\mathfrak{F}}.
    \end{split}
\end{equation}
Therefore,
\begin{align}
   & \norm{D_t\tilde{\sigma}-4(D_t\bar{\mathfrak{F}}- D_tq)}_{H^s}\\
   = & \norm{D_t(I-\mathcal{H})\Big\{-(\mathcal{H}+\bar{\mathcal{H}})D_t\zeta-[D_t\zeta,\mathcal{H}]\frac{\partial_{\alpha}(\zeta-\bar{\zeta})}{\zeta_{\alpha}}\Big\}-D_t(\bar{\mathcal{H}}+\mathcal{H})\bar{\mathfrak{F}}}_{H^s}\\
    \leq & C\epsilon^2.
\end{align}
\end{proof}

\begin{corollary}\label{aprioriequiv}
Assume the bootstrap assumption (\ref{longtimeapriori}), we have 
\begin{equation}
\norm{D_t\tilde{\theta}}_{H^{s+1/2}}\leq 11\epsilon,\quad \quad \norm{D_t\tilde{\sigma}}_{H^s}\leq 21\epsilon.
\end{equation}
\end{corollary}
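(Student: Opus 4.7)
The plan is to obtain both inequalities directly from Lemma \ref{comparetransform} by the triangle inequality, reducing matters to controlling $\bar{\mathfrak{F}}$, $q$, and their material derivatives under the bootstrap assumption (\ref{longtimeapriori}). For the first bound, I would write
\begin{equation*}
\norm{D_t\tilde{\theta}}_{H^{s+1/2}}\leq \norm{D_t\tilde{\theta}-2(\bar{\mathfrak{F}}-q)}_{H^{s+1/2}}+2\norm{\bar{\mathfrak{F}}}_{H^{s+1/2}}+2\norm{q}_{H^{s+1/2}}.
\end{equation*}
The first term is $\le C\epsilon^2$ by Lemma \ref{comparetransform}; the second is $\le 10\epsilon$ from (\ref{longtimeapriori}); and the third is $\le K_s^{-1}\epsilon\, d_I(t)^{-3/2}\le K_s^{-1}\epsilon$ by (a routine $H^{s+1/2}$ version of) Lemma \ref{smallestimate}, using Proposition \ref{parallel} which gives $\tfrac12 x(0)\le x(t)\le 2x(0)$ and $d_I(t)\ge 1$. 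Since $K_s^{-1}$ is absorbed and $C\epsilon^2\ll \epsilon$ when $\epsilon\le\epsilon_0$ is small, the sum is $\le 11\epsilon$.

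For the second bound I would argue analogously:
\begin{equation*}
\norm{D_t\tilde{\sigma}}_{H^s}\leq \norm{D_t\tilde{\sigma}-4(D_t\bar{\mathfrak{F}}-D_tq)}_{H^s}+4\norm{D_t\bar{\mathfrak{F}}}_{H^s}+4\norm{D_tq}_{H^s}.
\end{equation*}
The first term is $\le C\epsilon^2$ by Lemma \ref{comparetransform}, and $\|D_t\bar{\mathfrak{F}}\|_{H^s}\le 5\epsilon$ by (\ref{longtimeapriori}), contributing $\le 20\epsilon$. It remains to control $\|D_tq\|_{H^s}$. Here I would use the identity
\begin{equation*}
D_tq=\sum_{j=1}^2 \frac{\lambda_j i}{2\pi}\frac{D_t\zeta-\dot{z}_j(t)}{(\zeta(\alpha,t)-z_j(t))^2},
\end{equation*}
split it into the $D_t\zeta$ piece (estimated via the product rule using $\|D_t\zeta\|_{H^s}\le 6\epsilon$ and the $H^s$ bound on $(\zeta-z_j)^{-2}$ from Lemma \ref{smallestimate}), and the $\dot{z}_j$ piece, which is precisely $\|\sum_j\frac{\lambda_j i}{2\pi}\frac{\dot{z}_j}{(\zeta-z_j)^2}\|_{H^s}$, controlled by Lemma \ref{dotzjdotzj}. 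Both contributions are $\le K_s^{-1}\epsilon^2 d_I(t)^{-5/2}+C\epsilon^2\le C'\epsilon^2$, so the total is $\le 20\epsilon+C''\epsilon^2\le 21\epsilon$ for $\epsilon\le\epsilon_0$ small.

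There is essentially no obstacle: the corollary is pure bookkeeping on top of Lemma \ref{comparetransform} together with the small-denominator estimates (Lemma \ref{smallestimate}, Lemma \ref{dotzjdotzj}) and the a priori control $d_I(t)\ge 1$ coming from Proposition \ref{parallel}. The only mild subtlety is ensuring that the constants absorb correctly so that the cubic error $C\epsilon^2$ and the point-vortex contribution are strictly less than $\epsilon$ (respectively $\epsilon$), which forces a choice of $\epsilon_0$ depending only on absolute constants and $s$ (through $K_s$), consistent with the hypotheses of Theorem \ref{longtime}.
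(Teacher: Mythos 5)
Your approach is the natural one and is essentially what the paper has in mind: apply the triangle inequality to the two estimates of Lemma~\ref{comparetransform}, then control the extra terms $\bar{\mathfrak{F}}$, $q$, $D_t\bar{\mathfrak{F}}$, $D_t q$ using the bootstrap assumption and the point-vortex estimates. The first inequality goes through as you describe (the $H^{s+1/2}$ variant of Lemma~\ref{smallestimate} is indeed routine).

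There is, however, a slip in the treatment of $\|D_tq\|_{H^s}$. You write the $\dot{z}_j$ contribution as being bounded by $K_s^{-1}\epsilon^2\,d_I(t)^{-5/2}+C\epsilon^2$, but Lemma~\ref{dotzjdotzj} actually gives
\begin{equation*}
\norm{\sum_{j=1}^2\frac{\lambda_j i}{2\pi}\frac{\dot{z}_j}{(\zeta-z_j)^2}}_{H^s}\leq K_s^{-1}\epsilon\,\frac{|\lambda|}{x(0)}\,d_I(t)^{-5/2}+C\epsilon^2,
\end{equation*}
and by (H6) the factor $\frac{|\lambda|}{x(0)}$ is allowed to be much larger than $\epsilon$ (indeed Remark~\ref{largeinteraction} insists on this). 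So one cannot collapse $\frac{|\lambda|}{x(0)}$ to $\epsilon$ and obtain $O(\epsilon^2)$. The bound is still fine, but the reason is different: the $K_s^{-1}\epsilon$ in Lemma~\ref{dotzjdotzj} (coming from Lemma~\ref{smallestimate}) is really a bound of the form $(\mathrm{factorials})\cdot|\lambda x(0)|$, so after multiplying by $\frac{|\lambda|}{x(0)}\sim\frac{|\lambda|}{x(t)}$ one obtains $(\mathrm{factorials})\cdot\lambda^2\leq(\mathrm{factorials})\cdot c_0\epsilon$, which is $O(K_s^{-1}\epsilon)$ --- small but only linear in $\epsilon$, not quadratic. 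The net estimate is $\|D_tq\|_{H^s}\leq C\epsilon^2+K_s^{-1}\epsilon$ (which is what the paper records in the proof of Proposition~\ref{closedenergyboot}), and $20\epsilon+4(K_s^{-1}\epsilon+C\epsilon^2)\leq 21\epsilon$ still holds for $\epsilon\leq\epsilon_0$. You should state this $\lambda^2\leq c_0\epsilon$ step explicitly rather than silently treating $\frac{|\lambda|}{x(0)}$ as if it were $O(\epsilon)$.
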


\subsubsection{Estimate the quantity $\frac{a_t}{a}\circ\kappa^{-1}$.} Recall that 
\begin{equation}
\begin{split}
&(I-\mathcal{H})\frac{a_t}{a}\circ\kappa^{-1}A\bar{\zeta}_{\alpha}\\=&2i[D_t^2\zeta, \mathcal{H}]\frac{\partial_{\alpha}D_t\bar{\zeta}}{\zeta_{\alpha}}+2i[D_t\zeta, \mathcal{H}] \frac{\partial_{\alpha}D_t^2\bar{\zeta}}{\zeta_{\alpha}}-\frac{1}{\pi }\int \Big(\frac{D_t\zeta(\alpha,t)-D_t\zeta(\beta,t)}{\zeta(\alpha,t)-\zeta(\beta,t)}\Big)^2 (D_t\bar{\zeta})_{\beta}d\beta\\
&-\frac{1}{\pi}\sum_{j=1}^2 \lambda_j\Big(\frac{2D_t^2\zeta+i-\partial_t^2 z_j}{(\zeta(\alpha,t)-z_j(t))^2}-2\frac{(D_t\zeta-\dot{z}_j(t))^2}{(\zeta(\alpha,t)-z_j(t))^3}\Big)
\end{split}
\end{equation}
By lemma \ref{lemmacommutator1}, the a priori assumption (\ref{longtimeapriori}), we have 
\begin{equation}
\norm{2i[D_t^2\zeta, \mathcal{H}]\frac{\partial_{\alpha}D_t\bar{\zeta}}{\zeta_{\alpha}}}_{H^s}\leq C\|D_t^2\zeta\|_{H^s}\|D_t\zeta\|_{H^s}\leq C\epsilon^2.
\end{equation}
\begin{equation}
\norm{2i[D_t\zeta, \mathcal{H}] \frac{\partial_{\alpha}D_t^2\bar{\zeta}}{\zeta_{\alpha}}}_{H^s}\leq C\|D_t\zeta\|_{H^s}\|D_t^2\zeta\|_{H^s}\leq C\epsilon^2.
\end{equation}
\begin{equation}
\norm{\frac{1}{\pi }\int \Big(\frac{D_t\zeta(\alpha,t)-D_t\zeta(\beta,t)}{\zeta(\alpha,t)-\zeta(\beta,t)}\Big)^2 (D_t\bar{\zeta})_{\beta}d\beta}_{H^s}\leq \|D_t\zeta\|_{H^s}^3\leq C(5\epsilon)^3\leq C\epsilon^2.
\end{equation}
\begin{equation}
\norm{\frac{1}{\pi}\sum_{j=1}^2 \frac{2\lambda_j D_t^2\zeta}{(\zeta(\alpha,t)-z_j(t))^2}}_{H^s}\leq \|D_t^2\zeta\|_{H^s}\norm{\frac{2}{\pi}\sum_{j=1}^2\frac{\lambda_j}{(\zeta(\alpha,t)-z_j(t))^2}}_{H^s}
\end{equation}
By lemma \ref{smallestimate}, we have 
\begin{equation}
\norm{\frac{2}{\pi}\sum_{j=1}^2\frac{\lambda_j }{(\zeta(\alpha,t)-z_j(t))^2}}_{H^s}\leq K_s^{-1}\epsilon d_I(t)^{-5/2}
\end{equation}
So we have 
\begin{equation}
\norm{\frac{1}{\pi}\sum_{j=1}^2 \frac{2\lambda_j D_t^2\zeta}{(\zeta(\alpha,t)-z_j(t))^2}}_{H^s}\leq \|D_t^2\zeta\|_{H^s}\norm{\frac{1}{\pi}\sum_{j=1}^2 \frac{2\lambda_j }{(\zeta(\alpha,t)-z_j(t))^2}}_{H^s}\leq K_s^{-1}\epsilon^2.
\end{equation}
By lemma \ref{dotzjdotzj}, we have 
\begin{equation}
\begin{split}
\norm{\sum_{j=1}^2\frac{\lambda_j i}{\pi}\frac{2D_t\zeta \dot{z}_j}{(\zeta(\alpha,t)-z_j(t))^3}}_{H^s}\leq &2\|D_t\zeta\|_{H^s}\norm{\sum_{j=1}^2\frac{\lambda_j i}{\pi}\frac{\dot{z}_j}{(\zeta(\alpha,t)-z_j(t))^3}}_{H^s}\\
\leq & 12\epsilon K_s^{-1}\epsilon d_I(t)^{-5/2}\leq K_s^{-1}\epsilon^2 d_I(t)^{-5/2}.
\end{split}
\end{equation}
By lemma \ref{dotzjsquare} and lemma \ref{ddotzjnorm}, we have 
\begin{equation}
\begin{split}
&\norm{\frac{1}{\pi}\sum_{j=1}^2 \frac{\lambda_j \ddot{z}_j(t)}{(\zeta(\alpha,t)-z_j(t))^2}}_{H^s}+\norm{\frac{1}{\pi}\sum_{j=1}^2 \frac{\lambda_j 2(\dot{z}_j(t))^2}{(\zeta(\alpha,t)-z_j(t))^3}}_{H^s}\\
\leq & K_s^{-1}\epsilon\frac{|\lambda|}{x(0)}d_I(t)^{-5/2}+K_s^{-1}\epsilon^2d_I(t)^{-5/2}.
\end{split}
\end{equation}
So we obtain
\begin{equation}
\norm{(I-\mathcal{H})\frac{a_t}{a}\circ\kappa^{-1}A\bar{\zeta}_{\alpha}}_{H^s}\leq C\epsilon^2+K_s^{-1}\epsilon\frac{|\lambda|}{x(0)}d_I(t)^{-5/2}.
\end{equation}
By lemma \ref{realinverse} and Sobolev embedding, we have 
\begin{equation}\label{ataA}
\norm{\frac{a_t}{a}\circ\kappa^{-1}}_{\infty}\leq C\epsilon^2+K_s^{-1}\epsilon\frac{|\lambda|}{x(0)}d_I(t)^{-5/2}.
\end{equation}

\subsubsection{Estimate the quantity $A$.}  Recall that 
\begin{equation}
\begin{split}
(I-\mathcal{H})A=&1+i[D_t\zeta,\mathcal{H}]\frac{\partial_{\alpha}\mathfrak{F}}{\zeta_{\alpha}}+i[D_t^2\zeta,\mathcal{H}]\frac{\bar{\zeta}_{\alpha}-1}{\zeta_{\alpha}}-(I-\mathcal{H})\frac{1}{2\pi}\sum_{j=1}^2 \frac{\lambda_j(D_t\zeta(\alpha,t)-\dot{z}_j(t))}{(\zeta(\alpha,t)-z_j(t))^2}.
\end{split}
\end{equation}
By lemma \ref{lemmacommutator1}, lemma \ref{smallestimate}, lemma \ref{dotzjdotzj}, we have 
\begin{align*}
\|(I-\mathcal{H})(A-1)\|_{H^s}\leq &\|D_t\zeta\|_{H^s}\|\mathfrak{F}\|_{H^s}+\|D_t^2\zeta\|_{H^s}\|\zeta_{\alpha}-1\|_{H^s}+\frac{1}{\pi}\|D_t\zeta\|_{H^s}\norm{\sum_{j=1}^2\frac{\lambda_j}{(\zeta(\alpha,t)-z_j(t))^2}}_{H^s}\\
&+\norm{\frac{1}{\pi}\sum_{j=1}^2 \frac{\lambda_j \dot{z}_j(t)}{(\zeta(\alpha,t)-z_j(t))^2}}_{H^s}\\
\leq &C\epsilon^2+K_s^{-1}\epsilon d_I(t)^{-5/2}.
\end{align*}
So we have 
\begin{equation}\label{estimateAminusOne}
\|A-1\|_{H^s}\leq C\epsilon^2+K_s^{-1}\epsilon d_I(t)^{-5/2}.
\end{equation}
\begin{corollary}\label{lowerboundA}
Assume the assumptions of Theorem \ref{longtime} and assume the bootstrap assumption \ref{longtimeapriori}. For $\epsilon$ sufficiently small, we have 
\begin{equation}
\inf_{\alpha\in \mathbb{R}}A(\alpha,t)\geq \frac{9}{10},\quad \quad \forall t\in [0,T_0].
\end{equation}
\begin{equation}
\sup_{\alpha\in \mathbb{R}}A(\alpha,t)\leq \frac{10}{9},\quad \quad \forall t\in [0,T_0].
\end{equation}
\end{corollary}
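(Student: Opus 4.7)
The plan is to deduce the pointwise bounds on $A$ from the $H^s$ estimate on $A-1$ already established in (\ref{estimateAminusOne}), combined with Sobolev embedding and the time-decay estimate for $d_I(t)^{-1}$.

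First I would invoke the estimate derived immediately above the corollary, namely
\begin{equation*}
\|A-1\|_{H^s}\leq C\epsilon^2+K_s^{-1}\epsilon d_I(t)^{-5/2}.
\end{equation*}
Next, since $s\geq 4$, the Sobolev embedding (Lemma \ref{sobolevembedding}) gives
\begin{equation*}
\|A-1\|_{L^\infty}\leq \|A-1\|_{H^s}\leq C\epsilon^2+K_s^{-1}\epsilon d_I(t)^{-5/2}.
\end{equation*}
Under the bootstrap assumption (\ref{longtimeapriori}), Proposition \ref{parallel} applies, and the hypothesis (H7) together with Corollary \ref{decayestimate} yields $d_I(t)\geq \hat{d}_I(t)\geq 1$ for every $t\in[0,T_0]$. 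Consequently $d_I(t)^{-5/2}\leq 1$, and
\begin{equation*}
\|A-1\|_{L^\infty}\leq C\epsilon^2+K_s^{-1}\epsilon.
\end{equation*}

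Finally, I would choose $\epsilon_0$ small enough (depending on $s$ through the universal constant $K_s$ and the absolute constant $C$) that $C\epsilon^2+K_s^{-1}\epsilon\leq \frac{1}{10}$ for all $0<\epsilon\leq \epsilon_0$. This immediately gives $\inf_\alpha A(\alpha,t)\geq 1-\frac{1}{10}=\frac{9}{10}$ and $\sup_\alpha A(\alpha,t)\leq 1+\frac{1}{10}=\frac{11}{10}$; in particular $\sup_\alpha A\leq \frac{10}{9}$ follows (for example, by tightening the smallness threshold so that $\|A-1\|_{L^\infty}\leq \frac{1}{9}$, which still only requires $\epsilon$ small). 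There is no serious obstacle here; the only thing to be careful about is that the smallness of $\epsilon$ is chosen \emph{after} fixing $s$, so that the $s$-dependent factor $K_s^{-1}$ can be absorbed, and that the a priori estimate $d_I(t)\geq 1$ is genuinely available throughout $[0,T_0]$ under the bootstrap. Both of these have already been verified earlier in the section.
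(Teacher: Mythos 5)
Your proof is correct and follows the same line the paper implicitly takes: apply Sobolev embedding to the $H^s$ estimate (\ref{estimateAminusOne}), use $d_I(t)\geq 1$ (from (H7) and Corollary \ref{decayestimate}) to discard the $d_I(t)^{-5/2}$ factor, and take $\epsilon$ small. One small remark: your parenthetical about ``tightening'' to $\|A-1\|_{L^\infty}\leq \tfrac{1}{9}$ is unnecessary and slightly off --- $\tfrac{1}{9}>\tfrac{1}{10}$, so it is a weaker threshold, and it would only give $\inf A\geq \tfrac{8}{9}<\tfrac{9}{10}$; the original $\tfrac{1}{10}$ threshold already yields both $\inf A\geq\tfrac{9}{10}$ and $\sup A\leq\tfrac{11}{10}<\tfrac{10}{9}$ simultaneously, so nothing further is needed.
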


\subsubsection{Estimate the quantity $D_tb$.} Recall that 
\begin{equation}
\begin{split}
(I-\mathcal{H})D_t b=&[D_t\zeta,\mathcal{H}]\frac{\partial_{\alpha}b}{\zeta_{\alpha}}-[D_t^2\zeta,\mathcal{H}]\frac{\bar{\zeta}_{\alpha}-1}{\zeta_{\alpha}}-[D_t\zeta,\mathcal{H}]\frac{\partial_{\alpha}D_t\bar{\zeta}}{\zeta_{\alpha}}\\
&+\frac{1}{\pi i}\int \Big(\frac{D_t\zeta(\alpha,t)-D_t\zeta(\beta,t)}{\zeta(\alpha,t)-\zeta(\beta,t)}\Big)^2(\bar{\zeta}_{\beta}(\beta,t)-1)d\beta+\frac{i}{\pi}\sum_{j=1}^2\frac{\lambda_j(D_t\zeta-\dot{z}_j(t))}{(\zeta(\alpha,t)-z_j(t))^2}.
\end{split}
\end{equation}
By lemma \ref{lemmacommutator1}, lemma \ref{smallestimate}, lemma \ref{dotzjdotzj}, estimate (\ref{estimateforb}), we have 
\begin{align*}
&\|(I-\mathcal{H})D_tb\|_{H^s}\\
\leq &\norm{[D_t\zeta,\mathcal{H}]\frac{\partial_{\alpha}b}{\zeta_{\alpha}}}_{H^s}+\norm{[D_t^2\zeta,\mathcal{H}]\frac{\bar{\zeta}_{\alpha}-1}{\zeta_{\alpha}}}_{H^s}+\norm{[D_t\zeta,\mathcal{H}]\frac{\partial_{\alpha}D_t\bar{\zeta}}{\zeta_{\alpha}}}_{H^s}\\
&+\norm{\frac{1}{\pi i}\int \Big(\frac{D_t\zeta(\alpha,t)-D_t\zeta(\beta,t)}{\zeta(\alpha,t)-\zeta(\beta,t)}\Big)^2(\bar{\zeta}_{\beta}(\beta,t)-1)d\beta}_{H^s}+\norm{\frac{i}{\pi}\sum_{j=1}^2\frac{\lambda_j(D_t\zeta-\dot{z}_j(t))}{(\zeta(\alpha,t)-z_j(t))^2}}_{H^s}\\
\leq & C\|D_t\zeta\|_{H^s}\|b\|_{H^s}+C\|D_t^2\zeta\|_{H^s}\|\zeta_{\alpha}-1\|_{H^s}+C\|D_t\zeta\|_{H^s}^2\\
&+\|D_t\zeta\|_{H^s}\norm{\sum_{j=1}^2\frac{\lambda_j}{\pi(\zeta(\alpha,t)-z_j(t))^2}}_{H^s}+\norm{\sum_{j=1}^2\frac{\lambda_j \dot{z}_j}{\pi(\zeta(\alpha,t)-z_j(t))^2}}_{H^s}+C\|D_t\zeta\|_{H^s}^2\|\zeta_{\alpha}-1\|_{H^s}\\
\leq &C\epsilon (C\epsilon^2+K_s^{-1}\epsilon d_I(t)^{-5/2})+C\epsilon^2+K_s^{-1}\epsilon^2 d_I(t)^{-5/2}+K_s^{-1}\epsilon d_I(t)^{-5/2}+C\epsilon^3\\
\leq &C\epsilon^2+K_s^{-1}\epsilon d_I(t)^{-5/2}.
\end{align*}
By lemma \ref{realinverse}, we  have 
\begin{equation}
\|D_tb\|_{H^s}\leq C\epsilon^2+K_s^{-1}\epsilon d_I(t)^{-5/2}.
\end{equation}

\subsubsection{Estimate $\|G\|_{H^s}$.}\label{Gk1} Recall that $G=G_c+G_d$, with
\begin{equation}
G_c:=-2[\bar{\mathfrak{F}}, \mathcal{H}\frac{1}{\zeta_{\alpha}}+\bar{\mathcal{H}}\frac{1}{\bar{\zeta}_{\alpha}}]\bar{\mathfrak{F}}_{\alpha}+\frac{1}{\pi i}\int \Big(\frac{D_t\zeta(\alpha,t)-D_t\zeta(\beta,t)}{\zeta(\alpha,t)-\zeta(\beta,t)}\Big)^2(\zeta-\bar{\zeta})_{\beta}d\beta :=G_{c1}+G_{c2}.
\end{equation}
\begin{equation}
G_d:=-2[\bar{q}, \mathcal{H}]\frac{\partial_{\alpha}\bar{\mathfrak{F}}}{\zeta_{\alpha}}-2[\bar{\mathfrak{F}},\mathcal{H}]\frac{\partial_{\alpha}\bar{q}}{\zeta_{\alpha}}-2[\bar{q},\mathcal{H}]\frac{\partial_{\alpha}\bar{q}}{\zeta_{\alpha}}-4D_t q:=G_{d1}+G_{d2}+G_{d3}+G_{d4}.
\end{equation}
We rewrite $G_{c1}$ as 
\begin{equation}
G_{c1}=-\frac{4}{\pi}\int \frac{(D_t\bar{\mathfrak{F}}(\alpha,t)-D_t\bar{\mathfrak{F}}(\beta,t))Im\{\zeta(\alpha,t)-\zeta(\beta,t)\}}{|\zeta(\alpha,t)-\zeta(\beta,t)|^2}\partial_{\beta}\bar{\mathfrak{F}}(\beta,t)d\beta.
\end{equation}
By lemma \ref{lemmacommutator1},  we have 
\begin{equation}
\|G_{c1}\|_{H^s}\leq C\|\mathfrak{F}\|_{H^s}\|\zeta_{\alpha}-1\|_{H^s}\|\mathfrak{F}\|_{H^s}\leq C\epsilon^3,
\end{equation}
for some constant $C$ depends on $s$ only. Similarly, 
\begin{align*}
\|G_{c2}\|_{H^s}\leq C\|D_t\zeta\|_{H^s}^2\|\zeta_{\alpha}-1\|_{H^s}\leq C\epsilon^3.
\end{align*}
By lemma \ref{lemmacommutator1}, we have 
\begin{equation}\label{cs}
\|G_{d1}\|_{H^s}+\|G_{d2}\|_{H^s}\leq C\|q\|_{H^s}\|\mathfrak{F}\|_{H^s}\leq K_s^{-1}\epsilon^2 d_I(t)^{-3/2}.
\end{equation}
Similarly,
\begin{align*}
\|G_{d3}\|_{H^s}\leq C\|q\|_{H^s}^2\leq K_s^{-1}\epsilon^2 d_I(t)^{-3/2}.
\end{align*}
Use 
\begin{equation}
D_t\bar{q}=\sum_{j=1}^2\frac{\lambda_j i}{2\pi}\frac{D_t\zeta-\dot{z}_j}{(\zeta(\alpha,t)-z_j(t))^2},
\end{equation}
by lemma \ref{dotzjdotzj}, lemma \ref{smallestimate}, we have 
\begin{equation}\label{estimateGd4}
\begin{split}
\|G_{d4}\|_{H^s}\leq &4\|D_t\zeta\|_{H^s}\norm{\sum_{j=1}^2\frac{\lambda_j}{2\pi(\zeta(\alpha,t)-z_j(t))^2}}_{H^s}+4\norm{\sum_{j=1}^2\frac{\lambda_j \dot{z}_j}{2\pi(\zeta(\alpha,t)-z_j(t))^2}}_{H^s}\\
\leq & C\epsilon^2  d_I(t)^{-5/2}+K_s^{-1}\epsilon \frac{|\lambda|}{x(0)}d_I(t)^{-5/2}.
\end{split}
\end{equation}
So we obtain
\begin{equation}
\|G\|_{H^s}\leq C\epsilon^3+K_s^{-1}\epsilon^2 d_I(t)^{-3/2}+K_s^{-1}\epsilon \frac{|\lambda|}{x(0)} d_I(t)^{-5/2}.
\end{equation}
As a consequence, 
\begin{equation}\label{Gk11}
\|(I-\mathcal{H})G\|_{H^s}\leq 3\|G\|_{H^s}\leq C\epsilon^3+K_s^{-1}\epsilon^2 d_I(t)^{-3/2}+K_s^{-1}\epsilon \frac{|\lambda|}{x(0)}d_I(t)^{-5/2}.
\end{equation}

\subsubsection{Estimate  $\|(I-\mathcal{H})[D_t^2-iA\partial_{\alpha}, \partial_{\alpha}^k]\tilde{\theta})\|_{L^2}$.}\label{Gk2}
By lemma \ref{commutehigh}, we have 
\begin{align*}
[D_t^2,\partial_{\alpha}^k]\tilde{\theta}=&-\sum_{m=0}^{k-1}\Big[\partial_{\alpha}^m(D_tb_{\alpha})\partial_{\alpha}^{k-m}\tilde{\theta}+\partial_{\alpha}^m (b_{\alpha}\partial_{\alpha}^{k-m}D_t\tilde{\theta})+\partial_{\alpha}^m (b_{\alpha}[b\partial_{\alpha},\partial_{\alpha}^{k-m}]\tilde{\theta})+\partial_{\alpha}^m b_{\alpha}\partial_{\alpha}^{k-m}D_t\tilde{\theta}\\
&+\partial_{\alpha}^mb_{\alpha}[b\partial_{\alpha},\partial_{\alpha}]\partial_{\alpha}^{k-m-1}\tilde{\theta}\Big]\\
\end{align*}

\vspace*{1ex}

\noindent \underline{The quantity $\|\partial_{\alpha}^m(D_tb_{\alpha})\partial_{\alpha}^{k-m}\tilde{\theta}\|_{L^2}$.}
For $0\leq m\leq k-1, k\leq s$, we have 
\begin{equation}
\begin{split}
\|\partial_{\alpha}^m(D_tb_{\alpha})\partial_{\alpha}^{k-m}\tilde{\theta}\|_{L^2}\leq \|D_tb_{\alpha}\|_{H^m}\|\partial_{\alpha}^{k-m}\tilde{\theta}\|_{H^m}
\end{split}
\end{equation}
Since $D_tb_{\alpha}=\partial_{\alpha}D_tb+bb_{\alpha}$, we have 
\begin{align*}
\|D_tb_{\alpha}\|_{H^m}\leq &\|\partial_{\alpha}D_tb\|_{H^m}+\|b_{\alpha}b\|_{H^m}\leq \|D_tb\|_{H^s}+\|b\|_{H^s}^2\\
\leq & C\epsilon^2+K_s^{-1}\epsilon d_I(t)^{-5/2}+(C\epsilon^2+K_s^{-1}\epsilon d_I(t)^{-3/2})^2\\
\leq & C\epsilon^2+K_s^{-1}\epsilon d_I(t)^{-5/2}.
\end{align*}
and since $k-m\geq 1$, by Corollary \ref{onetotwo}, we have 
\begin{equation}
\|\partial_{\alpha}^{k-m}\tilde{\theta}\|_{H^s}\leq 11\epsilon,
\end{equation}
we obtain
\begin{equation}
\|\partial_{\alpha}^m(D_tb_{\alpha})\partial_{\alpha}^{k-m}\tilde{\theta}\|_{L^2}\leq \|D_tb_{\alpha}\|_{H^s}\|\partial_{\alpha}\tilde{\theta}\|_{H^s}\leq C\epsilon^3+K_s^{-1}\epsilon^2 d_I(t)^{-3/2}.
\end{equation}

\vspace*{1ex}

\noindent \underline{The quantity $\partial_{\alpha}^m (b_{\alpha}\partial_{\alpha}^{k-m}D_t\tilde{\theta})$.} Similar to the previous case, we have for $0\leq m\leq k-1, k\leq s$, and assume bootstrap assumption (\ref{longtimeapriori}),
\begin{equation}
\|\partial_{\alpha}^m (b_{\alpha}\partial_{\alpha}^{k-m}D_t\tilde{\theta})\|_{L^2}\leq \|b_{\alpha}\|_{H^{k-1}}\|D_t\tilde{\theta}\|_{H^k}\leq C\epsilon^3+K_s^{-1}\epsilon^2 d_I(t)^{-3/2}.
\end{equation}

\vspace*{1ex}

\noindent \underline{The quantity $\partial_{\alpha}^m (b_{\alpha}[b\partial_{\alpha},\partial_{\alpha}^{k-m}]\tilde{\theta})$.} 
We have  for $0\leq m\leq k-1, k\leq s$, and assume bootstrap assumption (\ref{longtimeapriori}),
\begin{equation}
\|\partial_{\alpha}^m (b_{\alpha}[b\partial_{\alpha},\partial_{\alpha}^{k-m}]\tilde{\theta})\|_{L^2}\leq C\|b_{\alpha}\|_{H^{s-1}}\|b\|_{H^s}\|\theta_{\alpha}\|_{H^{s-1}}\leq C\epsilon^3.
\end{equation}

\vspace*{2ex}

\noindent \underline{The quantity $\partial_{\alpha}^m b_{\alpha}\partial_{\alpha}^{k-m}D_t\tilde{\theta}$.} 
We have  for $0\leq m\leq k-1, k\leq s$, 
\begin{equation}
\|\partial_{\alpha}^m b_{\alpha}\partial_{\alpha}^{k-m}D_t\tilde{\theta}\|_{L^2}\leq C\|b_{\alpha}\|_{H^{k-1}}\|D_t\tilde{\theta}\|_{H^{k}}\leq C\epsilon^3+K_s^{-1}\epsilon^2  d_I(t)^{-3/2}.
\end{equation}

\vspace*{2ex}

\noindent \underline{The quantity $\partial_{\alpha}^mb_{\alpha}[b\partial_{\alpha},\partial_{\alpha}]\partial_{\alpha}^{k-m-1}\tilde{\theta}$.} 
We have  for $0\leq m\leq k-1, k\leq s$,
\begin{equation}
\|\partial_{\alpha}^mb_{\alpha}[b\partial_{\alpha},\partial_{\alpha}]\partial_{\alpha}^{k-m-1}\tilde{\theta}\|_{L^2}\leq C\|b\|_{H^{s}}^2\|D_t\theta\|_{H^{s}}\leq C\epsilon^3.
\end{equation}
So we obtain
\begin{equation}
\|[D_t^2, \partial_{\alpha}^k]\tilde{\theta}\|_{L^2}\leq C\epsilon^3+K_s^{-1}C\epsilon^2 d_I(t)^{-3/2}.
\end{equation}

\vspace*{2ex}

\noindent \underline{The quantity $\|[iA\partial_{\alpha},\partial_{\alpha}^k]\tilde{\theta}\|_{L^2}$} Use similar argument, we obtain
\begin{equation}
\|[iA\partial_{\alpha},\partial_{\alpha}^k]\theta\|_{L^2}\leq C\epsilon^3+K_s^{-1}\epsilon^2 d_I(t)^{-3/2}.
\end{equation}

\vspace*{2ex}
So we obtain
\begin{equation}\label{Gk22}
\|(I-\mathcal{H})[D_t^2-iA\partial_{\alpha}, \partial_{\alpha}^k]\tilde{\theta})\|_{L^2}\leq C\epsilon^3+K_s^{-1}C\epsilon^2  d_I(t)^{-3/2}.
\end{equation}

\subsubsection{Estimate $\|[D_t^2-iA\partial_{\alpha},\mathcal{H}]\partial_{\alpha}^k\tilde{\theta}\|_{L^2}$}\label{Gk3}
Note that by identity (\ref{abcde}),
\begin{equation}
[D_t^2-iA\partial_{\alpha},\mathcal{H}]\partial_{\alpha}^k\tilde{\theta}=2[D_t\zeta,\mathcal{H}]\frac{\partial_{\alpha}\partial_{\alpha}^k\tilde{\theta}}{\zeta_{\alpha}}-\frac{1}{\pi i}\int \Big(\frac{\zeta(\alpha,t)-\zeta(\beta,t)}{\zeta(\alpha,t)-\zeta(\beta,t)}\Big)^2 \partial_{\beta} \partial_{\beta}^k \tilde{\theta} d\beta
\end{equation}
Clearly, for $k\leq s$, and assume (\ref{longtimeapriori}), we have 
\begin{equation}
\norm{\frac{1}{\pi i}\int \Big(\frac{\zeta(\alpha,t)-\zeta(\beta,t)}{\zeta(\alpha,t)-\zeta(\beta,t)}\Big)^2 \partial_{\beta} \partial_{\beta}^k \tilde{\theta} d\beta}_{L^2}\leq C\epsilon^3.
\end{equation}

\vspace*{2ex}

\noindent $\bullet$ Estimate $\|[D_t\zeta,\mathcal{H}]\frac{\partial_{\alpha}\partial_{\alpha}^k\tilde{\theta}}{\zeta_{\alpha}}\|_{L^2}$.

$[D_t\zeta,\mathcal{H}]\frac{\partial_{\alpha}\partial_{\alpha}^k\tilde{\theta}}{\zeta_{\alpha}}$ is not obvious cubic. However, since $\partial_{\alpha}^k\tilde{\theta}$ is almost anti-holomorphic, and $D_t\zeta=\bar{\mathfrak{F}}+\bar{q}$, with $\bar{\mathfrak{F}}$ anti-holomorphic and $\bar{q}$ decays rapidly in time as long as the point vortices move away from the free interface rapidly,  we expect this quantity consists of cubic terms and quadratic terms which decay rapidly. To see this,  decompose 
$$\partial_{\alpha}^k\tilde{\theta}:=\frac{1}{2}(I-\mathcal{H})\partial_{\alpha}^k\tilde{\theta}+\frac{1}{2}(I+\mathcal{H})\partial_{\alpha}^k\tilde{\theta}.$$
Note that for $k\geq 1$,
\begin{align*}
(I+\mathcal{H})\partial_{\alpha}^k\tilde{\theta}=&(I+\mathcal{H})\partial_{\alpha}^k(I-\mathcal{H})(\zeta-\bar{\zeta})=-[\partial_{\alpha}^k,\mathcal{H}]\tilde{\theta}\\
=&-\sum_{m=0}^{k-1}\partial_{\alpha}^m [\zeta_{\alpha}-1,\mathcal{H}]\frac{\partial_{\alpha}\partial_{\alpha}^{k-m-1}\tilde{\theta}}{\zeta_{\alpha}}.
\end{align*}
By lemma \ref{lemmacommutator1} and lemma \ref{equivalent_1},
\begin{equation}
\|(I+\mathcal{H})\partial_{\alpha}^k\tilde{\theta}\|_{L^2}\leq C\|\zeta_{\alpha}-1\|_{H^k} \|\partial_{\alpha}\tilde{\theta}\|_{H^{s-1}}\leq C\epsilon^2.
\end{equation}
Therefore, by lemma \ref{lemmacommutator1}, we have 
\begin{equation}
\norm{[D_t\zeta,\mathcal{H}]\frac{\partial_{\alpha}\frac{1}{2}(I+\mathcal{H})\partial_{\alpha}^k\tilde{\theta}}{\zeta_{\alpha}}}_{L^2}\leq C\epsilon^3.
\end{equation}
We rewrite $[D_t\zeta,\mathcal{H}]\frac{\partial_{\alpha}\frac{1}{2}(I-\mathcal{H})\partial_{\alpha}^k\tilde{\theta}}{\zeta_{\alpha}}$ as 
\begin{align*}
&[D_t\zeta,\mathcal{H}]\frac{\partial_{\alpha}\frac{1}{2}(I-\mathcal{H})\partial_{\alpha}^k\tilde{\theta}}{\zeta_{\alpha}}\\
=&[\frac{1}{2}(I+\mathcal{H})D_t\zeta,\mathcal{H}]\frac{\partial_{\alpha}\frac{1}{2}(I-\mathcal{H})\partial_{\alpha}^k\tilde{\theta}}{\zeta_{\alpha}}+[\frac{1}{2}(I-\mathcal{H})D_t\theta,\mathcal{H}]\frac{\partial_{\alpha}\frac{1}{2}(I-\mathcal{H})\partial_{\alpha}^k\tilde{\theta}}{\zeta_{\alpha}}:=I+\it{II}.
\end{align*}
Clearly, $\it{II}=0$. Since 
\begin{equation}
    \frac{1}{2}(I+\mathcal{H})D_t\zeta=\frac{1}{2}(I+\mathcal{H})\bar{q}+\frac{1}{2}(\mathcal{H}+\bar{\mathcal{H}})\bar{\mathfrak{F}},
\end{equation}

Use lemma \ref{lemmacommutator1}, lemma \ref{smallestimate}, and similar to the estimate of $\|G_{d1}\|_{H^s}$ in \S \ref{Gk1},   we have 
\begin{equation}
\begin{split}
\norm{[\frac{1}{2}(I+\mathcal{H})\bar{q},\mathcal{H}]\frac{\partial_{\alpha}\frac{1}{2}(I-\mathcal{H})\partial_{\alpha}^k\tilde{\theta}}{\zeta_{\alpha}}}_{L^2}\leq & C\|q\|_{H^{k}}\|\partial_{\alpha}\theta\|_{H^{k-1}}\leq K_s^{-1}\epsilon^2 d_I(t)^{-3/2}.
\end{split}
\end{equation}
It's easy to obtain
\begin{equation}
    \norm{(\mathcal{H}+\bar{\mathcal{H}})\mathfrak{F}}_{H^s}\leq C\epsilon^2.
\end{equation}
So we obtain
\begin{equation}
    \norm{[(\mathcal{H}+\bar{\mathcal{H}})\mathfrak{F},\mathcal{H}]\frac{\partial_{\alpha}\frac{1}{2}(I-\mathcal{H})\partial_{\alpha}^k\tilde{\theta}}{\zeta_{\alpha}}}_{H^s}\leq C\epsilon^3.
\end{equation}
Therefore,
\begin{equation}\label{Gk33}
\norm{(I-\mathcal{H})[D_t^2-iA\partial_{\alpha},\mathcal{H}]\partial_{\alpha}^k\tilde{\theta}}_{L^2}\leq 3\norm{[D_t^2-iA\partial_{\alpha},\mathcal{H}]\partial_{\alpha}^k\tilde{\theta}}_{L^2}\leq C\epsilon^3+K_s^{-1}\epsilon^2d_I(t)^{-3/2}.
\end{equation}

\subsubsection{Estimate for $\norm{G_k^{\theta}}_{L^s}$.} Collect the estimates from  (\ref{Gk11}),  (\ref{Gk22}),  (\ref{Gk33}), we obtain
\begin{equation}\label{Gktheta}
\norm{G_k^{\theta}}_{L^2}\leq C\epsilon^3+K_s^{-1}\epsilon^2d_I(t)^{-3/2}+K_s^{-1}\epsilon \frac{|\lambda|}{x(0)}d_I(t)^{-5/2}.
\end{equation}

\subsection{Estimate $\norm{(I-\mathcal{H})\partial_{\alpha}^k \tilde{G}}_{L^2}$ }\label{estimateinvolve1} Recall that 
\begin{equation}\label{systemnew}
\begin{split}
\tilde{G}=& (I-\mathcal{H})(D_t G+i\frac{a_t}{a}\circ\kappa^{-1}A((I-\mathcal{H})(\zeta-\bar{\zeta}))_{\alpha})-2[D_t\zeta,\mathcal{H}]\frac{\partial_{\alpha}D_t^2(I-\mathcal{H})(\zeta-\bar{\zeta})}{\zeta_{\alpha}}\\
&+\frac{1}{\pi i}\int \Big(\frac{D_t\zeta(\alpha,t)-D_t\zeta(\beta,t)}{\zeta(\alpha,t)-\zeta(\beta,t)}\Big)^2 (D_t(I-\mathcal{H})(\zeta-\bar{\zeta}))_{\beta}d\beta\\
\end{split}
\end{equation}

\subsubsection{Estimate $\norm{D_tG}_{H^k}$} $D_tG$ is given by
\begin{align*}
D_tG=(\partial_t g)\circ \kappa^{-1}.
\end{align*}
$g=g_c+g_d$, and 
\begin{align*}
\partial_t g_c=&\partial_t \Big\{-2[\bar{f}, \mathfrak{H}\frac{1}{z_{\alpha}}+\bar{\mathfrak{H}}\frac{1}{\bar{z}_{\alpha}}]\bar{f}_{\alpha}+\frac{1}{\pi i}\int \Big(\frac{z_t(\alpha,t)-z_t(\beta,t)}{z(\alpha,t)-z(\beta,t)}\Big)^2(z-\bar{z})_{\beta}d\beta\Big\}\\
=& -2[\bar{f}_t, \mathfrak{H}\frac{1}{z_{\alpha}}+\bar{\mathfrak{H}}\frac{1}{\bar{z}_{\alpha}}]\bar{f}_{\alpha} -2[\bar{f}, \mathfrak{H}\frac{1}{z_{\alpha}}+\bar{\mathfrak{H}}\frac{1}{\bar{z}_{\alpha}}]\bar{f}_{t\alpha}\\
&-\frac{4}{\pi}\int (\bar{f}(\alpha,t)-\bar{f}(\beta,t)) \Big(\partial_t Im\Big\{ \frac{z_t(\alpha,t)-z_t(\beta,t)}{(z(\alpha,t)-z(\beta,t))^2} \Big\}\Big)\partial_{\beta}\bar{f}(\beta,t)d\beta\\
&+\frac{2}{\pi i}\int \frac{z_t(\alpha,t)-z_t(\beta,t)}{z(\alpha,t)-z(\beta,t)}\Big\{\frac{z_{tt}(\alpha,t)-z_{tt}(\beta,t)}{z(\alpha,t)-z(\beta,t)}-\frac{(z_t(\alpha,t)-z_t(\beta,t))^2}{(z(\alpha,t)-z(\beta,t))^2} \Big\} (z-\bar{z})_{\beta}d\beta \\
& +\frac{1}{\pi i}\int \Big(\frac{z_t(\alpha,t)-z_t(\beta,t)}{z(\alpha,t)-z(\beta,t)}\Big)^2(z_t-\bar{z}_t)_{\beta}d\beta
\end{align*}
So we have 
\begin{align*}
D_tG_c=&
-2[D_t\bar{\mathfrak{F}}, \mathcal{H}\frac{1}{\zeta_{\alpha}}+\bar{\mathcal{H}}\frac{1}{\bar{\zeta}_{\alpha}}]\bar{\mathfrak{F}}_{\alpha} -2[\bar{\mathfrak{F}}, \mathcal{H}\frac{1}{\zeta_{\alpha}}+\bar{\mathcal{H}}\frac{1}{\bar{\zeta}_{\alpha}}]\partial_{\alpha}D_t\bar{\mathfrak{F}}\\
&-\frac{4}{\pi}\int (\bar{\mathfrak{F}}(\alpha,t)-\bar{\mathfrak{F}}(\beta,t)) \Big(D_t Im\Big\{ \frac {\zeta(\alpha,t)-\zeta(\beta,t)}{(\zeta(\alpha,t)-\zeta(\beta,t))^2} \Big\}\Big)\partial_{\beta}\bar{\mathfrak{F}}(\beta,t)d\beta\\
&+\frac{2}{\pi i}\int \frac{D_t\zeta(\alpha,t)-D_t\zeta(\beta,t)}{\zeta(\alpha,t)-\zeta(\beta,t)}\Big\{\frac{D_t^2\zeta(\alpha,t)-D_t^2\zeta(\beta,t)}{\zeta(\alpha,t)-\zeta(\beta,t)}-\frac{(D_t\zeta(\alpha,t)-D_t\zeta(\beta,t))^2}{(\zeta(\alpha,t)-\zeta(\beta,t))^2} \Big\} (\zeta-\bar{\zeta})d\beta \\
& +\frac{1}{\pi i}\int \Big(\frac{D_t\zeta(\alpha,t)-D_t\zeta(\beta,t)}{\zeta(\alpha,t)-\zeta(\beta,t)}\Big)^2(D_t\zeta-D_t\bar{\zeta})_{\beta}d\beta
\end{align*}
Recall that 
\begin{equation}
g_d:=-2[\bar{p}, \mathfrak{H}]\frac{\partial_{\alpha}\bar{f}}{z_{\alpha}}-2[\bar{f},\mathfrak{H}]\frac{\partial_{\alpha}\bar{p}}{z_{\alpha}}-2[\bar{p},\mathfrak{H}]\frac{\partial_{\alpha}\bar{p}}{z_{\alpha}}-4p_t.
\end{equation}
So 
\begin{align*}
\partial_t g_d=&-2[\bar{p}, \mathfrak{H}]\frac{\partial_{\alpha t}\bar{f}}{z_{\alpha}}-\frac{2}{\pi i}\int \Big(\frac{\bar{p}(\alpha,t)-\bar{p}(\beta,t)}{z(\alpha,t)-z(\beta,t))} \Big)_t \partial_{\beta}\bar{f}(\beta,t)d\beta\\
&-2[\bar{f},\mathfrak{H}]\frac{\partial_{\alpha}\bar{p}_t}{z_{\alpha}}-\frac{2}{\pi i}\int \Big(\frac{\bar{f}(\alpha,t)-\bar{f}(\beta,t)}{z(\alpha,t)-z(\beta,t)}  \Big)_t\partial_{\beta}\bar{p}(\beta,t)d\beta\\
&-2[\bar{p},\mathfrak{H}]\frac{\partial_{\alpha}\bar{p}_t}{z_{\alpha}}-\frac{2}{\pi i}\int \Big(\frac{\bar{p}(\alpha,t)-\bar{p}(\beta,t)}{z(\alpha,t)-z(\beta,t)} \Big)_t \partial_{\beta}\bar{p}(\beta,t)d\beta\\
&-4p_{tt}.
\end{align*}
So we have 
\begin{equation}
\begin{split}
D_t G_d=& -2[\bar{q}, \mathcal{H}]\frac{\partial_{\alpha }D_t\bar{\mathfrak{F}}}{\zeta_{\alpha}}-\frac{2}{\pi i}\int \Big(D_t\frac{\bar{q}(\alpha,t)-\bar{q}(\beta,t)}{\zeta(\alpha,t)-\zeta(\beta,t)}  \Big)\partial_{\beta}\bar{\mathfrak{F}}(\beta,t)d\beta\\
&-2[\bar{\mathfrak{F}},\mathcal{H}]\frac{\partial_{\alpha}D_t\bar{q}}{\zeta_{\alpha}}-\frac{2}{\pi i}\int \Big(D_t\frac{\bar{\mathfrak{F}}(\alpha,t)-\bar{\mathfrak{F}}(\beta,t)}{\zeta(\alpha,t)-\zeta(\beta,t)}  \Big)\partial_{\beta}\bar{q}(\beta,t)d\beta\\
&-2[\bar{q},\mathcal{H}]\frac{\partial_{\alpha}D_t\bar{q}}{\zeta_{\alpha}}-\frac{2}{\pi i}\int \Big(D_t\frac{\bar{q}(\alpha,t)-\bar{q}(\beta,t)}{\zeta(\alpha,t)-\zeta(\beta,t)}\Big)  \partial_{\beta}\bar{q}(\beta,t)d\beta\\
&-4D_t^2q.
\end{split}
\end{equation}

\noindent $D_tG_c$ is cubic, we have
\begin{align*}
\norm{D_tG_c}_{H^k}\leq & C_s(\norm{D_t\mathfrak{F}}_{H^k}\norm{\zeta_{\alpha}-1}_{H^k}\norm{\mathfrak{F}}_{H^k}+\norm{\mathfrak{F}}_{H^k}^2\norm{D_t\zeta}_{H^k}+\norm{D_t\zeta}_{H^k}\norm{D_t^2\zeta}_{H^k}\norm{\zeta_{\alpha}-1}_{H^k}\\
&+\norm{D_t\zeta}_{H^k}^3\norm{\zeta_{\alpha}-1}_{H^k}+\|D_t\zeta\|_{H^k}^3)\\
\leq & C\epsilon^3.
\end{align*}
$D_tG_d$ consists of cubic terms or terms with rapid time decay.  By (\ref{estimateGd4}), we have 
\begin{equation}
\norm{D_tq}_{H^k}\leq C\epsilon^2 d_I(t)^{-5/2}+K_s^{-1}\epsilon d_I(t)^{-5/2}.
\end{equation}
Note that $D_t G_d+4D_t^2q$ is at least quadratic. Use lemma \ref{lemmacommutator1}, lemma \ref{smallestimate}, and similar to the estimate of $\norm{G_{d1}}_{H^s}$ in \S \ref{Gk1}, we obtain
\begin{equation}
\norm{D_tG_d+4D_t^2q}_{H^k}\leq C\epsilon^3+K_s^{-1}\epsilon^2d_I(t)^{-3/2}.
\end{equation}
Note that
\begin{equation}
\begin{split}
D_t^2q=&\sum_{j=1}^2\frac{\lambda_j i}{2\pi}\frac{D_t^2\zeta-\ddot{z}_j(t)}{(\zeta(\alpha,t)-z_j(t))^2}-\sum_{j=1}^2\frac{\lambda_j i}{\pi}\frac{(D_t\zeta)^2-2D_t\zeta \dot{z}_j}{(\zeta(\alpha,t)-z_j(t))^3}-\sum_{j=1}^2\frac{\lambda_j i}{\pi}\frac{\dot{z}_j^2}{(\zeta(\alpha,t)-z_j(t))^3}.
\end{split}
\end{equation}
Use lemma \ref{dotzjdotzj}, lemma \ref{dotzjsquare}, lemma \ref{ddotzjnorm}, we have 
\begin{equation}
\norm{4D_t^2q}_{H^k}\leq C\epsilon^3+K_s^{-1}\epsilon^2d_I(t)^{-3/2}+K_s^{-1}\epsilon \frac{|\lambda|}{x(0)}d_I(t)^{-3/2}.
\end{equation}
Then we have 
\begin{equation}
\norm{D_tG}_{H^k}\leq C\epsilon^3+K_s^{-1}\epsilon^2d_I(t)^{-3/2}+K_s^{-1}\epsilon \frac{|\lambda|}{x(0)}d_I(t)^{-3/2}.
\end{equation}
Therefore,
\begin{equation}\label{Gk111}
\norm{(I-\mathcal{H})D_tG}_{H^k}\leq C\epsilon^3+K_s^{-1}\epsilon^2d_I(t)^{-3/2}+K_s^{-1}\epsilon \frac{|\lambda|}{x(0)}d_I(t)^{-3/2}.
\end{equation}

\subsubsection{Estimate $\norm{2[D_t\zeta,\mathcal{H}]\frac{\partial_{\alpha}D_t^2(I-\mathcal{H})(\zeta-\bar{\zeta})}{\zeta_{\alpha}}}_{H^k}$} The way that we estimate for this quantity is the same as that for $[D_t\zeta,\mathcal{H}]\frac{\partial_{\alpha}\partial_{\alpha}^k\tilde{\theta}}{\zeta_{\alpha}}$. We obtain
\begin{equation}
\norm{2[D_t\zeta,\mathcal{H}]\frac{\partial_{\alpha}D_t^2(I-\mathcal{H})(\zeta-\bar{\zeta})}{\zeta_{\alpha}}}_{H^k}\leq C\epsilon^3+K_s^{-1}\epsilon^2 d_I(t)^{-3/2}.
\end{equation}

\vspace*{2ex}

\noindent So we obtain
\begin{equation}\label{Gk222}
\norm{(I-\mathcal{H})\partial_{\alpha}^k\tilde{G}}_{L^2}\leq C\epsilon^3+K_s^{-1}\epsilon^2 d_I(t)^{-3/2}+K_s^{-1}\epsilon\frac{|\lambda|}{x(0)}d_I(t)^{-3/2}.
\end{equation}

\subsubsection{Estimate $[D_t^2-iA\partial_{\alpha},\mathcal{H}]\partial_{\alpha}^k\tilde{\sigma}$} Use 
\begin{align*}
[D_t^2-iA\partial_{\alpha},\mathcal{H}]\partial_{\alpha}^k\tilde{\sigma}=&2[D_t\zeta, \mathcal{H}]\frac{\partial_{\alpha}D_t\partial_{\alpha}^k\tilde{\sigma}}{\zeta_{\alpha}}-\frac{1}{\pi i}\int \Big(\frac{D_t\zeta(\alpha,t)-D_t\zeta(\beta,t)}{\zeta(\alpha,t)-\zeta(\beta,t)}\Big)^2 \partial_{\beta}^{k+1}\tilde{\sigma}(\beta,t)d\beta\\
:=& I_1+I_2.
\end{align*}
Clearly, 
\begin{equation}
\|I_2\|_{L^2}\leq C\|D_t\zeta\|_{H^k}^2\|\tilde{\sigma}\|_{H^k}\leq C\epsilon^3.
\end{equation}
Note that 
\begin{align*}
[D_t\zeta, \mathcal{H}]\frac{\partial_{\alpha}D_t\partial_{\alpha}^k\tilde{\sigma}}{\zeta_{\alpha}}=[D_t\zeta, \mathcal{H}]\frac{\partial_{\alpha}\partial_{\alpha}^kD_t\tilde{\sigma}}{\zeta_{\alpha}}+[D_t\zeta, \mathcal{H}]\frac{\partial_{\alpha}[D_t,\partial_{\alpha}^k]\tilde{\sigma}}{\zeta_{\alpha}}
\end{align*}
The second term $[D_t\zeta, \mathcal{H}]\frac{\partial_{\alpha}[D_t,\partial_{\alpha}^k]\tilde{\sigma}}{\zeta_{\alpha}}$ is cubic, it's easy to obtain
\begin{equation}
\norm{[D_t\zeta, \mathcal{H}]\frac{\partial_{\alpha}[D_t,\partial_{\alpha}^k]\tilde{\sigma}}{\zeta_{\alpha}}}_{L^2}\leq C\epsilon^3.
\end{equation}
The way that we estimate for this quantity is the same as that for $[D_t\zeta,\mathcal{H}]\frac{\partial_{\alpha}\partial_{\alpha}^k\tilde{\theta}}{\zeta_{\alpha}}$. We obtain
\begin{equation}
\norm{[D_t\zeta, \mathcal{H}]\frac{\partial_{\alpha}\partial_{\alpha}^kD_t\tilde{\sigma}}{\zeta_{\alpha}}}_{H^k}\leq C\epsilon^3+K_s^{-1}\epsilon^2d_I(t)^{-3/2}.
\end{equation}
So we obtain
\begin{equation}\label{Gk333}
\norm{[D_t^2-iA\partial_{\alpha},\mathcal{H}]\partial_{\alpha}^k\tilde{\sigma}}_{L^2}\leq C\epsilon^3+K_s^{-1}\epsilon^2d_I(t)^{-3/2}.
\end{equation}

\subsubsection{Estimate $\norm{(I-\mathcal{H})[D_t^2-iA\partial_{\alpha}, \partial_{\alpha}^k]\tilde{\sigma}}_{L^2}$}
The way that we estimate this quantity is the same as that for $\norm{(I-\mathcal{H})[D_t^2-iA\partial_{\alpha}, \partial_{\alpha}^k]\tilde{\theta})}_{L^2}$. We obtain
\begin{equation}\label{Gk444}
\norm{(I-\mathcal{H})[D_t^2-iA\partial_{\alpha}, \partial_{\alpha}^k]\tilde{\sigma}}_{L^2}\leq C\epsilon^3+K_s^{-1}\epsilon^2d_I(t)^{-3/2}.
\end{equation}

\subsubsection{Estimate for $\norm{G_k^{\sigma}}_{L^2}$.} Collect the estimates from  (\ref{Gk111}),  (\ref{Gk222}),  (\ref{Gk333}), (\ref{Gk444}), we obtain
\begin{equation}\label{Gksigma}
\norm{G_k^{\sigma}}_{L^2}\leq C\epsilon^3+K_s^{-1}\epsilon^2d_I(t)^{-3/2}+K_s^{-1}\epsilon\frac{|\lambda|}{x(0)}d_I(t)^{-3/2}.
\end{equation}

\subsection{A priori energy estimates} We derive energy estimates in this subsection. We'll prove the following.
\begin{proposition}\label{energy_estimate}
Assume the assumptions of Theorem \ref{longtime}, assume the bootstrap assumption (\ref{longtimeapriori}), we have for all $t\in [0,T_0]$,
\begin{equation}
\frac{d}{dt}\mathcal{E}_s(t)\leq C\epsilon^4+K_s^{-1}\epsilon^3 d_I(t)^{-3/2}+K_s^{-1}\epsilon^2\frac{|\lambda|}{x(0)}d_I(t)^{-5/2}.
\end{equation}
\end{proposition}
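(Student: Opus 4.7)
The plan is to apply Wu's basic energy identities (\ref{energy1}) and (\ref{energy2}) termwise for $0\leq k\leq s$, control each of the resulting two integrals by Cauchy--Schwarz or $L^\infty$-$L^2$ H\"older, and then sum. For each $k$, the identities express $\frac{d}{dt}E_k^\theta$ and $\frac{d}{dt}E_k^\sigma$ as sums of a cross term involving $D_t\theta_k\cdot\overline{G_k^\theta}$ (resp.\ $D_t\sigma_k\cdot\overline{G_k^\sigma}$) and a multiplier term with $\frac{a_t}{a}\circ\kappa^{-1}$ acting on $|D_t\theta_k|^2$ (resp.\ $|D_t\sigma_k|^2$). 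The lower bound $A\geq 9/10$ from Corollary \ref{lowerboundA} neutralizes the $1/A$ factors throughout.

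For the cross term, Cauchy--Schwarz yields $|\int \tfrac{2}{A}\operatorname{Re}\{D_t\theta_k\,\overline{G_k^\theta}\}\,d\alpha| \leq C\|D_t\theta_k\|_{L^2}\|G_k^\theta\|_{L^2}$. The factor $\|D_t\theta_k\|_{L^2}$ is controlled by $C\epsilon$ via the decomposition $D_t\theta_k = (I-\mathcal{H})\partial_\alpha^k D_t\tilde{\theta} + (I-\mathcal{H})[D_t,\partial_\alpha^k]\tilde{\theta} - [D_t,\mathcal{H}]\partial_\alpha^k\tilde{\theta}$ together with Corollary \ref{aprioriequiv}, the chord-arc condition Lemma \ref{chordarclongtime}, and the $H^s$-bound on $b$ from Corollary \ref{estimateforbb}; the last two (commutator) terms are in fact at least quadratic and hence $O(\epsilon^2)$. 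Combined with the $L^2$ bound (\ref{Gktheta}) on $G_k^\theta$, the cross term is dominated by
\begin{equation*}
C\epsilon\cdot\bigl(C\epsilon^3 + K_s^{-1}\epsilon^2 d_I(t)^{-3/2} + K_s^{-1}\epsilon\tfrac{|\lambda|}{x(0)}d_I(t)^{-5/2}\bigr),
\end{equation*}
which matches the three terms on the right-hand side of the proposition. For the multiplier term, the $L^\infty$ bound (\ref{ataA}) combined with $\|D_t\theta_k\|_{L^2}^2\leq C\epsilon^2$ yields a contribution of the form $C\epsilon^4 + K_s^{-1}\epsilon^3\tfrac{|\lambda|}{x(0)}d_I(t)^{-5/2}$, which (using $\epsilon<1$) is absorbed into the same three terms. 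The argument for $\frac{d}{dt}E_k^\sigma$ is structurally identical, using (\ref{Gksigma}) and the analogous bound $\|D_t\sigma_k\|_{L^2}\leq C\epsilon$. Summing over $0\leq k\leq s$ then yields the stated estimate for $\frac{d}{dt}\mathcal{E}_s(t)$.

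The real work has already been done upstream: the principal obstacle is not this final energy step, but rather the $L^2$ commutator estimates (\ref{Gktheta}) and (\ref{Gksigma}) for $G_k^\theta$ and $G_k^\sigma$ carried out in subsections~\ref{estimateinvolve} and \ref{estimateinvolve1}, together with the $L^\infty$ bound (\ref{ataA}) on $\frac{a_t}{a}\circ\kappa^{-1}$. These in turn depend crucially on the key control Proposition \ref{parallel}, which guarantees linear time decay of $d_I(t)^{-1}$ and thereby makes the point-vortex contributions integrable against sufficient time decay; without that geometric control the quadratic terms contributed by $G_d$, $G_d'$, etc.\ would defeat the cubic lifespan scheme.
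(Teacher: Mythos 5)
Your proposal is correct and follows essentially the same route as the paper: apply Wu's energy identities termwise, bound the cross terms by Cauchy--Schwarz using the $L^2$ estimates (\ref{Gktheta}) and (\ref{Gksigma}) together with the $O(\epsilon)$ control on $\|D_t\theta_k\|_{L^2}$, $\|D_t\sigma_k\|_{L^2}$ coming from Corollary \ref{aprioriequiv}, bound the multiplier terms by the $L^\infty$ estimate (\ref{ataA}) on $\frac{a_t}{a}\circ\kappa^{-1}$, and use Corollary \ref{lowerboundA} to control $1/A$. The only cosmetic difference is that you spell out the commutator decomposition of $D_t\theta_k$ to justify the $O(\epsilon)$ bound, whereas the paper appeals more directly to Corollary \ref{aprioriequiv}; the mathematical content is identical.
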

\begin{proof}
From (\ref{energy1}) and (\ref{energy2}), we have 
\begin{equation}
\begin{split}
&\frac{d}{dt}\mathcal{E}_s(t)=\frac{d}{dt}\sum_{k=0}^s (E_k^{\theta}+E_k^{\sigma})\\
=&\sum_{k=0}^s \Big(\int\frac{2}{A}Re D_t\theta_k \overline{{G}_k^{\theta}}-\int \frac{1}{A}\frac{a_t}{a}\circ\kappa^{-1}|D_t\theta_k|^2+\int\frac{2}{A}Re D_t\sigma_k \overline{G_k^{\sigma}}-\int \frac{1}{A}\frac{a_t}{a}\circ\kappa^{-1}|D_t\sigma_k|^2 \Big)
\end{split}
\end{equation}
By Corollary \ref{aprioriequiv}, we have 
\begin{equation}
\|D_t\tilde{\theta}\|_{H^s}\leq 11\epsilon,\quad \quad \|D_t\tilde{\sigma}\|_{H^s}\leq 21\epsilon.
\end{equation}

By Corollary \ref{lowerboundA}, (\ref{ataA}), (\ref{Gktheta}), (\ref{Gksigma}), we have 
\begin{align*}
\frac{d}{dt}\mathcal{E}_s(t)\leq &\sum_{k=0}^s \Big(2\norm{\frac{1}{A}}_{\infty}\norm{D_t\theta_k}_{L^2}\norm{G_k^{\theta}}_{L^2}+\norm{\frac{1}{A}}_{\infty}\norm{\frac{a_t}{a}\circ \kappa^{-1}}_{\infty}\norm{D_t\theta_k}_{L^2}^2\\
&+ 2\norm{\frac{1}{A}}_{\infty}\norm{D_t\sigma_k}_{L^2}\norm{G_k^{\sigma}}_{L^2}+\norm{\frac{1}{A}}_{\infty}\norm{\frac{a_t}{a}\circ \kappa^{-1}}_{\infty}\norm{D_t\sigma_k}_{L^2}^2\Big)\\
\leq & \sum_{k=0}^s \Big(4\times 11\epsilon (C\epsilon^3+K_s^{-1}\epsilon^2d_I(t)^{-3/2}+K_s^{-1}\epsilon \frac{|\lambda|}{x(0)}d_I(t)^{-5/2})\\
&+4\times (C\epsilon^2+K_s^{-1}\epsilon\frac{|\lambda|}{x(0)}d_I(t)^{-5/2})\times (11\epsilon)^2\\
&+4\times 21\epsilon (C\epsilon^3+K_s^{-1}\epsilon^2d_I(t)^{-3/2}+K_s^{-1}\epsilon\frac{|\lambda|}{x(0)}d_I(t)^{-3/2})\\
&+4\times  (C\epsilon^2+K_s^{-1}\epsilon\frac{|\lambda|}{x(0)}d_I(t)^{-5/2})\times (21\epsilon)^2\Big)\\
\leq & C\epsilon^4+K_s^{-1}\epsilon^3 d_I(t)^{-3/2}+K_s^{-1} \epsilon^2\frac{|\lambda|}{x(0)}d_I(t)^{-5/2}.
\end{align*}
Here, we simply bound $\norm{\frac{1}{A}}_{\infty}$ by $\frac{1}{2}$.
\end{proof}

Before we use the bootstrap argument to complete the proof of Theorem \ref{longtime}, we need to show that the energy $\mathcal{E}_s$ is equivalent to $4(\norm{D_t\tilde{\theta}}_{H^s}^2+\norm{D_t\tilde{\sigma}}_{H^s}^2+\norm{|D|^{1/2}\tilde{\theta}}_{H^s}^2+\norm{|D|^{1/2}\tilde{\sigma}}_{H^s}^2)$.
\begin{lemma}\label{connect}
Assume the assumptions of Theorem \ref{longtime}, and assume the bootstrap assumption (\ref{longtimeapriori}). Then we have 
\begin{equation}
 \Big|\mathcal{E}_s-4(\norm{D_t\tilde{\theta}}_{H^s}^2+\norm{D_t\tilde{\sigma}}_{H^s}^2+\norm{|D|^{1/2}\tilde{\theta}}_{H^s}^2+\norm{|D|^{1/2}\tilde{\sigma}}_{H^s}^2)\Big|\leq C\epsilon^3.
\end{equation}
\end{lemma}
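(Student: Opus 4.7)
The plan is to show each $E_k^\theta$ equals $4\|\partial_\alpha^k D_t\tilde\theta\|_{L^2}^2 + 4\||D|^{1/2}\partial_\alpha^k\tilde\theta\|_{L^2}^2 + O(\epsilon^3)$, and similarly for $E_k^\sigma$, then sum in $k$. Four ingredients drive the argument: (i) $\|A^{-1}-1\|_\infty\lesssim\epsilon$, from (\ref{estimateAminusOne}), Sobolev embedding, and Corollary \ref{lowerboundA}; (ii) $(I+\mathcal{H})\tilde\theta = 0$ and $(I+\mathcal{H})\tilde\sigma = 0$, forced by $\mathcal{H}^2 = I$ together with the definitions $\tilde\theta = (I-\mathcal{H})(\zeta-\bar\zeta)$, $\tilde\sigma = (I-\mathcal{H})D_t\tilde\theta$; (iii) $\|\mathcal{H} - \mathbb{H}\|_{L^2\to L^2}\lesssim\|\zeta_\alpha-1\|_\infty\lesssim\epsilon$ by a standard Calderón commutator argument, using the chord-arc property (Lemma \ref{chordarclongtime}); and (iv) for $f$ with $\hat f$ supported in $\{\xi\ge 0\}$ (the Hardy space of $P_-$, equivalently $\mathbb{H}f=-f$), Plancherel gives $i\int f\,\overline{\partial_\alpha f}\,d\alpha = \||D|^{1/2}f\|_{L^2}^2$.

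\textbf{Step 1: reducing $\theta_k$ and $D_t\theta_k$.} By (i) and $\|D_t\theta_k\|_{L^2}\lesssim\epsilon$ (Corollary \ref{aprioriequiv}), $\int A^{-1}|D_t\theta_k|^2\,d\alpha = \|D_t\theta_k\|_{L^2}^2 + O(\epsilon^3)$. By (ii), $(I+\mathcal{H})\partial_\alpha^k\tilde\theta = [\partial_\alpha^k,\mathcal{H}]\tilde\theta$; a careful inductive reading of the proof of Lemma \ref{lemmacommutator1}, starting from $[\partial_\alpha,\mathcal{H}]f = \zeta_\alpha\mathcal{H}(\partial_\alpha f/\zeta_\alpha) - \mathcal{H}\partial_\alpha f$ (which carries an intrinsic factor of $\zeta_\alpha-1$), yields the quadratic bound
\begin{equation*}
\|R_k\|_{H^1}\lesssim\epsilon^2,\qquad R_k := \theta_k - 2\partial_\alpha^k\tilde\theta = -[\partial_\alpha^k,\mathcal{H}]\tilde\theta.
\end{equation*}
Similarly, expanding $D_t\theta_k - 2\partial_\alpha^k D_t\tilde\theta$ and controlling each piece by the formula $[D_t,\mathcal{H}]f = [D_t\zeta,\mathcal{H}](\partial_\alpha f/\zeta_\alpha)$ from Lemma \ref{basic1}, by Lemma \ref{commutehigh} for $[D_t,\partial_\alpha^k]$ combined with $\|b\|_{H^s}\lesssim\epsilon$ (Corollary \ref{estimateforbb}), and by $(I+\mathcal{H})D_t\tilde\theta = -[D_t,\mathcal{H}]\tilde\theta$, gives $\|D_t\theta_k - 2\partial_\alpha^k D_t\tilde\theta\|_{L^2}\lesssim\epsilon^2$. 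Squaring yields $\|D_t\theta_k\|_{L^2}^2 = 4\|\partial_\alpha^k D_t\tilde\theta\|_{L^2}^2 + O(\epsilon^3)$.

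\textbf{Step 2: the half-derivative identity.} Decompose $\theta_k = \theta_k^+ + \theta_k^-$ with $\theta_k^\pm := \tfrac12(I\mp\mathbb{H})\theta_k$ (flat Hardy projections). By Fourier orthogonality and (iv),
\begin{equation*}
i\int\theta_k\,\overline{\partial_\alpha\theta_k}\,d\alpha = \||D|^{1/2}\theta_k^+\|_{L^2}^2 - \||D|^{1/2}\theta_k^-\|_{L^2}^2 = \||D|^{1/2}\theta_k\|_{L^2}^2 - 2\||D|^{1/2}\theta_k^-\|_{L^2}^2.
\end{equation*}
Since $(I+\mathcal{H})\theta_k = 0$, we obtain $\theta_k^- = \tfrac12(\mathbb{H}-\mathcal{H})\theta_k$, and (iii) gives $\|\theta_k^-\|_{L^2}\lesssim\epsilon\|\theta_k\|_{L^2}\lesssim\epsilon^2$. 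Combined with the trivial $\|\partial_\alpha\theta_k^-\|_{L^2}\lesssim\epsilon$ (from Corollary \ref{onetotwo}), interpolation gives $\||D|^{1/2}\theta_k^-\|_{L^2}^2\le \|\theta_k^-\|_{L^2}\|\partial_\alpha\theta_k^-\|_{L^2}\lesssim\epsilon^3$. Inserting $\theta_k = 2\partial_\alpha^k\tilde\theta + R_k$ from Step 1 and using $\||D|^{1/2}R_k\|_{L^2}\le\|R_k\|_{H^1}\lesssim\epsilon^2$ converts $\||D|^{1/2}\theta_k\|_{L^2}^2$ into $4\||D|^{1/2}\partial_\alpha^k\tilde\theta\|_{L^2}^2 + O(\epsilon^3)$.

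\textbf{Conclusion and main obstacle.} The argument for $E_k^\sigma$ is verbatim, replacing $\tilde\theta$ by $\tilde\sigma$ (which satisfies $(I+\mathcal{H})\tilde\sigma = 0$ together with the bootstrap bounds $\|\tilde\sigma\|_{H^{s+1/2}},\|D_t\tilde\sigma\|_{H^s}\lesssim\epsilon$). Summing $0\le k\le s$ identifies $\sum_k\|\partial_\alpha^k g\|_{L^2}^2$ with $\|g\|_{H^s}^2$ and $\sum_k\||D|^{1/2}\partial_\alpha^k g\|_{L^2}^2$ with $\||D|^{1/2}g\|_{H^s}^2$, completing the proof. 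I expect the main technical hurdle to be ingredient (iii), the operator bound $\|\mathcal{H}-\mathbb{H}\|_{L^2\to L^2}\lesssim\epsilon$; this will be established by writing the kernel difference $\frac{1}{\pi i}\bigl[\tfrac{1}{\alpha-\beta} - \tfrac{\zeta_\beta}{\zeta(\alpha)-\zeta(\beta)}\bigr]$ as a first-order Calderón commutator in $\zeta_\alpha-1$, whose $L^2\to L^2$ norm is bounded by $\|\zeta_\alpha - 1\|_\infty\lesssim\epsilon$ via Sobolev embedding and the bootstrap. All remaining steps are routine commutator manipulations controlled by Lemmas \ref{lemmacommutator1}, \ref{basic1}, and \ref{commutehigh}.
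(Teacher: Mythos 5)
Your strategy and the paper's proof run on the same track: replace $1/A$ by $1$ using Corollary~\ref{lowerboundA}, show $D_t\theta_k = 2\partial_\alpha^k D_t\tilde\theta + O_{L^2}(\epsilon^2)$ via the commutator identities of Lemma~\ref{basic1}, and handle the $i\int\theta_k\overline{\partial_\alpha\theta_k}$ term by decomposing along the flat Hardy projections $\tfrac12(I\mp\mathbb{H})$ and invoking the one-sided Plancherel identity $i\int f\,\overline{\partial_\alpha f}=\pm\||D|^{1/2}f\|_{L^2}^2$. Your ingredient (iii) is exactly the mechanism the paper uses (its version of the estimate is $\||D|^{1/2}\tfrac12(I+\mathbb{H})\tilde\theta\|_{H^s}\leq C\epsilon^2$, built from $\mathcal{H}-\mathbb{H}$ being $O(\epsilon)$). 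Minor sign slips: you should have $(I+\mathcal{H})\partial_\alpha^k\tilde\theta=-[\partial_\alpha^k,\mathcal{H}]\tilde\theta$ and $R_k=[\partial_\alpha^k,\mathcal{H}]\tilde\theta$, but these are cosmetic.

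There is, however, a genuine gap in your treatment of the wrong-Hardy piece at $k=0$. Your argument needs $\|\theta_k^-\|_{L^2}\lesssim\epsilon\|\theta_k\|_{L^2}\lesssim\epsilon^2$, hence $\|\theta_k\|_{L^2}\lesssim\epsilon$. For $k\geq 1$ this is supplied by Corollary~\ref{onetotwo} since $\theta_k=(I-\mathcal{H})\partial_\alpha^k\tilde\theta$. But $\theta_0 = 2\tilde\theta$, and $\|\tilde\theta\|_{L^2}$ is not controlled by the bootstrap assumption~(\ref{longtimeapriori}): that assumption controls $\|\zeta_\alpha-1\|_{H^s}$, hence $\|\partial_\alpha\tilde\theta\|_{H^s}$ and $\||D|^{1/2}\tilde\theta\|_{H^s}$, but not $\|\tilde\theta\|_{L^2}$ itself (the data only satisfy $|D|^{1/2}\xi_0\in H^s$, not $\xi_0\in L^2$). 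So the interpolation inequality $\||D|^{1/2}\theta_0^-\|^2\le\|\theta_0^-\|_{L^2}\|\partial_\alpha\theta_0^-\|_{L^2}$ has an uncontrolled right-hand side. The paper avoids this by estimating $\||D|^{1/2}(\mathbb{H}-\mathcal{H})\tilde\theta\|_{H^s}\lesssim\epsilon^2$ directly, i.e.\ at the level of the half-derivative, never touching $\|\tilde\theta\|_{L^2}$. The cleanest repair of your argument is to interpolate the \emph{operator} rather than the \emph{function}: since $\mathbb{H}-\mathcal{H}$ is bounded on $L^2$ with norm $\lesssim\epsilon$ (your ingredient (iii)) and also on $\dot H^1$ with norm $\lesssim\epsilon$ (using $\partial_\alpha(\mathbb{H}-\mathcal{H})f=(\mathbb{H}-\mathcal{H})\partial_\alpha f-[\zeta_\alpha-1,\mathcal{H}]\tfrac{f_\alpha}{\zeta_\alpha}$), complex interpolation gives $\|\mathbb{H}-\mathcal{H}\|_{\dot H^{1/2}\to\dot H^{1/2}}\lesssim\epsilon$, whence $\||D|^{1/2}\theta_k^-\|\lesssim\epsilon\||D|^{1/2}\theta_k\|\lesssim\epsilon^2$ for all $0\le k\le s$, with no reference to $\|\theta_0\|_{L^2}$. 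That also upgrades your $O(\epsilon^3)$ bound on the square to $O(\epsilon^4)$, matching the paper.
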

\begin{proof}
Recall that 
\begin{equation}
    \mathcal{E}_s=\sum_{k=0}^s \Big\{\int \frac{1}{A}|D_t\theta_k|^2+i\theta_k \overline{\partial_{\alpha}\theta_k}d\alpha+\int \frac{1}{A}|D_t\sigma_k|^2+i\sigma_k \overline{\partial_{\alpha}\sigma_k}d\alpha\Big\},
\end{equation}
where 
\begin{equation}
\theta_k=(I-\mathcal{H})\partial_{\alpha}^k\tilde{\theta},\quad  \sigma_k=(I-\mathcal{H})\partial_{\alpha}^k\tilde{\sigma},\quad \tilde{\theta}:=(I-\mathcal{H})(\zeta-\bar{\zeta}), \quad \tilde{\sigma}:=(I-\mathcal{H})D_t\tilde{\theta}.
\end{equation}
It's easy to obtain that 
\begin{equation}
    \|A-1\|_{H^s}\leq C\epsilon.
\end{equation}
So 
\begin{equation}
    \mathcal{E}_s=\sum_{k=0}^s \Big\{\int |D_t\theta_k|^2+i\theta_k \overline{\partial_{\alpha}\theta_k}d\alpha+\int |D_t\sigma_k|^2+i\sigma_k \overline{\partial_{\alpha}\sigma_k}d\alpha\Big\}+O(\epsilon^3).
\end{equation}
We have 
\begin{align}
    \theta_k=\partial_{\alpha}^k(I-\mathcal{H})\tilde{\theta}+[\partial_{\alpha}^k, \mathcal{H}]\tilde{\theta}=2\partial_{\alpha}^k\tilde{\theta}+[\partial_{\alpha}^k, \mathcal{H}]\tilde{\theta}.
\end{align}
So we have 
\begin{equation}\label{almostone}
    \norm{D_t\theta_k-2\partial_{\alpha}^kD_t\tilde{\theta}}_{L^2}\leq \norm{D_t[\partial_{\alpha}^k, \mathcal{H}]\tilde{\theta}}_{L^2}+2\norm{[D_t,\partial_{\alpha}^k]\tilde{\theta}}_{L^2}\leq C\epsilon^2.
\end{equation}
Similarly, we have 
\begin{equation}\label{almosttwo}
    \norm{D_t\sigma_k-2\partial_{\alpha}^kD_t\tilde{\sigma}}_{L^2}\leq C\epsilon^2.
\end{equation}
Therefore,
\begin{equation}
    \Big|\sum_{k=0}^s \int (|D_t\theta_k|^2+|D_t\sigma_k|^2)d\alpha-4(\norm{\partial_{\alpha}^kD_t\tilde{\theta}}_{L^2}^2+\norm{\partial_{\alpha}^k D_t\tilde{\sigma}}_{L^2}^2)\Big|\leq C\epsilon^3.
\end{equation}
Decompose $\tilde{\theta}$ as 
\begin{equation}
    \tilde{\theta}=\frac{1}{2}(I+\mathbb{H})\tilde{\theta}+\frac{1}{2}(I-\mathbb{H})\tilde{\theta}
\end{equation}
Note that since $\tilde{\theta}=(I-\mathcal{H})(\zeta-\bar{\zeta})$, it's easy to obtain
\begin{equation}
    \norm{|D|^{1/2}\frac{1}{2}(I+\mathbb{H})\tilde{\theta}}_{H^{s}}\leq C\epsilon^2.
\end{equation}
Then we have 
\begin{equation}
    \Big|\norm{|D|^{1/2}\tilde{\theta}}_{H^s}^2-\norm{|D|^{1/2}\frac{1}{2}(I-\mathbb{H})\tilde{\theta}}_{H^s}^2\Big|\leq C\epsilon^3.
\end{equation}
Note that 
\begin{equation}
    \norm{|D|^{1/2}\frac{1}{2}(I-\mathbb{H})\tilde{\theta}}_{H^s}^2=i\sum_{k=0}^s \int \partial_{\alpha}^k\frac{1}{2}(I-\mathbb{H})\tilde{\theta}\overline{\partial_{\alpha}^{k+1}\frac{1}{2}(I-\mathbb{H})\tilde{\theta}}d\alpha
\end{equation}
Use the fact that 
\begin{equation}
    (I-\mathbb{H})\tilde{\theta}=2\tilde{\theta}+(\mathcal{H}-\mathbb{H})\tilde{\theta},
\end{equation}
and use
\begin{equation}
    \norm{\partial_{\alpha}^k |D|^{1/2}(\mathcal{H}-\mathbb{H})\tilde{\theta}}_{L^2}\leq C\epsilon^2,
\end{equation}
we obtain
\begin{equation}\label{potential1}
   \Big| \int i\theta_k \overline{\partial_{\alpha}\theta_k}d\alpha-4\|\partial_{\alpha}^k|D|^{1/2}\tilde{\theta}\|_{L^2}^2 \Big|\leq C\epsilon^3.
\end{equation}
Similarly,
\begin{equation}\label{potential2}
   \Big| \int i\sigma_k \overline{\partial_{\alpha}\sigma_k}d\alpha-4\norm{\partial_{\alpha}^k|D|^{1/2}\tilde{\sigma}}_{L^2}^2 \Big|\leq C\epsilon^3.
\end{equation}
By (\ref{almostone}), (\ref{almosttwo}), (\ref{potential1}), and (\ref{potential2}),  we obtain
\begin{equation}
    \Big|\mathcal{E}_s-4\sum_{k=0}^s \Big\{\norm{\partial_{\alpha}^kD_t\tilde{\theta}}_{L^2}^2+\norm{\partial_{\alpha}^k D_t\tilde{\sigma}}_{L^2}^2 +\norm{\partial_{\alpha}^k|D|^{1/2}\tilde{\theta}}_{L^2}^2+\norm{\partial_{\alpha}^k|D|^{1/2}\tilde{\sigma}}_{L^2}^2\Big\}\Big|\leq C\epsilon^3.
\end{equation}
\end{proof}
\begin{corollary}
Assume the assumptions of Theorem \ref{longtime}, then 
\begin{equation}
    \mathcal{E}_s(0)\leq 17\epsilon^2.
\end{equation}
\end{corollary}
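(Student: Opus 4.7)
\medskip
\noindent\textbf{Proof proposal.} The plan is to evaluate $\mathcal{E}_s(0)$ by first applying Lemma \ref{connect} to replace the energy by four Sobolev norms of $D_t\tilde\theta,\, D_t\tilde\sigma,\, |D|^{1/2}\tilde\theta,\, |D|^{1/2}\tilde\sigma$ at $t=0$, then using the comparison estimates in Lemma \ref{comparetransform} and Lemma \ref{equivalent_1} to express each of them in terms of the ``raw'' initial quantities $\bar{\mathfrak F},\, D_t\bar{\mathfrak F},\, |D|^{1/2}(\zeta-\alpha)$ plus the vortex-generated quantity $q$, and finally invoking (\ref{initial_new}) together with Lemma \ref{smallestimate} to bound everything explicitly.

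\medskip
\noindent Concretely, at $t=0$ the bootstrap assumption (\ref{longtimeapriori}) is automatic from (\ref{initial_new}), so the preliminary estimates of \S\ref{estimateinvolve} apply. Lemma \ref{connect} gives
\begin{equation*}
\mathcal{E}_s(0)\le 4\bigl(\|D_t\tilde\theta\|_{H^s}^2+\|D_t\tilde\sigma\|_{H^s}^2+\||D|^{1/2}\tilde\theta\|_{H^s}^2+\||D|^{1/2}\tilde\sigma\|_{H^s}^2\bigr)(0)+C\epsilon^3.
\end{equation*}
By Lemma \ref{comparetransform}, $\|D_t\tilde\theta\|_{H^s}(0)\le 2\|\bar{\mathfrak F}\|_{H^s}+2\|q\|_{H^s}+C\epsilon^2$ and $\|D_t\tilde\sigma\|_{H^s}(0)\le 4\|D_t\bar{\mathfrak F}\|_{H^s}+4\|D_tq\|_{H^s}+C\epsilon^2$; Lemma \ref{equivalent_1}, adapted to the $|D|^{1/2}$ norm by a standard $(\mathcal H-\mathbb H)$-commutator bound, gives $\||D|^{1/2}\tilde\theta\|_{H^s}(0)\le 2\||D|^{1/2}(\zeta-\alpha)\|_{H^s}+C\epsilon^2$. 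The surviving term $\||D|^{1/2}\tilde\sigma\|_{H^s}(0)$ is much smaller than $\epsilon$: expanding $\tilde\sigma=(I-\mathcal H)D_t\tilde\theta=2(I-\mathcal H)\bar{\mathfrak F}-4q+O_{H^{s+1/2}}(\epsilon^2)$ and using the anti-holomorphy of $\bar F$ in $\Omega(t)$ together with $\mathcal H-\bar{\mathcal H}=O(\epsilon)$ forces $(I-\mathcal H)\bar{\mathfrak F}=-(\mathcal H-\bar{\mathcal H})\bar{\mathfrak F}=O_{H^{s+1/2}}(\epsilon^2)$, so $\tilde\sigma(0)=O(\epsilon^2)+K_s^{-1}\epsilon$ in $H^{s+1/2}$ and its $|D|^{1/2}H^s$-norm contributes only at order $O(\epsilon^3)$.

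\medskip
\noindent By (H5)--(H7) and the Riemann-map-side data (\ref{initial_new}), $\|\mathfrak F\|_{H^{s+1/2}}(0),\,\|D_t\mathfrak F\|_{H^s}(0),\,\||D|^{1/2}(\zeta-\alpha)\|_{H^s}(0)\le\tfrac{3}{2}\epsilon$, while $\lambda^2+|\lambda x(0)|\le\epsilon/((s+12)!)^2$ and $\hat d_I(0)\ge 1$; Lemma \ref{smallestimate} together with the explicit formula $D_tq=\sum_j\tfrac{\lambda_j i}{2\pi}\tfrac{D_t\zeta-\dot z_j}{(\zeta-z_j)^2}$ (and the vortex-velocity bounds of Lemmas \ref{refinederivative}, \ref{dotzjdotzj}) yields $\|q\|_{H^s}(0)+\|D_tq\|_{H^s}(0)\le K_s^{-1}\epsilon$, which is negligible. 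Inserting all of this, each nonvanishing term on the right-hand side is $O(\epsilon^2)$ with an explicit constant, and collecting them (with careful bookkeeping of the factors $2$ and $4$ coming from Lemma \ref{comparetransform}) produces the desired estimate $\mathcal{E}_s(0)\le 17\epsilon^2$ after absorbing the $O(\epsilon^3)$ remainder.

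\medskip
\noindent The routine part of the argument is the chain of comparison and commutator estimates. The only delicate point is verifying that $\tilde\sigma(0)$ is quadratic (not linear) in $\epsilon$, which is what prevents $\||D|^{1/2}\tilde\sigma\|_{H^s}^2$ from contributing a term comparable to $\|D_t\tilde\sigma\|_{H^s}^2$; this relies simultaneously on the anti-holomorphy of $\bar{\mathfrak F}$, on the identity $(I-\mathcal H)q=2q$, and on the vortex--surface separation encoded in (H7).
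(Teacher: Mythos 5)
Your overall strategy — apply Lemma \ref{connect}, then the comparison Lemmas \ref{comparetransform} and \ref{equivalent_1}, then (\ref{initial_new}) and Lemma \ref{smallestimate} — is the natural one, and the first part of your argument is fine.  The problem is concentrated exactly at the step you flag as the "only delicate point."

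You claim that $(I-\mathcal H)\bar{\mathfrak F}=-(\mathcal H-\bar{\mathcal H})\bar{\mathfrak F}$ together with ``$\mathcal H-\bar{\mathcal H}=O(\epsilon)$'' gives $(I-\mathcal H)\bar{\mathfrak F}=O_{H^{s+1/2}}(\epsilon^2)$.  The algebraic identity is correct (it follows from $\bar{\mathcal H}\bar{\mathfrak F}=\bar{\mathfrak F}$), but the operator estimate is wrong: in the flat case $\zeta=\alpha$ one has $\mathcal H=\mathbb H$ and $\bar{\mathcal H}=-\mathbb H$, so $\mathcal H-\bar{\mathcal H}=2\mathbb H+O(\epsilon)$, which is an $O(1)$ operator.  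It is $\mathcal H+\bar{\mathcal H}$, not $\mathcal H-\bar{\mathcal H}$, that is $O(\epsilon)$ — this is precisely why the paper's own proof of Lemma \ref{comparetransform} uses the equivalent form $(I-\mathcal H)\bar{\mathfrak F}=2\bar{\mathfrak F}-(\mathcal H+\bar{\mathcal H})\bar{\mathfrak F}$, treating $(I-\mathcal H)\bar{\mathfrak F}$ as $2\bar{\mathfrak F}$ plus an $O(\epsilon^2)$ correction, not as $O(\epsilon^2)$ outright.

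Once this is corrected, $\tilde\sigma(0)=2(I-\mathcal H)\bar{\mathfrak F}-4q+O(\epsilon^2)=4(\bar{\mathfrak F}-q)+O(\epsilon^2)$ is of order $\epsilon$ (comparable to $\mathfrak F$), not quadratic in $\epsilon$.  Consequently $\||D|^{1/2}\tilde\sigma\|_{H^s}^2$ is of order $\epsilon^2$, the same order as the other three contributions in Lemma \ref{connect}, and cannot be discarded into the $O(\epsilon^3)$ remainder.  Your bookkeeping therefore drops a term that is genuinely of leading order, so the final constant you would produce is not a valid bound on $\mathcal E_s(0)$.  To repair the argument you would have to keep the $\||D|^{1/2}\tilde\sigma\|_{H^s}^2\approx 16\||D|^{1/2}\mathfrak F\|_{H^s}^2$ contribution and re-do the constant tracking with all four terms present.
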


\begin{proposition}\label{closedenergyboot}
Assume the assumptions of Theorem \ref{longtime}, there exists $\delta>0$ such that 
\begin{equation}
\|\zeta_{\alpha}-1\|_{H^s}\leq 5\epsilon,\quad \quad  \|\mathfrak{F}\|_{H^{s+1/2}}\leq 5\epsilon,\quad \quad \|D_t\mathfrak{F}\|_{H^s}\leq 5\epsilon\quad \quad t\in [0,\delta \epsilon^{-2}]
\end{equation}
Indeed, we can choose $\delta$ to be an absolute constant.
\end{proposition}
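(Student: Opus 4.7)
The plan is a standard continuous-induction (bootstrap) argument. Define
\[
\mathcal{T} := \sup\Big\{T_0\in [0,\delta\epsilon^{-2}] : (\ref{longtimeapriori}) \text{ holds on } [0,T_0]\Big\}.
\]
By the initial-data bound (\ref{initial_new}) together with continuity of all the quantities involved, $\mathcal{T}>0$ and the defining set is closed, so it suffices, granting (\ref{longtimeapriori}) on $[0,\mathcal{T}]$, to improve each of the three bounds from $5\epsilon$ to something strictly smaller. Under (\ref{longtimeapriori}) the entire machinery of \S\ref{estimateinvolve}--\S\ref{estimateinvolve1} applies: Proposition \ref{parallel} yields $\tfrac12\leq x(t)/x(0)\leq 2$, Corollary \ref{decayestimate} gives the decay $d_I(t)^{-1}\leq (1+\tfrac{|\lambda|}{20\pi x(0)}t)^{-1}$, and Proposition \ref{energy_estimate} provides the differential inequality for $\mathcal{E}_s$.

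The first step is to integrate Proposition \ref{energy_estimate} from $0$ to $t$, using the elementary identities
\[
\int_0^\infty\!\big(1+\tfrac{|\lambda|}{20\pi x(0)}\tau\big)^{-3/2}d\tau=\tfrac{40\pi x(0)}{|\lambda|},\qquad \int_0^\infty\!\big(1+\tfrac{|\lambda|}{20\pi x(0)}\tau\big)^{-5/2}d\tau=\tfrac{40\pi x(0)}{3|\lambda|},
\]
together with (H6) $x(0)/|\lambda|\leq 1/(M\epsilon)$ for $M=200\pi$. The contribution $K_s^{-1}\epsilon^3 d_I(\tau)^{-3/2}$ integrates to $\leq K_s^{-1}\epsilon^2\cdot\tfrac{40\pi}{M}$, while $K_s^{-1}\epsilon^2\tfrac{|\lambda|}{x(0)}d_I(\tau)^{-5/2}$ integrates to the absolute constant $K_s^{-1}\epsilon^2\cdot\tfrac{40\pi}{3}$. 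Combined with $\mathcal{E}_s(0)\leq 17\epsilon^2$ from the corollary of lemma \ref{connect}, one obtains, for every $t\in[0,\mathcal{T}]\cap[0,\delta\epsilon^{-2}]$,
\[
\mathcal{E}_s(t)\leq 17\epsilon^2 + C\delta\epsilon^2 + K_s^{-1}\epsilon^2\Big(\tfrac{40\pi}{M}+\tfrac{40\pi}{3}\Big) \leq 18\epsilon^2,
\]
provided $\delta$ is chosen as a small absolute constant.

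The second step reads off the bootstrap quantities from $\mathcal{E}_s$. Applying lemma \ref{connect} once more,
\[
\|D_t\tilde\theta\|_{H^s}^2+\|D_t\tilde\sigma\|_{H^s}^2+\||D|^{1/2}\tilde\theta\|_{H^s}^2+\||D|^{1/2}\tilde\sigma\|_{H^s}^2 \leq \tfrac14(\mathcal{E}_s+C\epsilon^3)\leq 5\epsilon^2.
\]
Lemma \ref{comparetransform} (\ref{equivtheta}) combined with $\|q\|_{H^{s+1/2}}\leq K_s^{-1}\epsilon$ from lemma \ref{smallestimate} yields $\|\mathfrak{F}\|_{H^{s+1/2}}\leq 2\epsilon$; lemma \ref{comparetransform} (\ref{equivsigma}) together with the estimate of $\|D_t q\|_{H^s}$ implicit in the control of $G_{d4}$ in \S\ref{Gk1} yields $\|D_t\mathfrak{F}\|_{H^s}\leq 2\epsilon$; and lemma \ref{equivalent_1}, combined with the almost anti-holomorphicity of $\tilde\theta$ (so that the $|D|^{1/2}$-part of the energy controls the full $H^{s+1}$-norm of $\tilde\theta$ modulo a quadratic error), yields $\|\zeta_\alpha-1\|_{H^s}\leq 2\epsilon$. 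All three quantities are therefore bounded by $2\epsilon<5\epsilon$ on $[0,\mathcal{T}]$, which forces $\mathcal{T}=\delta\epsilon^{-2}$ and completes the proof.

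The main obstacle is the $s$-uniformity of the absolute constant $\delta$. The point-vortex error terms in Proposition \ref{energy_estimate} carry the factorial loss $K_s^{-1}\sim ((s+7)!/(s+12)!)^2$, but thanks to the smallness assumption (H5), $\lambda^2+|\lambda x(0)|\leq \epsilon/((s+12)!)^2$, this loss is exactly compensated, so the contribution of the point vortices to $\mathcal{E}_s(t)$ is bounded by an absolute multiple of $\epsilon^2$ independent of $s$. The time-integrability of the point-vortex errors relies crucially on Proposition \ref{parallel}: the symmetric vortex pair keeps escaping downward at the initial speed throughout $[0,\mathcal{T}]$, which is the only place the symmetry assumptions (H3)--(H4) are used in an essential way. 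A secondary technical nuisance is recovering the $H^s$-norm of $\zeta_\alpha-1$ from $\||D|^{1/2}\tilde\theta\|_{H^s}$, handled via the identity $(I+\mathcal{H})\tilde\theta=-[\partial_\alpha^0,\mathcal{H}]$-type quadratic remainder already exploited in \S\ref{Gk3}.
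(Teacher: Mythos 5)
Your overall architecture (continuity method, integrating Proposition \ref{energy_estimate} against the decay of $d_I(t)^{-1}$ from Corollary \ref{decayestimate}, reading off the bootstrap quantities via Lemma \ref{connect} and Lemma \ref{comparetransform}) matches the paper, and your accounting of how (H5) and (H6) absorb the factorial constants is correct. However, there is a genuine gap in how you propose to recover $\|\zeta_\alpha-1\|_{H^s}$.

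You claim that the $|D|^{1/2}$-part of the energy ``controls the full $H^{s+1}$-norm of $\tilde\theta$ modulo a quadratic error,'' invoking the almost anti-holomorphicity of $\tilde\theta$. That is not true. The energy controls $\sum_{k\le s}\||D|^{1/2}\partial_\alpha^k\tilde\theta\|_{L^2}^2$, i.e.\ $\tilde\theta$ in $\dot H^{s+1/2}$, which together with Lemma \ref{equivalent_1} ($\partial_\alpha\tilde\theta\approx 2(\zeta_\alpha-1)$ modulo quadratic) gives only $\|\zeta_\alpha-1\|_{H^{s-1/2}}\lesssim\epsilon$, half a derivative short of what you need. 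Anti-holomorphicity lets one trade $\tilde\theta$ for $(I-\mathcal{H})\tilde\theta$ up to quadratic errors, but the Hilbert transform is order zero and cannot manufacture the missing $|D|^{1/2}$. The paper closes this gap by a different mechanism: it invokes the quasilinear structure, rewriting the momentum equation $D_t^2\zeta-iA\zeta_\alpha=-i$ as
\[
\zeta_\alpha-1=\frac{D_t^2\zeta-i(A-1)}{iA},
\]
and then estimates $D_t^2\zeta=D_t\bar{\mathfrak{F}}+D_t\bar q$ in $H^s$ using the already-recovered bound on $\|D_t\mathfrak{F}\|_{H^s}$ (coming from the $\tilde\sigma$-part of the energy and (\ref{equivsigma})) together with the point-vortex decay estimates and (\ref{estimateAminusOne}). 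This identity effectively gains a full derivative via the elliptic structure, which is exactly what the energy functional alone cannot supply. Without this step, your bootstrap does not close on $\|\zeta_\alpha-1\|_{H^s}$.

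A smaller remark: the extra half derivative on $\|\mathfrak{F}\|_{H^{s+1/2}}$ (needed for Lemma \ref{comparetransform} (\ref{equivtheta})) comes not from the $D_t\tilde\theta$-part of the energy (which only gives $H^s$) but from the $|D|^{1/2}\tilde\sigma$-part, since $\tilde\sigma=(I-\mathcal{H})D_t\tilde\theta$ and $(\mathcal{H}+\bar{\mathcal{H}})D_t\tilde\theta$ is quadratic; your proposal glosses over this, but the paper's chain of estimates is consistent here, so this is merely an omission rather than an error.
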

\begin{proof}
Let $\delta>0$ to be determined. Let 
\begin{equation}
\mathcal{T}:=\Big\{T\in [0,\delta \epsilon^{-2}]:\quad \|\zeta_{\alpha}-1\|_{H^s}\leq 5\epsilon,~~\|\mathfrak{F}\|_{H^{s+1/2}}\leq 5\epsilon, ~~\|D_t\mathfrak{F}\|{H^s}\leq 5\epsilon,\quad \forall ~t\in [0,T]  \Big\}
\end{equation}
At $t=0$, we have 
\begin{equation}
\|\mathfrak{F}\|_{H^{s+1/2}}+\|D_t\mathfrak{F}\|{H^s}\leq \frac{3}{2}\epsilon.
\end{equation}
To obtain estimate of $\|\zeta_{\alpha}-1\|_{H^s}$, use $D_t^2\zeta-iA\zeta_{\alpha}=-i$, we have 
\begin{equation}
    \zeta_{\alpha}-1=\frac{D_t^2\zeta-i(A-1)}{iA}.
\end{equation}
We have $D_t^2\zeta=D_t\bar{\mathfrak{F}}+D_t\bar{q}$, and
$$D_tq=\sum_{j=1}^2 \frac{\lambda_j i}{2\pi}\frac{D_t\zeta-\dot{z}_j}{(\zeta(\alpha,t)-z_j(t))^2}.$$
We have 
\begin{equation}
    \|D_tq\|_{H^s}\leq C\epsilon^2+K_s^{-1}\epsilon.
\end{equation}
Use (\ref{estimateAminusOne}), we obtain
\begin{equation}
    \|\zeta_{\alpha}(\cdot, 0)-1\|_{H^s}\leq \|D_t\mathfrak{F}(\cdot,0)\|_{H^s}+C\epsilon^2+K_s^{-1}\epsilon\leq 2\epsilon.
\end{equation}
Therefore,  $0\in \mathcal{T}$, so $\mathcal{T}\neq \emptyset$. Since $\|\zeta_{\alpha}-1\|_{H^s}, \|\mathfrak{F}\|_{H^{s+1/2}}, \|D_t\mathfrak{F}\|_{H^s}$ are continuous in $t$,  we have $\mathcal{T}$ is closed. To prove $\mathcal{T}=[0, \delta\epsilon^{-2}]$, it suffices to prove that if $T_0<\delta\epsilon^{-2}$, then there exists $c>0$ such that $[0, T_0+c)\subset \mathcal{T}$.

\noindent  Assume $T_0\in \mathcal{T}$ and assume $T_0<\delta \epsilon^{-2}$. By Proposition \ref{energy_estimate}, we have for any $t_0\leq T_0$,
\begin{align*}
\mathcal{E}_s(t_0)=&\mathcal{E}_s(0)+\int_0^{t_0} \frac{d}{dt}\mathcal{E}_s(t)dt\\
\leq &17\epsilon^2+\int_0^{t_0} (C\epsilon^4+K_s^{-1}\epsilon^3 d_I(t)^{-3/2}+K_s^{-1} \epsilon^2\frac{|\lambda|}{x(0)}d_I(t)^{-5/2})dt\\
\leq &17\epsilon^2+C\epsilon^4 T_0+K_s^{-1}\epsilon^3\int_0^{t_0} ((1+\frac{|\lambda|}{20\pi x(0)} t)^{-1})^{3/2}dt\\
&+K_s^{-1}\epsilon^2\frac{|\lambda|}{x(0)}\int_0^{t_0}((1+\frac{|\lambda|}{20\pi x(0)} t)^{-1})^{3/2}dt\\
\leq & 17\epsilon^2+C\epsilon^4T_0+K_s^{-1}\epsilon^3\frac{x(0)}{|\lambda|}+K_s^{-1}\epsilon^2.
\end{align*}
Since $\frac{|\lambda|}{x(0)}\geq M\epsilon$, we have 
$$K_s^{-1}\epsilon^3\frac{x(0)}{|\lambda|}\leq K_s^{-1}\epsilon^3(M)^{-1}\epsilon^{-1}=K_s^{-1}M^{-1}\epsilon^2\leq \frac{1}{2}\epsilon^2. $$

Since $T_0\leq \delta\epsilon^{-2}$, if we choose $\delta\leq C^{-1}$, then 
$$C\epsilon^4T_0\leq \epsilon^2.$$
Therefore we have $$\sup_{t\in [0,T_0]}\mathcal{E}_s(t)\leq 19\epsilon^2.$$  By lemma \ref{connect}, we obtain 
\begin{equation}
    4\sum_{k=0}^s \Big\{\|\partial_{\alpha}^kD_t\tilde{\theta}\|^2+\|\partial_{\alpha}^k D_t\tilde{\sigma}\|^2 +\|\partial_{\alpha}^k|D|^{1/2}\tilde{\theta}\|_{L^2}^2+\|\partial_{\alpha}^k|D|^{1/2}\tilde{\sigma}\|_{L^2}^2\Big\}\leq \mathcal{E}_s+C\epsilon^3\leq 20\epsilon^2.
\end{equation}
So we have 
\begin{equation}\label{selfcontrolbootstrap}
    \|D_t\tilde{\theta}\|_{H^{s+1/2}}+\|D_t\tilde{\sigma}\|_{H^s}+\| |D|^{1/2}\tilde{\theta}\|_{H^s}\leq 5\epsilon,\quad \quad
\end{equation}
By lemma \ref{comparetransform}, we obtain
\begin{equation}
    \|\mathfrak{F}\|_{H^{s+1/2}}\leq K_s^{-1}\epsilon+\frac{1}{2}\|D_t\tilde{\theta}\|_{H^{s+1/2}}\leq 3\epsilon.
\end{equation}
\begin{equation}
\|D_t\mathfrak{F}\|_{H^s}\leq \frac{1}{4}\|D_t\tilde{\sigma}\|_{H^s}+K_s^{-1}\epsilon\leq 2\epsilon.
\end{equation}
Since $\bar{\zeta}-\alpha$ is holomorphic, we have 
\begin{equation}
   \tilde{\theta}= (I-\mathcal{H})(\bar{\zeta}-\zeta)=(I-\mathcal{H})(\zeta-\alpha)=2(\zeta-\alpha)-(\mathcal{H}+\bar{\mathcal{H}})(\zeta-\alpha).
\end{equation}
It's easy to obtain
\begin{equation}
    \norm{|D|^{1/2}(\Big(\tilde{\theta}-2(\zeta-\alpha)\Big)}_{H^s}=\| |D|^{1/2} (\mathcal{H}+\bar{\mathcal{H}})(\zeta-\alpha)\|_{H^s}\leq C\epsilon^2.
\end{equation}
By (\ref{selfcontrolbootstrap}), we obtain
\begin{equation}
    2\||D|^{1/2}(\zeta-\alpha)\|_{H^s}\leq \||D|^{1/2}\tilde{\theta}\|_{H^s}+C\epsilon^2\leq 6\epsilon.
\end{equation}
So we have 
\begin{equation}
    \norm{|D|^{1/2}(\zeta-\alpha)}_{H^s}\leq 3\epsilon.
\end{equation}
To obtain control of $\|\zeta_{\alpha}-1\|_{H^s}$, again we use
\begin{equation}
    \zeta_{\alpha}-1=\frac{D_t^2\zeta-i(A-1)}{iA}.
\end{equation}
It's easy to obtain
\begin{equation}
    \|\zeta_{\alpha}-1\|_{H^s}\leq \|D_t\mathfrak{F}\|_{H^s}+C\epsilon^2+K_s^{-1}\epsilon\leq 3\epsilon.
\end{equation}
By continuity, we can choose $c>0$ sufficiently small such that 
\begin{equation}
    \|\zeta_{\alpha}-1\|_{H^s}\leq 5\epsilon,~~\|\mathfrak{F}\|_{H^{s+1/2}}\leq 5\epsilon, ~~\|D_t\mathfrak{F}\|{H^s}\leq 5\epsilon,\quad \forall ~t\in [0,T_0+c)
\end{equation}
So we must have $\mathcal{T}=[0,\delta\epsilon^{-2}]$, for some absolute constant $\delta>0$.
\end{proof}

\subsection{Change of variables back to lagrangian coordinates} Next, we need to change of variables back to system (\ref{vortex_boundary}). So we need to control $\kappa$ on time interval $[0, \delta\epsilon^{-2}]$. We have 
\begin{equation}
\kappa_t=b\circ\kappa
\end{equation}
So we have 
\begin{equation}
\partial_t\kappa_{\alpha}=b_{\alpha}\circ\kappa \kappa_{\alpha}.
\end{equation}
Recall that 
\begin{equation}
(I-\mathcal{H})b= -[D_t\zeta, \mathcal{H}]\frac{\bar{\zeta}_{\alpha}-1}{\zeta_{\alpha}}-\frac{i}{\pi} \sum_{j=1}^2 \frac{\lambda_j}{\zeta(\alpha,t)-z_j(t)}.
\end{equation}
So we have 
\begin{equation}
\begin{split}
(I-\mathcal{H})b_{\alpha}=[\zeta_{\alpha}, \mathcal{H}]b-\partial_{\alpha}[D_t\zeta, \mathcal{H}]\frac{\bar{\zeta}_{\alpha}-1}{\zeta_{\alpha}}+\frac{i}{\pi}\sum_{j=1}^2 \frac{\lambda_j \zeta_{\alpha}}{(\zeta(\alpha,t)-z_j(t))^2}.
\end{split}
\end{equation}
Clearly, 
\begin{equation}
\norm{[\zeta_{\alpha}, \mathcal{H}]b-\partial_{\alpha}[D_t\zeta, \mathcal{H}]\frac{\bar{\zeta}_{\alpha}-1}{\zeta_{\alpha}}}_{H^1}\leq C\epsilon^2,
\end{equation}
and
\begin{equation}
\norm{\frac{i}{\pi}\sum_{j=1}^2 \frac{\lambda_j \zeta_{\alpha}}{(\zeta(\alpha,t)-z_j(t))^2}}_{H^1}\leq K_s^{-1}d_I(t)^{-5/2}\epsilon,
\end{equation}
for some absolute constant $C>0$.  By lemma \ref{realinverse}, we have 
\begin{equation}
\norm{b_{\alpha}}_{H^1}\leq C\epsilon^2+K_s^{-1}d_I(t)^{-5/2}\epsilon.
\end{equation}
By Sobolev embedding, we have 
\begin{equation}
\norm{b_{\alpha}\circ\kappa}_{\infty}=\norm{b_{\alpha}}_{\infty}\leq \norm{b_{\alpha}}_{H^1}\leq C\epsilon^2+K_s^{-1}d_I(t)^{-5/2}\epsilon.
\end{equation}
It's easy  to obtain that
\begin{equation}
\norm{\kappa_{\alpha}(\cdot, 0)-1}_{\infty}\leq C\epsilon.
\end{equation}
So we obtain
\begin{align}
\kappa_{\alpha}(\alpha,t)-\kappa_{\alpha}(\alpha,0)=&\int_0^t \kappa_{\alpha \tau}(\alpha,\tau)d\tau\\
=& \int_0^t b_{\alpha}\circ\kappa(\alpha,\tau) \kappa_{\alpha}(\alpha,\tau)d\tau.
\end{align}
Let $\delta_1>0$ be a constant to be determined. 
\begin{equation}
\mathcal{T}_1:=\Big\{ T\in [0, \delta_1\epsilon^{-2}]:  \sup_{t\in [0,T]}\norm{\kappa_{\alpha}(\cdot,t)-\kappa_{\alpha}(\cdot, 0)}_{\infty}\leq \frac{1}{10}\Big\}
\end{equation}
In particular, if $t\in \mathcal{T}_1$, then for $\epsilon$ sufficiently small, we have $\frac{4}{5}\leq \kappa_{\alpha}\leq \frac{6}{5}$. Also, $\mathcal{T}_1$  is closed. For $T\in \mathcal{T}_1$, we have for any $t\in [0,T]$, 
\begin{align}
\Big| \kappa_{\alpha}(\alpha,t)-\kappa_{\alpha}(\alpha,0)\Big|\leq &\int_0^t \Big(C\epsilon^2+K_s^{-1}d_I(\tau)^{-5/2}\epsilon\Big) d\tau\\
\leq & \int_0^t (C\epsilon^2+K_s^{-1}(1+\frac{|\lambda|}{20\pi x(0)})^{-5/2}\epsilon) d\tau\\
\leq & C\epsilon^2 t+K_s^{-1}\frac{20\pi x(0)}{|\lambda|}\frac{2}{3}\epsilon\\
\leq & C\epsilon^2 t+\frac{1}{15K_s}.
\end{align}
Here we've used the assumption $\frac{|\lambda|}{x(0)}\geq 200\pi \epsilon$.  Choose $\delta_1=\frac{1}{30C}$. Then we have 
\begin{equation}
\sup_{t\in [0,T]} \norm{\kappa_{\alpha}(\cdot,t)-\kappa_{\alpha}(\cdot,0)}_{\infty}\leq \frac{1}{20}.
\end{equation}
Therefore, $\mathcal{T}_1$ is open in $[0,\delta_1\epsilon^{-2}]$,  we must have $\mathcal{T}_1=[0,\delta_1\epsilon^{-2}]$.

\vspace*{1ex}

\noindent Let $\delta_0:=\min\{\delta, \delta_1\}$. Since $\kappa_{\alpha}\geq \frac{3}{5}$ on $[0, \delta_0\epsilon^{-2}]$,  we can change of variables back to lagrangian coordinates and conclude the proof of Theorem \ref{longtime}.

\section*{Acknowledgement}
The author would like to thank his Ph.D advisor, Prof. Sijue Wu, for introducing him this topic, for many helpful discussions and  invaluable comments on this problem, and for carefully reading the drafts of this paper.   The author also would like to thank Prof. Robert Krasny, Prof. Charles Doering, Prof. Stefan Llewellyn Smith, and Prof. Christopher Curtis for providing references on water waves with point vortices. This work is partially supported by NSF grant DMS-1361791.

\bibliography{qingtangbib}{}
\bibliographystyle{plain}

\end{document}